\def\R{\mathbb R}
\def\E{\mathbb E}
\newtheorem{theorem}{Theorem}[section]
\newtheorem{lemma}[theorem]{Lemma}
\newtheorem{definition}[theorem]{Definition}
\newtheorem{corollary}[theorem]{Corollary}
\newtheorem{remark}{Remark}[section]
\newtheorem{proposition}[theorem]{Proposition}
\newenvironment{assumptionp}[1]{

\assumptionalt
}{\endassumptionalt}
\DeclareMathOperator*{\esssup}{ess\,sup}
\crefname{assumptionalt}{Assumption}{Assumptions}
\numberwithin{equation}{section}
\newcommand{\ome}{\omega}
\newcommand{\MF}{\mathcal{F}}
\newcommand{\tf}{\widetilde{f}}
\begin{document}
\title[Reflected BSDEs with rough drivers]{Reflected backward stochastic differential equations with rough drivers}

\author[H. Li]{Hanwu Li}\address{Research Center for Mathematics and Interdisciplinary Sciences, Shandong University, Qingdao 266237, China; Frontiers Science Center for Nonlinear Expectations (Ministry of Education), Shandong University, Qingdao 266237, China}
\email{lihanwu@sdu.edu.cn}

\author[H. Zhang]{Huilin Zhang}\address{Research Center for Mathematics and Interdisciplinary Sciences, Frontiers Science Center for Nonlinear Expectations (Ministry of Education), Shandong University, Qingdao 266237, China; Institute of Mathematics, Humboldt University, Berlin, 10099, Germany}
\email{huilinzhang@sdu.edu.cn}

\author[K. Zhang]{Kuan Zhang}\address{Research Center for Mathematics and Interdisciplinary Sciences, Shandong University, Qingdao 266237, China}
\email{201911819@mail.sdu.edu.cn}

\begin{abstract}
In this paper, we investigate reflected backward stochastic differential equations driven by rough paths (rough RBSDEs), which can be viewed as probabilistic representations of nonlinear rough partial differential equations (rough PDEs) or stochastic partial differential equations (SPDEs) with obstacles. Furthermore, we demonstrate that solutions to rough RBSDEs solve the corresponding optimal stopping problems within a rough framework. This development provides effective and practical tools for pricing American options in the context of the rough volatility model, thus playing a crucial role in advancing the understanding and application of option pricing in complex market regimes. 

The well-posedness of rough RBSDEs is established using a variant of the Doss-Sussman transformation. Moreover, we show that rough RBSDEs can be approximated by a sequence of penalized BSDEs with rough drivers. For applications, we first develop the viscosity solution theory for rough PDEs with obstacles via rough RBSDEs. Second, we solve the corresponding optimal stopping problem and establish its connection with an American option pricing problem in the rough path setting.

\end{abstract}

\keywords{rough paths, reflected BSDEs, rough PDEs}

\subjclass{60L20, 60H10, 60L50}

\maketitle
\tableofcontents

\section{Introduction}
\subsection{Background and motivation}

In this paper we focus on the reflected backward stochastic differential equations (RBSDEs) involving an extra rough integral $d\mathbf{X}_t.$ Specifically, for $p\ge2,$ we are interested in the equation
\begin{equation}\label{e:reflec BSDE}
\left\{\begin{aligned}
&Y_{t} = \xi + \int_{t}^{T} f(r,S_r,Y_r,Z_r) dr + \int_{t}^{T} H(S_{r},Y_{r}) d\mathbf{X}_r - \int_{t}^{T} Z_{r} dW_r + K_{T} - K_{t},   \\
&Y_t\ge L_{t},\ t\in[0,T], \textrm{ and } \int_{0}^{T} (Y_{r} - L_{r}) dK_{r} = 0,
\end{aligned}\right.
\end{equation}
where 
\begin{equation*}
\int_{t}^{T}H(S_{r},Y_{r})d\mathbf{X}_{r} = \sum_{i=1}^{l} \int_{t}^{T} H_{i}(S_{r},Y_{r})d\mathbf{X}^{i}_{r} ,
\end{equation*} 
the path $\mathbf{X}$ is a so called $p$-variation {\it geometric rough path}, and the diffusion process $S$ satisfies
\begin{equation*}
S_{t} = x + \int_{0}^{t}b(r,S_r)dr + \int_{0}^{t}\sigma(r,S_r) dW_r,\quad t\in[0,T].
\end{equation*}
In addition, we focus on the case where $f$ exhibits quadratic growth in $z.$ When $H \equiv 0,$ Eq.~\eqref{e:reflec BSDE} degenerates to RBSDEs with quadratic growth, as studied by, e.g., \cite{bayraktar2012quadratic,KLQT,lepeltier2007reflected,sun2022quantitative}.\\

The nonlinear backward stochastic differential equation (BSDE), introduced by Pardoux-Peng \cite{pardoux1990adapted}, has been a famous area of research in stochastic analysis, with significant applications in various fields such as finance, control theory, and mathematical physics.
The BSDE has the following form:
\begin{equation}\label{e:BSDE}
Y_{t} = \xi + \int_{t}^{T} f(r,Y_r,Z_r) dr -\int_{t}^{T} Z_r dW_r,\quad t\in[0,T],
\end{equation}
where $\xi$ is $\mathcal{F}_{T}$-measurable, and the solution of \eqref{e:BSDE} is a pair of adapted processes $(Y,Z),$ with $Z$ term (vaguely speaking) designed to cancel the anticipating part of $\xi+ \int f(...) dr$ so that $Y$ is adapted. The concept of RBSDEs, first introduced by El Karoui et al. \cite{elkaroui1997reflected}, has extended this theory to incorporate constraints represented by the reflection term. Compared with classical BSDEs, the RBSDE requires an additional adapted increasing process $K_t$ such that
\begin{equation}\label{e:rBSDE}
\left\{\begin{aligned}
&Y_{t} = \xi + \int_{t}^{T}f(r,Y_r,Z_r)dr - \int_{t}^{T}Z_r dW_r + K_{T} - K_{t},\\
&Y_{t}\ge L_t, \ t\in[0,T], \textrm{ and } \int_{0}^{T} (Y_{r} - L_{r}) dK_{r} = 0,
\end{aligned}\right.
\end{equation}
where the obstacle $L$ is continuous, adapted and $L_{T}\le \xi.$ \\

Our interest in RBSDE~\eqref{e:reflec BSDE} is motivated by the following two related problems.

(a) The first motivation is to solve the following obstacle problem for semilinear rough partial differential equations (rough PDEs):
\begin{equation}\label{e:rough PDE}
\left\{\begin{aligned}
&d u(t,x) + \mathcal{L}u(t,x)dt + H(x,u(t,x))d\mathbf{X}_t = 0,\\
&u(T,x) = g(x) ,\quad u(t,x)\ge l(t,x),\quad t\in[0,T],
\end{aligned}\right.
\end{equation}
where
\begin{equation*}
\mathcal{L}u(t,x) = \frac{1}{2}\text{Tr}\left\{\sigma\sigma^{\top}(t,x)D^{2}_{xx} u(t,x) \right\} + b^{\top}(t,x)D_{x} u(t,x) + f\big(t,x,u(t,x), \sigma^{\top}(t,x)D_{x} u(t,x)\big).
\end{equation*}
Rough PDEs, which can be viewed both as an extension of classical PDEs with singular drivers and as a pathwise interpretation of SPDEs, have been studied by various authors in different forms; see, e.g., \cite{caruana2009partial,deya2019priori,deya2012non,diehl2017stochastic,hocquet2018energy}. However, with reflections or obstacles, research on such rough differential equations is very limited. Some related topics can be found in Aida \cite{AIDA20153570} and Deya et al. \cite{DEYA20193261}, who study the reflected rough differential equation in the deterministic case. Nevertheless, rough PDEs with reflections in the infinite-dimensional case remain a widely open research area. If the driver $\mathbf{X}$ is a smooth path, it is well understood that the solution $u(t,x)$ to \eqref{e:rough PDE} can be represented as a solution to an RBSDE via the so-called nonlinear Feynman-Kac formula. In the same spirit, when $\mathbf{X}$ is rough, one expects an equivalent relationship between \eqref{e:rough PDE} and rough RBSDEs (see Section 5.1). On the other hand, when the $d\mathbf{X}_t$ term is a pathwise realization of a Brownian integral, \eqref{e:rough PDE} corresponds to an obstacle problem for SPDEs, which has been well studied by Matoussi-Stoica \cite{matoussi2010obstacle}. Thus, the well-posedness of our rough RBSDE, and consequently of \eqref{e:rough PDE}, would nontrivially generalize and solve the obstacle problem for SPDEs driven by rough stochastic noises, such as fractional Brownian motions or strong Markov processes that can be lifted to rough signals (see, e.g., \cite[Chapter~15]{friz2010multidimensional}). \\

(b) The second motivation originates from the American option pricing under rough volatility. Consider a financial market model with two assets $P^{i},i=0,1$. $P^0$ is the price of a riskless bond satisfying
\begin{equation}\label{e:bound}
dP^{0}_{t} = P^{0}_{t} r_{t} dt,
\end{equation}
where $r$ is the short rate. The other represents the price of a stock with dynamics given by 
$$
dP^{1}_{t} = P^{1}_{t} \left(b_{t} dt + \rho v_{t} dW_{t} +  \sqrt{1-\rho^{2}}v_{t} dB_{t} \right),
$$
where $B$ and $W$ are independent Brownian motions, $\rho\in[-1,1],$ and $v_{t} = h(t,\hat{B}_t)$ with $\hat{B}_t$ the Riemann-Liouville fractional Brownian motion (fBm) derived from $B$. The rough Stein–Stein model, the rough Bergomi model, and the rough Heston model can all be represented by the above formulas (see, e.g., Bayer et al. \cite[Example~8.1--Example~8.3]{Christian-Asymptotic}) and they describe the logarithm of realized variance as behaving like a fBm (see, e.g., Jim et al. \cite{Jim-VolatilityIsRough}).

Now we consider a slightly more general case where $(\rho v_{t},\sqrt{1-\rho^2} v_{t})$ is replaced by $(\sigma_{t},\lambda_{t}).$ According to the recent work \cite{bank2023rough} by Bank et al, one may \textit{lift} the term $\sigma^{-1}_{t}\lambda_{t} dB_t$ to a rough driver $d\mathbf{X}_t,$ and thus the above stock price is equivalent to 
\begin{equation}\label{e:stock}
dP^{1}_{t} = P^{1}_{t} \left(b_{t} dt + \sigma_{t} dW_{t} +  \lambda_{t} dB_t \right) = P^{1}_{t} \left(b_{t} dt + \sigma_{t} dW_{t} +  \sigma_{t} d\mathbf{X}_t \right).
\end{equation}
The main advantage of the rough path framework is that it overcomes the problem arising from the non-Markovian nature of the volatility $v_t$, as shown in \cite{bank2023rough} for the European option pricing.

When $\mathbf{X}$ is smooth (say, with bounded variation), El Karoui-Quenez \cite{el1997non} associate the minimal superhedging strategy (and thus the American option price) with an optimal stopping problem, and they solve this problem by analyzing the RBSDE with the reflection given by the payoff. Then, in the rough case, by extending this argument, the American option pricing problem is naturally related to the rough RBSDE \eqref{e:reflec BSDE} (for more details, see Section \ref{subsec:Optimal Stopping}).\\

The theory of rough paths originates from Lyons' seminal work \cite{Lyons1998}. It has been developed by several excellent researchers over the last two decades; see, e.g., \cite{FH20} and references therein. The theory of rough SDEs has proven to be a powerful tool in areas of  SPDEs, nonlinear filtering, and dynamical systems; see, e.g.,  \cite{allan2020pathwise,crisan2013robust,diehl2017control}. To our best knowledge, Diehl-Friz \cite{DF} are the first to introduce rough BSDEs and show their well-posedness by applying a continuity argument. Diehl-Zhang \cite{diehl2017young} consider rough BSDEs with Young drivers ($p$-variation with $p\in[1,2)$) and prove their well-posedness in a strong sense. Very recently, Liang-Tang \cite{liang2023multidimensional} extend the result of \cite{DF} to a multidimensional case and find a strong, unique solution to rough BSDE when the rough integral is linear. For the reflected rough differential equation, except \cite{AIDA20153570,DEYA20193261} mentioned in our motivation, we may refer to \cite{gassiat2021non,richard2020penalisation} and the references therein for more recent works. To the best of our knowledge, this is the first research on RBSDEs driven by rough paths and, consequently, rough PDEs with obstacles.   \\

The contributions of this paper are threefold. First, we systematically explore the characteristics and properties of rough RBSDEs, including the stability, the comparison theorem, and the construction and approximation of solutions via penalization. This penalization argument can be viewed as a numerical approximation for nonlinear rough PDEs or SPDEs with obstacles. Second, as applications, we relate rough RBSDEs to the obstacle problem within the rough path framework, and then we derive a nonlinear Feynman-Kac formula for the corresponding rough PDEs. Third, we delve into an optimal stopping problem that involves RBSDEs and rough BSDEs, and we relate this problem to an American option pricing problem.\\

The main idea for the well-posedness part, developed from \cite{DF}, is to approximate the rough path via {\it{lifted}} smooth paths. Then we solve the approximate RBSDE via  a modified Doss-Sussman transformation.  However, by following this idea, we will encounter fundamental problems when dealing with the penalization approximation and applications for rough RBSDEs, since the convergence of the solution via the prescribed approximation is rather weak. More precisely, the modified Doss-Sussman transformation fails to provide the continuous dependence of reflections $K_t$ on the rough driver, and thus the convergence of $K_t$ does not follow from the convergence of $(Y_t,Z_t).$ In addition, it is unclear if the continuity obtained by the approximation argument is consistent with the penalized approximation or the viscosity solution theory with obstacles. These two problems stem from the fact that the rough integral $\int_{0}^{t}H(...)d\mathbf{X}_{r}$ exists solely in a formal sense, while RBSDE theory relies heavily on It\^{o}'s theory to derive the convergence of $\int_{0}^{t}H(S_{r},Y^{m}_r)d\mathbf{X}_{r}$ when $Y^{m}\rightarrow Y$ uniformly. Moreover, for the obstacle problem involving rough PDEs, the mere pointwise convergence of the approximated solution is not sufficient for the existence of solutions to such rough PDEs. Indeed, the locally uniform convergence of viscosity solutions to the limit should be considered.\\

We handle these problems separately. For the trouble caused by the definition of solutions, we rely on the monotonic property of the diffeomorphism induced by the Doss-Sussman transformation, as well as the continuous property of the obstacle. Indeed, these two properties make the convergence of reflections equivalent to the weak convergence of corresponding measures by analyzing the uniform convergence of cumulative distribution functions. For the similar trouble encountered in the penalization argument, inspired by Zhang \cite[Section~6.3.3]{zhang2017backward}, we transform the problem for convergence of penalization sequences into the study of continuous dependence of rough RBSDEs on varying obstacles, and we demonstrate the convergence of these obstacles using the Doss-Sussman transformation. That is, we treat the sequence of penalized rough BSDEs as a new kind of rough RBSDEs, whose obstacle is the smaller value between the original obstacle and the penalization solution. Then, we employ the continuity and differentiability of the Doss-Sussman transformation to study the trajectory semi-continuity of the limits of penalization solutions. This semi-continuity provides us with the a.s. uniform convergence of the new obstacles. Consequently, the desired convergence of the penalization sequence follows directly from the stability of rough RBSDEs. For the well-posedness of rough PDEs with obstacles, motivated by \cite{DF}, to avoid the lack of uniform convergence, we employ a so-called ``semi-relaxed limits" technique, which is usually used to establish the invariance of viscosity solutions under limits. This technique offers a tool to establish a Dini-type argument, thereby allowing us to derive the uniform convergence of viscosity solutions from the equivalence of the relaxed upper and lower limits. Furthermore, the equivalence of relaxed limits is a consequence of the comparison theorem for viscosity solutions of transformed PDEs, as outlined in \cite[Section~6]{crandall1992user}.
\\

\subsection{Structure of the paper}

The paper is organized as follows. In Section \ref{sec:prelim}, we recall some notations and facts on the geometric rough path
theory. In Section~\ref{sec:rough reflected BSDE}, we establish the well-posedness of rough RBSDEs and introduce some properties of our equations, including the continuity of solution map, the comparison result, and the penalization approximation. In Section~\ref{subsec:PDE obstacle}, we study  the obstacle problem for rough PDEs. In Section~\ref{subsec:Optimal Stopping}, we investigate the optimal stopping problem related to Eq.~\eqref{e:reflec BSDE}, as well as the analogous problem arising from the American option pricing problem. Finally, in \ref{append:A} we recall some concepts and results of RBSDEs and rough BSDEs, and in \ref{append:B} we give some facts about viscosity solutions for obstacle problems and prove the comparison theorem that will be used in Section~\ref{subsec:PDE obstacle}.

\section{Preliminaries}\label{sec:prelim}

\subsection{Notations and basic definitions}
Let $l,m\in\mathbb{N},$ and $(E,d_{E})$ be a complete metric space, and they may vary in the context. Denote the Euclidean norm by $|\cdot|,$ of which the dimension depends on the context. $T>0$ is a fixed time horizon. Let  $\left\{W_t\right\}_{t\in[0,T]}$ be a $d$-dimensional standard Brownian motion in $(\Omega,\mathcal{F},(\mathcal{F}_{t})_{t\in[0,T]}, \mathbb{P})$ with $(\mathcal{F}_{t})_{t\in[0,T]}$ being the augmented filtration of $W.$ \\

{\it Stochastic norms: } For any $k\in[1,\infty)$ and $[s,t]\subset [0,T],$ denote by 

\begin{itemize}
    \item $\mathrm{H}^{k}_{[s,t]}( \R^{l})$ the space of progressively measurable processes $X$ on $[s,t]$ with values in $\R^{l}$, equipped with the norm
$$
\|X\|_{\mathrm{H}^{k};[s,t]}:= \left\{\E\left[\left|\int_s^t|X_r|^2 dr\right|^{\frac{k}{2}}\right]\right\}^{\frac1k};
$$

\item $L^{k}(\MF_t,\R^{l})$ the space of random variables $\xi\in\MF_{t}$ with values in $\R^{l}$, equipped with the norm 
$$\|\xi\|_{L^{k}} := \left\{\E\left[|\xi|^k\right]\right\}^{\frac1k};$$

\item $\mathrm{H}^{\infty}_{[s,t]}(\R^{l})$ the subspace of $\mathrm{H}^{k}_{[s,t]}( \R^{l})$, which has finite essential supremum norm 
$$
\|Y\|_{\mathrm{H}^{\infty};[s,t]}:=\esssup\limits_{\ome\in\Omega,r\in[s,t]}|Y_r(\ome)|;
$$

\item $L^{\infty}(\MF_t,\R^{l})$ the space of all $\xi\in L^{k}(\MF_t,\R^{l})$ such that $\|\xi\|_{L^\infty}<\infty$, where
$$
\|\xi\|_{L^{\infty}}:=\esssup\limits_{\omega\in\Omega}|\xi(\ome)|;
$$

\item $\mathrm{I}^k_{[s,t]}$ the space of continuous increasing adapted processes $K$ on $[s,t]$ with $K_s=0$ and $K_t \in L^k(\MF_t, \R)$.

\end{itemize}

In the following we write $\mathrm{H}^{k}_{[s,t]} $ and $L^k(\MF_t)$ as short for $\mathrm{H}^{k}_{[s,t]}(\R^{l})$ and $L^k(\MF_t,\R^{l})$ if $l=1.$ 
\\

{\it Spaces of functions: } For any open set $U\subset \R^{m},$ $k,k_1,k_2\in\mathbb{N},$ and $\lambda>1,$ denote by
\begin{itemize}
    \item $C(U,E)$ (resp. $C_{b}(U,E)$) the space of continuous (bounded and continuous) functions $g:U\rightarrow E$;\\

    \item $\|\cdot\|_{\infty;U}$ the uniform norm for $g\in C_b(U,\R^l),$ which is defined by $$\|g\|_{\infty;U} := \sup\limits_{r\in U}|g(r)|;$$
    
    
    \item $C^{k}(\R^m,\R^l)$ the space of $k$-th continuously differentiable functions $g\in C(\R^m,\R^l)$.\\
    
    \item $C^{k_1,k_2}(\R\times \R^m,\R^l)$ the space of functions $g\in C(\R\times \R^m,\R^l)$ such that for each $i=1,2,...,k_1$ and $|\alpha|_{\text{index}} \le k_{2},$ $\partial^{i}_{t}\partial^{\alpha}_{x}g(t,x)$ exists and is continuous,  where $\alpha\in\mathbb{N}^{m},$ $|\alpha|_{\text{index}} := \sum\limits_{i=1}^{m} \alpha_{i},$ and $\partial^{\alpha}_{x} g := \frac{\partial^{|\alpha|_{\text{index}}}g}{\partial x_{1}^{\alpha_{1}}\partial x_{2}^{\alpha_{2}}...\partial x_{m}^{\alpha_{m}}}$;\\

    \item $C^{k_{1},k_{2}}([0,T]\times \R^m, \R^l)$ the space of functions $g\in C([0,T]\times\R^m,\R^l),$ which is defined in the same manner as $C^{k_1,k_2}(\R\times \R^m,\R^l),$ requiring that the derivatives continuous on $[0,T]\times \R^m;$ 
    \\
    \item $D_{x}g$ and $D^2_{xx}g$ the gradient and Hessian matrix of $g\in C^{1}(\R^m,\R)$ and $g\in C^{2}(\R^m,\R)$ respectively, and denote $D^{2}_{xy}g:=D_{x}(D_{y} g)$ and $D^{3}_{xxy}g:=D^2_{xx}(D_{y} g)$ for $g\in C^{1,1}(\R^m\times \R,\R)$ and $g\in C^{2,1}(\R^m \times \R,\R)$ respectively;\\

    \item $\text{Lip}^1(U,\R^{l})$ the space of functions $V\in C(U,\R^{l})$ such that $\|V\|_{\text{Lip}^{1}(U,\R^l)}<\infty,$ where
    $$\|V\|_{\text{Lip}^1(U,\R^{l})}:=\sup\limits_{x^1,x^2\in U:x^1\neq x^2}\frac{|V(x^1)-V(x^2)|}{|x^1-x^2|} + \|V(\cdot)\|_{\infty;U};$$
    \item $\text{Lip}^{\lambda}(U,\R^{l})$ the space of functions $V\in C(U,\R^{l})$ such that $\|V\|_{\text{Lip}^{\lambda}(U,\R^{l})}<\infty,$ where
    $$\|V\|_{\text{Lip}^{\lambda}(U,\R^{l})}:=\sum\limits_{|\alpha|_{\text{index}} = \lfloor \lambda \rfloor}\sup\limits_{x^1,x^2\in U:x^1\neq x^2}\frac{|\partial^{\alpha}_{x}V(x^1)-\partial^{\alpha}_{x}V(x^2)|}{|x^1-x^2|^{\lambda - \lfloor \lambda \rfloor}} + \sum\limits_{|\beta|_{\text{index}}\le \lfloor \lambda \rfloor} \|\partial^{\beta}_{x}V(\cdot)\|_{\infty;U},$$  $\lfloor \lambda \rfloor$ represent the largest integer strictly smaller than $\lambda,$ and $\alpha,\beta\in\mathbb{N}^{m}$;\\

    \item $\text{Lip}^{1}([0,T],\R^l)$ and  $\text{Lip}^{\lambda}([0,T],\R^l)$ the space of $V\in C([0,T],\R^l),$ which is defined in the same manner as $\text{Lip}^{1}(U,\R^l)$ and $\text{Lip}^{\lambda}(U,\R^l)$ respectively;\\

    \item $\text{Lip}_{\text{loc}}^{1}(\R^m,\R^l)$ (resp. $\text{Lip}_{\text{loc}}^{\lambda}(\R^m,\R^l)$) the local $1$-Lipschitz ($\lambda$-Lipschitz) continuous space, which is the set of all functions $V\in C(\R^m,\R^l)$ such that $V\in \text{Lip}^{1}(U,\R^l)$ ($\text{Lip}^{\lambda}(U,\R^l)$) for every bounded open subset  $U\subset \R^m$.
\end{itemize}

For $g\in C^{1}([0,T],\R)$ ($g\in C^{1}(\R,\R).$ resp.) we also write  $\dot{g}_t := D_t g(t)$  ($g'(y) := D_y g(y)$ resp.). \\

{\it The $p$-variation metric space and geometric rough path space: } 
For $p\ge 1,$ let $[p]$ be the integer part of $p$ and denote by
\begin{itemize}
    \item $C^{p\text{-var}}([0,T],E)$ the space of functions $f\in C([0,T],E)$ with $\|f\|_{p\text{-var};[0,T]}<\infty,$ where
    $$\|f\|_{p\text{-var};[0,T]} := \sup\limits_{\pi\in \mathcal{P}[0,T]}\left\{\sum\limits_{[t_{i},t_{i+1}]\in\pi}|d_{E}( f_{t_i},f_{t_{i+1}})|^{p}\right\}^{\frac{1}{p}},$$ $\mathcal{P}[0,T]$ is the set of all partitions of the interval $[0,T],$ and $|\pi|$ is the mesh size of a partition $\pi \in \mathcal{P}[0,T]$;\\
    
    \item $G^{[p]}(\R^{l})\subset \oplus_{k=0}^{[ p]}(\R^{l})^{\otimes k}$ the free nilpotent group of step $[p]$ over $\R^{l},$ which is equipped with the product $*$ defined in \cite[Chapter~7.3.2]{friz2010multidimensional} ($\otimes$ in the literature) and the Carnot-Caratheodory metric $d(\cdot,\cdot)$ defined in \cite[Chapter~7.5.2]{friz2010multidimensional};\\

    \item $C^{p\text{-var}}([0,T],G^{[p]}(\R^l))$ the $p$-variation weak geometric rough path space, equipped with the $p$-variation metric\\
    $d_{p\text{-var};[0,T]}(\mathbf{X}^{1},\mathbf{X}^{2}) := \sup\limits_{\pi\in\mathcal{P}[0,T]}\left\{\sum\limits_{[t_{i},t_{i+1}]\in\pi}|d(\mathbf{X}^{1}_{t_{i},t_{i+1}}, \mathbf{X}^{2}_{t_{i},t_{i+1}})|\right\}^{\frac{1}{p}},$ where $\mathbf{X}_{s,t}:=\mathbf{X}^{-1}_{s}* \mathbf{X}_{t}$;
    \\
    
    \item $C^{0,p\text{-var}}([0,T],G^{[p]}(\R^{l}))$ the $p$-variation geometric rough path space, which is the closure of $C^{1\text{-var}}([0,T],G^{[p]}(\R^{l}))$ under $d_{p\text{-var};[0,T]}.$\\
\end{itemize}

\begin{remark}\label{e:weak geometric vs. geometric}
Referring to \cite[Theorem~8.13]{friz2010multidimensional}, $\left(C^{p\text{-var}}([0,T],G^{[p]}(\R^{l})),d_{p\text{-var};[0,T]}\right)$ forms a complete, non-separable metric space. Comparatively, \cite[Proposition~8.25]{friz2010multidimensional} shows that the metric space $\left(C^{0,p\text{-var}}([0,T],G^{[p]}(\R^{l})),d_{p\text{-var};[0,T]}\right)$ forms a polish space.
\end{remark}

Recall the definition of the lift of bounded variation path given by \cite[Definition~7.2]{friz2010multidimensional}.
\begin{definition}\label{def:lift}
The step-$[p]$ signature of $\mathrm{X}\in C^{1\text{-var}}([0,T],\R^{l})$ is given by
\begin{equation*}
\begin{aligned}
S_{[p]}(\mathrm{X})_{0,T} &:= \left(1,\int_{0<u<T}d\mathrm{X}_{u},\int_{0<u_{1}<u_{2}<T}d\mathrm{X}_{u_1}\otimes d\mathrm{X}_{u_2},\cdots,\int_{0<u_1<...<u_{[p]}<T}d\mathrm{X}_{u_{1}}\otimes\cdots \otimes \mathrm{X}_{u_{[p]}}\right)\\
&\in G^{[p]}(\R^{l}).
\end{aligned}
\end{equation*}
The path $\mathbf{X}_{t} := S_{[p]}(\mathrm{X})_{0,t}$ is called the (step-$[p]$) lift of $\mathrm{X}.$
\end{definition}

\begin{remark}\label{rem:geo rough path}
For $\mathbf{X}\in C^{0,p\text{-var}}([0,T],G^{[p]}(\R^{l})),$ according to \cite[Lemma~8.21]{friz2010multidimensional} we can find $\mathrm{X}^{n}\in \bigcap\limits_{k\ge 1}\text{Lip}^{k}([0,T],\R^{l})$ (i.e., $\mathrm{X}^{n}$ are smooth in time) such that the lifts of $\mathrm{X}^{n},$ denoted by $\mathbf{X}^{n},$ converges to $\mathbf{X}$ in $C^{p\text{-var}}([0,T],G^{[p]}(\R^{l})).$
\end{remark}

\bigskip

\subsection{Property of rough flows}
Throughout this article, we make the following assumption: 
\begin{assumptionp}{$(H_{p,\gamma})$}\label{(Hpr)}
Given a natural number $l\ge 1$ and real numbers $\gamma>p\ge 2$, let $H(x,\cdot) := (H_1(x,\cdot),...,H_l(x,\cdot))$ be a collection of vector fields on $\mathbb{R},$ parameterized by $x\in\mathbb{R}^{d}.$ There exists a constant $C_H>0$ such that 
$$|H|_{\rm{Lip}^{\gamma+2}(\mathbb{R}^{d+1},\mathbb{R}^l)}\le C_H.$$
\end{assumptionp}

Under Assumption~\eqref{(Hpr)}, according to \cite[Chapter~11]{friz2010multidimensional}, given any geometric rough path $\mathbf{X}\in C^{0,p\text{-var}}([0,T],G^{[p]}(\R^l))$ and terminal time $T'\in[0,T],$ the following RDE admits a unique solution $\phi^{T'}(t,x,y):$
\begin{equation}\label{e:flow}
\phi^{T'}(t,x,y) = y + \int_{t}^{T'} H(x,\phi^{T'}(r,x,y))d\mathbf{X}_r,\quad t\in[0,T'].
\end{equation} 
In addition, $\phi^{T'}:[0, T'] \times \R^d \times \mathbb{R} \rightarrow \mathbb{R}$ is a continuous $C^{3}$-diffeomorphism flow on $\R$. We denote the space of such functions by $\mathcal{D}_{3}([0,T']\times\R^d;\R),$ i.e.,
\begin{align*}
\mathcal{D}_{3}([0,T']\times\R^d;\R):=\Big\{ 
\phi \Big| \
\forall (t,x) \in[0, T']\times\R^d, & \ \alpha \leq 3: D^\alpha_{y}\phi (t,x,y), D^\alpha_{y} \phi^{-1}(t,x,y)\\
&\text { are continuous on } [0,T'] \times \R^{d+1}
 \Big\}.
\end{align*}
Moreover, the inverse of the map $y\mapsto\phi^{T'}(t,x,y)$ satisfies the following RDE (or ODE when $\mathbf{X}$ is a lift of a smooth path):
\begin{equation}\label{e:phi inverse}
(\phi^{T'})^{-1}(t,x,y)=y-\int_t^{T'} D_y(\phi^{T'})^{-1}(r,x,y)H(x,y) d\mathbf{X}_r,\quad t\in[0,T'].
\end{equation}

In view of \cite[Appendix B]{DF}, we obtain the following estimates for $\phi^{T'}$ and its derivatives. 

\begin{lemma}\label{lem:boundedness of the flow}
Suppose \ref{(Hpr)} holds and $\mathbf{X}\in C^{0,p\text{-var}}([0,T],G^{[p]}(\R^{l}))$ with 
\[\|\mathbf{X}\|_{p\text{-var};[0,T]}\le C_{\mathbf{X}},\text{ for some constant }C_{\mathbf{X}}>0.\]
Then, the solution to \eqref{e:flow} $\phi^{T'}$ belongs to $ \mathcal{D}_{3}([0,T']\times\R^d;\R).$ Moreover, there exists a positive constant $C = C(C_H,C_{\mathbf{X}},T)$ independent of $T'$ such that, uniformly over $t\in[0,T'],$ $x\in\R^d,$ and $y\in\mathbb{R}$ 
\begin{align*}\max\Bigg\{\left|D_{x}\phi^{T'}\right|,\left|D_{y}\phi^{T'}\right|,&\left|\frac{1}{D_y \phi^{T'}}\right|,\left|D^{2}_{yy}\phi^{T'}\right|,\left|D^{2}_{xy}\phi^{T'}\right|,\left|D^{2}_{xx}\phi^{T'}\right|,\\ &\left|D^{3}_{yyy}\phi^{T'}\right|,\left|D^{3}_{xyy}\phi^{T'}\right|,\left|D^{3}_{xxy}\phi^{T'}\right|\Bigg\}\le C,\end{align*}
where the derivatives of $\phi^{T'}$ always take values at $(t,x,y).$ Moreover, for every $\varepsilon>0,$ there exists a positive constant $\delta = \delta(\varepsilon,C_{H},C_{\mathbf{X}})$ independent of $T'$ such that whenever $\|\mathbf{X}\|_{p\text{-var};[s,T']} \le \delta,$  we have for all $t\in[s,T'],$ $x\in\mathbb{R}^{d},$ and $y\in\mathbb{R},$ the following inequality holds
\begin{equation*}
\max\bigg\{\left|D_{x}\phi^{T'}\right|,\left|D_{y}\phi^{T'} - 1\right|,\left|D^{2}_{yy}\phi^{T'}\right|,\left|D^{2}_{xy}\phi^{T'}\right|,\left|D^{2}_{xx}\phi^{T'}\right|,\left|D^{3}_{yyy}\phi^{T'}\right|,\left|D^{3}_{xyy}\phi^{T'}\right|,\left|D^{3}_{xxy}\phi^{T'}\right|\bigg\}\le \varepsilon.
\end{equation*}
\end{lemma}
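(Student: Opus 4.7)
The plan is to obtain the lemma from classical rough flow theory applied to an enlarged system in $(x,y)$, together with a quantitative continuity argument at the zero driver for the $\varepsilon$--$\delta$ statement.

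First I would time-reverse \eqref{e:flow} by setting $\psi(t,x,y):=\phi^{T'}(T'-t,x,y)$ on $[0,T']$, which converts the backward RDE into a standard forward RDE driven by a geometric rough path of the same $p$-variation. Next I would regard $(x,\psi)$ as the state of an enlarged RDE on $\R^{d+1}$ with vector field $\widetilde{H}(x,y):=(0,H(x,y))$, the $x$-component evolving trivially. Since $H\in\text{Lip}^{\gamma+2}(\R^{d+1},\R^l)$ by \ref{(Hpr)}, so is $\widetilde{H}$, and \cite[Chapter~11]{friz2010multidimensional} together with the derivative estimates recalled in \cite[Appendix~B]{DF} yields existence, uniqueness, and $C^{3}$-regularity of the flow, so that $\phi^{T'}\in\mathcal{D}_{3}([0,T']\times\R^{d};\R)$. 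The same theory bounds each of $D_{x}\phi^{T'}$, $D_{y}\phi^{T'}$, $D^{2}_{yy}\phi^{T'}$, $D^{2}_{xy}\phi^{T'}$, $D^{2}_{xx}\phi^{T'}$, $D^{3}_{yyy}\phi^{T'}$, $D^{3}_{xyy}\phi^{T'}$, $D^{3}_{xxy}\phi^{T'}$ in terms of $C_{H}$ and the driver's $p$-variation on $[t,T']$, since these derivatives solve the linearized (Jacobian and Hessian) RDEs obtained by formal differentiation of the enlarged RDE. The bound on $1/D_{y}\phi^{T'}$ follows from the chain-rule identity
\begin{equation*}
D_{y}(\phi^{T'})^{-1}\bigl(t,x,\phi^{T'}(t,x,y)\bigr)\,D_{y}\phi^{T'}(t,x,y)=1
\end{equation*}
combined with the analogous bound applied to the inverse flow \eqref{e:phi inverse}. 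The uniformity in $T'\in[0,T]$ then follows from the monotonicity $\|\mathbf{X}\|_{p\text{-var};[t,T']}\le\|\mathbf{X}\|_{p\text{-var};[0,T]}\le C_{\mathbf{X}}$.

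For the second, $\varepsilon$--$\delta$ assertion, I would exploit that when $\mathbf{X}\equiv 0$ the RDE \eqref{e:flow} forces $\phi^{T'}(t,x,y)=y$, so that $D_{y}\phi^{T'}=1$, $D_{x}\phi^{T'}=0$, and each higher derivative on the list vanishes. Since the linearized RDEs governing $D_{y}\phi^{T'}-1$, $D_{x}\phi^{T'}$, and the higher derivatives have integrands that degenerate when the driver is zero, a rough Gronwall estimate on $[s,T']$ produces a bound of the form
\begin{equation*}
\bigl|D_{y}\phi^{T'}-1\bigr|+\bigl|D_{x}\phi^{T'}\bigr|+\bigl|D^{2}_{yy}\phi^{T'}\bigr|+\cdots+\bigl|D^{3}_{xxy}\phi^{T'}\bigr|\le C(C_{H},C_{\mathbf{X}})\,F\bigl(\|\mathbf{X}\|_{p\text{-var};[s,T']}\bigr),
\end{equation*}
with $F(\delta)\downarrow 0$ as $\delta\downarrow 0$. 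Choosing $\delta$ so small that $C(C_H,C_{\mathbf{X}})\cdot F(\delta)\le\varepsilon$ then yields the claim.

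The main technical point, in my view, is verifying the \emph{locality} of the rough flow estimates: one must confirm that every constant appearing in \cite[Chapter~11]{friz2010multidimensional} and \cite[Appendix~B]{DF} depends on the driver only through its $p$-variation on the actual integration interval, so that restricting attention to $[s,T']$ with $s$ close to $T'$ genuinely shrinks the relevant norm rather than a global one. Once this locality is in hand, both parts of the lemma follow from the arguments sketched above.
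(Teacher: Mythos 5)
Your proposal is correct and reflects essentially the same reasoning the paper appeals to: the paper does not spell out a proof but simply cites \cite[Appendix~B]{DF} and \cite[Chapter~11]{friz2010multidimensional}, and your argument is a faithful unpacking of what those references deliver. The two devices you introduce, time-reversal to turn the backward RDE into a forward one and augmenting the state to $(x,y)$ with trivial $x$-dynamics so that $x$-derivatives become derivatives of a genuine rough flow in its initial condition, are exactly the mechanism underlying the cited results, and the chain-rule identity for $1/D_y\phi^{T'}$ together with boundedness of the inverse flow's derivatives is the standard way to get the lower bound on $D_y\phi^{T'}$. Your treatment of the $\varepsilon$--$\delta$ part is also sound: $\phi^{T'}$ on $[s,T']$ depends on $\mathbf{X}$ only through its restriction to $[s,T']$, the identity flow corresponds to the zero driver, and the derivative processes solve linear RDEs whose solutions are continuous (indeed locally Lipschitz in the $p$-variation metric) in the driver, so smallness of $\|\mathbf{X}\|_{p\text{-var};[s,T']}$ forces closeness to $(1,0,\ldots,0)$. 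Your closing remark about locality of constants is the right thing to flag, though in this setting it is automatic precisely because $\phi^{T'}$ restricted to $[s,T']$ sees only $\mathbf{X}|_{[s,T']}$.
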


\begin{remark}\label{rem:flow}
According to \cite[Proposition 11.11]{friz2010multidimensional} we also have for $(t,x,y)\in[0,T']\times\R^{d}\times\R$
\begin{equation*}
\begin{aligned}
\max\bigg\{\left|D_{x}(\phi^{T'})^{-1}\right|,\left|D_{y}(\phi^{T'})^{-1}\right|,&\left|D^{2}_{yy}(\phi^{T'})^{-1}\right|,\left|D^{2}_{xy}(\phi^{T'})^{-1}\right|,\left|D^{2}_{xx}(\phi^{T'})^{-1}\right|,\left|D^{3}_{yyy}(\phi^{T'})^{-1}\right|,\\
&\left|D^{3}_{xyy}(\phi^{T'})^{-1}\right|,\left|D^{3}_{xxy}(\phi^{T'})^{-1}\right|\bigg\}\le C(C_{H},C_{\mathbf{X}},T),
\end{aligned}
\end{equation*}
where the derivatives of $(\phi^{T'})^{-1}$ take values at $(t,x,y).$
\end{remark}

We also have the following stability result for Eq.~\eqref{e:flow}, which is referred to \cite[Lemma~B.2]{DF} and \cite[Theorem~11.13]{friz2010multidimensional}.

\begin{lemma}\label{lem:stability of the flow}
Under the same setting as Lemma \ref{lem:boundedness of the flow}, let $\left\{\mathbf{X}^n,\ n\ge 1\right\}$ be a sequence of $p$-variation geometric rough paths that converges to $\mathbf{X}$ in $p$-variation norm. Denote by $\phi^{n,T'}$ the solution of Eq.~\eqref{e:flow} driven by $\mathbf{X}^n.$ Then locally uniformly on $[0,T']\times\mathbb{R}^{d}\times\mathbb{R}$, i.e. for any compact set $U \subset \R^{d+1},$ the following convergence holds uniformly on $[0,T'] \times U,$
\begin{equation*}
\begin{aligned}
&\left(\phi^{n,T'},D_{x}\phi^{n,T'},D_{y}\phi^{n,T'},\frac{1}{D_y \phi^{n,T'}},D^{2}_{yy}\phi^{n,T'},D^{2}_{xy}\phi^{n,T'},D^{2}_{xx}\phi^{n,T'}\right)\\
&\ \ \ \ \ \  \longrightarrow \left(\phi^{T'},D_{x}\phi^{T'},D_{y}\phi^{T'},\frac{1}{D_y \phi^{T'}},D^{2}_{yy}\phi^{T'},D^{2}_{xy}\phi^{T'},D^{2}_{xx}\phi^{T'}\right).
\end{aligned}
\end{equation*}

In addition, let 
$\psi^{n,T'}(t,x,y) := (\phi^{n,T'})^{-1}(t,x,y)$ and $\psi^{T'}(t,x,y) := (\phi^{T'})^{-1}(t,x,y).$ Then the above locally uniform convergence also holds with $(\phi^{n,T'}, \phi^{T'})$ replaced by $(\psi^{n,T'}, \psi^{T'})$.

\end{lemma}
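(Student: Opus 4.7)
The plan is to deduce the stated convergence from the continuity of the Itô–Lyons solution map for rough differential equations applied to (systems of) augmented RDEs satisfied by $\phi^{T'}$ and its derivatives. First I would note that, by Remark~\ref{rem:geo rough path} and the assumption $\mathbf{X}^n \to \mathbf{X}$ in $d_{p\text{-var};[0,T]}$, the $p$-variations of the $\mathbf{X}^n$ are uniformly bounded; hence Lemma~\ref{lem:boundedness of the flow} applies to every $\phi^{n,T'}$ with a \emph{uniform} constant $C$. This gives a uniform Lipschitz bound in $(x,y)$ and $t$, so it suffices to prove pointwise convergence on $[0,T'] \times \R^{d+1}$: local uniform convergence then follows by a standard Arzelà–Ascoli / equi-continuity argument.

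For pointwise convergence of $\phi^{n,T'}(t,x,y)$, fix $(x,y)$ and view $\phi^{n,T'}(\cdot,x,y)$ as the unique solution of the RDE~\eqref{e:flow} driven by $\mathbf{X}^n$ with vector fields $H(x,\cdot) \in \mathrm{Lip}^{\gamma+2}$ (uniformly in $x$, by Assumption~\ref{(Hpr)}). By the universal limit theorem \cite[Theorem~10.26 and Theorem~11.13]{friz2010multidimensional}, continuity of the Itô–Lyons map in the driver yields
\[
\|\phi^{n,T'}(\cdot,x,y) - \phi^{T'}(\cdot,x,y)\|_{p\text{-var};[0,T']} \longrightarrow 0,
\]
which in particular gives uniform convergence in $t$.

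For the first derivatives $D_x\phi^{n,T'},\,D_y\phi^{n,T'}$, I would formally differentiate \eqref{e:flow} in the parameters and observe that the pair $\Phi^{T'} = (\phi^{T'}, D_x\phi^{T'}, D_y\phi^{T'})$ solves an augmented RDE whose coefficients depend on $H$ and its first derivatives; since $H \in \mathrm{Lip}^{\gamma+2}$, these augmented vector fields are $\mathrm{Lip}^{\gamma+1}$-regular with $\gamma+1 > p$. A second application of the universal limit theorem, together with continuity in the initial data, yields convergence of the Jacobians. Iterating this procedure twice more produces augmented RDEs for the second- and third-order derivatives, each losing one degree of regularity, which is precisely why Assumption~\ref{(Hpr)} requires $\mathrm{Lip}^{\gamma+2}$. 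For $1/D_y\phi^{n,T'}$, convergence is then automatic since $|D_y\phi^{n,T'}|$ is bounded below uniformly in $n$ by the lower bound built into Lemma~\ref{lem:boundedness of the flow} (i.e.\ $|1/D_y\phi^{n,T'}| \le C$).

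Finally, for the inverse flow $\psi^{n,T'} = (\phi^{n,T'})^{-1}$, I would apply the same program to the RDE~\eqref{e:phi inverse}, or equivalently exploit the relation $\phi^{n,T'}(t,x,\psi^{n,T'}(t,x,y)) = y$ together with the uniform non-degeneracy $|D_y\phi^{n,T'}| \ge c > 0$ and the inverse function theorem to deduce locally uniform convergence of $\psi^{n,T'}$ and its derivatives from that of $\phi^{n,T'}$. The main technical point is simply the careful bookkeeping of derivative losses in the augmented RDEs; everything else is a direct appeal to the continuity of the solution map established in \cite[Chapter~11]{friz2010multidimensional}, as done in \cite[Lemma~B.2]{DF}.
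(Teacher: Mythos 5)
Your proposal is correct and follows essentially the same route as the paper, which simply cites \cite[Lemma~B.2]{DF} and \cite[Theorem~11.13]{friz2010multidimensional} without writing out a proof; your outline (universal limit theorem applied to the original RDE and to the augmented RDE systems for derivatives, plus the uniform Lipschitz bounds from Lemma~\ref{lem:boundedness of the flow} to pass from pointwise to locally uniform convergence, and the nondegeneracy of $D_y\phi$ for the reciprocal and the inverse flow) is precisely the mechanism behind those references. The one spot worth tightening is the augmented RDE for $D_x\phi^{T'}$: since $H(x,\cdot)$ carries the parameter $x$, one should either treat $x$ as a frozen parameter and invoke the universal limit theorem for each fixed $x$ (then use your equi-Lipschitz argument in $x$), or append $x$ as a state component with trivial dynamics as the paper itself does in the proof of Proposition~\ref{prop:conti of (Y^0,Z^0,K^0)}.
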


The following lemma will be useful for performing induction from one subinterval to another.

\begin{lemma}\label{lem:length of the subinterval}
Let $X\in C^{\text{p\text{-var}}}([0,T],E).$ For any fixed $\delta>0,$ set $N:=\left(\left[\frac{\|X\|_{p\text{-var};[0,T]}}{\delta}\right] + 1\right)^{p}.$ Then, there exists a set of positive real numbers $\left\{h^{(i)},i = 0,...,N\right\}$ satisfying 
\begin{equation*}
h^{(i)}\le h^{(i+1)},\quad i=0,\dots,N-1, \quad h^{(N)} = T, \quad h^{(0)} = 0,
\end{equation*}
and 
\begin{equation*}
\|X\|_{p\text{-var};[T-h^{(i)},T-h^{(i-1)}]}\le \delta,\quad \text{for all }i=1,2,...,N.
\end{equation*}
\end{lemma}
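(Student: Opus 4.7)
The plan is to exploit the superadditive control function $\omega(s,t):=\|X\|_{p\text{-var};[s,t]}^{p}$ associated with $X$, and to construct a forward partition of $[0,T]$ on which each block has $\omega$-mass at most $\delta^{p}$; the backward sequence $h^{(i)}$ is then obtained by reflecting the partition across $T$.

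First, I will record the standard facts that $\omega$ is superadditive, i.e.\ $\omega(s,t)+\omega(t,u)\le\omega(s,u)$ for $s\le t\le u$, and that the function $\Phi(t):=\omega(0,t)=\|X\|_{p\text{-var};[0,t]}^{p}$ is non-decreasing and continuous on $[0,T]$ (continuity of $\Phi$ follows from the continuity of $X$ together with dominated convergence on the partition sup). Set $V:=\|X\|_{p\text{-var};[0,T]}=\Phi(T)^{1/p}$ and note that by the definition of $N=([V/\delta]+1)^{p}$ one has
\begin{equation*}
\frac{V^{p}}{N}\;=\;\frac{V^{p}}{([V/\delta]+1)^{p}}\;\le\;\frac{V^{p}}{(V/\delta)^{p}}\;=\;\delta^{p}.
\end{equation*}

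Next, using the continuity and monotonicity of $\Phi$ together with the intermediate value theorem, I will pick times $0=t_{0}\le t_{1}\le\dots\le t_{N}=T$ with
\begin{equation*}
\Phi(t_{i})\;=\;\frac{i}{N}\,\Phi(T)\;=\;\frac{i\,V^{p}}{N},\qquad i=0,1,\dots,N
\end{equation*}
(at points where $\Phi$ is flat I just take the infimum of the level set). By superadditivity of $\omega$,
\begin{equation*}
\omega(t_{i-1},t_{i})\;\le\;\Phi(t_{i})-\Phi(t_{i-1})\;=\;\frac{V^{p}}{N}\;\le\;\delta^{p},
\end{equation*}
hence $\|X\|_{p\text{-var};[t_{i-1},t_{i}]}\le\delta$ for every $i$.

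Finally, I will define $h^{(i)}:=T-t_{N-i}$ for $i=0,\dots,N$. This gives $h^{(0)}=T-t_{N}=0$, $h^{(N)}=T-t_{0}=T$, monotonicity $h^{(i+1)}-h^{(i)}=t_{N-i}-t_{N-i-1}\ge 0$, and $[T-h^{(i)},T-h^{(i-1)}]=[t_{N-i},t_{N-i+1}]$, so the $p$-variation bound above translates directly into the required estimate. There is no real obstacle; the only point requiring care is the continuity of $\Phi$, which is a standard consequence of $X\in C^{p\text{-var}}([0,T],E)$ and is what allows the intermediate value theorem to produce the equal-mass partition.
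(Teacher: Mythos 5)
Your proof is correct, but it takes a genuinely different route from the paper. The paper uses a \emph{greedy backward} construction: it defines $h^{(i)} := \sup\{h\in[h^{(i-1)},T];\ \|X\|_{p\text{-var};[T-h,T-h^{(i-1)}]}\le\delta\}$ and then argues by contradiction --- if $h^{(N)}<T$, each block would have $p$-variation exactly $\delta$, and the superadditivity $\sum_i\|X\|^p_{p\text{-var};[T-h^{(i)},T-h^{(i-1)}]}\le\|X\|^p_{p\text{-var};[0,T]}$ would force $N\delta^p\le V^p$, contradicting $N>(V/\delta)^p$. You instead build an \emph{equal-mass forward} partition of the range of $\Phi(t)=\|X\|^p_{p\text{-var};[0,t]}$ via the intermediate value theorem and then reflect it across $T$; superadditivity is used only in the harmless direction $\omega(t_{i-1},t_i)\le\Phi(t_i)-\Phi(t_{i-1})$, and no contradiction argument is needed. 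What each approach buys: the paper's construction is one line to state and needs no a-priori partition count, while yours makes the block estimates hold automatically by design and makes the role of continuity explicit rather than implicit (both proofs in fact rely on continuity of the $p$-variation in the endpoints --- the paper needs it so that the supremum is attained with $\le\delta$, you need it for the intermediate value theorem). Two small loose ends on your side: your one-line justification of the continuity of $\Phi$ (``dominated convergence on the partition sup'') is not really an argument --- the relevant fact is that $\omega(s,t):=\|X\|^p_{p\text{-var};[s,t]}$ is a control in the sense of Friz--Victoir (Proposition 5.8 there), which does require a short proof; and you should set $t_N:=T$ explicitly rather than as the infimum of the top level set, since that infimum may fall strictly below $T$ when $X$ is constant near the right endpoint (the estimate on the last block still holds with $t_N=T$, so nothing breaks).
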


\begin{proof}
Fix $\delta>0.$ Let $h^{(0)} := 0$ and
\begin{equation*}
h^{(i)} := \sup\left\{h\in[h^{(i-1)},T];\|X\|_{p\text{-var};[T-h,T-h^{(i-1)}]}\le \delta\right\},\quad i=1,2,...,N.
\end{equation*}
The reader may easily prove that the set $\left\{h^{(i)},i = 0,...,N\right\}$ satisfies the desired conditions of the lemma by the superadditivity  $\|X\|^{p}_{p\text{-var};[0,T]}\ge \sum\limits_{i=1}^{N}\|X\|^{p}_{p\text{-var};[T-h^{(i)},T-h^{(i-1)}]}.$
\end{proof}

\section{Rough reflected BSDEs}\label{sec:rough reflected BSDE}

In this section, we mainly consider the following rough RBSDE driven by $\mathbf{X},$
\begin{equation}\label{e:BSDE main result}
\left\{\begin{aligned}
&Y_{t} = \xi + \int_{t}^{T} f(r,S_r,Y_r,Z_r) dr + \int_{t}^{T} H(S_{r},Y_{r}) d\mathbf{X}_r - \int_{t}^{T} Z_{r} dW_r + K_{T} - K_{t} ,	\\
&Y_t\ge L_{t},\ t\in[0,T] \textrm{ and } \int_{0}^{T} (Y_{r} - L_{r}) dK_{r} = 0,
\end{aligned}\right.
\end{equation}
where the process $S$ satisfies
\begin{equation}\label{e:diffusion process}
S_{t} = x + \int_{0}^{t}b(r,S_{r})dr + \int_{0}^{t}\sigma(r,S_{r})dW_{r},\ t\in[0,T].
\end{equation}
In the rest of our paper, we always assume that $Z$ takes values in the $d$-dimensional row vector.

Throughout this section, we assume that $f:\Omega\times[0,T]\times\R^{d}\times\R\times\R^{d}\rightarrow\R$ is $\mathcal{P}\otimes\mathcal{B}(\R^d)\otimes\mathcal{B}(\R)\otimes\mathcal{B}(\R^d)$-measurable, where $\mathcal{P}$ denotes the predicable $\sigma$-algebra and $\mathcal{B}$ denotes the Borel $\sigma$-algebra. Let $C_0>0$ be a global constant and $c(\cdot)\in C(\R^{+},\R^{+})$ be a strictly positive and increasing continuous function with $\int_0^\infty \frac{1}{c(y)}dy=\infty.$ We propose the following assumption for the function $f$.
\begin{assumptionp}{($H_f$)}\label{(H0)}
\end{assumptionp}
\begin{itemize}
 \item[(A1)] $\mathbb{P}$-a.s., for all $(t,x,y,z)\in[0,T]\times\mathbb{R}^d\times \mathbb{R}\times\mathbb{R}^d$,
\begin{equation*}
|f(t,x,y,z)|\leq c(|y|)+C_0|z|^2.
\end{equation*}
\item[(A2)] For all $M>0,$ there exists a constant $C_{M}>0$ such that $\mathbb{P}$-a.s, 
\begin{equation*}
|D_z f(t,x,y,z)|\leq C_{M}(1+|z|),\text{ for all }(t,x,y,z)\in[0,T]\times\mathbb{R}^d\times [-M,M]\times\mathbb{R}^d.
\end{equation*}
\item[(A3)] For all $M>0$ and $\varepsilon>0,$ there exists a constant $C_{\varepsilon,M}>0$ such that $\mathbb{P}$-a.s.,
\[D_y f(t,x,y,z)\leq C_{\varepsilon,M} + \varepsilon |z|^{2}, \text{ for all }(t,x,y,z)\in [0, T ]\times \mathbb{R}^d \times[-M,M]\times\mathbb{R}^d.\]
\item[(A4)] For each $m\ge 1,$ there exists a random variable $C'_m (\omega)>0$ such that $\mathbb{P}$-a.s.,
\begin{equation*}
|D_y f(t,x,y,z)|\le C'_m, \text{ for all }|(t,x,y,z)|\le m .
\end{equation*}

\end{itemize}

\begin{remark}\label{rem:remark of H0} 

Regarding the above conditions, we provide the following explanations.
\begin{itemize}
 
\item[(i).]  Conditions~(A1) and (A3) are slightly more general compared to the corresponding ones in \cite{DF}.
For example, letting $f(t,x,y,z) = y \log(1+|y|) + |z|^2$, then $f$ satisfies (A1). Indeed, by choosing $c(y) = C + (1+y)\log(1+y)$ with some $C>0$, (A1) holds by applying Young's inequality. For Assumption (A3), $D_y f$ here is assumed to have quadratic growth in $z$ while in \cite[(F2)]{DF} they assume  that $D_{y}f$ has a constant upper bound. 

\item[(ii).]
Condition (A3) is slightly different from \cite[(H3)]{KLQT}. Here $c(\cdot)$ is required to be increasing while (H3) in \cite{KLQT} only requires it to be strictly positive. The additional monotonicity of $c(\cdot)$ ensures that there exists a constant $C>0$ such that 
\begin{equation*}
c(|\phi^{T'}(t,x,y)|)\le C\cdot c(C |y|),
\end{equation*}
which will provide an upper bound convenient for the proof of Lemma~\ref{lem40}.

\item[(iii).]
Here, we assume that the partial derivatives of $f$ exist. However, as stated in \cite{DF}, the existence of partial derivatives is not necessary. This requirement can be replaced by the assumption that $f$ is locally Lipschitz continuous in $(y,z)$, and by bounding the Lipschitz constant accordingly.  

 \item[(iv).]
Compared with \cite{KLQT}, we require an additional condition $(A4)$, which assumes the local boundedness of $D_y f.$ This technical condition will be used in Lemma~\ref{lem:locally uniform conver} to ensure that, a.s.,
\begin{equation*}
f(t,x,y_n,z) \rightarrow f(t,x,y,z),\text{ locally uniformly in }(t,x,z) \text{ as }y_n\rightarrow y,
\end{equation*}
which will play an essential role in the proof of the well-posedness of rough RBSDEs. 

\end{itemize}
\end{remark}

Now, we will introduce the assumptions on $b(\cdot)$ and  $\sigma(\cdot)$ appearing in  Eq.~\eqref{e:diffusion process}.

\begin{assumptionp}{($H_{b,\sigma}$)}\label{(H_b,sigma)}
Assume that $b: \Omega\times[0,T]\times\R^{d}\rightarrow\R^{d}$ and $\sigma:\Omega\times [0,T]\times\R^{d}\rightarrow\R^{d\times d}$ are progressively measurable and satisfy that 
\begin{equation}\label{e:b,sigma}
|D_{x} b(t,x)|\vee | b(t,x)|\vee|D_{x} \sigma(t,x) | \vee | \sigma(t,x)|\le C_{S}, \text{ for all } (\omega,t,x)\in \Omega\times [0,T]\times\R^{d},
\end{equation}
where $C_{S}>0$ is a constant.
\end{assumptionp}

For the terminal value $\xi$ and the obstacle process $L,$ we assume the following assumption.
\begin{assumptionp}{($H_{L,\xi}$)}\label{(H_L,xi)}
Assume that $\xi\in L^{\infty}(\mathcal{F}_{T}),$ and $L:\Omega\times [0,T]\rightarrow \R$ is a bounded, continuous, and adapted process with $L_{T}\le \xi.$ Assume further that 
\begin{equation*}
\|\xi\|_{L^{\infty}}\le C_{\xi},\quad  \|L\|_{\mathrm{H}^{\infty};[0,T]}\le C_{L},
\end{equation*}
where $C_{\xi},C_{L}>0$ are constants.
\end{assumptionp}

\subsection{Well-posedness of rough RBSDEs}\label{subsec:well-posedness}

Firstly, we consider the case where the driver is $d\mathrm{X}_r,$ with  $\mathrm{X}\in\bigcap_{\lambda\ge 1}\text{Lip}^{\lambda}([0,T],\R^{l}).$ And we call such $\mathrm{X}$ a smooth path. Then, the equation is written as follows 
\begin{equation}\label{58}
\left\{\begin{array}{l}
Y_t=\xi+\int_t^T f(r,S_r,Y_r,Z_r)dr+\int_t^T H(S_r,Y_r) \dot{\mathrm{X}}_r dr -\int_t^T Z_r dW_r + K_T-K_t,  \\
\\
Y_t \ge L_t,\ t\in[0,T] \textrm{ and }\int_{0}^{T}(Y_{r} - L_{r})dK_{r} = 0, 
\end{array}\right.
\end{equation}
where the diffusing process $S$ is the solution to Eq.~\eqref{e:diffusion process}.
Here, $H$ satisfies Assumption~\ref{(Hpr)}, $(L,\xi)$ satisfies Assumption~\ref{(H_L,xi)}, $f,$ $b$ and $ \sigma$ satisfy Assumption~\ref{(H0)} and \ref{(H_b,sigma)} respectively. 

\begin{definition}\label{def:smooth case}
A triplet of processes $(Y,Z,K) \in \mathrm{H}^{\infty}_{[0,T]} \times \mathrm{H}^{2}_{[0,T]}(\R^d)\times \mathrm{I}^{1}_{[0,T]}$ is called a solution to the reflected BSDE with parameter $(\xi,f,L,H,\mathrm{X})$ on $[0,T]$ if \eqref{58} holds.
\end{definition}

\begin{remark}\label{rem:well-posedness for smooth case}
Suppose Assumption~\ref{(Hpr)}, \ref{(H0)}, \ref{(H_b,sigma)}, and \ref{(H_L,xi)} hold.  Assume $\mathrm{X}$ is a smooth path. Then Eq.~\eqref{58} with parameter $(\xi,f,L,H,\mathrm{X})$ admits a unique solution in the sense of Definition~\ref{def:smooth case}, which is a consequence of \cite[Theorem~1]{KLQT} and \cite[Corollary~1]{KLQT}.
\end{remark}

\begin{lemma}[Smooth case]\label{lem:smmoth case}
Suppose Assumption~\ref{(Hpr)}, \ref{(H0)}, \ref{(H_b,sigma)}, and \ref{(H_L,xi)} hold. Assume $\mathrm{X}$ is smooth. Let $\phi^{T}(t,x,y)$ be the solution of Eq.~\eqref{e:flow} with $T' = T,$ and $(Y,Z,K)$ be the unique solution to the reflected BSDE with parameter $(\xi,f,L,H,\mathrm{X}).$ Then, the triplet $(\widetilde{Y},\widetilde{Z},\widetilde{K})$ defined by
\begin{equation}\label{e1'}\begin{split}
&\widetilde{Y}_t:=(\phi^{T})^{-1}(t,S_t,Y_t), \  \widetilde{K}_t:=\int_0^t \frac{1}{D_y \phi^{T}(r,S_r,\widetilde{Y}_r)}dK_r,\\
&\widetilde{Z}_t:=-\frac{(D_x \phi^{T}(t,S_t,\widetilde{Y}_t))^{\top}}{D_y\phi^{T}(t,S_t,\widetilde{Y}_t)}\sigma_t+ \frac{1}{D_y \phi^{T}(t,S_t,\widetilde{Y}_t)}Z_t,\ t\in[0,T],
\end{split}\end{equation}
is the solution of the reflected BSDE with parameter $(\xi, \tf, \widetilde{L}, 0,0),$ where (throughout, $\phi^{T}$ and all its derivatives will always be evaluated at $(t,x,\widetilde{y}),$ and $b,\sigma$ will always be evaluated at $(t,x)$) $\widetilde{L}_t:=(\phi^{T})^{-1}(t,S_t,L_t),$ and 
\begin{align*}
\widetilde{f}(t,x,\widetilde{y},\widetilde{z}):=\frac{1}{D_y \phi^{T}}\Big\{f(t,x,\phi^{T},D_y \phi^{T} \widetilde{z}&+ \sigma^{\top}D_x \phi^{T})+ (D_x \phi^{T})^{\top} b+\frac{1}{2}\text{Tr}\left\{D^{2}_{xx}\phi^{T} \sigma\sigma^{\top}\right\} \\ 
&+(D^{2}_{xy}\phi^{T})^{\top} \sigma \widetilde{z}+\frac{1}{2}D^{2}_{yy}\phi^{T} |\widetilde{z}|^2\Big\}.
\end{align*}
\end{lemma}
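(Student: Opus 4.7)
The plan is to apply It\^o's formula to $Y_t = \phi^T(t, S_t, \widetilde Y_t)$, postulating $d\widetilde Y_t = -\widetilde\alpha_t\,dt - d\widetilde K_t + \widetilde Z_t\,dW_t$, and then to identify $(\widetilde\alpha, \widetilde Z, d\widetilde K)$ by matching with the given BSDE for $Y$. Since $\mathrm X$ is smooth, the RDE for $\phi^T$ is the ODE $\partial_t \phi^T(t,x,y) = -H(x, \phi^T(t,x,y))\,\dot{\mathrm X}_t$ with $\phi^T(T,x,y) = y$. In the It\^o expansion, the $\partial_t\phi^T$ term produces $-H(S_t, Y_t)\,\dot{\mathrm X}_t\,dt$, which cancels the rough drift in \eqref{58}. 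Matching Brownian coefficients gives $Z_t = (D_x\phi^T)^{\top}\sigma_t + D_y\phi^T\,\widetilde Z_t$, which inverts (using $D_y\phi^T > 0$, bounded away from $0$ by Lemma~\ref{lem:boundedness of the flow}) to the stated formula for $\widetilde Z_t$. Matching the reflection terms gives $d\widetilde K_t = dK_t/D_y\phi^T$, a well-defined, continuous, adapted, increasing process; and matching the remaining $dt$ part, after substituting $Y_t = \phi^T$ and $Z_t = D_y\phi^T\,\widetilde Z_t + \sigma^{\top}D_x\phi^T$ inside $f$, yields exactly $\widetilde\alpha_t = \widetilde f(t, S_t, \widetilde Y_t, \widetilde Z_t)$.

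The reflection requirements follow from the structural properties of $\phi^T$. The terminal condition $\widetilde Y_T = \xi$ is immediate from $\phi^T(T,x,\cdot) = \mathrm{id}$. Strict monotonicity of $(\phi^T)^{-1}(t,x,\cdot)$, which follows from $D_y\phi^T > 0$, combined with $Y_t \ge L_t$, yields $\widetilde Y_t \ge \widetilde L_t$. The Skorokhod condition is then automatic, since $\widetilde Y_r - \widetilde L_r = (\phi^T)^{-1}(r, S_r, Y_r) - (\phi^T)^{-1}(r, S_r, L_r)$ vanishes exactly on $\{Y_r = L_r\}$, which is the support of $dK$ and hence of $d\widetilde K = dK/D_y\phi^T$.

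Integrability comes from the uniform bounds in Lemma~\ref{lem:boundedness of the flow} and Remark~\ref{rem:flow}. Bounds on $|D_x\phi^T|$ and $|1/D_y\phi^T|$ together with $\|\sigma\|_\infty \le C_S$ yield $|\widetilde Z_t| \le C(1+|Z_t|)$, hence $\widetilde Z \in \mathrm{H}^2_{[0,T]}(\R^d)$; the bound on $1/D_y\phi^T$ likewise gives $\widetilde K_T \le C\,K_T \in L^1$. Finally, the integral equation \eqref{e:phi inverse} together with $|H|_\infty \le C_H$, $\|\mathrm X\|_{1\text{-var};[0,T]} < \infty$ (valid since $\mathrm X$ is smooth) and the bound on $D_y(\phi^T)^{-1}$ shows that $|(\phi^T)^{-1}(t,x,y) - y|$ is bounded uniformly in $(t,x)$ when $y$ ranges over any compact set; applied to $y = Y_t$ with $Y \in \mathrm{H}^\infty$, this gives $\widetilde Y \in \mathrm{H}^\infty_{[0,T]}$. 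The only delicate point is the It\^o bookkeeping in the first paragraph, where all the second-order terms $D^2_{xx}\phi^T$, $D^2_{xy}\phi^T$, $D^2_{yy}\phi^T$ must be collected and rearranged to recognise $\widetilde f$, and where the rough-driver contribution must be seen to cancel exactly; the remaining steps are routine consequences of the diffeomorphism property.
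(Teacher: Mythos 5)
Your proof is correct and rests on the same underlying idea as the paper's -- an It\^o change of variables via the Doss--Sussman flow $\phi^T$ -- but you run the computation in the opposite direction. The paper applies It\^o's formula directly to $\psi(t,S_t,Y_t)$ with $\psi:=(\phi^T)^{-1}$, using the ODE~\eqref{e:phi inverse} for $\psi$ to cancel the rough drift and thereby reading off the semimartingale decomposition of $\widetilde Y$ in one pass. You instead apply It\^o's formula to $\phi^T(t,S_t,\widetilde Y_t)$ after postulating the form of $d\widetilde Y_t$, and match coefficients against the known dynamics of $Y$; this avoids invoking~\eqref{e:phi inverse} (using only the simpler ODE for $\phi^T$), but implicitly relies on uniqueness of the continuous semimartingale decomposition to justify the matching, which you should state. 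Both routes give identical algebra, the rough drift cancellation argument you give is exactly the one underlying the paper's displayed expansion, and your treatment of the reflection/Skorokhod conditions via strict monotonicity of $(\phi^T)^{-1}(t,x,\cdot)$ and the positivity of $D_y\phi^T$ matches the paper's items (i)--(iii) and Eq.~\eqref{e:Skorohod tilde Y}. You also spell out the integrability of $(\widetilde Y,\widetilde Z,\widetilde K)$ via Lemma~\ref{lem:boundedness of the flow} and Remark~\ref{rem:flow}, a point the paper leaves implicit but which is indeed needed to verify membership in $\mathrm H^\infty_{[0,T]}\times\mathrm H^2_{[0,T]}(\R^d)\times\mathrm I^1_{[0,T]}$.
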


\begin{proof}
Applying It\^{o}'s formula to $\psi(t,S_t,Y_t),$ where $\psi:=(\phi^{T})^{-1}$ is the $y$-inverse of $\phi^{T}(t,x,y)$ satisfying Eq.~\eqref{e:phi inverse}, we have
\begin{align*}
\psi(t,S_t,Y_t)=&\xi+\int_t^T \bigg[ D_y \psi(\theta_r)f(r,Y_r,Z_r) - (D_x \psi(\theta_r))^{\top}b_r-\frac{1}{2}\text{Tr}\left\{D^{2}_{xx}\psi(\theta_r)\sigma_r\sigma^{\top}_r\right\}\\
&\ \ \ \ \ \ - \frac{1}{2}D^{2}_{yy} \psi(\theta_{r})|Z_r|^2 - (D^{2}_{xy}\psi(\theta_r))^{\top} \sigma_r Z^{\top}_r \bigg]dr\\
&\ \ \ \ \ \  - \int_t^T \left[(D_x\psi(\theta_r))^{\top}\sigma_r+D_y\psi(\theta_r)Z_r\right]dW_r +\int_t^T D_y\psi(\theta_r) dK_r,
\end{align*}
where $\theta_r:=(r,S_r,Y_r),$ $b_r := b(r,S_r),$ and $\sigma_r := \sigma(r,S_r).$ Let $(\widetilde{Y},\widetilde{Z},\widetilde{K})$ be given in \eqref{e1'}. Noting that $\psi(t,x,y)$ is continuously strictly increasing in $y$ and $(Y,Z,K)$ solves \eqref{58}, we have 
\begin{itemize}
\item[(i)] $\widetilde{L}_{T} = L_{T}\le \xi,$
\item[(ii)] $\widetilde{K}_t$ is increasing,
\item[(iii)] $\widetilde{Y}_t\geq \widetilde{L}_t$, a.s., and  $Y_{t} = L_{t} \Leftrightarrow \widetilde{Y}_t = \widetilde{L}_{t}.$
\end{itemize}
It follows that
\begin{equation}\label{e:Skorohod tilde Y}
\int_0^T \mathbf{1}_{\{\widetilde{Y}_{t} = \widetilde{L}_{t}\}}(t) d\widetilde{K}_t = \int_0^T \frac{\mathbf{1}_{\{Y_{t} = L_{t}\}}(t)}{D_y \phi(t,S_t,\widetilde{Y}_t)} dK_t = 0,
\end{equation}
which implies that $\int_{0}^{T}(\widetilde{Y}_{t} - \widetilde{L}_{t})d\widetilde{K}_{t} = 0.$ Therefore, $(\widetilde{Y},\widetilde{Z},\widetilde{K})$ is the solution of the reflected BSDE with parameter $(\xi,\widetilde{f},\widetilde{L},0,0).$ 
\end{proof}

The objective is to provide the definition of the solution to Eq.~\eqref{e:BSDE main result} driven by a geometric rough path $\mathbf{X}.$ Our strategy is to study the transformed RBSDEs with parameter $(\xi,\widetilde{f},\widetilde{L},0,0)$ suggested by the Doss-Sussman transformation. Therefore, we need the following property of $\widetilde{f}(\cdot)$ to ensure that $\widetilde{f}(\cdot)$ fulfills Assumption~\ref{(H1)} for the well-posedness of classical RBSDEs.

\begin{lemma}\label{lem40}
Suppose Assumption \ref{(Hpr)}, \ref{(H0)}, and \ref{(H_b,sigma)} hold. Let $\mathbf{X}\in C^{0,p\text{-var}}([0,T],G^{[p]}(\mathbb{R}^{l})).$ Let $\phi^{T'}(t,x,y)$ be the unique solution to Eq.~\eqref{e:flow}. Then, for the function
\begin{align*}
\widetilde{f}^{T'}(t,x,\widetilde{y},\widetilde{z}):=\frac{1}{D_y \phi^{T'}}\Bigg\{f\big(t,x,\phi^{T'},D_y \phi^{T'} \widetilde{z}& + \sigma^{\top} D_x \phi^{T'} \big) + (D_x \phi^{T'})^{\top} b+\frac{1}{2}\text{Tr}\left\{D^{2}_{xx}\phi^{T'} \sigma\sigma^{\top}\right\}\\
& + (D^{2}_{xy}\phi^{T'})^{\top} \sigma \widetilde{z}+\frac{1}{2}D^{2}_{yy}\phi^{T'} |\widetilde{z}|^2\Bigg\},
\end{align*}
there exists an increasing positive continuous function $\widetilde{c}(\cdot):\R\rightarrow \R$  
depending on $\|\mathbf{X}\|_{p\text{-var};[0,T']}$ such that $\int_{0}^{\infty}\frac{dy}{\widetilde{c}(y)} = \infty$ and 
\begin{itemize}
\item[(i).]  there exists a constant $\widetilde{C}_0 > 0$ depending on 
 $\|\mathbf{X}\|_{p\text{-var};[0,T']}$ such that a.s.,
\begin{align*}
|\widetilde{f}^{T'}(t,x,\widetilde{y},\widetilde{z})|\leq \widetilde{c}(|\widetilde{y}|) + \widetilde{C}_{0}|\widetilde{z}|^2,\text{ for all }(t,x,\widetilde{y},\widetilde{z})\in[0,T']\times \R^{d}\times\R\times\R^{d};
\end{align*}

\item[(ii).] for every $M>0,$ there exists a constant $\widetilde{C}_{M}>0$ depending on 
$\|\mathbf{X}\|_{p\text{-var};[0,T']}$ such that a.s.,
\begin{equation*}
|D_{\widetilde{z}}\widetilde{f}^{T'}(t,x,\widetilde{y},\widetilde{z})|\leq \widetilde{C}_{M}(1+|\widetilde{z}|),\text{ for all }(t,x,\widetilde{y},\widetilde{z})\in[0,T']\times \R^{d}\times[-M,M]\times\R^{d};
\end{equation*}

\item[(iii).] for every $\varepsilon,M>0,$ there exist constants $\delta_{\varepsilon,M},\widetilde{C}_{\varepsilon,M}>0$ depending on 
 $\|\mathbf{X}\|_{p\text{-var};[0,T']}$ such that whenever  $s\in[0,T')$ and
$\|\mathbf{X}\|_{p\text{-var};[s,T']}\le \delta_{\varepsilon,M}$,  we have $a.s.,$ 
\[D_{\widetilde{y}}\widetilde{f}^{T'}(t,x,\widetilde{y},\widetilde{z})\leq \widetilde{C}_{\varepsilon,M}+\varepsilon|\widetilde{z}|^2,\text{ for all }(t,x,\widetilde{y},\widetilde{z})\in[s,T']\times\R^{d}\times[-M,M]\times\R^{d}.\]
\end{itemize}

\end{lemma}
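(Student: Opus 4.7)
The plan is to verify each of the three inequalities by direct computation, plugging the flow estimates from Lemma~\ref{lem:boundedness of the flow} and Remark~\ref{rem:flow} into the explicit formula for $\widetilde{f}^{T'}$ and then invoking the corresponding growth condition on $f$ from Assumption~\ref{(H0)}. Throughout, let $C_{\mathbf{X}} := \|\mathbf{X}\|_{p\text{-var};[0,T']}$; by Lemma~\ref{lem:boundedness of the flow}, all first through third derivatives of $\phi^{T'}$, together with $1/D_y\phi^{T'}$, are bounded by a constant $K = K(C_H, C_{\mathbf{X}}, T)$. The value of $\phi^{T'}$ at $\widetilde{y}$ admits the mean-value bound $|\phi^{T'}(t,x,\widetilde{y})| \le |\phi^{T'}(t,x,0)| + K|\widetilde{y}|$, and $|\phi^{T'}(t,x,0)|$ is controlled by a Gronwall-type RDE estimate depending only on $C_H$ and $C_{\mathbf{X}}$.

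For (i), I would estimate each piece of $\widetilde{f}^{T'}$ separately. By (A1), $|f(t,x,\phi^{T'}, D_y\phi^{T'}\widetilde{z} + \sigma^{\top}D_x\phi^{T'})| \le c(|\phi^{T'}|) + 2C_0 K^2(|\widetilde{z}|^2 + C_S^2)$; since $c$ is increasing, $c(|\phi^{T'}|) \le c(K_1(1+|\widetilde{y}|))$ for an appropriate constant $K_1$. The remaining terms contribute at most $K\cdot C_S + \frac{1}{2}K\cdot C_S^2 d + K\cdot C_S|\widetilde{z}| + \frac{1}{2}K|\widetilde{z}|^2$. Dividing by $D_y\phi^{T'}$, which is bounded above and below, yields a bound of the form $\widetilde{c}(|\widetilde{y}|) + \widetilde{C}_0 |\widetilde{z}|^2$ after defining $\widetilde{c}(r) := C' + K c(K_1(1+r))$ and absorbing the linear $|\widetilde{z}|$ term into $|\widetilde{z}|^2 + 1$ via Young's inequality. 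The required condition $\int_0^\infty d y/\widetilde{c}(y) = \infty$ follows from a change of variables $u = K_1(1+y)$, because $1/(C' + Kc(u)) \ge \tfrac{1}{2K} \cdot 1/c(u)$ for $u$ large enough.

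For (ii), I would differentiate $\widetilde{f}^{T'}$ in $\widetilde{z}$, obtaining
\[
D_{\widetilde{z}} \widetilde{f}^{T'} = D_z f\bigl(t,x,\phi^{T'}, D_y\phi^{T'}\widetilde{z} + \sigma^{\top} D_x \phi^{T'}\bigr) + \frac{1}{D_y\phi^{T'}}\Bigl\{(D^{2}_{xy}\phi^{T'})^{\top}\sigma + D^2_{yy}\phi^{T'} \widetilde{z}\Bigr\}.
\]
For $|\widetilde{y}| \le M$, the mean-value bound on $\phi^{T'}$ gives $|\phi^{T'}| \le \widetilde{M}(M,C_{\mathbf{X}})$, so (A2) provides $|D_z f| \le C_{\widetilde{M}}(1 + K|\widetilde{z}| + KC_S)$; combining with the flow bounds yields the desired estimate $\widetilde{C}_M(1+|\widetilde{z}|)$.

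Part (iii) is the key step and requires the smallness provided by the second half of Lemma~\ref{lem:boundedness of the flow}. I would compute
\[
D_{\widetilde{y}} \widetilde{f}^{T'} = -\frac{D^2_{yy}\phi^{T'}}{(D_y\phi^{T'})^2}\cdot A + \frac{1}{D_y\phi^{T'}}\, D_{\widetilde{y}} A,
\]
where $A$ denotes the braced expression in the definition of $\widetilde{f}^{T'}$, and $D_{\widetilde{y}} A$ involves $D_y f\cdot D_y\phi^{T'}$, $D_z f\cdot(D^2_{yy}\phi^{T'}\widetilde{z} + \sigma^{\top} D^2_{xy}\phi^{T'})$, plus terms with $D^2_{xy}\phi^{T'}$, $D^3_{xxy}\phi^{T'}$, $D^3_{xyy}\phi^{T'}$, $D^3_{yyy}\phi^{T'}$. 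Given $\varepsilon, M>0$, I would first fix an auxiliary $\varepsilon' = \varepsilon'(\varepsilon, M)$ so that applying (A3) to $D_y f$ on $|\widetilde{y}|\le M$ gives $D_y f \le C_{\varepsilon',\widetilde{M}} + \varepsilon'|D_y\phi^{T'}\widetilde{z} + \sigma^{\top}D_x\phi^{T'}|^2$; using $D_y\phi^{T'}$ close to $1$ (part of Lemma~\ref{lem:boundedness of the flow}), this controls $D_y f \cdot D_y\phi^{T'}$ by $\widetilde{C}_{\varepsilon,M} + \frac{\varepsilon}{2}|\widetilde{z}|^2$. Then I would invoke Lemma~\ref{lem:boundedness of the flow} to select $\delta_{\varepsilon,M}$ so small that all the ``small derivatives'' $D^2_{yy}\phi^{T'}, D^2_{xy}\phi^{T'}, D^3_{yyy}\phi^{T'}, D^3_{xyy}\phi^{T'}, D^3_{xxy}\phi^{T'}$, and $D_x\phi^{T'}$ are bounded in absolute value by a small quantity $\eta = \eta(\varepsilon,M)$. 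The $D_z f$ term then contributes at most $C_{\widetilde{M}}(1+|\widetilde{z}|)\cdot\eta(|\widetilde{z}| + C_S)$, which is $\le \widetilde{C}'_{\varepsilon,M} + \frac{\varepsilon}{4}|\widetilde{z}|^2$ for $\eta$ small enough; the first term $-\frac{D^2_{yy}\phi^{T'}}{(D_y\phi^{T'})^2}\cdot A$ is bounded using part (i) by $\eta \cdot(\widetilde{c}(M) + \widetilde{C}_0|\widetilde{z}|^2) \le \widetilde{C}''_{\varepsilon,M} + \frac{\varepsilon}{4}|\widetilde{z}|^2$ after shrinking $\eta$ again; the remaining terms are either bounded constants or bounded multiples of $\eta(1 + |\widetilde{z}| + |\widetilde{z}|^2)$ and are absorbed similarly. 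The main obstacle is keeping track of how many times $\varepsilon$ has to be split and how the smallness parameter $\delta_{\varepsilon,M}$ must be chosen after all the bounds are in place; nothing deeper is involved beyond careful bookkeeping.
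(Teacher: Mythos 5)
Your proof is correct and follows the same strategy as the paper: plug the uniform flow bounds of Lemma~\ref{lem:boundedness of the flow} (and its quantitative small-interval variant) into the explicit formula for $\widetilde f^{T'}$, then invoke the corresponding growth condition (A1)--(A3) on $f$ with the enlarged radius $\widetilde M$ supplied by the mean-value bound $|\phi^{T'}(t,x,\widetilde y)|\le C'''+C''|\widetilde y|$. The paper actually only writes out item (i), deferring (ii)--(iii) to \cite[Lemma~7]{DF}, so your explicit differentiations for those parts reproduce, in spelled-out form, the content of that citation; your formulas for $D_{\widetilde z}\widetilde f^{T'}$ and the decomposition of $D_{\widetilde y}\widetilde f^{T'}$ are correct, and the bookkeeping of the auxiliary smallness parameters $\varepsilon'$ and $\eta$ into $\delta_{\varepsilon,M}$ is handled consistently with the stated dependence on $\|\mathbf X\|_{p\text{-var};[0,T']}$ alone. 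One remark: in (iii) the factor $D_y\phi^{T'}$ multiplying $D_yf$ cancels exactly against the leading $1/D_y\phi^{T'}$, so the appeal to $D_y\phi^{T'}$ being near $1$ for that particular term is unnecessary (harmless, but worth noting); also, your argument that $\int_0^\infty dy/\widetilde c(y)=\infty$ implicitly assumes $c$ unbounded, though the bounded case is trivial, so this is not a genuine gap.
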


\begin{proof}
In view of \cite[Lemma 7]{DF}, it remains to show the first inequality in $(i)$. By Assumption~\ref{(H0)} we have  
\begin{equation*}
\begin{aligned}
\frac{1}{D_{y}\phi^{T'}}\left|f\big(t,x,\phi^{T'},D_y \phi^{T'} \widetilde{z} +  \sigma^{\top}D_x \phi^{T'}\big)\right|&\le C'c(|\phi^{T'}|) + C'C_0 \left|D_y \phi^{T'} \widetilde{z} + \sigma^{\top} D_x \phi^{T'} \right|^{2}\\
& \le C'c(C''|\widetilde{y}| + C''' ) + C'C_0 \left|D_y \phi^{T'} \widetilde{z} + \sigma^{\top} D_x \phi^{T'}\right|^{2},
\end{aligned}
\end{equation*}
where (the $\sup_{(t,x,\widetilde{y})}$ takes over $[0,T']\times \R^d \times \R$)
\begin{equation*}
C' := \sup_{(t,x,\widetilde{y})}\left|\frac{1}{D_{y}\phi^{T'}(t,x,\widetilde{y})}\right|,\ C'' := \sup_{(t,x,\widetilde{y})}|D_{y}\phi^{T'}(t,x,\widetilde{y})|,\  C''' := \sup\limits_{(t,x)}|\phi^{T'}(t,x,0)|.
\end{equation*}
By Lemma~\ref{lem:boundedness of the flow}, $C'$ and $C''$ are finite and depending on $\|\mathbf{X}\|_{p\text{-var};[0,T']}$. In view of \cite[Theorem~10.14]{friz2010multidimensional} and Assumption~\ref{(Hpr)}, $C'''$ is finite as well. Again by Lemma~\ref{lem:boundedness of the flow} and Assumption~\ref{(H_b,sigma)}, there exist constants $\hat{C}$ and $\widetilde{C}_{0}$ such that
\begin{equation*}
\begin{aligned}
C'C_0 \left|D_y \phi^{T'} \widetilde{z} + \sigma^{\top} D_x \phi^{T'} \right|^{2} & + \frac{1}{D_{y}\phi^{T'}}\Big|(D_{x}\phi^{T'})^{\top}b + \frac{1}{2}\text{Tr}\left\{D^{2}_{xx}\phi^{T'}\sigma\sigma^{\top}\right\}  \\
& \quad + (D^{2}_{xy}\phi^{T'})^{\top}\sigma\widetilde{z} + \frac{1}{2}D^{2}_{yy}\phi^{T'}|\widetilde{z}|^2\Big|\\
&\le  \hat{C} + \widetilde{C}_0|\widetilde{z}|^2.
\end{aligned}
\end{equation*}
Thus, letting $\widetilde{c}(\widetilde{y}) := C' c(C''|\widetilde{y}| + C''') + \hat{C},$ the desired 
bound follows.
\end{proof}

The following lemma demonstrates the continuity property of $\widetilde{f}^{T'}(t,x,\widetilde{y},\widetilde{z})$ defined above with respect to $\mathbf{X}\in C^{0,p\text{-var}}([0,T],G^{[p]}(\R^l)).$ In light of the continuity of derivatives of $\phi^{T'}$ (Lemma~\ref{lem:stability of the flow}), it suffices to discuss the continuity of
\[\mathbf{X}\mapsto f^{T'}(t,x,\phi^{T'}(t,x,\widetilde{y}),D_{y}\phi^{T'}(t,x,\widetilde{y})\widetilde{z} + \sigma^{\top}(t,x)D_x\phi^{T'}(t,x,\widetilde{y}) ).\]

\begin{lemma}\label{lem:locally uniform conver}
Let $F:[0,T]\times\R^d\times\R\times \R^d\rightarrow \R.$ Assume that $\sigma:[0,T]\times\R^d \rightarrow \R^{d\times d}$ satisfies \ref{(H_b,sigma)}, and $\mathbf{X}^{n}\rightarrow\mathbf{X}^{0}$ in $C^{0,p\text{-var}}([0,T],G^{[p]}(\R^l)).$ For $T'\in[0,T]$ let $\phi^{n,T'}(t,x,y)$ be the solution of Eq.~\eqref{e:flow} driven by $\mathbf{X}^n.$ For $m\ge 1$ set 
\[O_{m}:=\left\{(t,x,y,z)\in [0,T]\times\R^d\times\R\times \R^d,\ |(t,x,y,z)|< m\right\}.\]
Assume further that for each $m\ge 1,$ there exists $C_m>0$ such that for all $(t,x,y,z)\in O_{m}$,
\begin{equation*}
|D_y F(t,x,y,z)|\vee |D_z F(t,x,y,z)|\le C_m.
\end{equation*}
For $n\in\mathbb{N}$, set 
\begin{equation*}
F^n(t,x,y,z) := F(t,x,\phi^{n,T'}(t,x,y),D_y\phi^{n,T'}(t,x,y)z + \sigma^{\top}(t,x) D_x \phi^{n,T'}(t,x,y)).
\end{equation*}
Then, we have that locally uniformly on $[0,T']\times\R^d\times\R\times\R^d$,
\begin{equation*}
F^{n}(t,x,y,z) \rightarrow F^{0}(t,x,y,z).
\end{equation*}
\end{lemma}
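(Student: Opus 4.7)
The plan is to reduce the locally uniform convergence of $F^n$ to the already-established locally uniform convergence of the flow $\phi^{n,T'}$ and its derivatives $D_x\phi^{n,T'}$, $D_y\phi^{n,T'}$ (Lemma~\ref{lem:stability of the flow}), using the local Lipschitz continuity of $F$ in $(y,z)$ supplied by the assumed local bounds on $D_yF$ and $D_zF$. The only genuine work is to show that, for points $(t,x,y,z)$ in a prescribed compact set $K$ and uniformly in $n$, the arguments fed into $F$, namely $\phi^{n,T'}(t,x,y)$ and $D_y\phi^{n,T'}(t,x,y)\,z+\sigma^\top(t,x)D_x\phi^{n,T'}(t,x,y)$, remain in a compact subset $\tilde K\subset O_m$ for some $m$ independent of $n$; once this is known, the local Lipschitz bound on $F$ and the convergence of the flow quantities finish the argument.

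First I would fix a compact set $K\subset [0,T']\times\R^d\times\R\times\R^d$, and pick $R$ with $K\subset[0,T']\times B_R$. Since $\mathbf{X}^n\to\mathbf{X}^0$ in $p$-variation, the norms $\|\mathbf{X}^n\|_{p\text{-var};[0,T]}$ are uniformly bounded by some $C_\mathbf{X}$. Lemma~\ref{lem:boundedness of the flow} then provides a constant $C$, independent of $n$, such that on $[0,T']\times\R^d\times[-R,R]$,
\[
|D_y\phi^{n,T'}|\vee|D_x\phi^{n,T'}|\le C,
\]
and, combined with the boundedness of $H$ in Assumption~\ref{(Hpr)} (which yields $\sup_{t,x}|\phi^{n,T'}(t,x,0)|\le C'$), also $|\phi^{n,T'}(t,x,y)|\le C'+CR$ for $|y|\le R$. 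Using Assumption~\ref{(H_b,sigma)} on $\sigma$, the image
\[
\bigl(t,x,\phi^{n,T'}(t,x,y),\,D_y\phi^{n,T'}(t,x,y)z+\sigma^\top(t,x)D_x\phi^{n,T'}(t,x,y)\bigr)
\]
then stays, uniformly in $n\ge 0$ and $(t,x,y,z)\in K$, inside a fixed compact set $\tilde K\subset O_m$ for some sufficiently large $m$.

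Next I would exploit the local Lipschitz property: the hypothesis $|D_yF|\vee|D_zF|\le C_m$ on $O_m$ yields, for any two points $(t,x,y_i,z_i)\in O_m$ ($i=1,2$) sharing the same $(t,x)$,
\[
|F(t,x,y_1,z_1)-F(t,x,y_2,z_2)|\le C_m\bigl(|y_1-y_2|+|z_1-z_2|\bigr).
\]
Applying this with the $F^n$ and $F^0$ evaluations above gives, uniformly on $K$,
\[
|F^n-F^0|\le C_m\Bigl(|\phi^{n,T'}-\phi^{0,T'}|+R\,|D_y\phi^{n,T'}-D_y\phi^{0,T'}|+C_S\,|D_x\phi^{n,T'}-D_x\phi^{0,T'}|\Bigr),
\]
where each flow quantity is evaluated at $(t,x,y)$ with $(t,x,y,z)\in K$.

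Finally, by Lemma~\ref{lem:stability of the flow} the triple $(\phi^{n,T'},D_x\phi^{n,T'},D_y\phi^{n,T'})$ converges to $(\phi^{0,T'},D_x\phi^{0,T'},D_y\phi^{0,T'})$ locally uniformly on $[0,T']\times\R^d\times\R$, hence uniformly on the compact projection of $K$ onto the $(t,x,y)$-coordinates; substituting into the bound above concludes $F^n\to F^0$ uniformly on $K$. The main (mild) obstacle is the first step: establishing uniform-in-$n$ compactness of the image of $K$ under the randomly perturbed flow. This is routine thanks to the uniform $p$-variation bound inherited from convergence of $\mathbf{X}^n$ and to the uniform flow estimates of Lemma~\ref{lem:boundedness of the flow}; the rest is a straightforward local Lipschitz argument.
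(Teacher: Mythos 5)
Your proposal is correct and follows essentially the same route as the paper: establish uniform-in-$n$ boundedness of the composed arguments via Lemma~\ref{lem:boundedness of the flow} and Assumption~\ref{(H_b,sigma)}, convert the local bounds on $D_yF,D_zF$ into a local Lipschitz estimate, and conclude from the locally uniform convergence of $(\phi^{n,T'},D_x\phi^{n,T'},D_y\phi^{n,T'})$ in Lemma~\ref{lem:stability of the flow}. Your write-up is slightly more explicit than the paper's about the compactness step and about convexity of $O_m$ being needed for the Lipschitz estimate, but there is no substantive difference in approach.
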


\begin{proof}
Note that by Lemma~\ref{lem:boundedness of the flow} and the boundedness of $\sigma$ as stated in \eqref{e:b,sigma}, 
\[\Theta:=(t,x,\phi^{n,T'}(t,x,y),D_y\phi^{n,T'}(t,x,y)z + \sigma^{\top}(t,x)D_{x}\phi^{n,T'}(t,x,y))\] 
is bounded uniformly in $n\in\mathbb{N}$ whenever $(t,x,y,z)$ is bounded. Suppose that both $(t,x,y,z),\Theta\in O_{m},$ then there exists a constant $C'_m > 0$ such that
\begin{equation*}
\begin{aligned}
&|F^n(t,x,y,z) - F^0(t,x,y,z)|\\
&\le C'_{m}\Big(|\phi^{n,T'}(t,x,y) - \phi^{0,T'}(t,x,y)|  + |D_{y}\phi^{n,T'}(t,x,y) - D_{y}\phi^{0,T'}(t,x,y)||z|\\
&\quad\quad\quad\quad  + |D_{x}\phi^{n,T'}(t,x,y) - D_{x}\phi^{0,T'}(t,x,y)||\sigma(t,x)|\Big).
\end{aligned}
\end{equation*}
Thus the locally uniform convergence of $F^n$ follows from Lemma~\ref{lem:stability of the flow} and the boundedness of $\sigma.$ 
\end{proof}

In the following we will present a helpful property of increasing functions, which implies that the pointwise convergence of continuous increasing functions can be strengthened to uniform convergence. This property will play an essential role in the proof of the well-posedness of rough RBSDEs.

\begin{lemma}\label{lem:K}
Let $K^{n}:[0,T]\rightarrow \R$, $n \ge 1,$ and $ K\in C([0,T],\R)$ be positive increasing functions such that $K^n_{0} = K_{0} = 0.$ Assume that $\lim_{n}K^{n}_{t} = K_{t}$ for every $t\in[0,T].$ Let $y_t\in C([0,T],\R)$ be a positive function. Then it follows that
\begin{equation*}
\int_{0}^{t}y_s dK^{n}_{s} \rightarrow \int_{0}^{t}y_s dK_{s}\text{ uniformly in }t\in[0,T]. 
\end{equation*}
\end{lemma}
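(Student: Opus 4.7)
The plan is to reduce the statement to two ingredients: (a) the pointwise convergence of monotone $K^n\to K$ can be upgraded to uniform convergence on $[0,T]$, and (b) for a uniformly convergent sequence of increasing integrators against a continuous integrand, the Stieltjes integrals converge uniformly in the upper limit. I would carry these out in turn and then combine them.

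\textbf{Step 1: Uniform convergence $K^n\to K$.} This is the classical Pólya / second Dini-type theorem for monotone sequences. I would fix $\varepsilon>0$, use uniform continuity of $K$ on $[0,T]$ to choose a partition $0=t_0<t_1<\cdots<t_N=T$ with $K_{t_{i+1}}-K_{t_i}<\varepsilon/2$, and exploit monotonicity of each $K^n$ to bound, for any $t\in[t_i,t_{i+1}]$,
\[
K^n_t-K_t\le K^n_{t_{i+1}}-K_{t_i}=(K^n_{t_{i+1}}-K_{t_{i+1}})+(K_{t_{i+1}}-K_{t_i}),
\]
and similarly from below. Since $K^n_{t_j}\to K_{t_j}$ at the finitely many grid points, one obtains $\sup_{t\in[0,T]}|K^n_t-K_t|<\varepsilon$ for $n$ large enough.

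\textbf{Step 2: Step function approximation of $y$.} Since $y\in C([0,T],\R)$ it is bounded and uniformly continuous. Fix $\varepsilon>0$ and choose a partition $0=s_0<s_1<\cdots<s_M=T$ with oscillation of $y$ on each $[s_j,s_{j+1}]$ less than $\varepsilon$; set $\bar y_s:=y_{s_j}$ on $[s_j,s_{j+1})$. Then for every $t\in[0,T]$ and every $n$,
\[
\left|\int_0^t y_s\,dK^n_s-\int_0^t \bar y_s\,dK^n_s\right|\le\varepsilon K^n_T,
\qquad
\left|\int_0^t y_s\,dK_s-\int_0^t \bar y_s\,dK_s\right|\le\varepsilon K_T.
\]
Pointwise convergence $K^n_T\to K_T$ gives $\sup_n K^n_T<\infty$, so both errors are $O(\varepsilon)$ uniformly in $n,t$.

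\textbf{Step 3: Convergence for step functions.} For the step function we compute explicitly
\[
\int_0^t \bar y_s\,dK^n_s=\sum_{j=0}^{M-1} y_{s_j}\bigl(K^n_{s_{j+1}\wedge t}-K^n_{s_j\wedge t}\bigr),
\]
and similarly with $K$ in place of $K^n$. Each of the finitely many terms converges uniformly in $t$ by Step 1, so this sum converges uniformly in $t\in[0,T]$. Letting $n\to\infty$ and then $\varepsilon\to 0$ finishes the proof.

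The only nontrivial ingredient is Step 1, i.e.\ the Pólya-type upgrade from pointwise to uniform convergence of monotone functions to a continuous monotone limit; the rest is a routine $\varepsilon/3$ argument combined with the definition of the Riemann--Stieltjes integral against a continuous integrand and a monotone integrator. No assumption of bounded variation of $y$ is needed because we approximate $y$ rather than integrate by parts.
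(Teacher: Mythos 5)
Your proof is correct, and it takes a genuinely different (and simpler) route than the one in the paper.

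The paper's argument first isolates $t_0:=\sup\{t: K_t=0\}$, handles $[0,t_0]$ directly, then on $(t_0,T]$ normalizes $dK^n$ to probability measures $dK^n_s/(K^n_t-K^n_{t_0})$, invokes weak convergence of probability measures together with the classical fact that weak convergence to a \emph{continuous} CDF implies uniform convergence of CDFs, and iterates this two levels deep (once for $K^n$ and once for $\hat K^n_t=\int_{t_0}^t y\,dK^n$). Your argument instead upgrades the pointwise monotone convergence $K^n\to K$ to \emph{uniform} convergence directly via the P\'olya/second-Dini theorem, and then reduces the Stieltjes integral to the case of a step-function integrand by uniform continuity of $y$; each of the finitely many terms $K^n_{s_{j+1}\wedge t}-K^n_{s_j\wedge t}$ then converges uniformly in $t$ as a consequence of Step 1. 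Conceptually the two proofs lean on the same fact in disguise (P\'olya's theorem \emph{is} ``weak convergence to a continuous CDF gives sup-norm convergence of CDFs''), but your version avoids the $t$-dependent renormalization and the two-level iteration, and the $\varepsilon/3$ argument makes the uniformity in $t$ transparent. One small thing worth pinning down in a final write-up: when you write $\int_0^t\bar y_s\,dK^n_s=\sum_j y_{s_j}\bigl(K^n_{s_{j+1}\wedge t}-K^n_{s_j\wedge t}\bigr)$ you are implicitly using the Lebesgue--Stieltjes convention for half-open intervals; since $K^n$ is not assumed continuous, the step function $\bar y$ should be taken right-continuous on $(s_j,s_{j+1}]$ (or one should say explicitly that the integral is Lebesgue--Stieltjes) so that the identity is exact rather than a Riemann--Stieltjes integral that may fail to exist at common jump points. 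This does not affect the estimate in Step 2 and is purely a matter of convention.
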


\begin{proof}
Denote $t_0 := \sup\{t\in[0,T];K_{t}=0\}.$
Since the equality $K_{t_0} - K_{0} = 0$ leads to that
\begin{equation*}
\left|\int_{0}^{t}y_{s}d(K^{n} - K)_s\right| = \left|\int_{0}^{t}y_{s}dK^{n}_s\right| \le \|y\|_{\infty;[0,t_0]}|K^{n}_{t_0} - K^{n}_0|,
\end{equation*}
we have as $n\rightarrow\infty$
\begin{equation}\label{e:Kn -> K; [0,t0]}
\int_{0}^{t}y_{s}dK^{n}_{s} \rightarrow \int_{0}^{t}y_s dK_{s}\text{ uniformly in }t\in[0,t_0].
\end{equation}
For $t\in(t_0,T],$ from the convergence $\lim_n K^{n}_{\cdot} = K_{\cdot},$ it follows that for all $r\in[t_0,t],$
\begin{equation*}
(K^{n}_{r} - K^{n}_{t_0}) \rightarrow (K_{r} - K_{t_0} ) > 0\text{ and } \int_{t_0}^{r}\frac{dK^{n}_s}{K^{n}_t  - K^{n}_{t_0}} \rightarrow \int_{t_0}^{r}\frac{dK_s}{K_t  - K_{t_0}}.
\end{equation*}
Thus, we have that 
\begin{equation}\label{e:weak conver}
dK^{n}_s / (K^{n}_t - K^{n}_{t_0}) \rightarrow dK_s / (K_t  - K_{t_0}),\ \text{weakly in probability measure on }[t_0,t].
\end{equation}
Since $y$ is continuous, by the definition of weak convergence in measure we have
\begin{equation}\label{e:y/K}
\int_{t_0}^{t}\frac{y_s dK^{n}_s}{K^{n}_t  - K^{n}_{t_0}} \rightarrow \int_{t_0}^{t}\frac{y_s dK_s}{K_t  - K_{t_0}},\text{ for every }t\in(t_0,T].
\end{equation}
Let 
\begin{equation*}
\hat{K}^{n}_{t}:=\int_{t_0}^{t}y_s dK^{n}_s,\ \hat{K}_{t} := \int_{t_0}^{t} y_s dK_s,\ t\in[t_0,T].
\end{equation*}
According to the convergence $\lim_n (K^{n}_t - K^{n}_{t_0}) = K_{t} - K_{t_0}$ and \eqref{e:y/K}, we have 
\begin{equation}\label{e:hat Kn -> hat K}
\hat{K}^{n}_{t}\rightarrow \hat{K}_{t},\text{ for every }t\in(t_0,T].
\end{equation}
Furthermore, by the positivity of $y,$ $\hat{K}^n$ and $\hat{K}$ are positive increasing functions with $\hat{K}^n_{t_0} = \hat{K}_{t_0} = 0.$ And by the continuity of $K,$ $\hat{K}$ is also continuous.  Set $t_0' := \sup\{t\in[t_0,T];\hat{K}_{t} = 0\}.$ Then by repeating the proof of \eqref{e:Kn -> K; [0,t0]} with $(K^n, K)$ replaced by $(\hat{K}^n,\hat{K}),$ and letting the integrand be $1,$ we have that 
\begin{equation}\label{e:y/K'}
\int_{t_0}^{t}y_{s}dK^{n}_{s}  \rightarrow \int_{t_0}^{t}y_s dK_{s}\text{ uniformly in }t\in[t_0,t'_0].
\end{equation}
By repeating the proof of \eqref{e:weak conver} with $t = T,$ we have
\begin{equation}\label{e:weak conver'}
d\hat{K}^{n}_s / (\hat{K}^{n}_T - \hat{K}^{n}_{t'_0}) \rightarrow d\hat{K}_s / (\hat{K}_T  - \hat{K}_{t'_0}), \text{ weakly in probability measure on }[t'_0,T].
\end{equation}
Note that the cumulative distribution function (CDF) of $d\hat{K}_s / (\hat{K}_T  - \hat{K}_{t'_0})$ is continuous. By applying \cite[Chapter~10, Exercise~5.2]{parzen1960modern} to \eqref{e:weak conver'}, the weak convergence of $d\hat{K}^n_s/(\hat{K}^n_{T} - \hat{K}^{n}_{t'_{0}})$ implies the uniform convergence of CDFs, i.e.,
\begin{equation}\label{e:y/K+}
\int_{t'_0}^{t}\frac{y_{s}dK^{n}_{s}}{\hat{K}^{n}_{T} - \hat{K}^{n}_{t'_0}}\rightarrow \int_{t'_0}^{t}\frac{y_{s}dK_{s}}{\hat{K}_{T} - \hat{K}_{t'_0}},\text{ uniformly in }t\in[t'_{0},T].
\end{equation}
According to \eqref{e:hat Kn -> hat K}, $\hat{K}^{n}_{T} - \hat{K}^{n}_{t_0'}\rightarrow \hat{K}_{T} - \hat{K}_{t'_{0}},$ and then by using \eqref{e:y/K+} we have
\begin{equation}\label{e:y/K''}
\int_{t'_0}^{t}y_{s}dK^{n}_{s}\rightarrow \int_{t'_0}^{t}y_{s}dK_{s},\text{ uniformly in }t\in[t'_{0},T].
\end{equation}
By \eqref{e:Kn -> K; [0,t0]},\eqref{e:y/K'}, and \eqref{e:y/K''}, the proof is complete.
\end{proof}

Now we are ready to give the proof of our first main result.

\begin{theorem}\label{thm:well-posedness}
Assume \ref{(Hpr)}, \ref{(H0)}, \ref{(H_b,sigma)}, and \ref{(H_L,xi)} hold. Let $\mathbf{X}\in C^{0,p\text{-var}}([0,T],G^{[p]}(\R^{l}))$ and let $\mathrm{X}^{n}\in\bigcap_{\lambda\ge 1}\text{Lip}^{\lambda}([0,T],\R^l)$ for each $n \ge 1.$ Let $\mathbf{X}^{n}$ be the lift of $\mathrm{X}^{n}$, and $(Y^n,Z^n,K^n)$ be the unique solutions to Eq.~\eqref{e:BSDE main result} with parameter $(\xi,f,L,H,\mathrm{X}^{n}).$ If $\mathbf{X}^{n} \rightarrow \mathbf{X}$ in $C^{0,p\text{-var}}([0,T],G^{[p]}(\mathbb{R}^l)),$ then there exists a triplet $(Y,Z,K)\in \mathrm{H}^{\infty}_{[0,T]}\times \mathrm{H}^{2}_{[0,T]}(\mathbb{R}^d) \times \mathrm{I}^{1}_{[0,T]}$ such that
\begin{equation*}
(Y^{n}_{\cdot},K^{n}_{\cdot}) \rightarrow (Y_{\cdot},K_{\cdot})\  \text{ uniformly on }\ [0,T]\  \text{ a.s., }\ 
Z^n \rightarrow Z \ \text{ in }\ \mathrm{H}^{2}_{[0,T]}(\mathbb{R}^d),
\end{equation*}
and 
\[\int_{0}^{T}(Y_{r} - L_{r})dK_{r} = 0.\]

In addition, the triplet $(Y,Z,K)$ is unique in the sense that it does not depend on the choice of sequence $\{\mathrm{X}^{n},n\ge 1\}.$
In this case we call this $(Y,Z,K)$ the unique solution of Eq.~\eqref{e:BSDE main result} with parameter $(\xi,f,L,H,\mathbf{X}).$ 
\end{theorem}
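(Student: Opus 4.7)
The plan is to reduce the rough RBSDE to a classical reflected BSDE (without any rough driver) through the Doss--Sussman transformation induced by the flow $\phi^{n,T}$ associated with $\mathbf{X}^n$, pass to the limit in this transformed equation using the already-established stability of the flow and the a priori theory for quadratic RBSDEs, and then invert the transformation, using the uniform convergence lemma for increasing functions to recover $K$.

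\textbf{Step 1: Doss--Sussman reduction.} For each $n$, apply Lemma~\ref{lem:smmoth case} to produce
\[
\widetilde{Y}^n_t=(\phi^{n,T})^{-1}(t,S_t,Y^n_t),\qquad \widetilde{Z}^n_t,\ \widetilde{K}^n_t
\]
so that $(\widetilde Y^n,\widetilde Z^n,\widetilde K^n)$ is the unique solution of the classical RBSDE with parameter $(\xi,\widetilde f^{n,T},\widetilde L^n,0,0)$, where $\widetilde f^{n,T}$ and $\widetilde L^n=(\phi^{n,T})^{-1}(\cdot,S_\cdot,L_\cdot)$ are the transformed coefficient and obstacle. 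Since $\{\|\mathbf{X}^n\|_{p\text{-var};[0,T]}\}_n$ is bounded, Lemma~\ref{lem:boundedness of the flow} gives uniform bounds on $\phi^{n,T}$ and all its derivatives that appear in $\widetilde f^{n,T}$. Combined with Lemma~\ref{lem40}, this yields coefficients $\widetilde f^{n,T}$ of the standard quadratic type with constants independent of $n$, and $\|\widetilde Y^n\|_{\mathrm{H}^\infty;[0,T]}$ is bounded uniformly in $n$.

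\textbf{Step 2: Localization via small $p$-variation pieces.} Fix $\varepsilon,M>0$ with $M\ge \sup_n\|\widetilde Y^n\|_{\mathrm{H}^\infty;[0,T]}\vee \|L\|_{\mathrm{H}^\infty;[0,T]}$ and take the threshold $\delta_{\varepsilon,M}$ from Lemma~\ref{lem40}(iii). Apply Lemma~\ref{lem:length of the subinterval} to split $[0,T]$ into finitely many subintervals $[t_{i-1},t_i]$ on which $\|\mathbf{X}\|_{p\text{-var};[t_{i-1},t_i]}\le\delta_{\varepsilon,M}/2$; since $\mathbf{X}^n\to\mathbf{X}$ in $p$-variation, the same bound holds for $\mathbf{X}^n$ when $n$ is large. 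On each such piece, the generator $\widetilde f^{n,T}$ satisfies the hypothesis $(H3)$-type estimate and the standard a priori estimates for quadratic RBSDEs apply uniformly in $n$.

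\textbf{Step 3: Convergence of the transformed triple.} By Lemma~\ref{lem:stability of the flow}, $(\phi^{n,T},D_x\phi^{n,T},D_y\phi^{n,T},1/D_y\phi^{n,T},D^2_{\cdot\cdot}\phi^{n,T})$ converges locally uniformly to the corresponding derivatives of $\phi^T$, hence $\widetilde L^n_t\to \widetilde L_t$ uniformly a.s.\ in $t$. Combined with Lemma~\ref{lem:locally uniform conver}, $\widetilde f^{n,T}(t,x,\tilde y,\tilde z)\to \widetilde f^{T}(t,x,\tilde y,\tilde z)$ locally uniformly. Invoking the stability theory for quadratic RBSDEs (Appendix~\ref{append:A}), first on the last piece $[t_{N-1},T]$ and then inductively on the preceding pieces, produces $(\widetilde Y,\widetilde Z,\widetilde K)\in\mathrm{H}^\infty_{[0,T]}\times\mathrm{H}^2_{[0,T]}(\R^d)\times\mathrm{I}^1_{[0,T]}$ such that
\[
\widetilde Y^n\to\widetilde Y \text{ uniformly on } [0,T]\text{ a.s.},\qquad \widetilde Z^n\to\widetilde Z \text{ in }\mathrm{H}^2_{[0,T]}(\R^d),\qquad \widetilde K^n_t\to\widetilde K_t \text{ pointwise a.s.}
\]
and $(\widetilde Y,\widetilde Z,\widetilde K)$ solves the classical RBSDE with parameter $(\xi,\widetilde f^T,\widetilde L,0,0)$.

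\textbf{Step 4: Inversion of the transformation.} Set $Y_t:=\phi^T(t,S_t,\widetilde Y_t)$ and $Z_t$ via the inverse of the rule in \eqref{e1'}. The uniform convergence of $\widetilde Y^n$, the uniform a.s.\ boundedness of $S$ on $[0,T]$, and the locally uniform convergence in Lemma~\ref{lem:stability of the flow} yield $Y^n\to Y$ uniformly a.s.\ and $Z^n\to Z$ in $\mathrm{H}^2$. For $K^n$, write
\[
K^n_t=\int_0^t D_y\phi^{n,T}(s,S_s,\widetilde Y^n_s)\,d\widetilde K^n_s,
\]
and observe that $D_y\phi^{n,T}(s,S_s,\widetilde Y^n_s)\to D_y\phi^T(s,S_s,\widetilde Y_s)$ uniformly in $s$ a.s. (by Lemma~\ref{lem:stability of the flow} and the uniform convergence of $\widetilde Y^n$). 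The pointwise convergence $\widetilde K^n_t\to\widetilde K_t$ together with Lemma~\ref{lem:K} (applied pathwise to the continuous increasing limit $\widetilde K$) upgrades this to
\[
K^n_t=\int_0^t D_y\phi^{n,T}(s,S_s,\widetilde Y^n_s)\,d\widetilde K^n_s\longrightarrow \int_0^t D_y\phi^T(s,S_s,\widetilde Y_s)\,d\widetilde K_s=:K_t
\]
uniformly in $t\in[0,T]$ a.s. In particular $K\in\mathrm{I}^1_{[0,T]}$ is continuous and increasing. The Skorokhod condition $\int_0^T(Y_r-L_r)dK_r=0$ follows from the analogous identity for $(\widetilde Y,\widetilde L,\widetilde K)$ and the strict monotonicity $Y_r=L_r\iff \widetilde Y_r=\widetilde L_r$, exactly as in \eqref{e:Skorohod tilde Y}.

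\textbf{Step 5: Uniqueness of the limit.} Given two approximating sequences $(\mathrm{X}^n)$, $(\widehat{\mathrm{X}}^n)$ both yielding $\mathbf{X}^n,\widehat{\mathbf{X}}^n\to\mathbf{X}$, interleave them into a single approximating sequence still converging to $\mathbf{X}$, and apply Steps 1--4 to this interleaved sequence; the resulting limit is unique and must coincide with both individual limits.

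\textbf{Main obstacle.} The delicate step is Step 4: the Doss--Sussman transformation does not give continuous dependence of $K^n$ on the rough driver directly, and the quadratic RBSDE stability only provides pointwise convergence of $\widetilde K^n$. The key technical input is Lemma~\ref{lem:K}, which turns pointwise convergence of the continuous increasing process $\widetilde K^n$ into uniform convergence of the integrals $\int y_s\, d\widetilde K^n_s$, using the weak convergence of the normalized measures and uniform convergence of the CDFs of a continuous limiting distribution.
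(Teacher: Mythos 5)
There is a genuine gap in Steps 2–3 of your proposal, and it concerns the single global Doss--Sussman transformation $\phi^{n,T}$.

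You transform once, with terminal time $T$, obtaining $\widetilde f^{n,T}$ and $\widetilde L^{n}$, and then claim that after splitting $[0,T]$ into pieces $[t_{i-1},t_i]$ of small $p$-variation, "the generator $\widetilde f^{n,T}$ satisfies the hypothesis $(H3)$-type estimate" on each piece. This is false for $t_i<T$. Lemma~\ref{lem40}(iii) controls $D_{\widetilde y}\widetilde f^{T'}$ on $[s,T']$ only under the smallness assumption $\|\mathbf{X}\|_{p\text{-var};[s,T']}\le\delta_{\varepsilon,M}$, and the relevant $T'$ is the terminal time of the flow appearing inside $\widetilde f^{T'}$. For $\widetilde f^{n,T}$ that is $T'=T$, so on an early piece $[t_{i-1},t_i]$ with $i<N$ you would need $\|\mathbf{X}\|_{p\text{-var};[t_{i-1},T]}$ small, not $\|\mathbf{X}\|_{p\text{-var};[t_{i-1},t_i]}$. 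The mechanism is concrete: the RDE for $D_y\phi^{n,T}(t,\cdot,\cdot)$ starts from $1$ at time $T$ and evolves backward, so its deviation from $1$ at time $t$ accumulates the whole rough increment over $[t,T]$, and Lemma~\ref{lem:boundedness of the flow} only makes $D_y\phi^{T'}-1$ small when the $p$-variation over $[s,T']$ is small. Consequently on the early pieces your $\widetilde f^{n,T}$ can fail (A3') in Assumption~\ref{(H1)}, and the quadratic-RBSDE well-posedness/stability results (Proposition~\ref{prop:RBSDE willposedness}, Lemma~\ref{lem:continu solution map}) you invoke there no longer apply.

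The paper avoids this by re-transforming on each subinterval: on $[T-h^{(i+1)},T-h^{(i)}]$ it uses the flow $\phi^{n,T-h^{(i)}}$ with terminal time $T-h^{(i)}$, so the transformed coefficient $\widetilde f^{n,T-h^{(i)}}$ inherits the needed local estimate because only $\|\mathbf{X}\|_{p\text{-var};[T-h^{(i+1)},T-h^{(i)}]}$ has to be small. The terminal data for the transformed equation on the $i$-th piece is $Y^n_{T-h^{(i)}}$, which is glued from the previous piece; the overall $K^0$ is then built by the telescoping sum in the paper's Step~3. Your Steps~4 and~5 (use of Lemma~\ref{lem:K} to upgrade the convergence of $\widetilde K^n$ to uniform convergence of $K^n$, and uniqueness by interleaving) are sound and essentially the same as the paper's, but they sit on top of a broken Step~2/3. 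To repair your argument, replace the single flow $\phi^{n,T}$ with per-subinterval flows $\phi^{n,T-h^{(i)}}$ and redo the reduction and stability argument separately on each piece, as the paper does.
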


\begin{proof} 

\textbf{Step 1.} We first show that there exists a constant $M$ independent of $n$, such that 
\begin{align*}
    \esssup_{(t,\omega)\in[0,T]\times\Omega}|Y^{n}_t|\le M.
\end{align*}
In the following, we will denote by $(Y^0,Z^0,K^0)$ and $\mathbf{X}^0$ the solution $(Y,Z,K)$ and the rough path $\mathbf{X},$ respectively. For each $n\ge 0,$ let $\phi^{n,T'}(t,x,y)$  be the following solution flow parameterized by $x\in\R^{d}$ with terminal time $T'\in[0,T]$
\begin{equation}\label{e:phi^n}
\phi^{n,T'}(t,x,y)=y+\int_t^{T'} H(x,\phi^n(r,x,y))d\mathrm{X}^n_r,\quad t\in[0,T'].
\end{equation}
Due to the smoothness of $\mathrm{X}^{n}$ for $n\geq 1,$ the above equation is an ordinary differential equation (ODE) when $n\ge 1$; and it is an RDE when $n=0.$ By Lemma~\ref{lem:boundedness of the flow}, for every $x\in\mathbb{R}$ $\phi^{n,T'}(t,x,\cdot)$ is a flow of $C^3$-diffeomorphisms in $\R.$ In addition, Lemma~\ref{lem:boundedness of the flow} shows that $\phi^{n,T'}(t,\cdot,\cdot)$ and its derivatives up to order three are bounded, and the same holds true for the inverse $y$-inverse $(\phi^{n,T'})^{-1}(t,\cdot,\cdot).$ Furthermore, by Lemma~\ref{lem:stability of the flow}, we have the following locally uniform convergence in $[0,T']\times \mathbb{R}^d\times\mathbb{R}$
\begin{equation}\label{63}
\begin{aligned}
&(\phi^{n,T'},\frac{1}{D_y\phi^{n,T'}},D_y\phi^{n,T'},D^{2}_{yy}\phi^{n,T'},D_x\phi^{n,T'},D^{2}_{xx}\phi^{n,T'},D^{2}_{xy}\phi^{n,T'})\\
&\longrightarrow
(\phi^{0,T'},\frac{1}{D_y\phi^{0,T'}},D_y\phi^{0,T'},D^{2}_{yy}\phi^{0,T'},D_x\phi^{0,T'},D^{2}_{xx}\phi^{0,T'},D^{2}_{xy}\phi^{0,T'}).
\end{aligned}
\end{equation}

For $n\ge 0,$ set
\begin{equation}\label{e:tilde f^n,T'}
\begin{aligned}
&\widetilde{f}^{n,T'}(t,x,\widetilde{y},\widetilde{z}):=\frac{1}{D_y \phi^{n,T'}}\Bigg\{f(t,x,\phi^{n,T'},D_y \phi^{n,T'} \widetilde{z} + \sigma^{\top} D_x \phi^{n,T'} ) + (D_x \phi^{n,T'} )^{\top}b \\
&\qquad\qquad\qquad\qquad\qquad+ \frac{1}{2}\text{Tr}\left\{D^{2}_{xx}\phi^{n,T'} \sigma\sigma^{\top}\right\} + (D^{2}_{xy}\phi^{n,T'})^{\top} \sigma \widetilde{z}+\frac{1}{2}D^{2}_{yy}\phi^{n,T'} |\widetilde{z}|^2\Bigg\},\\
&\widetilde{L}^{n,T'}_{t}:=(\phi^{n,T'})^{-1}(t,S_{t},L_{t}),\quad (t,x,\widetilde{y},\widetilde{z})\in[0,T']\times\R^{d}\times\R\times\R^{d},
\end{aligned}
\end{equation}
where $\phi^{n,T'}$ and all its derivatives are evaluated at $(t,x,\widetilde{y}),$ and $b,\sigma$ are evaluated at $(t,x).$ For $n\ge 1,$ let $(Y^{n},Z^{n},K^{n})$ be the solution of Eq.~\eqref{58} with terminal time $T$ and parameter $(\xi,f,L,H,\mathrm{X}^{n}).$ 
By Lemma~\ref{lem:smmoth case}, 
the triplet $(\widetilde{Y}^{n,T'},\widetilde{Z}^{n,T'},\widetilde{K}^{n,T'})$ solves Eq.~\eqref{58} with terminal time $T'$ and parameter $(Y^{n}_{T'},\widetilde{f}^{n,T'},\widetilde{L}^{n,T'},0,0),$ where 
\begin{equation}\label{e1}
\begin{aligned}
&\widetilde{Y}^{n,T'}_t:=(\phi^{n,T'})^{-1}(t,S_t,Y^n_t), \quad  \widetilde{K}^{n,T'}_t:=\int_0^t \frac{1}{D_y \phi^{n,T'}(s,S_s,\widetilde{Y}^{n,T'}_s)}dK^n_s,\\
&\widetilde{Z}^{n,T'}_t:=-\frac{(D_x \phi^{n,T'}(t,S_t,\widetilde{Y}^{n,T'}_t))^{\top}}{D_y\phi^{n,T'}(t,S_t,\widetilde{Y}^{n,T'}_t)}\sigma_t+ \frac{1}{D_y \phi^{n,T'}(t,S_t,\widetilde{Y}^{n,T'}_t)}Z^n_t,\quad t\in[0,T'].
\end{aligned}
\end{equation}
By Lemma~\ref{lem40}, there exist $\widetilde{C}_{0}>0$ and an increasing positive continuous function $\widetilde{c}(\cdot)$  such that a.s. 
\begin{equation}\label{e:tilde f <= 1 + |z|^2}
|\widetilde{f}^{n,T'}(t,x,\widetilde{y},\widetilde{z})|\le \widetilde{c}(|\widetilde{y}|) + \widetilde{C}_0 |\widetilde{z}|^2, \text{ for all }(t,x,\widetilde{y},\widetilde{z})\in [0,T']\times\R^d\times\R\times\R^d.
\end{equation}
In view of \eqref{e:tilde f <= 1 + |z|^2} and Lemma~\ref{lem:A Prior Estimate}, 
\begin{equation}\label{e:bounde of Yn}
\begin{aligned}
\esssup_{(t,\omega)\in[0,T']\times\Omega}|\widetilde{Y}^{n,T'}_t|&\le \widetilde{M},\text{ and then }
\esssup_{(t,\omega)\in[0,T]\times\Omega}|Y^{n}_t|\le M ,
\end{aligned}
\end{equation}
where $\widetilde{M} = \bar{M},$ which is given by estimate \eqref{e:prior estimate} with $c(\cdot)$ replaced by $\widetilde{c}(\cdot)$, and 
\begin{equation*}
M := \sup\limits_{n\ge 1}\left\{\widetilde{M}\|D_{y}\phi^{n,T}(\cdot,\cdot,\cdot)\|_{\infty;[0,T]\times\R^d\times\R} + \|\phi^{n,T}(\cdot,\cdot,0)\|_{\infty;[0,T]\times\R^d}\right\}.
\end{equation*}
By Lemma~\ref{lem:boundedness of the flow} and the uniform boundedness of $ \|\mathbf{X}^{n}\|_{p\text{-var};[0,T]},$ $M$ defined above is finite.    \\

\textbf{Step 2.} We show that $(Y^n,Z^n,K^n)$ converges to some $(Y^0,Z^0,K^0)$ on a small interval. Note that $\sup_{n}\|\mathbf{X}^{n}\|_{p\text{-var};[0,T]}<\infty$. By Lemma~\ref{lem40}, for every $\varepsilon,M>0$ there exist $\delta,\widetilde{C}>0$ independent of $n$ and $T'$, such that 
\begin{itemize}
\item for every $M>0$ there exits $\widetilde{C}_{M}>0$ such that a.s.
\begin{equation*}
|D_{\widetilde{z}}\widetilde{f}^{n,T'}(t,x,\widetilde{y},\widetilde{z})|\le \widetilde{C}_{M}(1+|\widetilde{z}|),\text{ for all }(t,x,\widetilde{y},\widetilde{z})\in[0,T']\times \R^d \times [-M,M] \times \R^d;
\end{equation*}

\item for every $\varepsilon,M>0,$ there exist $\delta_{\varepsilon,M},\widetilde{C}_{\varepsilon,M}>0$ such that whenever $s\in[0,T')$ and $\|\mathbf{X}^{n}\|_{p\text{-var};[s,T']}\le \delta_{\varepsilon,M}$, we have a.s.
\begin{equation*}
|D_{\widetilde{y}}\widetilde{f}^{n,T'}(t,x,\widetilde{y},\widetilde{z})|\le \widetilde{C}_{\varepsilon,M} + \varepsilon |\widetilde{z}|^2\text{ for all }(t,x,\widetilde{y},\widetilde{z})\in[s,T']\times \R^d \times [-M,M] \times\R^d.
\end{equation*}
\end{itemize}
Then, for each $T'\in(0,T]$ and $n\ge 1,$ in view of \eqref{e:tilde f <= 1 + |z|^2} and the above facts, $\widetilde{f}^{n,T'}$ satisfies the requirement of Proposition~\ref{prop:RBSDE willposedness}. Thus, by Proposition~\ref{prop:RBSDE willposedness}, there exists $\varepsilon>0$ such that for every $s\in[0,T')$ satisfying $\|\mathbf{X}^{n}\|_{p\text{-var};[s,T']}\le \delta_{\varepsilon,\widetilde{M}},$ Eq.~\eqref{58} with terminal time $T'$ and parameter $(Y^{n}_{T'},\widetilde{f}^{n,T'},\widetilde{L}^{n,T'},0,0)$ admits a unique solution $(\widetilde{Y}^{n,T'}_{\cdot},\widetilde{Z}^{n,T'}_{\cdot},\widetilde{K}^{n,T'}_{\cdot} - \widetilde{K}^{n,T'}_{s})$ on $[s,T']$. Now fix the $\varepsilon$ mentioned above, and denote $\delta := \delta_{\varepsilon,\widetilde{M}}$. Since $\lim_{n}d_{p\text{-var};[0,T]}(\mathbf{X}^{n}, \mathbf{X}^{0}) = 0,$ we can
assume without loss of generality that $\sup_{n\ge 1}d_{p\text{-var};[0,T]}(\mathbf{X}^{n},\mathbf{X}^{0})\le \frac{\delta}{2}$. By Lemma~\ref{lem:length of the subinterval}, there exists a set of increasing real numbers $\left\{h^{(i)},i=0,1,...,N\right\}$ such that $[T-h^{(N)},T-h^{(0)}] = [0,T]$ and 
\begin{equation*}
\|\mathbf{X}^{0}\|_{p\text{-var};[T-h^{(i+1)},T-h^{(i)}]}\le \frac{\delta}{2},
\end{equation*}
where $N$ only depends on $\|\mathbf{X}^{0}\|_{p\text{-var};[0,T]}$ and $\delta.$ Then, for $i=0,1,2,...,N-1,$ it follows that
\begin{equation*}
\sup_{n\ge 0}\|\mathbf{X}^{n}\|_{p\text{-var};[T-h^{(i+1)},T-h^{(i)}]}\le \delta.
\end{equation*}
Thus, the Eq.~\eqref{58} with terminal time $T-h^{(i)}$ and 
parameter $(Y^{n}_{T-h^{(i)}},\widetilde{f}^{n,T-h^{(i)}},\widetilde{L}^{n,T-h^{(i)}},0,0)$ admits a unique solution $(\widetilde{Y}^{n,T-h^{(i)}}_{\cdot},\widetilde{Z}^{n,T-h^{(i)}}_{\cdot},\widetilde{K}^{n,T-h^{(i)}}_{\cdot} - \widetilde{K}^{n,T-h^{(i)}}_{T-h^{(i+1)}})$ on the interval $[T-h^{(i+1)},T-h^{(i)}].$  

On the other hand, by Lemma~\ref{lem:locally uniform conver} with $F(t,x,y,z) := f(\omega,t,x,y,z)$ and Lemma~\ref{lem:stability of the flow}, $\widetilde{f}^{n,T}$
converges to $\widetilde{f}^{0,T}$ locally uniformly in $(t,x,\widetilde{y},\widetilde{z})$ a.s., and $\widetilde{L}^{n,T}$ converges to $\widetilde{L}^{0,T}$ uniformly in $t$ a.s. as $n\rightarrow \infty.$ 
In view of Lemma~\ref{lem:continu solution map} we have 
\begin{equation}\begin{split}\label{65}
&\widetilde{Y}^{n,T}_{\cdot}\rightarrow \widetilde{Y}^{0,T}, \  \widetilde{K}^{n,T}_{\cdot} - \widetilde{K}^{n,T}_{T-h^{(1)}}\rightarrow \widetilde{K}^{0,T}_{\cdot} - \widetilde{K}^{0,T}_{T-h^{(1)}} \textrm{ uniformly on } [T-h^{(1)},T] \textrm{ a.s.,}\\
&\widetilde{Z}^{n,T}\rightarrow \widetilde{Z}^{0,T} \textrm{ in } \mathrm{H}^{2}_{[T-h^{(1)},T]},
\end{split}\end{equation}
where the triplet $(\widetilde{Y}^{0,T},\widetilde{Z}^{0,T},\widetilde{K}^{0,T})$ uniquely solves Eq.~\eqref{58} with parameter $(\xi,\widetilde{f}^{0,T},\widetilde{L}^{0,T},0,0)$ on interval $[T-h^{(1)},T].$ For $t\in[T-h^{(1)},T]$, we define
\begin{equation}\label{e:inverse transformation}
\begin{aligned}
& Y^0_t:=\phi^{0,T}(t,S_t,\widetilde{Y}^{0,T}_t), \ K^{0,T}_t:=\int_{T-h^{(1)}}^t D_y \phi^{0,T}(s,S_s,\widetilde{Y}^{0,T}_s)d\widetilde{K}^{0,T}_s,\\ 
&  Z^0_t:=D_y\phi^{0,T}(t,S_t,\widetilde{Y}^{0,T}_t)\left[\widetilde{Z}^{0,T}_t+\frac{(D_x\phi^{0,T}(t,S_t,\widetilde{Y}^{0,T}_t))^{\top}}{D_y\phi^{0,T}(t,S_t,\widetilde{Y}^{0,T}_t)}\sigma_t\right].
\end{aligned}
\end{equation}
Since $\widetilde{Y}^{0,T}_t\geq \widetilde{L}^{0,T}_t$ for $t\in[T-h^{(1)},T],$ and $\int_{T-h^{(1)}}^T (\widetilde{Y}^{0,T}_t-\widetilde{L}^{0,T}_t)d\widetilde{K}^{0,T}_t=0,$ we have for $t\in[T-h^{(1)},T],$ $Y^0_t\geq L_t.$ Therefore $\int_{T-h^{(1)}}^T (Y^0_t-L_t)dK^{0,T}_t=0$ by the same calculation as \eqref{e:Skorohod tilde Y}. In addition, by Eq.~\eqref{e1} we have for $t\in[T-h^{(1)},T]$
\begin{equation*}
\begin{aligned}
&       Y^n_t:=\phi^{n,T}(t,S_t,\widetilde{Y}^{n,T}_t), \ K^{n}_t:=\int_{0}^t D_y \phi^{n,T}(s,S_s,\widetilde{Y}^{n,T}_s)d\widetilde{K}^{n,T}_s,\\ 
&  Z^{n}_t:=D_y\phi^{n,T}(t,S_t,\widetilde{Y}^{n,T}_t)\left[\widetilde{Z}^{n,T}_t+\frac{(D_x\phi^{n,T}(t,S_t,\widetilde{Y}^{n,T}_t))^{\top}}{D_y\phi^{n,T}(t,S_t,\widetilde{Y}^{n,T}_t)}\sigma_t\right].
\end{aligned}
\end{equation*}
Fix a sample point $\omega,$ and note that the difference between  $K^{n}_{\cdot} - K^{n}_{T-h^{(1)}}$ and $K^{0,T}_{\cdot}$ is 
\begin{equation}\label{e:first second term}
\begin{aligned}
&\int_{T-h^{(1)}}^{\cdot} \left(D_{y}\phi^{n,T}(s,S_{s},\widetilde{Y}^{n,T}_{s}) - D_{y}\phi^{0,T}(s,S_{s},\widetilde{Y}^{0,T}_{s})\right)d\widetilde{K}^{n,T}_{s}\\
&+ \int_{T-h^{(1)}}^{\cdot} D_{y}\phi^{0,T}(s,S_{s},\widetilde{Y}^{0,T}_{s})d(\widetilde{K}^{n,T} - \widetilde{K}^{0,T})_{s}.
\end{aligned}
\end{equation}
By the uniform convergence of $D_{y}\phi^{n,T}(t,S_t,\widetilde{Y}^{n,T}_t) \rightarrow D_{y}\phi^{0,T}(t,S_t,\widetilde{Y}^{0,T}_t)$ and the uniform boundedness of $\widetilde{K}^{n,T}_{T},$ the sequence in the first term of \eqref{e:first second term} uniformly converges to $0$.
In addition, note that $t\mapsto D_{y}\phi^{0,T}(t,S_{t},\widetilde{Y}^{0,T}_{t})$ is  continuous and positive, and note that $\widetilde{K}^{n,T}_t - \widetilde{K}^{n,T}_{T-h^{(1)}} \rightarrow \widetilde{K}^{0,T}_t - \widetilde{K}^{0,T}_{T-h^{(1)}}$ for every $t\in [T-h^{(1)},T].$ By Lemma~\ref{lem:K} with $K^n_t = \widetilde{K}^{n,T}_t - \widetilde{K}^{n,T}_{T-h^{(1)}},$ $K_t = \widetilde{K}^{0,T}_t - \widetilde{K}^{0,T}_{T-h^{(1)}},$ and $y_t = D_{y}\phi^{0,T}(t,S_{t},\widetilde{Y}^{0,T}_{t}),$ we have
\begin{equation*}
\int_{T-h^{(1)}}^{t} D_{y}\phi^{0,T}(s,S_{s},\widetilde{Y}^{0,T}_{s})d\widetilde{K}^{n,T}_s \rightarrow \int_{T-h^{(1)}}^{t} D_{y}\phi^{0,T}(s,S_{s},\widetilde{Y}^{0,T}_{s})d\widetilde{K}^{0,T}_{s}\text{ uniformly in }t\in[T-h^{(1)},T].
\end{equation*}
Hence, \eqref{e:first second term} converges to $0$ uniformly on $[T-h^{(1)},T].$ Combining \eqref{63} and \eqref{65} we obtain that
\begin{equation}\begin{split}\label{66}
&{Y}^n_{\cdot}\rightarrow {Y}^0_{\cdot}, \  {K}^n_{\cdot} - K^n_{T-h^{(1)}}\rightarrow {K}^{0,T} \textrm{ uniformly on } [T-h^{(1)},T] \textrm{ a.s.,}\\
&{Z}^n\rightarrow {Z}^0 \textrm{ in } \mathrm{H}^{2}_{[T-h^{(1)},T]}.
\end{split}\end{equation}

\textbf{Step 3.} We will show that $(Y^n,Z^n,K^n)$ converges to $(Y,Z,K)$ on the entire interval and that the Skorokhod condition holds by induction. Fix an $i\in\left\{0,1,...,N-1\right\}$ and assume that we already have 
\begin{equation*}
{Y}^n_{\cdot}\rightarrow {Y}^0_{\cdot}, \textrm{ uniformly on } [T-h^{(i)},T] \textrm{ a.s..}
\end{equation*}
First, we show the above convergence holds on $[T-h^{(i+1)},T]$.
Denote by $(\widetilde{Y}^{0,T-h^{(i)}},\widetilde{Z}^{0,T-h^{(i)}},\widetilde{K}^{0,T-h^{(i)}})$ the unique solution of Eq.~\eqref{58} on the interval $[T-h^{(i+1)},T-h^{(i)}]$ with terminal time $T-h^{(i)}$ and parameter $(Y^{0}_{T-h^{(i)}},\widetilde{f}^{0,T-h^{(i)}},\widetilde{L}^{0,T-h^{(i)}},0,0).$  Note that $Y^{n}_{T-h^{(i)}}$ converges to $Y^{0}_{T-h^{(i)}}$ a.s. and $(\widetilde{f}^{n,T-h^{(i)}},\widetilde{L}^{n,T-h^{(i)}})$ converges to $(\widetilde{f}^{0,T-h^{(i)}},\widetilde{L}^{0,T-h^{(i)}})$ locally uniformly in $(t,x,\widetilde{y},\widetilde{z}).$ Thus, by Lemma~\ref{lem:continu solution map}, it follows that
\begin{equation*}
\begin{aligned}
&\widetilde{Y}^{n,T-h^{(i)}}_{\cdot}\rightarrow \widetilde{Y}^{0,T-h^{(i)}}_{\cdot}, \  \widetilde{K}^{n,T-h^{(i)}}_{\cdot} - \widetilde{K}^{n,T-h^{(i)}}_{T-h^{(i+1)}}\rightarrow \widetilde{K}^{0,T-h^{(i)}}_{\cdot} - \widetilde{K}^{0,T-h^{(i)}}_{T-h^{(i+1)}} \\
&\qquad\qquad\qquad\qquad\qquad\qquad\textrm{ uniformly on } [T-h^{(i+1)},T-h^{(i)}] \textrm{ a.s.,}\\
&\widetilde{Z}^{n,T-h^{(i)}}\rightarrow \widetilde{Z}^{0,T-h^{(i)}} \textrm{ in } \mathrm{H}^{2}_{[T-h^{(i+1)},T-h^{(i)}]}.
\end{aligned}
\end{equation*}
For $t\in[T-h^{(i+1)},T-h^{(i)}]$, set 
\begin{equation*}
\begin{aligned}
&Y^0_t:=\phi^{0,T-h^{(i)}}(t,S_t,\widetilde{Y}^{0,T-h^{(i)}}_t), \ K^{0,T-h^{(i)}}_t:=\int_{T-h^{(i+1)}}^t D_y \phi^{0,T-h^{(i)}}(s,S_s,\widetilde{Y}^{0,T-h^{(i)}}_s)d\widetilde{K}^{0,T-h^{(i)}}_s,\\ 
&Z^0_t:=D_y\phi^{0,T-h^{(i)}}(t,S_t,\widetilde{Y}^{0,T-h^{(i)}}_t)\left[\widetilde{Z}^{0,T-h^{(i)}}_t+\frac{(D_x\phi^{0,T-h^{(i)}}(t,S_t,\widetilde{Y}^{0,T-h^{(i)}}_t))^{\top}}{D_y\phi^{0,T-h^{(i)}}(t,S_t,\widetilde{Y}^{0,T-h^{(i)}}_t)}\sigma_t\right].
\end{aligned}
\end{equation*}
Repeating the same argument as $i=0,$ we have the following convergence in the form of \eqref{66}
\begin{equation}\label{66'}
\begin{aligned}
&{Y}^n_{\cdot}\rightarrow {Y}^0_{\cdot}, \  {K}^n_{\cdot} - K^n_{T-h^{(i+1)}}\rightarrow {K}^{0,T-h^{(i)}}_{\cdot} \textrm{ uniformly on } [T-h^{(i+1)},T-h^{(i)}] \textrm{ a.s.,}\\
&{Z}^n\rightarrow {Z}^0 \textrm{ in } \mathrm{H}^{2}_{[T-h^{(i+1)},T-h^{(i)}]},
\end{aligned}
\end{equation}
and then
\begin{equation*}
Y^{n}_{\cdot}\rightarrow Y^{0}_{\cdot},\text{ uniformly on }[T-h^{(i+1)},T] \text{ a.s..}
\end{equation*}
Thus, the induction is valid, and then \eqref{66'} holds for every $i=0,1,...,N-1.$ Let $K^0_0:=0,$ and for each $i=0,1,...,N-1$ and $t\in(T-h^{(i+1)},T-h^{(i)}]$, let
\begin{equation*}
K^0_t:= \sum_{j=1}^{N-i-1}K^{0,T-h^{(N-j)}}_{T-h^{(N-j)}}+K^{0,T-h^{(i)}}_t,
\end{equation*}
where we stipulate that $\sum\limits_{j=1}^{0} K^{0,T-h^{(N-j)}}_{T-h^{(N-j)}} = 0.$ Clearly, for $t\in(T-h^{(i+1)},T-h^{(i)}],$ we have 
\begin{equation*}
\left|K^{n}_{t} - K^{0}_{t}\right|= \left|\sum_{j=1}^{N-i-1}\left[K^{n}_{T-h^{(N-j)}} - K^{n}_{T-h^{(N-j+1)}} - K^{0,T-h^{(N-j)}}_{T-h^{(N-j)}}\right] + K^{n}_{t} - K^{n}_{T-h^{(i+1)}} - K^{0,T-h^{(i)}}_{t} \right|.
\end{equation*}
Therefore, as $n\rightarrow\infty,$ we have
\begin{equation}\label{e:sum k}
\begin{aligned}
&\sup_{t\in[0,T]}\left|Y^n_t-Y^0_t\right|\leq \sum_{i=0}^{N-1}\sup_{T-h^{(i+1)}< t\le T-h^{(i)}}\left|Y^n_t-Y^0_t\right|\rightarrow 0,\quad\text{a.s.,}\\
&\sup_{t\in[0,T]}\left|K^n_t-K^0_t\right|\leq \sum_{i=0}^{N-1}\sup_{T-h^{(i+1)}< t\le T-h^{(i)}}\left|K^n_t - K^{n}_{T-h^{(i+1)}} - K^{0,T-h^{(i)}}_t\right|\rightarrow 0,\quad\text{a.s.,}\\
&\mathbb{E}\left[\int_0^T \left|Z^n_t-Z^0_t\right|^2 dt\right]\le \sum_{i=0}^{N-1}\mathbb{E}\left[\int_{T-h^{(i+1)}}^{T-h^{(i)}} \left|Z^n_t -Z^0_t\right|^2 dt\right]\rightarrow 0.
\end{aligned}
\end{equation}
Furthermore, by the same calculation as in \eqref{e:Skorohod tilde Y}, we have for $t\in[0,T],$ 
\begin{equation*}
Y^0_t\geq L_t,\  \text{and }\int_{0}^{T} (Y^{0}_{r} - L_{r}) dK^{0}_{t}=0.
\end{equation*}

Finally, $(Y^0,Z^0,K^0)$ is independent of the approximation sequence since the sequence of $\mathrm{X}^n$ is arbitrarily chosen in the beginning. The proof is complete.
\end{proof}

In the above theorem, we have shown that $\|Z\|_{\mathrm{H}^{2};[0,T]}<\infty.$ In the following we improve the integrability via proving the boundedness of the BMO norm of $Z$, where  
\begin{equation*}
\|Z\|_{\text{BMO}} :=  \esssup_{(t,\omega)\in[0,T]\times\Omega}\left|\mathbb{E}_{t}\left[\int_{t}^{T}|Z_{r}|^{2}dr\right]\right|^{\frac{1}{2}}.
\end{equation*}
Consequently, we have the $L^2$-boundedness of $K,$ which is essential to the penalization method introduced in Subsection \ref{subsec:penalization}.

\begin{proposition}\label{prop:BMO}
Under the same assumption as in Theorem~\ref{thm:well-posedness}, let $(Y,Z,K)$ be the solution of Eq.~\eqref{58} with parameter $(\xi,f,L,H,\mathbf{X}).$ Then there exists a constant $C>0$ depending on $\|\mathbf{X}\|_{p\text{-var};[0,T]}$ 
such that 
\begin{equation*}
\|Z\|_{\text{BMO}}\le C,\ \text{ and }\ \|K_{T}\|_{L^{2}}\le C.
\end{equation*}
\end{proposition}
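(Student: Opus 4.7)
The strategy is to localize via the Doss--Sussman transformation and reduce to known BMO-type estimates for classical quadratic RBSDEs. First, using Lemma~\ref{lem40} (iii) and Lemma~\ref{lem:length of the subinterval}, pick $\delta > 0$ and a partition $0 = T - h^{(N)} < \dots < T - h^{(0)} = T$ so that $\|\mathbf{X}\|_{p\text{-var};[T-h^{(i+1)}, T-h^{(i)}]} \le \delta$ for every $i$, where $N$ depends only on $\|\mathbf{X}\|_{p\text{-var};[0,T]}$. On each subinterval, the transformed triple $(\widetilde{Y}^{T-h^{(i)}}, \widetilde{Z}^{T-h^{(i)}}, \widetilde{K}^{T-h^{(i)}})$ defined in \eqref{e1} solves a classical RBSDE with parameter $(Y_{T-h^{(i)}}, \widetilde{f}^{T-h^{(i)}}, \widetilde{L}^{T-h^{(i)}}, 0, 0)$, where by Lemma~\ref{lem40} the driver has quadratic growth with constants depending only on $\|\mathbf{X}\|_{p\text{-var};[0,T]}$, and by Lemma~\ref{lem:A Prior Estimate} and Step~1 of Theorem~\ref{thm:well-posedness} the transformed state $\widetilde{Y}^{T-h^{(i)}}$ and obstacle $\widetilde{L}^{T-h^{(i)}}$ are bounded uniformly by some constant $\widetilde{M}$.

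The BMO estimate on each subinterval is obtained by applying It\^o's formula to $u(\widetilde{Y}^{T-h^{(i)}}_t) = e^{-\beta \widetilde{Y}^{T-h^{(i)}}_t}$ with $\beta > 2\widetilde{C}_0$. Since $u' < 0$ and $d\widetilde{K}^{T-h^{(i)}} \ge 0$, the reflection term has the favorable sign and can be dropped; using $|\widetilde{f}| \le \widetilde{c}(\widetilde{M}) + \widetilde{C}_0|\widetilde{z}|^2$, the choice $\beta > 2\widetilde{C}_0$ allows the $|\widetilde{Z}|^2$ term on the right to be absorbed into the left, leaving
\[
\Bigl(\tfrac{\beta^2}{2} - \widetilde{C}_0 \beta\Bigr) e^{-\beta \widetilde{M}} \, \mathbb{E}_s\!\left[\int_s^{T-h^{(i)}} |\widetilde{Z}^{T-h^{(i)}}_r|^2 \, dr\right] \le C,
\]
with $C$ depending on $\beta,\widetilde{M},\widetilde{c}(\widetilde{M}),T$. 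Inverting via \eqref{e:inverse transformation} together with the uniform bounds on $D_y\phi^{T-h^{(i)}}, D_x\phi^{T-h^{(i)}}$ from Lemma~\ref{lem:boundedness of the flow} and the boundedness of $\sigma$ from Assumption~\ref{(H_b,sigma)} yields $|Z_t|^2 \le C'(|\widetilde{Z}^{T-h^{(i)}}_t|^2 + 1)$, hence a subinterval BMO bound on $Z$. Summing over the $N$ subintervals gives $\|Z\|_{\mathrm{BMO}} \le C$.

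For the $L^2$ bound on $K_T$, I would use the transformed RBSDE on each subinterval: standard estimates for the classical quadratic RBSDE (applying It\^o to $|\widetilde{Y}^{T-h^{(i)}}|^2$ and using the Skorokhod condition $\int \widetilde{Y} \, d\widetilde{K} = \int \widetilde{L} \, d\widetilde{K}$ with bounded $\widetilde{L}$) give $\mathbb{E}\bigl[|\widetilde{K}^{T-h^{(i)}}_{T-h^{(i)}} - \widetilde{K}^{T-h^{(i)}}_{T-h^{(i+1)}}|^2\bigr] \le C$; transferring back through the boundedness of $D_y\phi^{T-h^{(i)}}$ and summing the (finitely many) subinterval contributions yields $\|K_T\|_{L^2} \le C$. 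The main obstacle is the reflection contribution in the exponential Itô argument: without the trick of using the \emph{decreasing} exponential $e^{-\beta \widetilde{Y}}$ (so that $u'(\widetilde{Y}) \, d\widetilde{K} \le 0$ has the correct sign and the reflection is harmless), one would be trapped in a circular dependence between the BMO bound on $\widetilde{Z}$ and the $L^2$ bound on $\widetilde{K}$.
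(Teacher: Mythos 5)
Your exponential It\^o argument is a valid and in fact quite explicit substitute for the paper's appeal to \cite[Proposition~2.2]{sun2022quantitative}: applying It\^o to $e^{-\beta\widetilde Y}$ with $\beta>2\widetilde C_0$, the reflection contributes $\beta e^{-\beta\widetilde Y}\,d\widetilde K\ge 0$ on the left of the resulting identity, so it can be discarded while the quadratic $|\widetilde Z|^2$ term is absorbed by $\tfrac{\beta^2}{2}-\widetilde C_0\beta>0$. This is the standard trick and the sign bookkeeping is correct. Two remarks on differences from the paper.

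First, the subinterval decomposition via Lemma~\ref{lem:length of the subinterval} is unnecessary for the BMO step. The paper derives the bound on $\widetilde Z^{n,T}$ on the \emph{whole} interval $[0,T]$ for the smooth approximations, because Lemma~\ref{lem:smmoth case} guarantees that $(\widetilde Y^{n,T},\widetilde Z^{n,T},\widetilde K^{n,T})$ solves a classical RBSDE on $[0,T]$, and Lemma~\ref{lem40}(i) already gives the global quadratic growth bound $|\widetilde f^{n,T}|\le\widetilde c(|\widetilde y|)+\widetilde C_0|\widetilde z|^2$ on $[0,T]$ (the small-interval splitting is only needed for conditions (iii) of Lemma~\ref{lem40}, which are not used in the exponential argument). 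Then $\|Z^n\|_{\text{BMO}}$ follows by the bounded inverse transformation, and $\|Z\|_{\text{BMO}}$ by Fatou for conditional expectations applied along the approximation $Z^n\to Z$. Your subinterval route is not wrong (and BMO norms do sum over finitely many subintervals), but it adds an assembly step and obscures the fact that the transformation constants and the uniform bound on $\widetilde Y$ are already global.

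Second, and more substantively: your proposed argument for $\|K_T\|_{L^2}$ is imprecise and points in the wrong direction. Applying It\^o to $|\widetilde Y|^2$ together with the Skorokhod condition $\int\widetilde Y\,d\widetilde K=\int\widetilde L\,d\widetilde K$ gives an estimate of $\mathbb E\bigl[\int|\widetilde Z|^2\bigr]$ \emph{in terms of} $\mathbb E[\widetilde K_{T'}]$, not a bound on $\widetilde K$ itself. The step that is actually needed is Kazamaki's energy inequality \cite[Corollary~2.1]{kazamaki1994continuous}: the BMO bound already obtained yields $\sup_n\mathbb E\bigl[\bigl(\int_0^T|\widetilde Z^{n,T}_r|^2\,dr\bigr)^2\bigr]<\infty$, and then $\widetilde K^{n,T}_T$ is read off directly from the transformed equation
\[
\widetilde K^{n,T}_T=\widetilde Y^{n,T}_0-\xi-\int_0^T\widetilde f^{n,T}\,dr+\int_0^T\widetilde Z^{n,T}_r\,dW_r,
\]
with the quadratic term in $\widetilde f^{n,T}$ controlled by the $\mathrm H^4$ bound just produced. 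This is exactly what the paper does. Your proof would be complete if you replace the $|\widetilde Y|^2$ step with the Kazamaki $\mathrm H^4$ bound followed by this direct reading of the equation.
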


\begin{proof}
For each $n\ge 1$ let $(Y^n,Z^n,K^n)$ be given in Theorem~\ref{thm:well-posedness}, and $(\widetilde{Y}^{n,T},\widetilde{Z}^{n,T},\widetilde{K}^{n,T})$ be defined in \eqref{e1}.  
We first claim that there exits a constant $\widetilde{C}_{\text{BMO}}>0$ such that 
\begin{equation}\label{e:||tilde Z||_BMO}
\sup_{n\ge 1}\|\widetilde{Z}^{n,T}\|_{\text{BMO}}\le\widetilde{C}_{\text{BMO}}.
\end{equation}
If so, by the representation~\eqref{e:inverse transformation}, and boundedness of the inverse transformation shown by Lemma~\ref{lem:boundedness of the flow}, the above boundedness implies that $\sup_{n\ge 1}\|Z^{n}\|_{\text{BMO}}\le C_{\text{BMO}}$ with some constant $C_{\text{BMO}}>0.$ In addition, recalling that $Z$ is the limit of $Z^n$ in $\mathrm{H}^{2}_{[0,T]}(\R^d),$ it follows that $\int_{t}^{T}|Z^{n}_{r}|^2dr\rightarrow \int_{t}^{T}|Z_{r}|^2dr $ in probability. Then we obtain $\|Z\|_{\text{BMO }}\le C_{\text{BMO}}$ by Fatou’s Lemma for conditional expectations. Indeed, \eqref{e:||tilde Z||_BMO} can be proved similarly as the proof of \cite[Proposition 2.2]{sun2022quantitative} so we omit the proof\footnote{Although in \cite{sun2022quantitative} parameters of the corresponding RBSDE are assumed to be Markovian, the boundedness of BMO norm can be extended to the case satisfying $(A1)$ of Assumption~\ref{(H1)}.}. 

To complete the proof, it remains to show the boundedness of  $\|K\|_{\mathrm{I}^{2};[0,T]}.$ 
Firstly, according to \cite[Corollary~2.1]{kazamaki1994continuous}, the estimate \eqref{e:||tilde Z||_BMO} implies that there exist a constant $C'>0$ such that
\begin{equation*}
\sup_{n\ge 1}\|\widetilde{Z}^{n,T}\|_{\mathrm{H}^{4}_{[0,T]}}\le C'.
\end{equation*}
Secondly, by \eqref{e:bounde of Yn} we have $\sup_{n\ge 1}|\widetilde{Y}^{n,T}_{t}|\le \widetilde{M}$ with some constant $\widetilde{M}>0.$ Finally, noting that
\begin{equation*}
\widetilde{K}^{n,T}_{T} = \widetilde{Y}^{n,T}_{0} - \xi - \int_{0}^{T}\widetilde{f}^{n,T}(r,S_r,\widetilde{Y}^{n,T}_{r},\widetilde{Z}^{n,T}_{r})dr + \int_{0}^{T}\widetilde{Z}^{n,T}_{r} dW_{r},
\end{equation*}
we obtain the following inequality by the condition (A1) in \ref{(H0)}
\begin{equation*}
\sup_{n\ge 1}\mathbb{E}\left[\left|\widetilde{K}^{n,T}_{T}\right|^{2}\right] \le C'' + C'' \sup_{n\ge 1}\mathbb{E}\left[\left|\int_{0}^{T}|\widetilde{Z}^{n,T}_{r}|^{2}dr\right|^{2}\right] 
\end{equation*}
with some constant $C''>0,$ and then the boundedness of $\mathbb{E}\left[|K_{T}|^{2}\right]$ follows.
\end{proof}

The following comparison theorem is a direct consequence of the comparison theorem for RBSDEs with quadratic growth (we refer 
it to \cite[Proposition~3.2]{KLQT}) and Theorem~\ref{thm:well-posedness}. 

\begin{corollary}\label{cor:comparison}
Suppose that for each $j=1,2,$ the parameter $(\xi^{j},f^{j},L^{j},H,\mathbf{X})$ satisfy the conditions in Theorem~\ref{thm:well-posedness}. Assume further that  $\xi^{1}\le \xi^{2},$ $f^{1}\le f^{2},$ and $L^{1}\le L^{2}.$ Let $(Y^{1},Z^{1},K^{1})$ and $(Y^{2},Z^{2},K^{2})$ be the solution of Eq.~\eqref{58} with parameters $(\xi^{1},f^{1},L^{1},H,\mathbf{X})$ and $(\xi^{2},f^{2},L^{2},H,\mathbf{X})$ respectively. Then, a.s., for all $t\in[0,T],$
\begin{equation*}
    Y^{1}_t\le Y^{2}_t.
\end{equation*}
\end{corollary}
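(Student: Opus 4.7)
The plan is to derive the comparison result by lifting the standard comparison theorem for quadratic-growth RBSDEs through the smooth approximation provided by Theorem~\ref{thm:well-posedness}. Fix a single sequence $\mathrm{X}^{n}\in\bigcap_{\lambda\ge 1}\text{Lip}^{\lambda}([0,T],\R^{l})$ whose lifts $\mathbf{X}^{n}$ converge to $\mathbf{X}$ in $C^{0,p\text{-var}}([0,T],G^{[p]}(\R^{l}))$ (such a sequence exists by Remark~\ref{rem:geo rough path}), and let $(Y^{j,n},Z^{j,n},K^{j,n})$ denote the unique solutions to Eq.~\eqref{58} with parameters $(\xi^{j},f^{j},L^{j},H,\mathrm{X}^{n})$ for $j=1,2$. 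For each fixed $n$, these are classical RBSDEs whose generators are
\[
F^{j,n}(r,S_{r},y,z) := f^{j}(r,S_{r},y,z) + H(S_{r},y)\dot{\mathrm{X}}^{n}_{r},\qquad j=1,2.
\]

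Next I would check that $F^{j,n}$ fits the hypotheses of the classical comparison theorem from \cite[Proposition~3.2]{KLQT}. Since $\dot{\mathrm{X}}^{n}$ is bounded on $[0,T]$ for each fixed $n$, and since by Assumption~\ref{(Hpr)} the vector field $H$ is bounded together with its derivatives up to any needed order in $(x,y)$, the extra term $H(S_{r},y)\dot{\mathrm{X}}^{n}_{r}$ contributes only bounded (and Lipschitz in $y$) perturbations. Thus $F^{j,n}$ inherits from $f^{j}$ the quadratic growth in $z$, the local Lipschitz condition in $z$, and the one-sided bound on $D_{y}F^{j,n}$ required in \ref{(H0)}. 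Crucially, because $H$ is identical in the two equations, the inequality $f^{1}\le f^{2}$ upgrades immediately to $F^{1,n}\le F^{2,n}$. Combined with $\xi^{1}\le \xi^{2}$, $L^{1}\le L^{2}$, and $L^{j}_{T}\le \xi^{j}$ from \ref{(H_L,xi)}, the classical comparison theorem for quadratic-growth RBSDEs yields
\[
Y^{1,n}_{t}\le Y^{2,n}_{t},\quad t\in[0,T],\ \text{a.s.}
\]

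Finally, I would pass to the limit $n\to\infty$. Theorem~\ref{thm:well-posedness} gives, along the common approximating sequence $\mathbf{X}^{n}$, that $Y^{j,n}_{\cdot}\to Y^{j}_{\cdot}$ uniformly on $[0,T]$ almost surely for each $j=1,2$. Taking the a.s.\ pointwise limit in the inequality above preserves it, yielding $Y^{1}_{t}\le Y^{2}_{t}$ for all $t\in[0,T]$ a.s.

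I do not expect a genuinely difficult step here: the construction is standard once Theorem~\ref{thm:well-posedness} is in place, and the only mildly delicate point is verifying that the $n$-dependent drivers $F^{j,n}$ still fall within the framework of \cite[Proposition~3.2]{KLQT} uniformly in the variables $(y,z)$ (but not uniformly in $n$, which is unnecessary since we apply comparison level by level). The essential input is simply the monotonicity in $f$ together with the identical rough-path term in the two equations, so no additional regularization, localization, or reflection-specific argument is needed beyond what Theorem~\ref{thm:well-posedness} already provides.
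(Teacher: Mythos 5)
Your proof is correct and follows exactly the route the paper itself indicates (the paper labels this corollary ``a direct consequence of the comparison theorem for RBSDEs with quadratic growth [KLQT, Prop.~3.2] and Theorem~\ref{thm:well-posedness}'' and gives no further argument): apply the classical quadratic-RBSDE comparison to the approximating equations driven by the smooth $\mathrm{X}^{n}$ and then pass to the a.s.\ uniform limit supplied by Theorem~\ref{thm:well-posedness}. Your verification that $F^{j,n}$ stays within the framework of [KLQT, Prop.~3.2] for fixed $n$ is the only detail worth spelling out, and you handle it correctly.
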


Now we present a very useful stability result for rough RBSDEs, which serves as an essential tool for obtaining the results in the following sections.

\begin{proposition}\label{prop:conti of (Y^0,Z^0,K^0)}
Suppose that for each $n\ge 0,$ $(\xi^{(n)},f^{(n)},L^{(n)},H^{(n)},\mathbf{X}^{(n)})$ satisfies the same conditions as in Theorem~\ref{thm:well-posedness}.
Assume further that the sequence $f^n$ converges to $f^{0}$ locally uniformly in $(t,x,y,z)\in[0,T]\times\mathbb{R}^d\times \mathbb{R}\times\mathbb{R}^{d}$ a.s., $\xi^n$ converges to $\xi^{0}$ a.s., $L^{n}$ converges to $L^{0}$ uniformly in $t\in[0,T]$ a.s., $H^{(n)}$ converges to $H^{(0)}$ in $\text{Lip}^{\gamma+2}(\R^{d+1},\R^{l}),$ and $\mathbf{X}^{(n)}$ converges to $\mathbf{X}^{(0)}$ in $C^{0,p\text{-var}}([0,T],G^{[p]}(\R^{l})).$ Let $(Y^{(n)},Z^{(n)},K^{(n)})$ be the unique solution to Eq.~\eqref{58} with parameter $(\xi^{(n)},f^{(n)},L^{(n)},H^{(n)},\mathbf{X}^{(n)})$. Then, we have as $n\rightarrow \infty$ 
\begin{equation*}
\begin{aligned}
(Y^{(n)},K^{(n)}) &\rightarrow (Y^{(0)},K^{(0)}) \text{ uniformly on } [0,T],\ \mathbb{P} \text{-a.s.,}\\
Z^{(n)} &\rightarrow Z^{(0)} \text{ in }\mathrm{H}^{2}_{[0,T]}(\mathbb{R}^d).
\end{aligned}
\end{equation*}
\end{proposition}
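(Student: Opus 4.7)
The plan is to mimic the proof of Theorem~\ref{thm:well-posedness}, namely to transform every rough RBSDE in the sequence to a classical RBSDE via a Doss--Sussman change of variable, apply the classical RBSDE stability on short subintervals where the rough path has small $p$-variation, and then iterate over a common partition of $[0,T]$. The new ingredient, compared with Theorem~\ref{thm:well-posedness}, is that $\xi^{(n)}, f^{(n)}, L^{(n)}, H^{(n)}$ all vary with $n$ in addition to $\mathbf{X}^{(n)}$, so I first need to upgrade the locally uniform convergence of the flow maps to accommodate varying $H^{(n)}$.

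Concretely, for each $n\ge 0$ and each $T'\in[0,T]$ let $\phi^{(n),T'}$ solve
\begin{equation*}
\phi^{(n),T'}(t,x,y)=y+\int_t^{T'} H^{(n)}\bigl(x,\phi^{(n),T'}(r,x,y)\bigr)\,d\mathbf{X}^{(n)}_r,
\end{equation*}
and introduce the transformed data $\widetilde{f}^{(n),T'}, \widetilde{L}^{(n),T'}$ as in~\eqref{e:tilde f^n,T'} together with $(\widetilde{Y}^{(n),T'},\widetilde{Z}^{(n),T'},\widetilde{K}^{(n),T'})$ as in~\eqref{e1}. Under \ref{(Hpr)} and the given convergences $H^{(n)}\to H^{(0)}$ in $\text{Lip}^{\gamma+2}$ and $\mathbf{X}^{(n)}\to\mathbf{X}^{(0)}$ in $p$-variation, a routine extension of Lemma~\ref{lem:stability of the flow} (joint continuity of the It\^o--Lyons map with respect to vector field and driver, cf.~\cite[Theorem~10.26]{friz2010multidimensional}) yields locally uniform convergence on $[0,T']\times\R^d\times\R$ of $\phi^{(n),T'}$ and all its derivatives up to order three, as well as for the $y$-inverses $(\phi^{(n),T'})^{-1}$. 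Combined with Lemma~\ref{lem:locally uniform conver} and the a.s.\ locally uniform convergence $f^{(n)}\to f^{(0)}$, this delivers $\widetilde{f}^{(n),T'}\to\widetilde{f}^{(0),T'}$ locally uniformly in $(t,x,\widetilde{y},\widetilde{z})$ and $\widetilde{L}^{(n),T'}\to\widetilde{L}^{(0),T'}$ uniformly in $t$, a.s.

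By Lemma~\ref{lem40} the estimates on $\widetilde{f}^{(n),T'}$ are uniform in $n$, so Proposition~\ref{prop:RBSDE willposedness} provides a $\delta>0$ (independent of $n$) such that whenever $\|\mathbf{X}^{(n)}\|_{p\text{-var};[s,T']}\le\delta$ the transformed RBSDE with parameters $(\xi^{(n)},\widetilde{f}^{(n),T'},\widetilde{L}^{(n),T'},0,0)$ is well posed on $[s,T']$ with uniform-in-$n$ a priori bounds. Using $\sup_n\|\mathbf{X}^{(n)}\|_{p\text{-var};[0,T]}<\infty$ and Lemma~\ref{lem:length of the subinterval}, I pick a common mesh $0=h^{(0)}<h^{(1)}<\dots<h^{(N)}=T$ with $\sup_n\|\mathbf{X}^{(n)}\|_{p\text{-var};[T-h^{(i+1)},T-h^{(i)}]}\le\delta$. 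On the top slice $[T-h^{(1)},T]$, the classical stability result Lemma~\ref{lem:continu solution map} applied to the transformed equations yields uniform convergence of $\widetilde{Y}^{(n),T}$ and of $\widetilde{K}^{(n),T}_{\cdot}-\widetilde{K}^{(n),T}_{T-h^{(1)}}$, plus $\mathrm{H}^2$-convergence of $\widetilde{Z}^{(n),T}$. Inverting the Doss--Sussman relations~\eqref{e:inverse transformation'} and using the locally uniform convergence of $\phi^{(n),T}$ on the deterministically bounded ranges of $(S,\widetilde{Y}^{(n),T})$ recovers the desired convergences for $Y^{(n)}$ and $Z^{(n)}$; for $K^{(n)}$ I split
\begin{equation*}
K^{(n)}_t-K^{(n)}_{T-h^{(1)}}=\int_{T-h^{(1)}}^t\!\!\bigl(D_y\phi^{(n),T}-D_y\phi^{(0),T}\bigr)(s,S_s,\widetilde{Y}^{(n),T}_s)\,d\widetilde{K}^{(n),T}_s+\int_{T-h^{(1)}}^t\!\!D_y\phi^{(0),T}(s,S_s,\widetilde{Y}^{(0),T}_s)\,d\widetilde{K}^{(n),T}_s,
\end{equation*}
where the first summand vanishes uniformly thanks to the $n$-uniform bound on $\widetilde{K}^{(n),T}_{T}$ (Proposition~\ref{prop:BMO}) and the locally uniform flow convergence, while the second converges uniformly by Lemma~\ref{lem:K} applied with the strictly positive continuous integrand $D_y\phi^{(0),T}(\cdot,S_\cdot,\widetilde{Y}^{(0),T}_\cdot)$.

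To propagate to the lower slices, I use the just-obtained a.s.\ convergence $Y^{(n)}_{T-h^{(i)}}\to Y^{(0)}_{T-h^{(i)}}$ as new terminal data and repeat the argument on $[T-h^{(i+1)},T-h^{(i)}]$; summing over $i=0,1,\dots,N-1$ as in~\eqref{e:sum k} then gives the global convergences, and the Skorokhod condition $\int_0^T(Y^{(0)}_r-L^{(0)}_r)\,dK^{(0)}_r=0$ is inherited from the transformed problem via the computation in~\eqref{e:Skorohod tilde Y}. The main technical obstacle is the uniform convergence of $K^{(n)}$: a priori $dK^{(n)}$ converges only weakly as a measure (via $d\widetilde{K}^{(n),T}$), and pushing this forward through the $n$-dependent integrand $D_y\phi^{(n),T}$ while keeping uniform-in-$t$ control requires both the continuity of the limit $K^{(0)}$ and the CDF argument encoded in Lemma~\ref{lem:K}; that is the step I would be most careful about.
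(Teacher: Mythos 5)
Your proposal reproduces the paper's own strategy essentially step for step: Doss--Sussman transform of each equation to a classical quadratic RBSDE, a common small-$p$-variation mesh $\{h^{(i)}\}$ from Lemma~\ref{lem:length of the subinterval}, the classical stability result Lemma~\ref{lem:continu solution map} on each slice, inversion of the transformation, the two-term decomposition of $K^{(n)}$ handled via Lemma~\ref{lem:K}, and backward induction over the slices. The one place the paper spells out more detail than you do is your ``routine extension of Lemma~\ref{lem:stability of the flow}'' for varying $H^{(n)}$: the paper obtains locally uniform convergence of the flow and its derivatives by bundling $(\phi^{(n)},D_x\phi^{(n)},D_y\phi^{(n)},\dots)$ into a single RDE driven by a derived vector field $\hat H^{(n)}\in\text{Lip}^{\gamma-1}$, verifying $\hat H^{(n)}\to\hat H^{(0)}$, and invoking \cite[Corollary~10.27]{friz2010multidimensional}; your citation of the joint continuity of the It\^o--Lyons map is pointing to the same machinery, so this is a presentational rather than substantive difference.
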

\begin{proof}
The idea is to apply Lemma~\ref{lem:continu solution map} to the transformed equations. For $T'\in[0,T)$ and $t\in[T',T],$ for $n\ge 0$ denote by $\phi^{(n),T'}(t,x,\widetilde{y})$ the unique solution of the following RDE on $t\in[0,T']$
\begin{equation*}
\phi^{(n),T'}(t,x,\widetilde{y}) = \widetilde{y} + \int_{t}^{T'}H^{(n)}(x,\phi^{(n),T'}(r,x,\widetilde{y}))d\mathbf{X}^{(n)}_{r}.
\end{equation*} 
In addition, set 
\begin{equation*}
\begin{aligned}
&\widetilde{Y}^{(n),T'}_{t}:=(\phi^{(n),T'})^{-1}(t,S_{t},Y^{(n)}_{t}), \  \widetilde{K}^{(n),T'}_t:=\int_0^t \frac{1}{D_{y} \phi^{(n),T'}(s,S_{s},\widetilde{Y}^{(n),T'}_s)}dK^{(n)}_s,\\
&\widetilde{Z}^{(n),T'}_t := -\frac{(D_x \phi^{(n),T'}(t,S_t,\widetilde{Y}^{(n),T'}_t))^{\top}}{D_y\phi^{(n),T'}(t,S_t,\widetilde{Y}^{(n),T'}_t)}\sigma_t+ \frac{1}{D_y \phi^{(n),T'}(t,S_t,\widetilde{Y}^{(n),T'}_t)}Z^{(n)}_t,\\
&\widetilde{f}^{(n),T'}\big(t,x,\widetilde{y},\widetilde{z}):=\frac{1}{D_y \phi^{(n),T'}}\bigg\{f^{(n)}(t,x,\phi^{(n),T'},D_y \phi^{(n),T'} \widetilde{z} + \sigma^{\top} D_x \phi^{(n),T'} \big)\\
&\qquad + (D_x \phi^{(n),T'})^{\top} b + \frac{1}{2}\text{Tr}\left\{D^{2}_{xx}\phi^{(n),T'} \sigma\sigma^{\top}\right\} + (D^{2}_{xy}\phi^{(n),T'})^{\top} \sigma \widetilde{z}+\frac{1}{2}D^{2}_{yy}\phi^{(n),T'} |\widetilde{z}|^2\bigg\},
\end{aligned}
\end{equation*}
and $\widetilde{L}^{(n),T'} := (\phi^{(n),T'})^{-1}(t,S_{t},L^{(n)}_{t}).$ Here, $\phi^{(n),T'}$ and all its derivatives are valued at $(t,x,\widetilde{y}).$ By the same argument as the step~2 of the proof of Theorem~\ref{thm:well-posedness}, there exists a sequence of increasing positive numbers $\left\{h^{(i)},i=0,1,2,...,N\right\}$ with $[T-h^{(N)},T-h^{(0)}] = [0,T]$
such that, for each $n\ge 0$ and each $i\in\left\{0,1,2,...,N-1\right\},$  
\begin{equation*}
(\widetilde{Y}^{(n),T-h^{(i)}}_{\cdot},\widetilde{Z}^{(n),T-h^{(i)}}_{\cdot},\widetilde{K}^{(n),T-h^{(i)}}_{\cdot} - \widetilde{K}^{(n),T-h^{(i)}}_{T-h^{(i+1)}})
\end{equation*}
uniquely solves Eq.~\eqref{58} on $[T-h^{(i+1)},T-h^{(i)}]$ with terminal time $T-h^{(i)}$ and parameter $(Y^{(n),T-h^{(i)}}_{T-h^{(i)}},\widetilde{f}^{(n),T-h^{(i)}},\widetilde{L}^{(n),T-h^{(i)}},0,0).$

Now we verify conditions to apply Lemma~\ref{lem:continu solution map} to $(Y^{(n),T-h^{(i)}}_{T-h^{(i)}},\widetilde{f}^{(n),T-h^{(i)}},\widetilde{L}^{(n),T-h^{(i)}})$ on $[T-h^{(i+1)},T-h^{(i)}]$. Indeed, Lemma~\ref{lem:boundedness of the flow} implies that for each $i\in\left\{0,1,...,N-1\right\},$ the terminal values $Y^{(n)}_{T-h^{(i)}},$ the obstacles $\widetilde{L}^{(n),T-h^{(i)}},$ and the functions $\widetilde{f}^{(n),T-h^{(i)}}$ satisfy (i) and (ii) in Lemma~\ref{lem:continu solution map}. 
In addition, set 
\begin{equation*}
\left\{\begin{aligned}
\hat{\phi}^{(n),T-h^{(i)}}_t &:= \Big(x,\phi^{(n),T-h^{(i)}},D_{x}\phi^{(n),T-h^{(i)}},D_{y}\phi^{(n),T-h^{(i)}},D^{2}_{xy}\phi^{(n),T-h^{(i)}},\\
&\qquad\qquad\qquad\qquad\qquad\qquad\qquad D^{2}_{xx}\phi^{(n),T-h^{(i)}},D^{2}_{yy}\phi^{(n),T-h^{(i)}}\Big),\\
\hat{\phi}^{(n),T-h^{(i)}}_T &= (x,y,0,1,0,0,0),
\end{aligned}
\right.
\end{equation*}
where $\phi^{(n),T-h^{(i)}}$ and its derivatives are valuated at $(t,x,y).$ Then, according to the proof of \cite[Theorem~11.12]{friz2010multidimensional}, $\hat{\phi}^{(n),T-h^{(i)}}$ is the unique solution to an RDE along some collection of vector fields $\hat{H}^{(n)} \in \text{Lip}^{\gamma - 1}(\R^{d^2 + 3d + 3},\R^{(d^2 + 3d + 3)\times l})$ and driven by $d\mathbf{X}^{(n)}_t .$ By \cite[Proposition~11.5]{friz2010multidimensional}, the derivatives of $\phi^{(n),T-h^{(i)}}$ satisfies the RDE obtained by formal differentiation. Then by this observation, each element of $\hat{H}^{(n)}$ is multiplication of some derivatives of $H^{(n)}$ and some truncated linear functions. Since $H^{(n)}\rightarrow H^{(0)}$ in $\text{Lip}^{\gamma + 1}(\R^d\times\R,\R^l)$ we have 
\begin{equation}\label{e:hat H^n}
\hat{H}^{(n)}\rightarrow \hat{H}^{(0)} \text{ in }\text{Lip}^{\gamma - 1}(\R^{d^2 + 3d + 3},\R^{(d^2 + 3d + 3)\times l}).
\end{equation}
In view of \eqref{e:hat H^n} and the convergence of $\mathbf{X}^{(n)}\rightarrow \mathbf{X}^{0}$ in $C^{0,p\text{-var}}([0,T],G^{[p]}(\R^l)),$ by \cite[Corollary~10.27]{friz2010multidimensional} 
\begin{equation*}
\hat{\phi}^{(n),T-h^{(i)}}\rightarrow \hat{\phi}^{(0),T-h^{(i)}}\text{ locally uniformly in }(t,x,y)\in [0,T]\times\R^d\times\R.
\end{equation*}
Similarly, we also have the above locally convergence with $\phi^{(n),T-h^{(i)}}$ replaced by $(\phi^{(n),T-h^{(i)}})^{-1}.$ Thus, in view of the convergence of derivatives of $\phi^{(n),T-h^{(i)}}$ and its inverse, and by adapting the proof of Lemma~\ref{lem:locally uniform conver} with $\phi^{n,T'}$ replaced by 
$\phi^{(n),T-h^{(i)}},$ the sequence of $(\widetilde{f}^{(n),T-h^{(i)}},\widetilde{L}^{(n),T-h^{(i)}})$ satisfies the requirements (iii) in Lemma~\ref{lem:continu solution map}.

Finally, we finish the proof by Lemma~\ref{lem:continu solution map} and induction argument. Note that $T = T - h^{(0)}$ and 
\begin{equation*}
Y^{(n)}_{T} = \xi^{n}\rightarrow\xi = Y^{(0)}_{T} \ \text{ a.s..}
\end{equation*} 
For $i=0,1,2,...,N-1,$ assume that the terminal value $Y^{(n)}_{T-h^{(i)}}\rightarrow Y^{(0)}_{T-h^{(i)}}$ a.s.. Then by Lemma~\ref{lem:continu solution map}
\begin{equation*}
\begin{aligned}
&(\widetilde{Y}^{(n),T-h^{(i)}}_{\cdot},\widetilde{K}^{(n),T-h^{(i)}}_{\cdot} - \widetilde{K}^{(n),T-h^{(i)}}_{T-h^{(i+1)}})\rightarrow (\widetilde{Y}^{(0),T-h^{(i)}}_{\cdot},\widetilde{K}^{(0),T-h^{(i)}}_{\cdot} - \widetilde{K}^{(0),T-h^{(i)}}_{T-h^{(i+1)}})\\
&\qquad\qquad\qquad\qquad\qquad\text{ uniformly on }[T-h^{(i+1)},T-h^{(i)}]\ \text{ a.s.,}\\
&\widetilde{Z}^{(n),T-h^{(i)}}\rightarrow \widetilde{Z}^{(0),T-h^{(i)}}\  \text{ in }\ \mathrm{H}^{2}_{[T-h^{(i+1)},T-h^{(i)}]}(\R^{d}).
\end{aligned}
\end{equation*}
As a consequence, we can repeat the proof of \eqref{66} to obtain that
\begin{equation*}
\begin{aligned}
(Y^{(n)}_{\cdot},K^{(n)}_{\cdot} - K^{(n)}_{T-h^{(i+1)}})&\rightarrow (Y^{(0)}_{\cdot},K^{(0)}_{\cdot} - K^{(0)}_{T-h^{(i+1)}})\ \text{ uniformly on }\ [T-h^{(i+1)},T-h^{(i)}]\ \text{ a.s.,}\\
Z^{(n)}&\rightarrow Z^{(0)}\ \text{ in }\ \mathrm{H}^{2}_{[T-h^{(i+1)},T-h^{(i)}]}(\R^{d}).
\end{aligned}
\end{equation*}
This convergence implies that 
\begin{equation*}
Y^{(n)}_{T-h^{(i+1)}}\rightarrow Y^{(0)}_{T-h^{(i+1)}}\text{ a.s. .}
\end{equation*}
Hence, by induction, it follows that  $Y^{(n)}_{T-h^{(i)}} \rightarrow Y^{(0)}_{T-h^{(i)}}$ a.s. for all $i\in\{0,1,...,N\}$, 
and then
\begin{equation*}
\begin{aligned}
(Y^{(n)}_{\cdot},K^{(n)}_{\cdot})&\rightarrow (Y^{(0)}_{\cdot},K^{(0)}_{\cdot})\ \text{ uniformly on }\ [0,T]\ \text{ a.s.,}\\
Z^{(n)}&\rightarrow Z^{(0)}\  \text{ in }\ \mathrm{H}^{2}_{[0,T]}(\R^{d}).
\end{aligned}
\end{equation*}
\end{proof}

\subsection{Approximation via penalization}\label{subsec:penalization}

In this subsection, we show that the solution of rough RBSDE can be constructed by a penalization method. Recall the rough BSDE as follows
\begin{equation}\label{59-}
Y_{t} = \xi + \int_{t}^{T} f(r,S_{r},Y_{r},Z_{r})dr + \int_{t}^{T} H(S_{r},Y_{r})d\mathbf{X}_{r} -\int_{t}^{T} Z_{r} dW_{r},\quad t\in[0,T].
\end{equation}
Suppose that the assumptions \ref{(Hpr)}, \ref{(H0)}, and \ref{(H_b,sigma)} hold, and that $\xi\in L^{\infty}(\mathcal{F}_{T}).$ \cite{DF} shows that Eq.~\eqref{59-} admits a unique solution $(Y,Z)\in \mathrm{H}^{\infty}_{[0,T]}\times \mathrm{H}^{2}_{[0,T]}(\R^{d})$ (see Proposition~\ref{prop:BSDE with rough driver}). 

\begin{theorem}\label{thm:punishment}
Assume the same conditions as in Theorem~\ref{thm:well-posedness}. Let $(Y,Z,K)$ be the unique solution to Eq.~\eqref{e:BSDE main result}. For each $m\ge 1,$ let  $(Y^{m},Z^{m})$ be the solution to the following rough BSDE on $[0,T]$
\begin{equation}\label{e:rough BSDE punish}
Y^{m}_{t} = \xi + \int_{t}^{T}f(r,S_{r},Y^{m}_{r},Z^{m}_{r}) dr + \int_{t}^{T}H(S_{r},Y^{m}_{r})d\mathbf{X}_{r} + m\int_{t}^{T}(Y^{m}_{r} - L_{r})^{-}dr - \int_{t}^{T}Z^{m}_{r}dW_{r}.
\end{equation}
Then $(Y^{m}_{\cdot},K^{m}_{\cdot}) \rightarrow (Y_{\cdot},K_{\cdot})$ uniformly on $[0,T]$ a.s., and $Z^{m}\rightarrow Z$ in $\mathrm{H}^{2}_{[0,T]}(\R^d),$ where $K^{m}_{t} := m\int_{0}^{t}(Y^{m}_{r} - L_{r})^{-} dr.$ 
\end{theorem}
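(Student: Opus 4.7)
The plan is to reinterpret each $(Y^m,Z^m,K^m)$ as the solution of a rough RBSDE whose obstacle $L^m$ converges uniformly to $L$, and then conclude via the stability result Proposition~\ref{prop:conti of (Y^0,Z^0,K^0)}. The subtle point is the uniform convergence of the obstacles, which is obtained by passing through the Doss-Sussman transformation.

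\textbf{Step 1 (reformulation).} With $K^m_t := m\int_0^t (Y^m_r-L_r)^-\,dr$ and $L^m_t := Y^m_t\wedge L_t$, one checks that $(Y^m,Z^m,K^m)$ solves the rough RBSDE with parameter $(\xi,f,L^m,H,\mathbf{X})$: the term $Y^m-L^m=(Y^m-L)^+$ vanishes on the support of $dK^m\propto (Y^m-L)^-\,dr$, so the Skorokhod condition $\int_0^T (Y^m_r-L^m_r)\,dK^m_r=0$ holds.

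\textbf{Step 2 (monotonicity and uniform bounds).} The comparison theorem for rough BSDEs (the non-reflected analogue of Corollary~\ref{cor:comparison}, proved by the same smooth-approximation argument) yields $Y^m\leq Y^{m+1}$ because the penalty $m(y-L_t)^-$ is non-decreasing in $m$; hence $Y^m\uparrow Y^\infty$ pointwise. Using Lemma~\ref{lem:smmoth case} along a smooth approximation of $\mathbf{X}$ together with the a priori estimate for classical quadratic BSDEs (Lemma~\ref{lem40}), $\|Y^m\|_{\mathrm{H}^\infty}$ is uniformly bounded in $m$, so $\|L^m\|_{\mathrm{H}^\infty}$ is too. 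Proposition~\ref{prop:BMO} applied to the RBSDE formulation of Step~1 then gives $\sup_m\|K^m_T\|_{L^2}<\infty$.

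\textbf{Step 3 ($Y^\infty\geq L$ a.e.).} From $\mathbb{E}[K^m_T]=m\,\mathbb{E}\!\left[\int_0^T (Y^m_r-L_r)^-\,dr\right]$ and the $L^2$-bound, $\mathbb{E}\!\left[\int_0^T(Y^m_r-L_r)^-\,dr\right]=O(1/m)\to 0$. Since $(Y^m-L)^-$ is decreasing in $m$, monotone convergence gives $(Y^\infty_r-L_r)^-=0$ for Lebesgue-a.e.\ $r\in[0,T]$ a.s.

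\textbf{Step 4 (continuity of $Y^\infty$ and uniform convergence of $L^m$).} This is the key step. A priori $Y^\infty$ is only lower semi-continuous, which is insufficient for Dini. To promote it to continuity, transfer to the Doss-Sussman coordinates $\widetilde{Y}^m:=(\phi^T)^{-1}(\cdot,S,Y^m)$. Along a smooth approximation of $\mathbf{X}$ (as in the proof of Theorem~\ref{thm:well-posedness}), Lemma~\ref{lem:smmoth case} shows that $\widetilde{Y}^m$ satisfies a classical quadratic BSDE with an additional monotone-in-$m$ penalty. Classical penalization theory for quadratic RBSDEs (cf.\ \cite[Section~6.3.3]{zhang2017backward} and \cite{KLQT}) identifies the monotone limit $\widetilde{Y}^\infty$ with the continuous solution of the classical RBSDE with obstacle $\widetilde{L}:=(\phi^T)^{-1}(\cdot,S,L)$. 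Lemma~\ref{lem:boundedness of the flow} ensures $\phi^T$ is a $C^3$-diffeomorphism with bounded derivatives, so back-transformation yields continuity of $Y^\infty=\phi^T(\cdot,S,\widetilde{Y}^\infty)$. Dini's theorem then upgrades the monotone pointwise convergence $Y^m\uparrow Y^\infty$ to uniform convergence on $[0,T]$ a.s. Since $Y^\infty$ and $L$ are continuous and $Y^\infty\geq L$ a.e., we get $Y^\infty\geq L$ everywhere, whence
\[ L^m_t = Y^m_t\wedge L_t \;\uparrow\; L_t \quad\text{uniformly in } t\in[0,T],\ \text{a.s.} \]

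\textbf{Step 5 (conclusion).} Apply Proposition~\ref{prop:conti of (Y^0,Z^0,K^0)} with $(\xi,f,H,\mathbf{X})$ held fixed and $L^m\to L$ uniformly a.s.: the solutions $(Y^m,Z^m,K^m)$ of the RBSDE with obstacle $L^m$ converge to $(Y,Z,K)$ — uniformly in $t$ a.s.\ for $(Y^m,K^m)$ and in $\mathrm{H}^{2}_{[0,T]}(\R^d)$ for $Z^m$.

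\textbf{Main obstacle.} The hard part is Step~4. Without the Doss-Sussman reduction one cannot pass from the a.e.\ bound $Y^\infty\geq L$ to the pointwise statement, nor from pointwise monotone convergence to uniform convergence, because LSC is not enough to invoke Dini. The rigorous implementation requires carefully tracking the smooth approximation of $\mathbf{X}$ through the classical penalization argument and then inverting via the continuous flow $\phi^T$, which is where the ``continuity and differentiability of the Doss-Sussman transformation'' mentioned in the introduction enters.
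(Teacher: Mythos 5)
Your Steps 1--3 and Step 5 track the paper's argument faithfully: rewrite the penalized rough BSDE as a rough RBSDE with obstacle $L^m=Y^m\wedge L$, establish monotonicity and uniform bounds, deduce $Y^\infty\ge L$ Lebesgue-a.e., and conclude via the stability result Proposition~\ref{prop:conti of (Y^0,Z^0,K^0)}. The genuine gap is in Step~4. You claim that, after the Doss--Sussman transformation, ``classical penalization theory for quadratic RBSDEs identifies the monotone limit $\widetilde Y^\infty$ with the continuous solution of the classical RBSDE with obstacle $\widetilde L$.'' This does not follow off the shelf: the transformed penalty term is $\frac{m}{D_y\phi^T(t,S_t,\widetilde Y^m_t)}\big(\phi^T(t,S_t,\widetilde Y^m_t)-L_t\big)^-$, which is not of the canonical form $m(\widetilde y-\widetilde L_t)^-$ (the effective penalty coefficient is a random, $\widetilde Y^m$-dependent multiple of $m$), so the classical penalization theorems you cite do not directly apply. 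Moreover, the transformed driver $\widetilde f^{T'}$ satisfies the one-sided growth condition on $D_{\widetilde y}\widetilde f^{T'}$ only on small subintervals where $\|\mathbf X\|_{p\text{-var}}$ is small (Lemma~\ref{lem40}(iii)), so any identification of the limit must be done interval-by-interval with terminal-value patching, which you do not address; and establishing upfront that $\widetilde Y^\infty$ is the RBSDE solution is essentially the theorem itself, so the argument risks circularity.

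The paper circumvents continuity of $Y^\infty$ entirely. It applies Dini directly to $L^m=Y^m\wedge L\uparrow L$: each $L^m$ is continuous, the limit $L$ is continuous, so uniform convergence follows from pointwise, i.e.\ from $Y^\infty\ge L$ everywhere. To upgrade the a.e.\ bound to an everywhere bound, it proves the strictly weaker claim that the right limit $Y^\infty_{t+}$ exists and $Y^\infty_t\ge Y^\infty_{t+}$ for all $t$. This is done by passing to a weak $\mathrm H^2$-limit of the transformed reflection terms $\widetilde K^{m,T-h^{(i)}}$ on each small subinterval, invoking the optional-section argument (Lemma~\ref{lem:app of sction theorem}) to get that the weak limit is increasing, and then reading off the right-limit property from the representation of $\widetilde Y^{0,T-h^{(i)}}$. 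You should replace Step~4's appeal to classical penalization with this weak-convergence/right-limit argument (or supply a careful adaptation of classical penalization to the non-standard penalty, which is substantially more work and still requires the subinterval decomposition).
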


\begin{proof}
Denote $f^{m}(t,x,y,z) := f(t,x,y,z) + m(y-L_{t})^{-}.$ Recall that for each $m\ge 1$, $(Y^{m},Z^{m})$ is the unique solution to Eq.~\eqref{59-} with parameter $(\xi,f^m,H,\mathbf{X}).$ Denote $L^{m}_{t} := Y^{m}_{t}\land L_{t}.$ \\

\textbf{Step 1.} We will show that for $m\ge 1$, the unique solution to BSDE~\eqref{e:rough BSDE punish} is also the unique solution to the following RBSDE with rough driver $d\mathbf{X}_r$
\begin{equation}\label{e:BSDE is also RBSDE}
\left\{\begin{aligned}
&Y^m_{t} = \xi + \int_{t}^{T}f(r,S_{r},Y^{m}_{r},Z^{m}_{r}) dr + \int_{t}^{T}H(S_{r},Y^{m}_{r})d\mathbf{X}_{r} - \int_{t}^{T}Z^{m}_{r}dW_{r} + K^{m}_{T} - K^{m}_{t},\\
&Y^{m}_{t}\ge L^{m}_{t},\quad \int_{0}^{T} (Y^{m}_{r} - L^{m}_{r}) dK^{m}_r = 0,\quad t\in[0,T].
\end{aligned}\right.
\end{equation}
Let $\left\{\mathbf{X}^{n},n\ge 1\right\}$ be the same lifts sequence of the smooth paths given in Theorem~\ref{thm:well-posedness} such that $\mathbf{X}^{n}\rightarrow\mathbf{X}\in C^{0,p\text{-var}}([0,T],G^{[p]}(\R^{l})).$ For each $m\ge 1,$ by Proposition~\ref{prop:BSDE with rough driver} we have that $(Y^{m},Z^{m}),$ the solution of Eq.~\eqref{e:rough BSDE punish} with parameter $(\xi,f^{m},H,\mathbf{X}),$ is the limit of solutions of Eq.~\eqref{e:rough BSDE punish} with $(\xi,f^{m},H,\mathbf{X}^{n}),$ $n\ge 1.$ And we denote by $(\bar{Y}^{m,n},\bar{Z}^{m,n})$ the solution of Eq.~\eqref{e:rough BSDE punish} with $(\xi,f^{m},H,\mathbf{X}^{n}).$ For $m,n\ge 1,$ set $\bar{K}^{m,n}_{t} := m\int_{0}^{t}(\bar{Y}^{m,n}_{r} - L_{r})^{-}dr$ and $\bar{L}^{m,n}_{t} := \bar{Y}^{m,n}_{t}\land L_{t}.$ By Definition~\ref{def:smooth case}, for each $ m\ge 1 $ and $ n\ge 1$ $(\bar{Y}^{m,n},\bar{Z}^{m,n},\bar{K}^{m,n})$ also solves RBSDE~\eqref{58} with parameter $(\xi,f^{m},\bar{L}^{m,n},H,\mathbf{X}^{n}).$ Note $\bar{L}^{m,n}_{\cdot}$ converges to $L^{m}_{\cdot}$ uniformly on $[0,T]$ a.s.  as $n\rightarrow\infty$ by the same convergence of $\bar{Y}^{m,n}\rightarrow \bar{Y}^{m}.$ Then the triplet $(\bar{Y}^{m,n},\bar{Z}^{m,n},\bar{K}^{m,n})$ converges to the solution of Eq.~\eqref{e:BSDE is also RBSDE} as $n\rightarrow \infty,$ which is a consequence of the stability result shown in Proposition~\ref{prop:conti of (Y^0,Z^0,K^0)}. Recalling that $(\bar{Y}^{m,n},\bar{Z}^{m,n})\rightarrow(Y^{m},Z^{m})$ as $n\rightarrow\infty,$ we also have 
\begin{equation*}
\bar{K}^{m,n}_{\cdot} = m\int_{0}^{\cdot}(\bar{Y}^{m,n}_{r} - L_{r})^{-} dr\rightarrow m\int_{0}^{\cdot}(Y^{m}_{r} - L_{r})^{-} dr = K^{m}_{\cdot},\ \text{ uniformly on }\ [0,T]\ \text{ a.s., }\  n\rightarrow \infty.
\end{equation*} 
Thus, the triplet $(Y^{m},Z^{m},K^{m})$ solves Eq.~\eqref{e:BSDE is also RBSDE}, or equally, it solves Eq.~\eqref{58} with parameter $(\xi,f,L^{m},H,\mathbf{X}).$\\

\textbf{Step 2.} According to Proposition~\ref{prop:conti of (Y^0,Z^0,K^0)}, it is sufficient to show that $L^{m}_{\cdot} \rightarrow L_{\cdot}$ uniformly on $[0,T]$ a.s. to prove the convergence $(Y^{m},Z^{m},K^{m})\rightarrow(Y,Z,K).$ Noting that $Y^{m}$ is increasing by Proposition~\ref{prop:comparison-BSDE with rough driver} and the limit process $L$ is continuous, Dini's Lemma implies that the uniform convergence of $L^{m}:= Y^{m} \land L $ is equal to the convergence by point. To this end, let $Y^{0} := \lim_{m\rightarrow\infty} Y^{m},$ it suffices to prove that
\begin{equation}\label{e:Y^{0}>=L}
Y^{0}_{t}\ge L_{t},\text{ for all }t\in[0,T].
\end{equation} 
We will demonstrate the above inequality through the following three steps. \\

\textbf{Step 3.} We will show that the essential supremum of $Y^m$ and the $\mathrm{H}^{4}$-norm of $Z^{m}$ are uniformly bounded in $m\ge 1.$ Let $(\hat{Y},\hat{Z})$ be the unique solution to Eq.~\eqref{59-} with parameter $(\xi,f,H,\mathbf{X}).$ Recalling that $(Y^{m},Z^{m})$ is the unique solution to Eq.~\eqref{59-} with parameter $(\xi,f^m,H,\mathbf{X}).$ By  Proposition~\ref{prop:comparison-BSDE with rough driver}, we have a.s.
\begin{equation*}
\hat{Y}_{t} \le Y^{m}_{t},\ \text{ for all }\ t\in[0,T].
\end{equation*}
In view of \cite[Theorem~3]{DF} and Remark~\ref{rem:superlienar}, an estimate similar to \eqref{e:bounde of Yn} can be established for $\hat{Y}$. Consequently, there exists a constant $\hat{M}>0$ such that 
\[\esssup\limits_{(t,\omega)\in[0,T]\times\Omega}|\hat{Y}_t|\le \hat{M}.\] 
Thus, we have $(-\hat{M})\land L_{t}\le L^{m}_{t}\le L_{t}.$ According to the step 1, for each $m\ge 1$ the triplet $(Y^{m},Z^{m},K^{m})$ is also the unique solution to Eq.~\eqref{58} with parameter $(\xi,f,L^{m},H,\mathbf{X}).$ By the same argument leading to \eqref{e:bounde of Yn}, there exists a constant $M>0$ such that
\begin{equation}\label{e:ess |Y^m|}
\sup_{m\ge 1}\esssup_{(t,\omega)\in[0,T]\times \Omega}|Y^{m}_{t}|\le M.
\end{equation}
In addition, Proposition~\ref{prop:BMO} shows that $\|\int_{0}^{\cdot}Z^{m}_{r}dW_{r}\|_{\text{BMO}}$ are uniformly bounded in $m\ge 1.$ By \cite[Corollary~2.1]{kazamaki1994continuous}, we have
\begin{equation}\label{e:|int Z^2|^{2}}
\sup_{m\ge 1}\mathbb{E}\left[\left|\int_{0}^{T}|Z^{m}_{r}|^{2}dr\right|^{2}\right]<\infty.
\end{equation}

\textbf{Step 4.} We will show that a.s., $Y^{0}_{t}\ge L_t$ $dt$-a.e.. For the same reason as in Proposition~\ref{prop:BMO}, we have 
\begin{equation*}
\sup\limits_{m\ge 1}\mathbb{E}\left[\left|K^{m}_{T}\right|^{2}\right]<\infty.
\end{equation*} Hence, since $|Y^{m}|$ is uniformly bounded in $m\ge 1,$ it follows from the dominated convergence theorem that 
\begin{equation*}
\mathbb{E}\left[\int_{0}^{T}(Y^{0}_{t} - L_{t})^{-}dt\right] = \lim_{m\rightarrow \infty}\mathbb{E}\left[\int_{0}^{T}(Y^{m}_{t} - L_{t})^{-}dt\right] = \lim_{m\rightarrow \infty}\frac{1}{m}\mathbb{E}\left[K^{m}_{T}\right] = 0.
\end{equation*}
This implies that $a.s.,$ $Y^{0}_{t}\ge L_{t}$ for a.e. $t\in[0,T].$   \\

\textbf{Step 5.} We will show that a.s. the right limit of $Y^{0}_{t}$ (denoted by $Y^{0}_{t+},$) exists for all $t\in[0,T],$ and it satisfies a.s., $ Y^{0}_{t}\ge Y^{0}_{t+}$ for all $t\in[0,T).$ If this claim is true, combined with the fact that a.s., $Y^{0}_{t} \ge L_{t}$ for a.e. $t\in[0,T]$, it follows that a.s., $Y^{0}_{t}\ge L_{t}$ for all $t\in[0,T],$ which is exactly \eqref{e:Y^{0}>=L}. For any fixed $(T',x,\widetilde{y})\in[0,T]\times\R^d\times\R$, let $\phi^{T'}(\cdot,x,\widetilde{y})$ be the unique solution of the following RDE
\begin{equation*}
\phi^{T'}(t,x,\widetilde{y}) = \widetilde{y} + \int_{t}^{T'} H(x,\phi^{T'}(r,x,\widetilde{y})) d\mathbf{X}_{r},\quad t\in[0,T'].
\end{equation*}
For each $m\ge 1,$ $T'\in[0,T],$ and $t\in[0,T'],$ denote by $(\widetilde{Y}^{m,T'}_{t},\widetilde{Z}^{m,T'}_{t},\widetilde{K}^{m,T'}_{t})$ the following
\begin{equation*}
\begin{aligned}
\widetilde{Y}^{m,T'}_{t}&:=(\phi^{T'})^{-1}(t,S_{t},Y^{m}_{t}), \quad  \widetilde{K}^{m,T'}_t:=\int_0^t \frac{1}{D_{y} \phi^{T'}(s,S_{s},\widetilde{Y}^{m,T'}_s)}dK^{m}_s,\\
\widetilde{Z}^{m,T'}_t&:=-\frac{(D_x \phi^{T'}(t,S_t,\widetilde{Y}^{m,T'}_t))^{\top}}{D_y\phi^{T'}(t,S_t,\widetilde{Y}^{m,T'}_t)}\sigma_t+ \frac{1}{D_y \phi^{T'}(t,S_t,\widetilde{Y}^{m,T'}_t)}Z^{m}_t.\\
\end{aligned}
\end{equation*}
And for $(t,x,\widetilde{y},\widetilde{z})\in[0,T']\times\R^d\times\R\times\R^d$, denote
\begin{equation*}
\begin{aligned}
&\widetilde{f}^{T'}(t,x,\widetilde{y},\widetilde{z}):=\frac{1}{D_y \phi^{T'}}\bigg\{f\big(t,x,\phi^{T'},D_y \phi^{T'} \widetilde{z} + \sigma^{\top} D_x \phi^{T'} \big) + (D_x \phi^{T'})^{\top} b\\
&\qquad\qquad\qquad\qquad\qquad + \frac{1}{2}\text{Tr}\left\{D^{2}_{xx}\phi^{T'} \sigma\sigma^{\top}\right\} + (D^{2}_{xy}\phi^{T'})^{\top} \sigma \widetilde{z}+\frac{1}{2}D^{2}_{yy}\phi^{T'} |\widetilde{z}|^2\bigg\},\\
&\widetilde{L}^{m,T'}_t := (\phi^{T'})^{-1}(t,S_{t},L^{m}_{t}),
\end{aligned}
\end{equation*}
where $\phi^{T'}$ and its derivatives take values at $(t,x,\widetilde{y}),$ and $b,$ $\sigma$ take values at $(t,x).$ By \eqref{e:ess |Y^m|} and Lemma~\ref{lem:boundedness of the flow}, there exists a constant $\widetilde{M}>0$ such that
\begin{equation*}
\sup_{m\ge 1}\esssup_{(t,\omega)\in[0,T']\times\Omega}|\widetilde{Y}^{m,T'}_{t}| \le \widetilde{M}.
\end{equation*}
Thus, by the same argument in the step~2 of the proof pf Theorem~\ref{thm:well-posedness}, there exist an integer $N>0$ and a sequence of $\left\{h^{(i)},i=0,2,...,N\right\}$ such that $[T-h^{(N)},T-h^{(0)}] = [0,T],$ and that, in addition, 
the triplet $(\widetilde{Y}^{m,T-h^{(i)}}, \widetilde{Z}^{m,T-h^{(i)}}, \widetilde{K}^{m,T-h^{(i)}})$ uniquely solves Eq.~\eqref{58} on $[T-h^{(i+1)},T-h^{(i)}]$ with terminal time $T-h^{(i)}$ and parameter $(Y^{m}_{T-kh},\widetilde{f}^{T-h^{(i)}},L^{m,T-h^{(i)}},0,0).$ 
For any $t\in[T-h^{(i+1)},T-h^{(i)}],$ denote
\begin{equation}\label{e:tilde{Y}^0 = ..}
\widetilde{Y}^{0,T-h^{(i)}}_{t} := (\phi^{T-h^{(i)}})^{-1}(t,S_{t},Y^{0}_{t}).
\end{equation}
By the definition of $Y^{0}$ we have that a.s., $\widetilde{Y}^{m,T-h^{(i)}}_{t}\rightarrow \widetilde{Y}^{0,T-h^{(i)}}_{t}$ for all $t\in[T-h^{(i+1)},T-h^{(i)}]$ as $m\rightarrow\infty.$ In addition, by \eqref{e:ess |Y^m|} and \eqref{e:|int Z^2|^{2}}, we have that both $\|\widetilde{Z}^{m,T-h^{(i)}}_{t}\|_{\mathrm{H}^{2};[T-h^{(i+1)},T-h^{(i)}]}$ and $\|\widetilde{f}^{T-h^{(i)}}(t,S_{t},\widetilde{Y}^{m,T-h^{(i)}}_{t},\widetilde{Z}^{m,T-h^{(i)}}_{t})\|_{\mathrm{H}^{2};[T-h^{(i+1)}, T-h^{(i)}]}$ are uniformly bounded in $m\ge 1$ and $i=0,1,2,...,N-1.$ This boundedness implies that there exists a pair of processes $(\widetilde{Z}^{0,T-h^{(i)}},\widetilde{f}^{0,T-h^{(i)}})$ such that, by passing to a subsequence if necessary,
\begin{equation*}
\begin{aligned}
&(\widetilde{Z}^{m,T-h^{(i)}}_{\cdot},\widetilde{f}^{m,T-h^{(i)}}_{\cdot}):=(\widetilde{Z}^{m,T-h^{(i)}}_{\cdot},\widetilde{f}^{T-h^{(i)}}(\cdot,S_{\cdot},\widetilde{Y}^{m,T-h^{(i)}}_{\cdot},\widetilde{Z}^{m,T-h^{(i)}}_{\cdot}))\\
&\qquad\qquad\overset{\text{weakly}}{\longrightarrow} (\widetilde{Z}^{0,T-h^{(i)}}_{\cdot},\widetilde{f}^{0,T-h^{(i)}}_{\cdot})\ 
\text{ in }\ \mathrm{H}^{2}_{[T-h^{(i+1)},T-h^{(i)}]}(\R^d)\times \mathrm{H}^{2}_{[T-h^{(i+1)},T-h^{(i)}]}(\R).
\end{aligned}
\end{equation*}
For $i=0,1,...,N-1$ and $t\in[T-h^{(i+1)},T-h^{(i)}],$ define $\widetilde{K}^{0,T-h^{(i)}}_{t}$ by
\begin{equation}\label{e:after weak conver}
\widetilde{K}^{0,T-h^{(i)}}_{t} := \widetilde{Y}^{0,T-h^{(i)}}_{T-h^{(i+1)}} - \widetilde{Y}^{0,T-h^{(i)}}_{t} - \int_{T-h^{(i+1)}}^{t}\widetilde{f}^{0,T-h^{(i)}}_{r}dr + \int_{T-h^{(i+1)}}^{t}\widetilde{Z}^{0,T-h^{(i)}}_{r}dW_{r}.
\end{equation}
Noting that $\widetilde{Y}^{m,T-h^{(i)}}_{\tau} \rightarrow \widetilde{Y}^{0,T-h^{(i)}}_{\tau}$ a.s. as $m\rightarrow\infty,$ and for $\zeta\in L^{2}(\mathcal{F}_{T})$
\begin{equation*}
\mathbb{E}\left[\int_{T-h^{(i+1)}}^{\tau}\widetilde{f}^{m,T-h^{(i)}}_{r}dr \cdot \zeta \right] = \mathbb{E}\left[\int_{T-h^{(i+1)}}^{T}\widetilde{f}^{m,T-h^{(i)}}_{r}\cdot \mathbb{E}_{r}[\zeta \mathbf{1}_{\{r<\tau}\}] dr \right],
\end{equation*}
we have for each stopping time $\tau$ taking values in $[T-h^{(i+1)},T-h^{(i)}]$  
\begin{equation*}
\widetilde{K}^{m,T-h^{(i)}}_{\tau} \overset{\text{weakly}}{\longrightarrow} \widetilde{K}^{0,T-h^{(i)}}_{\tau}\ \text{ in }\ L^{2}(\mathcal{F}_{T}).
\end{equation*}
Since that for each $m\ge 1$ and $i\in\{0,1,...,N-1\}$ the process $\widetilde{K}^{m,T-h^{(i)}}_{t}$ is increasing in $t\in[T-h^{(i+1)},T-h^{(i)}]$ a.s., Lemma~\ref{lem:app of sction theorem}
implies that  $\widetilde{K}^{0,T-h^{(i)}}_{t}$ is also increasing in $t\in[T-h^{(i+1)},T-h^{(i)}]$ a.s.. In view of the equality~\eqref{e:after weak conver}, for any $i\in\{0,1,...,N-1\}$ and $t\in\left[T-h^{(i+1)},T-h^{(i)}\right)$, we obtain that $\widetilde{Y}^{0,T-h^{(i)}}_{t+}$ exists and $\widetilde{Y}^{0,T-h^{(i)}}_{t}\ge \widetilde{Y}^{0,T-h^{(i)}}_{t+}.$ Thus, $Y^{0}_{t+} = \phi^{T-h^{(i)}}(t+,S_{t+},\widetilde{Y}^{0,T-h^{(i)}}_{t+})$ exists by the continuity of $\phi^{T-h^{(i)}}(t,x,\widetilde{y})$ in $(t,x,\widetilde{y}).$ And then by the representation~\eqref{e:tilde{Y}^0 = ..} and the positivity of $\frac{1}{D_{y}\phi^{T-h^{(i)}}}$ from Lemma~\ref{lem:boundedness of the flow}, it follows that $Y^{0}_{t}\ge Y^{0}_{t+}.$ The proof is complete.
\end{proof}

\section{Applications}\label{sec:applications}

\subsection{Obstacle problem for rough PDEs}\label{subsec:PDE obstacle}

In this subsection, we suppose that $f:[0,T]\times \R^{d} \times \R \times \R^{d}\rightarrow \R$ is deterministic and jointly continuous.  Given $b:[0,T]\times\R^{d}\rightarrow\R^{d},$ $\sigma:[0,T]\times\R^{d}\rightarrow\R^{d\times d},$ $l:[0,T]\times\R^{d}\rightarrow \R,$  $g:\R^{d}\rightarrow \R$ and $\mathbf{X}\in C^{0,p\text{-var}}([0,T],G^{[p]}(\R^{l})),$ consider the following system of forward-backward stochastic differential equations
\begin{equation}\label{e:Markovian}
\left\{\begin{aligned}
&Y^{t,x}_{s} = g(S^{t,x}_{T}) + \int_{s}^{T}f(r,S^{t,x}_r,Y^{t,x}_r,Z^{t,x}_r)dr + \int_{s}^{T}H(S^{t,x}_r,Y^{t,x}_r)d\mathbf{X}_{r}\\
&\quad \quad \quad \quad  - \int_{s}^{T}Z^{t,x}_r dW_{r} + K^{t,x}_{T} - K^{t,x}_{s}, \\
& Y^{t,x}_{s}\ge l(s,S^{t,x}_{s}), \quad \int_{t}^{T} \left(Y^{t,x}_{r} - l(r,S^{t,x}_{r})\right) dK^{t,x}_{r} = 0 , \quad s\in[t,T], \\
&S^{t,x}_{s} = x + \int_{t}^{s} b(r,S^{t,x}_{r}) dr + \int_{t}^{s} \sigma(r,S^{t,x}_{r}) dW_r,\quad s\in[t,T],
\end{aligned}
\right.
\end{equation}
and consider the corresponding rough PDE
\begin{equation}\label{e:rough PDE'}
\left\{\begin{aligned}
&d v(t,x) + \mathcal{L}v(t,x)dt + H(x,v(t,x))d\mathbf{X}_t = 0,\\
&v(T,x) = g(x) ,\quad v(t,x)\ge l(t,x),
\end{aligned}\right.
\end{equation}
where (we stipulate that $(\sigma,b)$ takes value at $(t,x)$)
\begin{equation*}
\mathcal{L}v(t,x) := \frac{1}{2}\text{Tr}\left\{(\sigma\sigma^{\top}D^{2}_{xx}) v(t,x) \right\} + (b^{\top}D_{x}) v(t,x) + f\big(t,x,v(t,x), \sigma^{\top}D_{x}v(t,x)\big).
\end{equation*}

Assume \ref{(Hpr)}, \ref{(H0)}, \ref{(H_b,sigma)} hold for $(H,f,b,\sigma).$ Assume further $(l,g) \in C_b([0,T]\times \R^{d},\R) \times C_b(\R^{d},\R)$ such that $g(x)\ge l(T,x).$ For a smooth driver $d\mathrm{X}_t$ (i.e. $\mathrm{X}\in\bigcap_{\lambda\ge 1}\text{Lip}^{\lambda}([0,T],\R^l)$), $f'(t,x,y,z) := f(t,x,y,z) + H(x,y)\dot{\mathrm{X}}_t$ also satisfies \ref{(H0)}. Thus, by 
Proposition~\ref{prop:exist of PDE}, the PDE \eqref{e:rough PDE'} driven by $d\mathrm{X}_t$ (i.e. with $d\mathbf{X}_t$ replaced by $d\mathrm{X}_t$) admits a unique viscosity solution.
On the other hand, according to Theorem~\ref{thm:well-posedness}, the system~\eqref{e:Markovian} admits a unique solution $(Y^{t,x},Z^{t,x},K^{t,x})$. We aim to establish the relation between $Y^{t,x}$ and the solution of rough PDE~\eqref{e:rough PDE'}. For this purpose, we first introduce the following definition inspired by \cite[Theorem~12]{DF}.

\begin{definition}\label{def:rough PDE}
For any $\mathbf{X} \in C^{0,p\text{-var}}([0,T],G^{[p]}(\mathbb{R}^{l})),$ for $n \ge 1$ let $\mathrm{X}^{n}$ be a smooth path such that their lifts $\mathbf{X}^{n} \rightarrow \mathbf{X}$ in $C^{0,p\text{-var}}([0,T],G^{[p]}(\mathbb{R}^{l})).$ Assume that $u^{n}(t,x)$ is the unique viscosity solution of obstacle PDE~\eqref{e:rough PDE'} driven by $d\mathrm{X}^{n}_t$ (i.e. with $\mathbf{X}$ replaced by $\mathrm{X}^{n}$), and there exists $u \in C([0,T]\times\mathbb{R}^d,\mathbb{R})$ such that
\[ u^{n} \rightarrow u, \quad \text{locally uniformly.} \]
Then we call $u(t,x)$ a solution of the rough PDE~\eqref{e:rough PDE'}. Moreover, if $u(t,x)$ does not depend on the choice of the approximation sequence $\left\{\mathrm{X}^n,n\ge 1\right\},$ we call $u(t,x)$ the unique solution of Eq.~\eqref{e:rough PDE'}.
\end{definition}

\begin{theorem}
\label{thm:rough PDE}
 
Assume that $(H,f)$ satisfies Assumption~\ref{(Hpr)}, \ref{(H0)}, $(l,g) \in C_b([0,T]\times \R^{d},\R) \times C_b(\R^{d},\R)$ such that $g(x)\ge l(T,x),$ and $(b,\sigma)$ satisfies \ref{(H_b,sigma)}. In addition, suppose that $f$ satisfies 
\begin{equation*}
|D_{x}f(t,x,y,z)|\le C'( 1 + |z|^{2} ), 
\end{equation*} 
for some $C'>0$. For any $\mathbf{X}\in C^{0,\text{p-var}}([0,T],G^{[p]}(\R^l))$, let $u(t,x):=Y^{t,x}_t,$ where $(Y^{t,x},Z^{t,x},K^{t,x})$ is the unique solution of Eq.~\eqref{e:Markovian}. Then $u(t,x)$ is the unique solution of the rough PDE~\eqref{e:rough PDE'}.
\end{theorem}

\begin{proof}
First we introduce some notations that will be used for our proof. Denote $\mathbf{X}^{0} := \mathbf{X}$ and $(b,\sigma):=(b(t,x),\sigma(t,x)).$ For $T'\in[0,T],$ let $\phi^{n,T'},n\in\mathbb{N}$ be the unique solution of Eq.~\eqref{e:phi^n}, and let $\mathrm{X}^{n}$ be smooth with their lifts $\mathbf{X}^{n} \rightarrow \mathbf{X}^0$ in $C^{0,p\text{-var}}([0,T],G^{[p]}(\R^l)).$
In view of Proposition~\ref{prop:exist of PDE}, for 
$n\ge 1,$ Eq.~\eqref{e:rough PDE'} driven in $d\mathrm{X}^n_t$ admits a unique viscosity solution $u^n.$ On the other hand, by the monotonicity of $\phi^{n,T'}(t,x,y)$ and $(\phi^{n,T'})^{-1}(t,x,y)$ in $y,$  
\begin{equation}\label{e:u^n > l}
u^n(t,x)>l(t,x) \Leftrightarrow (\phi^{n,T'})^{-1}(t,x,u^n(t,x)) > (\phi^{n,T'})^{-1}(t,x,l(t,x)).
\end{equation}
Let $\widetilde{f}^{n,T'}(t,x,\widetilde{y},\widetilde{z})$ be defined by \eqref{e:tilde f^n,T'}, and $\widetilde{l}^{n,T'}(t,x) : = (\phi^{n,T'})^{-1}(t,x,l(t,x)).$ According to \eqref{e:u^n > l} and \cite[Lemma~6]{friz2014rough}, it follows that 
for $n\ge 1,$ $u^{n}$ is a viscosity solution to Eq.~\eqref{e:rough PDE'} driven by $d\mathbf{X}^{n}_t$ if and only if $v^{n,T'}(t,x) := (\phi^{n,T'})^{-1}(t,x,u^{n}(t,x)) $ is a viscosity solution of the following equation
\begin{equation}\label{e:transformed PDE}
\left\{\begin{aligned}
&d v(t,x) + \left[\frac{1}{2}\text{Tr}\Big\{(\sigma\sigma^{\top}D^{2}_{xx}v(t,x))\right\} + b^{\top}D_{x} v(t,x)\\
&\ \ \ \ \ \  + \widetilde{f}^{n,T'}(t,x,v(t,x), \sigma^{\top}D_{x} v(t,x)) \Big]dt = 0\\
&v(T',x) = u^{n}(T',x) ,\quad v(t,x)\ge \widetilde{l}^{n,T'}(t,x),\quad (t,x)\in [0,T']\times \R^{d}.
\end{aligned}\right.
\end{equation}
For $n=0,$ we denote $\widetilde{l}^{0,T'}(t,x) : = (\phi^{0,T'})^{-1}(t,x,l(t,x)),$ and define $ \widetilde{f}^{0,T'}(t,x,\widetilde{y},\widetilde{z})$ by \eqref{e:tilde f^n,T'} with $n=0$ therein. Now, we will divide our proof into the following three parts.\\

\textbf{Step 1.}  
Assuming additional $u^{n}(T',\cdot)\rightarrow u(T',\cdot),$
we will use the method of semi-relaxed limits (refer to \cite[Section~6]{crandall1992user}) to show that the limit
\begin{equation*}
\overline{v}^{0,T'} := \limsup_{n\rightarrow \infty}{}^{*} v^{n,T},\quad (\text{resp. }\underline{v}_{0,T'} := \liminf_{n\rightarrow \infty}{}_{*} v^{n,T'}),
\end{equation*}
is a viscosity sub  (resp. super) solution of Eq.~\eqref{e:transformed PDE} with $n=0,$ where 
\begin{equation}\label{e:semi-relaxed}
\begin{aligned}
\limsup_{n\rightarrow \infty}{}^{*} v^{n,T'}(t,x) &:= \lim_{n\rightarrow\infty}\sup\left\{v^{m,T'}(s,y): |s-t|\vee|y-x|\le\frac{1}{n},\ \text{and }m\ge n\right\},\\
\liminf_{n\rightarrow \infty}{}_{*} v^{n,T'}(t,x) &:= \lim_{n\rightarrow\infty}\inf\left\{v^{m,T'}(s,y): |s-t|\vee|y-x|\le\frac{1}{n},\ \text{and }m\ge n\right\}.
\end{aligned}
\end{equation}
To see the finiteness of the limits in \eqref{e:semi-relaxed}, for each $n\ge 1$, the Feynman–Kac representation shown in Proposition~\ref{prop:exist of PDE} and the prior bound of the corresponding BSDEs given by Lemma~\ref{lem:A Prior Estimate} imply that $u^{n}(t,x)$ are uniformly bounded in $(t,x)\in [0,T]\times \R^d$ and $n\ge 1.$ In addition, due to the uniform bound of $\|D_y (\phi^{n,T'})^{-1}\|_{\infty;[0,T']\times\R^d\times\R}$ in $n\ge 1,$ the uniform boundedness of $u^{n}(t,x)$ implies that of $v^{n,T},$ i.e., 
\begin{equation*}
\sup_{n\ge 1}\|v^{n,T'}(\cdot,\cdot)\|_{\infty;[0,T']\times\R^d}<\infty.
\end{equation*}
Now, we will show that $\overline{v}^{0,T'}(\cdot,\cdot)$ (resp. $\underline{v}^{0,T'}(\cdot,\cdot)$) is a subsolution (resp. supersolution). Firstly, by \cite[Lemma~1.5 Chapter~V]{bardi1997optimal} we have that $\overline{v}^{0,T'}(\cdot,\cdot)$ (resp. $\underline{v}^{0,T'}(\cdot,\cdot)$) is upper (resp. lower) semicontinuous. 
Secondly, by Lemma~\ref{lem:stability of the flow}, $\widetilde{l}^{n,T'}(\cdot,\cdot)\rightarrow \widetilde{l}^{0,T'}(\cdot,\cdot)$ locally uniformly; and by Lemma~\ref{lem:locally uniform conver}, $\widetilde{f}^{n,T'}\rightarrow \widetilde{f}^{0,T'}$ locally uniformly. In view of $u^{n}(T',\cdot)\rightarrow u(T',\cdot)$ and these two facts,  Proposition~\ref{prop:preserve terminal value} applies, and we have $\overline{v}^{0,T'}(T',x) = \underline{v}^{0,T'}(T',x) = u(T',x).$ Thirdly, assume that $(t,x)\in(0,T')\times\R^{d}$ satisfies 
\begin{equation}\label{e:v0 > l0}
\overline{v}^{0,T'}(t,x)>\widetilde{l}^{0,T'}(t,x),
\end{equation} 
and let $(a,p,\Lambda)\in P^{2,+}\overline{v}^{0,T'}(t,x)$, where the semi-jets $P^{2,+}$ are defined in \ref{append:B}. Using the same argument as in \cite[Lemma~6.1]{crandall1992user}, there exist sequences 
\begin{equation*}
n_{j}\rightarrow \infty,\ 
(t_{j},x_{j})\in(0,T')\times\R^{d},\ (a_{j},p_{j},\Lambda_{j})\in P^{2,+}v^{n_{j},T'}(t_{j},x_{j}),\quad j\ge 1,
\end{equation*}
such that
\begin{equation*}
(t_{j},x_{j},v^{n_{j},T'}(t_{j},x_{j}),a_{j},p_{j},\Lambda_{j})\rightarrow (t,x,\overline{v}^{0,T'}(t,x),a,p,\Lambda),\quad j\rightarrow \infty.
\end{equation*}
In view of the locally uniform convergence of $\widetilde{l}^{n_{j},T'}(\cdot,\cdot) \rightarrow \widetilde{l}^{0,T'}(\cdot,\cdot)$ 
and the inequality \eqref{e:v0 > l0},
we have that $v^{n_{j},T'}(t_{j},x_{j})>\widetilde{l}^{n_{j},T'}(t_{j},x_{j})$ for sufficiently large $j.$ Due to the viscosity property of $v^{n,T'},$ we have 
\begin{equation*}
a_{j} + (p_{j}^{\top}b(t_{j},x_{j})) + \frac{1}{2}\text{Tr}\left\{\sigma\sigma^{\top}(t_{j},x_{j})\Lambda_{j}\right\} + \widetilde{f}^{n_{j},T'}(t_{j},x_{j},v^{n_{j},T'}(t_{j},x_{j}),\sigma^{\top}(t_{j},x_{j})p_{j})\ge 0.
\end{equation*}
According to the fact that $\widetilde{f}^{n,T'}\rightarrow \widetilde{f}^{0,T'}$ locally uniformly, by sending $j\rightarrow \infty$ we have
\begin{equation*}
a + (p^{\top}b(t,x)) + \frac{1}{2}\text{Tr}\left\{\sigma\sigma^{\top}(t,x)\Lambda\right\} + \widetilde{f}^{0,T'}(t,x,\overline{v}^{0}(t,x),\sigma^{\top}(t,x)p)\ge 0,
\end{equation*}
which implies that $\overline{v}^{0,T'}$ is a viscosity subsolution to Eq.~\eqref{e:transformed PDE} with $n=0.$ In addition, the uniform continuity of $l(\cdot,\cdot)$ implies that $\underline{v}^{0,T'}(\cdot,\cdot)\ge \widetilde{l}^{0,T'}(\cdot,\cdot),$ and then we can repeat the above procedure to show that $\underline{v}^{0,T'}$ is a viscosity supersolution to Eq.~\eqref{e:transformed PDE} with $n=0.$\\ 

\textbf{Step 2.}
Following the above step, we will show that $v^{n,T'}(\cdot,\cdot)$ locally uniformly converges to a limit.
Note that $\overline{v}^{0,T'}$ ($\underline{v}^{0,T'}$ resp.) is the viscosity subsolution (supersolution resp.). In view of Lemma~\ref{lem40}, we see that comparison theorem Proposition~\ref{prop:comparison of PDE} applies. Thus, there exists $\delta>0$ such that for whenever $s\in[0,T')$ and $\|\mathbf{X}^{n}\|_{p\text{-var};[s,T']}\le \delta,$  
\begin{equation*}
\overline{v}^{0,T'}(t,x)\le \underline{v}^{0,T'}(t,x)\text{ for }(t,x)\in(s,T']\times\R^{d}.
\end{equation*}
And then by the definition we have $\overline{v}^{0,T'} = \underline{v}^{0,T'}.$  Following the Dini-type argument \cite[Remark~6.4]{crandall1992user}, 
for every $r\in(s,T')$ we have that 
\begin{equation}\label{e:vn uniform converg}
v^{n,T'}(\cdot,\cdot)\rightarrow \overline{v}^{0,T'}(\cdot,\cdot) \text{ locally uniformly on }
 [r,T']\times\R^{d}.
\end{equation}

\textbf{Step 3.}
Now we will prove the convergence of $u^n(\cdot,\cdot).$ Let $\widetilde{u}^{0,T'}(t,x) := \phi^{0,T'}(t,x,\overline{v}^{0,T'}(t,x)).$ Then, Lemma~\ref{lem:stability of the flow} and \eqref{e:vn uniform converg} yield that 
\begin{equation*}
u^{n}(\cdot,\cdot)\rightarrow \widetilde{u}^{0,T'}(\cdot,\cdot) \text{ locally uniformly on }
[r,T']\times\R^{d}.
\end{equation*}
On the other hand, for $n\ge 1,$ denote by $Y^{n;t,x}$ the unique solution of RBSDE 
\eqref{e:Markovian} with $\mathbf{X} = \mathbf{X}^{n}.$ Recall that $u(t,x) = Y^{t,x}_{t}.$ 
Then by Proposition~\ref{prop:exist of PDE} and Theorem~\ref{thm:well-posedness}, for $n\ge 1,$ $u^{n}(t,x) = Y^{n;t,x}_{t}$ and $u^{n}(t,x) \rightarrow Y^{t,x}_{t} = u(t,x).$ Thus, assuming that the locally uniform convergence of $u^{n}(T',\cdot)\rightarrow u(T',\cdot)$ holds, we have that 
$\widetilde{u}^{0,T'}(t,x) = u(t,x)$ on $[r,T']\times\R^d$ and hence
\begin{equation}\label{e:vn uniform converg'}
u^{n}(\cdot,\cdot)\rightarrow u(\cdot,\cdot) \text{ locally uniformly on } [r,T']\times\R^{d}.
\end{equation}
Let $\theta\in(0,T)$ satisfying $\sup_n\|\mathbf{X}^n\|_{p\text{-var};[0,\theta]}\le \delta/2.$ By the fact that $\mathbf{X}^n\rightarrow \mathbf{X}^0$ in $p$-variation metric, and Lemma~\ref{lem:length of the subinterval} with $[0,T]$ replaced by $[\theta,T],$ there exists a sequence of increasing numbers $\{h^{(i)},i=0,1,...,N\}$ satisfying 
\begin{equation*}
h^{(i)} < h^{(i+1)},\ i=0,1,...,N-1,\ h^{(N)} = T - \theta,\ h^{(0)} = 0,
\end{equation*}
and (take $n$ large enough if necessary)
\begin{equation}\label{e:<=delta/2}
\sup_{n}\|\mathbf{X}^n\|_{p\text{-var};[T-h^{(i)},T-h^{(i-1)}]}\le \frac{\delta}{2},\text{ for all }i=1,2,...,N.
\end{equation}
By \eqref{e:<=delta/2} and the fact that $\|\mathbf{X}^n\|_{p\text{-var};[0,\theta]}\le \delta/2,$ we have for $i < N$
\begin{equation*}
\|\mathbf{X}^n\|_{p\text{-var};[T-h^{(i+1)},T-h^{(i-1)}]}\le \delta,\text{ and }\|\mathbf{X}^n\|_{p\text{-var};[0,T-h^{(N-1)}]}\le\delta.
\end{equation*}
Then in view of the above inequalities and the fact that 
\[T-h^{(i)}\in(T-h^{(i+1)},T-h^{(i-1)}),\ T-h^{(N)}\in(0,T-h^{(N-1)}),\]
by \eqref{e:vn uniform converg'} it follows that whenever $u^{n}(T - h^{(i-1)},\cdot)\rightarrow u(T - h^{(i-1)},\cdot)$ locally uniformly,  
\begin{equation}\label{e:u^n -> u locally for i}
u^{n}(\cdot,\cdot)\rightarrow u(\cdot,\cdot)\text{ locally uniformly on }[T-h^{(i)},T-h^{(i-1)}]\times\R^d.
\end{equation}
Since $u^{n}(T,\cdot) = g(\cdot) = u(\cdot)$ for all $n,$ \eqref{e:u^n -> u locally for i} holds for $i=1.$ Thus by induction, the uniform convergence of $u^{n}(T-h^{(i-1)},\cdot)\rightarrow u(T-h^{(i-1)},\cdot)$ can be deduced, and then \eqref{e:u^n -> u locally for i} holds for every $i>1.$ Since $[T-h^{(N)},T-h^{(0)}] = [\theta,T],$ we have 
\begin{equation*}
u^{n}(\cdot,\cdot)\rightarrow u(\cdot,\cdot)\text{ locally uniformly on }[\theta,T]\times\R^d.
\end{equation*}
And then the proof is complete by taking the terminal time $T$ replaced $T+\theta,$ and doing the transition of time $t\mapsto t+\theta.$
\end{proof}

\begin{remark}\label{rem:the point except for 0}

In the above proof, by the same reason mentioned in \cite[Theorem~12]{DF},  the convergence at time $t=0$ is not immediate, due to that the comparison result in Proposition~\ref{prop:comparison of PDE} may not hold at the left endpoint.
However, we can take the transition of time: $t\mapsto t + \theta,$ and extend the coefficients as well as the rough path up to time $0.$ Then the rough PDEs are considered on $[0,T+\theta]$ to get the locally uniform convergence of corresponding $u^n(t-\theta,x)$ on the interval $[\theta,T+\theta].$ Therefore, the convergence of $u^n$ on $[0,T]$ follows by doing the reverse transition of time: $t\mapsto t - \theta.$ 
\end{remark}

\begin{remark}\label{rem:coincide with classical PDE}
Assume $(H,f,l,g,b,\sigma)$ satisfies the requirements in Proposition~\ref{prop:exist of PDE}, and $\mathrm{X}$ is smooth. Let $u$ be the unique viscosity solution of obstacle PDE~\eqref{e:rough PDE'} driven by $d\mathrm{X}_t.$ Then from Theorem~\ref{thm:rough PDE}, it follows that $u$ is also the unique solution in the sense of Definition~\ref{def:rough PDE}.
\end{remark}

\subsection{Optimal stopping problem}\label{subsec:Optimal Stopping}

In this subsection, we aim to show that the solution $Y_{t}$ of Eq.~\eqref{58} with parameter  $(\xi,f,L,H,\mathbf{X})$ corresponds to an optimal stopping time problem in the context of BSDEs with rough drivers $d\mathbf{X}_t$. In the classical case, one of the tasks of optimal stopping problem is to characterize the following value function
\begin{equation*}
\esssup_{\tau\in\mathcal{T}_{t}}\mathbb{E}_{t}\left[G_{\tau}\right],
\end{equation*}
where $G$ is an $\mathcal{F}$-adapted process, $\mathcal{T}_{t}$ is the set of all stopping times taking values in $[t,T].$ Assuming $(Y,Z,K)$ uniquely solves RBSDE~\eqref{e:rBSDE}, it follows from El Karoui et al. \cite[Theorem~5.2]{el1997non} that
\begin{equation*}
Y_{t}  = \esssup_{\tau \in \mathcal{T}_{t}} B_{t}(\tau, L'_{\tau}),
\end{equation*}
where $B_{t}(\tau,L'_{\tau})$ represents the solution to BSDE~\eqref{e:BSDE} with terminal time $\tau,$ terminal value $L'_{\tau},$ and
\begin{equation}\label{L'}
L'_{s} := L_{s}\mathbf{1}_{\{s<T\}} + \xi\mathbf{1}_{\{s=T\}}.
\end{equation}
In the following, we will establish the well-posedness of rough RBSDEs with random terminal times and provide a characterization of the optimal stopping problem in the rough path setting.

Fix a $\tau\in \mathcal{T}_0.$ For $\xi\in L^{\infty}(\mathcal{F}_{\tau}),$ consider the following equation 
\begin{equation}\label{58'}
\left\{\begin{aligned}
Y_{t} &= \xi + \int_{t\land \tau}^{\tau} f(r,S_{r},Y_{r},Z_{r})dr + \int_{t\land \tau}^{\tau} H(S_{r},Y_{r})d\mathbf{X}_{r} - \int_{t\land \tau}^{\tau} Z_{r} dW_{r} + K_{\tau} - K_{t\land \tau}\\
Y_{t}&\ge L_{t\land \tau},\quad \int_{0}^{\tau}(Y_{r} - L_{r}) dK_{r} = 0 ,\quad t\in[0,T],
\end{aligned}\right.
\end{equation}
where $L_{\tau}\le \xi.$ Set  
\begin{equation}\label{e:hat f, hat L, hat H}
\begin{aligned}
\left(\hat{f}(t,x,y,z),\hat{L}_{t},\hat{H}(t,x,y)\right) := \left(f(t,x,y,z) \mathbf{1}_{\{t\le \tau\}},L_{t\land \tau},H(x,y) \mathbf{1}_{\{t\le \tau\}}\right).
\end{aligned}
\end{equation}

\begin{definition}\label{def:stopping terminal}
Let $ \{\mathrm{X}^{n}\}_{n \ge 1}$ be a sequence of smooth paths with lifts $\mathbf{X}^{n} \rightarrow \mathbf{X}$ in $C^{0,p\text{-var}}([0,T],G^{[p]}(\R^{l})).$ Assume that for each $n\ge 1$ Eq.~\eqref{58} with parameter $(\xi,\hat{f},\hat{L},\hat{H},\mathrm{X}^{n})$ admits a unique solution $(Y^{n},Z^{n},K^{n})\in \mathrm{H}^{\infty}_{[0,T]}\times \mathrm{H}^{2}_{[0,T]}(\R^{d})\times\mathrm{I}^{1}_{[0,T]}$. We call a triple $(Y,Z,K)\in \mathrm{H}^{\infty}_{[0,T]}\times \mathrm{H}^{2}_{[0,T]}(\R^{d})\times\mathrm{I}^{1}_{[0,T]}$
 a solution of Eq.~\eqref{58'} with terminal time $\tau$ and parameter $(\xi,f,L,H,\mathbf{X})$ if 
\begin{equation*}
(Y^{n}_{\cdot},K^{n}_{\cdot}) \rightarrow (Y_{\cdot},K_{\cdot})\  \text{ uniformly on }\ [0,T]\  \text{ a.s., }\ 
Z^n \rightarrow Z \ \text{ in }\ \mathrm{H}^{2}_{[0,T]}(\mathbb{R}^d),
\end{equation*}
and 
\begin{equation*}
\int_{0}^{\tau}(Y_{r} - L_{r})dK_{r} = 0,
\end{equation*} 
If  $(Y,Z,K)$ is independent of the choice of $\left\{\mathrm{X}^{n},n\ge 1\right\},$ we call $(Y,Z,K)$ the unique solution of Eq.~\eqref{58'} with terminal time $\tau$ and parameter $(\xi,f,L,H,\mathbf{X}).$
\end{definition}

Now we establish the well-posedness of Eq.~\eqref{58'} by adapting the arguments from Theorem~\ref{thm:well-posedness} to a stopping time scenario.

\begin{proposition}\label{prop:extension of the well-posedness}
Under the same assumptions as in Theorem~\ref{thm:well-posedness}, and assume further that $\xi\in L^{\infty}(\mathcal{F}_{\tau}),$ where $\tau \in \mathcal{T}_{0}.$ Then there exists a unique solution $(Y,Z,K)$ of Eq.~\eqref{58'} with terminal time $\tau$ and parameter $(\xi,f,L,H,\mathbf{X}).$ 
\end{proposition}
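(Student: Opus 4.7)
The proof adapts the argument of Theorem~\ref{thm:well-posedness} to a random terminal time $\tau$. For each smooth path $\mathrm{X}^n$ with lift $\mathbf{X}^n\to\mathbf{X}$, the rough term in Eq.~\eqref{58} with parameter $(\xi,\hat f,\hat L,\hat H,\mathrm{X}^n)$ can be absorbed into the drift since $\mathrm{X}^n$ is smooth, yielding a classical quadratic-growth RBSDE on $[0,T]$ whose data (the progressively measurable driver $[f+H\dot{\mathrm{X}}^n]\mathbf{1}_{\{r\le\tau\}}$ and the continuous bounded obstacle $L_{\cdot\wedge\tau}$) still satisfies Assumptions~\ref{(H0)} and~\ref{(H_L,xi)}. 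Remark~\ref{rem:well-posedness for smooth case} then produces a unique solution $(Y^n,Z^n,K^n)\in\mathrm{H}^\infty_{[0,T]}\times\mathrm{H}^2_{[0,T]}(\R^d)\times\mathrm{I}^1_{[0,T]}$. On the interval $[\tau,T]$ the driver vanishes and the obstacle reduces to the $\mathcal{F}_\tau$-measurable constant $L_\tau\le\xi$, so $(Y^n_t,Z^n_t,K^n_t)=(\xi,0,K^n_\tau)$ a.s.\ there; the nontrivial dynamics occur on the random interval $[0,\tau]$.

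To pass to the limit, I would carry out the Doss-Sussman transformation with random terminal $\tau$. Pathwise in $\omega$, define the flow $\phi^{n,\tau(\omega)}(t,x,y)$ as the unique solution of Eq.~\eqref{e:flow} with $T'=\tau(\omega)$; Lemma~\ref{lem:boundedness of the flow} applies uniformly in $T'\in[0,T]$ and hence for the random choice $T'=\tau(\omega)$, while Lemma~\ref{lem:stability of the flow} transfers the locally uniform convergence of the flow and its first two derivatives to the random terminal. Setting
\begin{equation*}
\widetilde Y^{n,\tau}_t:=(\phi^{n,\tau})^{-1}(t,S_t,Y^n_t),\qquad \widetilde K^{n,\tau}_t:=\int_0^t\frac{dK^n_s}{D_y\phi^{n,\tau}(s,S_s,\widetilde Y^{n,\tau}_s)},\quad t\in[0,\tau],
\end{equation*}
together with the analogue of $\widetilde Z^{n,\tau}$ from \eqref{e1'}, the triple satisfies a classical non-rough RBSDE on $[0,\tau]$ whose driver $\widetilde f^{n,\tau}$ and obstacle $\widetilde L^{n,\tau}$ enjoy the quantitative estimates of Lemma~\ref{lem40} uniformly in $n$ and converge by Lemma~\ref{lem:locally uniform conver}. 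Lemma~\ref{lem:length of the subinterval} provides a partition $\{T-h^{(i)}\}_{i=0}^N$ of $[0,T]$ on each of whose pieces $\|\mathbf{X}\|_{p\text{-var}}\le\delta$, and a backward induction using Lemma~\ref{lem:continu solution map} on each $[(T-h^{(i+1)})\wedge\tau,(T-h^{(i)})\wedge\tau]$ yields the convergence of $\widetilde Y^{n,\tau}$, $\widetilde K^{n,\tau}$ and $\widetilde Z^{n,\tau}$. Inverting the Doss-Sussman map via \eqref{e:inverse transformation} with $\phi^T$ replaced by $\phi^{0,\tau}$ and applying Lemma~\ref{lem:K} to handle the reflection terms produces the claimed convergence of $(Y^n,Z^n,K^n)$; the Skorokhod condition on $[0,\tau]$ passes to the limit exactly as in \eqref{e:Skorohod tilde Y}, and on $[\tau,T]$ it is automatic since $K^n$ is constant and $\hat L\equiv L_\tau\le\xi=Y^n$. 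Uniqueness of the limit is then inherited from the uniqueness of the transformed classical RBSDE at each inductive step and is therefore independent of the approximating sequence $\{\mathrm{X}^n\}$.

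\textbf{The principal obstacle} is the justification of the pathwise Doss-Sussman transformation at the random terminal time $\tau$: the joint measurability in $\omega$ of $\phi^{n,\tau(\omega)}$ and its derivatives up to order three, and the uniformity in $\omega$ of the estimates supplied by Lemmas~\ref{lem:boundedness of the flow}--\ref{lem40}, which currently read as statements about a deterministic $T'$. Both points are expected to be routine consequences of the continuity of the rough flow in its terminal time together with the fact that $\tau$ is a stopping time bounded by $T$, but they must be checked carefully. A closely related issue is that the classical RBSDE stability results Lemma~\ref{lem:continu solution map} and Proposition~\ref{prop:conti of (Y^0,Z^0,K^0)} are formulated for deterministic terminal times; one either extends them directly to the random-terminal setting by stopping all processes at $\tau$, or approximates $\tau$ from above by simple stopping times and patches the solutions together. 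Once these technical points are dealt with, the remainder of the proof is a transcription of the proof of Theorem~\ref{thm:well-posedness} with $T$ replaced by $\tau$.
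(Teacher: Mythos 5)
Your plan deviates from the paper at the crucial step, and the deviation is not a routine technical check — it is a genuine failure. You propose to Doss--Sussman transform with the flow $\phi^{n,\tau(\omega)}$, i.e.\ the solution of Eq.~\eqref{e:flow} with \emph{random} terminal time $T'=\tau(\omega)$. The issue you flag as ``the principal obstacle'' is phrased as one of joint measurability in $\omega$ and of uniformity of the flow estimates; but the real problem is adaptedness. For $t<\tau$, computing $\phi^{n,\tau(\omega)}(t,x,y)=y+\int_t^{\tau(\omega)}H(x,\phi^{n,\tau}(r,x,y))\,d\mathrm{X}^n_r$ requires knowing the value of $\tau(\omega)$, which is generally only $\mathcal{F}_T$-measurable, not $\mathcal{F}_t$-measurable. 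Consequently the transformed processes $\widetilde Y^{n,\tau}_t=(\phi^{n,\tau})^{-1}(t,S_t,Y^n_t)$ (and likewise $\widetilde Z^{n,\tau},\widetilde K^{n,\tau}$) are not adapted, so they cannot be the solution of a classical $(\mathcal{F}_t)$-adapted RBSDE on $[0,\tau]$, and Lemma~\ref{lem:continu solution map} cannot be invoked. This is not a gap that can be filled by ``patching with simple stopping times'' — the very objects you want to study sit outside the function spaces in which the classical RBSDE theory lives.

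The paper sidesteps this entirely and in a way you do not mention. It keeps the flow terminal time \emph{deterministic}: for each piece $[T-h^{(i+1)},T-h^{(i)}]$ of the partition it uses $\phi^{n,T'}$ with $T'=T-h^{(i)}$, exactly as in Theorem~\ref{thm:well-posedness}, and injects the $\tau$-dependence only through the \emph{arguments}: $\widetilde Y^{n,T'}_t:=(\phi^{n,T'})^{-1}(t\wedge\tau,\,S_{t\wedge\tau},\,Y^n_{t\wedge\tau})$, with $\widetilde Z^{n,T'}_t$ carrying a factor $1_{\{t\le\tau\}}$ and $\widetilde K^{n,T'}_t$ integrated up to $t\wedge\tau$ (see Eq.~\eqref{e:tilde Y^n,T'_tau}). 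Since $t\wedge\tau$ is a stopping time $\le t$ and $\phi^{n,T'}$ is nonrandom, these are $\mathcal{F}_t$-measurable. An It\^{o} computation then shows the transformed triple solves, on the deterministic interval $[0,T']$, an RBSDE whose driver is $\widetilde f^{n,T'}(t,x,\widetilde y,\widetilde z)\,1_{\{t\le\tau\}}$ and whose obstacle is $\widetilde L^{n,T'}_{t\wedge\tau}$ — both adapted because the $\tau$-dependence is via the progressively measurable indicator $1_{\{t\le\tau\}}$ and the stopped continuous process $L_{\cdot\wedge\tau}$, not via the flow. From there the argument really is a transcription of Theorem~\ref{thm:well-posedness}: the uniform estimate on $\widetilde Y^{n,T'}$, the dyadic partition from Lemma~\ref{lem:length of the subinterval}, Lemma~\ref{lem:continu solution map} on each piece, the inverse transformation and Lemma~\ref{lem:K} for $K^n$, and the Skorokhod condition by the argument of Eq.~\eqref{e:Skorohod tilde Y}. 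Your remaining observations (e.g.\ that $Y^n$ is frozen at $\xi$ and $Z^n\equiv0$, $K^n$ constant on $[\tau,T]$, and that the limit is independent of the choice of $\{\mathrm X^n\}$) are correct and consistent with the paper, but they cannot rescue the non-adapted transformation; you need to replace the random-terminal flow by the paper's deterministic-terminal, stopped-argument version of the Doss--Sussman map.
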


\begin{proof}
Let $\left\{\mathrm{X}^{n}:[0,T]\rightarrow\R^{l},n\ge 1\right\}$ be a sequence of smooth paths such that their lifts $\mathbf{X}^{n} \rightarrow \mathbf{X}$ in $C^{0,p\text{-var}}([0,T],G^{[p]}(\R^{l})).$ Denoted $\mathbf{X}^{0}:=\mathbf{X}.$ For $n\ge 1$, let $(Y^{n},Z^{n},K^{n})$ be the unique solution of Eq.~\eqref{58} with parameter $(\xi,\hat{f},\hat{L},\hat{H},\mathrm{X}^{n}),$ where $(\hat{f},\hat{L},\hat{H})$ is defined in \eqref{e:hat f, hat L, hat H}. For $n\ge 1,$ $T'\in[0,T],$ and $t\in[0,T'],$ we define  $(\widetilde{Y}^{n,T'},\widetilde{Z}^{n,T'},\widetilde{K}^{n,T'})$ by
\begin{equation}\label{e:tilde Y^n,T'_tau}
\begin{aligned}
&\widetilde{Y}^{n,T'}_{t}:=(\phi^{n,T'})^{-1}(t\land \tau,S_{t\land \tau},Y^n_{t\land \tau}), \  \widetilde{K}^{n,T'}_{t}:=\int_{0}^{t\land \tau} \frac{1}{D_y \phi^{n,T'}(s,S_s,\widetilde{Y}^{n,T'}_s)}dK^n_s,\\
&\widetilde{Z}^{n,T'}_t:= \mathbf{1}_{\{t\le \tau\}}\left(-\frac{(D_x \phi^{n,T'}(t,S_{t},\widetilde{Y}^{n,T'}_{t}))^{\top}}{D_y\phi^{n,T'}(t,S_{t},\widetilde{Y}^{n,T'}_{t})}\sigma_{t} + \frac{1}{D_y \phi^{n,T'}(t,S_{t},\widetilde{Y}^{n,T'}_{t})}Z^{n}_{t}\right),
\end{aligned}
\end{equation}
where $\phi^{n,T'}$ satisfies Eq.~\eqref{e:phi^n}. For $n\ge 0$ let $(\widetilde{f}^{n,T'},\widetilde{L}^{n,T'})$ be given by \eqref{e:tilde f^n,T'}. For $n\ge 1$, applying It\^o's formula to $(\phi^{n,T'})^{-1}(t,S_{t},Y^{n}_{t})$ and using \eqref{e:Skorohod tilde Y}, we have for $t\in[0,T']$
\begin{equation*}
\left\{\begin{aligned}
\widetilde{Y}^{n,T'}_{t\land \tau} &= (\phi^{n,T'})^{-1}(T'\land\tau,S_{T'\land\tau},Y^{n}_{T'\land \tau}) + \int_{t\land \tau}^{T'\land \tau}\widetilde{f}^{n,T'}(r,S_{r},\widetilde{Y}^{n,T'}_{r},\widetilde{Z}^{n,T'}_{r}) dr\\
&\qquad\qquad\qquad   - \int_{t\land\tau}^{T'\land \tau} \widetilde{Z}^{n,T'}_{r} dW_{r} + \widetilde{K}^{n,T'}_{T'\land \tau} - \widetilde{K}^{n,T'}_{t\land \tau},\\
\widetilde{Y}^{n,T'}_{t\land \tau} &\ge \widetilde{L}^{n,T'}_{t\land \tau},\quad \int_{0}^{T'\land \tau}\left(\widetilde{Y}^{n,T'}_{r} - \widetilde{L}^{n,T'}_r \right) d\widetilde{K}^{n}_{r} = 0,
\end{aligned}\right.
\end{equation*}
and then the fact $(\widetilde{Y}^{n,T'}_{t},\widetilde{Z}^{n,T'}_{t},\widetilde{K}^{n,T'}_{t}) = (\widetilde{Y}^{n,T'}_{t\land \tau},\widetilde{Z}^{n,T'}_{t}\mathbf{1}_{\{t\le \tau\}},\widetilde{K}^{n,T'}_{t\land \tau})$ yields that for $t\in[0,T']$
\begin{equation*}
\left\{\begin{aligned}
\widetilde{Y}^{n,T'}_{t} &= (\phi^{n,T'})^{-1}(T'\land\tau,S_{T'\land\tau},Y^{n}_{T'\land \tau}) + \int_{t}^{T'}\widetilde{f}^{n,T'}(r,S_{r},\widetilde{Y}^{n,T'}_{r},\widetilde{Z}^{n,T'}_{r})\mathbf{1}_{\{r\le \tau\}} dr  \\
&\qquad\qquad\qquad - \int_{t}^{T'} \widetilde{Z}^{n,T'}_{r} dW_{r} + \widetilde{K}^{n,T'}_{T} - \widetilde{K}^{n,T'}_{t},\\
\widetilde{Y}^{n,T'}_{t} &\ge \widetilde{L}^{n,T'}_{t\land\tau},\quad \int_{0}^{T'}\left(\widetilde{Y}^{n,T'}_{r} - \widetilde{L}^{n,T'}_{r\land \tau}\right)d\widetilde{K}^{n}_{r} = 0,
\end{aligned}\right.
\end{equation*}
which implies that $(\widetilde{Y}^{n,T'},\widetilde{Z}^{n,T'},\widetilde{K}^{n,T'})$ is a solution of Eq.~\eqref{58} with terminal time $T'$ and parameter $(\widetilde{\xi}^{n,T'},\widetilde{f}^{n,T'}(t,x,\widetilde{y},\widetilde{z})\mathbf{1}_{\{t\le \tau\}},\widetilde{L}^{n,T'}_{t\land\tau},0,0),$ where
\begin{equation}\label{e:xi^n,T'}
\widetilde{\xi}^{n,T'}:= (\phi^{n,T'})^{-1}(T'\land\tau,S_{T'\land\tau},Y^{n}_{T'\land\tau}).
\end{equation}

The rest of the proof is similar to the procedure of \eqref{e:bounde of Yn}--\eqref{e:sum k} in the proof of Theorem~\ref{thm:well-posedness}, so we will just illustrate it briefly. By the same argument used to obtain $\eqref{e:bounde of Yn},$ there exists a constant $\widetilde{M}>0$ such that for all $n\ge 1$ and $T'\in[0,T]$ 
\begin{equation*}
\esssup_{(t,\omega)\in[0,T']\times\Omega}|\widetilde{Y}^{n,T'}_t|\le \widetilde{M}.
\end{equation*}
In addition, by the same argument in the step 2 of the proof of Theorem~\ref{thm:well-posedness}, there exist an integer $N>0$ and a set of increasing real numbers $\left\{h^{(i)},i=0,1,...,N\right\}$ such that $[T-h^{(N)},T-h^{(0)}] = [0,T].$ Furthermore, the Eq.~\eqref{58'} with terminal time $T - h^{(i)}$ and parameter 
\[(\widetilde{\xi}^{n,T - h^{(i)}},\widetilde{f}^{n,T - h^{(i)}}(t,x,\widetilde{y},\widetilde{z})\mathbf{1}_{\{t\le \tau\}},\widetilde{L}^{n,T - h^{(i)}}_{t\land\tau},0,0)\] 
admits a unique solution on the interval $[T-h^{(i+1)},T - h^{(i)}].$ For $n\ge 1$ this unique solution coincides with $(\widetilde{Y}^{n,T-h^{(i)}},\widetilde{Z}^{n,T-h^{(i)}},\widetilde{K}^{n,T-h^{(i)}});$ for $n=0,$ we denote this unique solution by $(\widetilde{Y}^{0,T-h^{(i)}},\widetilde{Z}^{0,T-h^{(i)}},\widetilde{K}^{0,T-h^{(i)}}).$ 
For each $i=0,1,...,N-1$ and $t\in[T-h^{(i+1)},T-h^{(i)}],$ define by  
\begin{equation}\label{e:Y0 Z0}
\begin{aligned}
&Y^0_t:=\phi^{0,T-h^{(i)}}(t\land \tau,S_{t\land \tau},\widetilde{Y}^{0,T-h^{(i)}}_{t}), \\
&K^{0,T-h^{(i)}}_t:=\int_{T-h^{(i+1)}}^{t\land \tau} D_y \phi^{0,T-h^{(i)}}(s,S_s,\widetilde{Y}^{0,T-h^{(i)}}_s)d\widetilde{K}^{0,T-h^{(i)}}_s,\\ 
&Z^0_t:= \mathbf{1}_{\{t\le \tau\}}\cdot D_y\phi^{0,T-h^{(i)}}(t,S_t,\widetilde{Y}^{0,T-h^{(i)}}_t)\left[\widetilde{Z}^{0,T-h^{(i)}}_t+\frac{(D_x\phi^{0,T-h^{(i)}}(t,S_t,\widetilde{Y}^{0,T-h^{(i)}}_t))^{\top}}{D_y\phi^{0,T-h^{(i)}}(t,S_t,\widetilde{Y}^{0,T-h^{(i)}}_t)}\sigma_t\right].
\end{aligned}
\end{equation}
Then, using  Lemma~\ref{lem:continu solution map}, and performing an induction indexed by $i=0,1,...,N-1$ similar to the one in the proof of Theorem~\ref{thm:well-posedness}, it follows that
\begin{equation*}
\begin{aligned}
&\widetilde{Y}^{n,T-h^{(i)}}_{\cdot}\rightarrow \widetilde{Y}^{0,T-h^{(i)}}_{\cdot}, \  \widetilde{K}^{n,T-h^{(i)}}_{\cdot} - \widetilde{K}^{n,T-h^{(i)}}_{T-h^{(i+1)}}\rightarrow \widetilde{K}^{0,T-h^{(i)}}_{\cdot} - \widetilde{K}^{0,T-h^{(i)}}_{T-h^{(i+1)}} \\
&\qquad\qquad\qquad\qquad\qquad\qquad\textrm{ uniformly on } [T-h^{(i+1)},T-h^{(i)}] \textrm{ a.s.,}\\
&\widetilde{Z}^{n,T-h^{(i)}}\rightarrow \widetilde{Z}^{0,T-h^{(i)}} \textrm{ in } \mathrm{H}^{2}_{[T-h^{(i+1)},T-h^{(i)}]},
\end{aligned}
\end{equation*}
and then 
\begin{equation*}
\begin{aligned}
&{Y}^n_{\cdot}\rightarrow {Y}^0_{\cdot}, \  {K}^n_{\cdot}\rightarrow {K}^{0}_{\cdot} \textrm{ uniformly on } [0,T] \textrm{ a.s.,}\\
&{Z}^n\rightarrow {Z}^0 \textrm{ in } \mathrm{H}^{2}_{[0,T]},
\end{aligned}
\end{equation*}
where $K^{0}_{0} := 0$ and
\begin{equation*}
K^{0}_{t} := \sum_{j=1}^{N-j-1}K^{0,T-h^{(N-j)}}_{T-h^{(N-j)}} + K^{0,T-h^{(i)}}_{t},\quad t\in(T-h^{(i+1)}, T-h^{(i)}],\quad i=0,1,...,N-1.
\end{equation*}
\end{proof}

\begin{remark}\label{rem:property of solution by stopping}
For the solutions obtained by Proposition~\ref{prop:extension of the well-posedness}, we claim that similar results to those in Corollary~\ref{cor:comparison} and Proposition~\ref{prop:conti of (Y^0,Z^0,K^0)} (i.e., the comparison property and the stability property) still hold.
\end{remark}

Now, for a stopping time $\tau\in\mathcal{T}_0$ and $\xi \in L^{\infty}(\mathcal{F}_{\tau}),$ consider a BSDE with random terminal time  
\begin{equation}\label{59'}
Y_{t} = \xi + \int_{t\land\tau}^{\tau}f(r,S_{r},Y_{r},Z_{r})dr + \int_{t\land\tau}^{\tau} H(S_{r},Y_{r}) d\mathbf{X}_{r} - \int_{t\land\tau}^{\tau} Z_{r} dW_{r},\  t\in[0,T].
\end{equation}
Similar to Definition~\ref{def:stopping terminal} and Proposition~\ref{prop:extension of the well-posedness}, we provide the definition for the solution to a BSDE with random terminal time and its existence and uniqueness result.
\begin{definition}\label{def:stopping terminal'}
Let the parameter $(\hat{f},\hat{H})$ be defined in \eqref{e:hat f, hat L, hat H}, and $ \{\mathrm{X}^{n}\}_{n \ge 1}$ be a sequence of smooth paths with lifts $\mathbf{X}^{n} \rightarrow \mathbf{X}$ in $C^{0,p\text{-var}}([0,T],G^{[p]}(\R^{l})).$ Assume that for each $n\ge 1,$ Eq.~\eqref{59-} with parameter $(\xi,\hat{f},\hat{H},\mathrm{X}^{n})$ admits a unique solution $(Y^{n},Z^{n})$ in $\mathrm{H}^{\infty}_{[0,T]}\times \mathrm{H}^{2}_{[0,T]}(\R^{d}).$ Call $(Y,Z)\in \mathrm{H}^{\infty}_{[0,T]}\times \mathrm{H}^{2}_{[0,T]}(\R^{d})$ a solution of Eq.~\eqref{58'} with terminal time $\tau$ and parameter $(\xi,f,H,\mathbf{X})$ if
\begin{equation*}
Y^{n}_{\cdot} \rightarrow Y_{\cdot}\  \text{ uniformly on }\ [0,T]\  \text{ a.s., }\ 
Z^n \rightarrow Z \ \text{ in }\ \mathrm{H}^{2}_{[0,T]}(\mathbb{R}^d).
\end{equation*}
If  $(Y,Z)$ is independent of the choice of $\left\{\mathrm{X}^{n},n\ge 1\right\},$ we call $(Y,Z)$ the unique solution of Eq.~\eqref{59'}.  
\end{definition}

\begin{remark}\label{rem:relation of BSDE and RBSDE}
In our case (i.e., under the assumption as in Proposition \ref{prop:extension of the well-posedness}), Definition \ref{def:stopping terminal} is a generalization of Definition~\ref{def:stopping terminal'} with $K=0$. By Remark~\ref{rem:superlienar} and the estimate  \eqref{e:bounde of Yn}, a uniform prior estimate can be established for $Y^n,$ and thus $Y$ is uniformly bounded. Then a solution to \eqref{59'} is a solution to \eqref{58'} with $L\equiv -M$ for some sufficiently large $M>0$ and $K=0.$


\end{remark}

According to Remark~\ref{rem:relation of BSDE and RBSDE}, we obtain the well-posedness of \eqref{59'} by Proposition~\ref{prop:extension of the well-posedness}.

\begin{proposition}\label{prop:extension of the well-posedness'}
Assume $H$ and $f$ satisfy the conditions in \ref{(Hpr)} and \ref{(H0)} respectively. Assume $\xi\in L^{\infty}(\mathcal{F}_{\tau}),$ and $(\sigma,b)$ satisfies \ref{(H_b,sigma)}.  Then there exists a unique solution of Eq.~\eqref{59'}.

\end{proposition}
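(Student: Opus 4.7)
The plan is to reduce the BSDE~\eqref{59'} with random terminal time to the reflected equation~\eqref{58'} already resolved by Proposition~\ref{prop:extension of the well-posedness}, by choosing an obstacle so far below the natural a priori bound that no reflection can occur, following the reduction indicated in Remark~\ref{rem:relation of BSDE and RBSDE}.

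First, I would establish a uniform $L^{\infty}$ bound for the approximating solutions. Pick smooth paths $\mathrm{X}^n$ whose lifts $\mathbf{X}^n$ converge to $\mathbf{X}$ in $C^{0,p\text{-var}}([0,T],G^{[p]}(\R^l))$. For each $n$, the BSDE~\eqref{59-} with parameter $(\xi,\hat{f},\hat{H},\mathrm{X}^n)$ (with $(\hat{f},\hat{H})$ as in~\eqref{e:hat f, hat L, hat H}) is a classical quadratic BSDE whose unique solution $(Y^n,Z^n)\in \mathrm{H}^\infty_{[0,T]}\times \mathrm{H}^2_{[0,T]}(\R^d)$ is provided by \cite[Theorem~3]{DF} (see Remark~\ref{rem:superlienar}). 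Applying the Doss--Sussman transform $\widetilde{Y}^{n,T}_t:=(\phi^{n,T})^{-1}(t\land\tau,S_{t\land\tau},Y^n_{t\land\tau})$ exactly as in the proof of Proposition~\ref{prop:extension of the well-posedness} but with $K^n\equiv 0$, and combining Lemma~\ref{lem40} with the prior estimate of Lemma~\ref{lem:A Prior Estimate} applied to the resulting transformed BSDE, I obtain a constant $M>0$ independent of $n$ with
\[
\sup_{n\ge 1}\esssup_{(t,\omega)\in[0,T]\times\Omega}|Y^n_t|\le M.
\]

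Second, with this $M$ fixed I set the obstacle $L_t\equiv -M-1$. Then $L$ is bounded, continuous, adapted, and satisfies $L_\tau\le -M-1<-M\le \xi$, so the parameter $(\xi,f,L,H,\mathbf{X})$ verifies the hypotheses of Proposition~\ref{prop:extension of the well-posedness}. Let $(Y,Z,K)$ denote the unique solution of~\eqref{58'} it provides. Since $(Y,Z,K)$ is constructed as the uniform limit of reflected approximations, and the comparison principle (Corollary~\ref{cor:comparison}, together with Remark~\ref{rem:property of solution by stopping}) sandwiches each approximating $Y^n$ between the unreflected BSDE solution and any solution above $L$, the uniform bound $|Y|\le M$ transfers to the limit. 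Hence $Y_t\ge -M>L_t$ for all $t\in[0,T]$ a.s., and the Skorokhod condition $\int_0^\tau(Y_r-L_r)dK_r=0$ then forces $K\equiv 0$. Therefore $(Y,Z)$ is a solution of~\eqref{59'} in the sense of Definition~\ref{def:stopping terminal'}.

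Third, uniqueness follows from the uniqueness part of Proposition~\ref{prop:extension of the well-posedness}: any two solutions of~\eqref{59'} lift, via $K\equiv 0$, to solutions of~\eqref{58'} with obstacle $L\equiv -M-1$, and the latter is unique. The main technical obstacle is guaranteeing that the limiting reflection vanishes, which is precisely what the uniform a priori $L^\infty$ bound accomplishes; since the Doss--Sussman machinery and all the required prior estimates have already been developed in the proof of Proposition~\ref{prop:extension of the well-posedness}, this amounts to a routine adaptation rather than a new difficulty.
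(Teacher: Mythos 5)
Your proposal is correct and follows essentially the same strategy as the paper, which simply invokes Remark~\ref{rem:relation of BSDE and RBSDE} together with Proposition~\ref{prop:extension of the well-posedness}; you spell out the reduction in more detail (establishing the uniform a priori bound, choosing the artificial obstacle $L\equiv -M-1$, and inferring $K\equiv 0$). One minor streamlining is available in your last step: since each smooth-driver approximant $Y^n$ already satisfies $Y^n_t\ge -M>L_t$, the uniqueness for smooth-case RBSDEs forces $K^n\equiv 0$ for every $n$, so $K\equiv 0$ in the limit directly, without needing to re-derive nondegeneracy of the limit $Y$ and invoke the Skorokhod condition at the limiting level.
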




Now we are ready to show the main result of this subsection.
For each $\tau\in \mathcal{T}_{t},$ let the pair of processes $(B_{s}(\tau,L'_{\tau}),\pi_{s}(\tau,L'_{\tau}))$ be the unique solution of the BSDE~\eqref{59'} with terminal time $\tau$ and parameter $(L'_{\tau},f,H,\mathbf{X})$, where $L'$ is defined in \eqref{L'}.

\begin{theorem}\label{thm:optimal stopping}
Assume $(\xi,f,L,H,\mathbf{X})$ satisfies the conditions in Theorem~\ref{thm:well-posedness}. Let $(Y,Z,K)$ be the unique solution of the rough RBSDE \eqref{e:BSDE main result}
with parameter $(\xi,f,L,H,\mathbf{X}).$ Then, for each $t\in[0,T],$
\begin{equation*}
Y_t=B_t(D_t,L'_{D_{t}})=\esssup_{\tau\in\mathcal{T}_t} B_t(\tau,L'_{\tau}),
\end{equation*}
where $D_t=\inf\{u\geq t; Y_u=L'_u\}.$
\end{theorem}

\begin{proof}
Denote $\mathbf{X}^{0} := \mathbf{X}.$ Let $\left\{\mathrm{X}^{n}:[0,T]\rightarrow \R^{l},n\ge 1\right\}$ be a sequence of smooth paths with their lifts $\mathbf{X}^{n}\rightarrow \mathbf{X}^{0}$ in $C^{0,p\text{-var}}([0,T],G^{[p]}(\R^{l})).$ For each $n\ge 1,$  according to \cite{lepeltier1998existence} and \cite{kobylanski2000backward},  the BSDE with stopping terminal time $\tau$ and parameter $(L'_{\tau},f,H,\mathrm{X}^{n})$ admits a unique solution, and we denote it by $(B^{n}_{t}(\tau,L'_{\tau}),\pi^{n}_{t}(\tau,L'_{\tau})).$
Using the same argument as in the proof of Proposition~\ref{prop:extension of the well-posedness}, an estimate similar to \eqref{e:bounde of Yn} can be established, i.e., there exists a constant $M>0$ such that, for all $\tau\in\mathcal{T}_{t}$
\begin{equation}\label{e:|B^n| <= M}
\sup_{n\ge 1} \esssup_{(t,\omega)\in[0,T]\times\Omega}|B^{n}_{t}(\tau,L'_{\tau})| \le M.
\end{equation}
Let $\overline{L}_{t} := (-M-1)\land L_{t}.$ By  \eqref{e:|B^n| <= M} we have $B^{n}_{t}(\tau,L'_{\tau}) > \overline{L}_{t}.$ According to the uniqueness result shown by \cite[Corollary~1]{KLQT}, the triplet $(B^{n}_{t}(\tau,L'_{\tau}),\pi^{n}_{t}(\tau,L'_{\tau}),0)$ uniquely solves the RBSDE~\eqref{58'} with terminal time $\tau$ and parameter $(L'_{\tau},f,\overline{L},H,\mathrm{X}^{n}).$ Consequently, by Definition~\ref{def:stopping terminal}, \ref{def:stopping terminal'}, and Proposition~\ref{prop:extension of the well-posedness'}, $(B_{t}(\tau,L'_{\tau}),\pi_{t}(\tau,L'_{\tau}),0)$ uniquely solves the RBSDE~\eqref{58'} associated with terminal time $\tau$ and parameter $(L'_{\tau},f,\overline{L},H,\mathbf{X}).$ Since $Y_{\tau}\ge L'_{\tau}$ and $L_{\cdot}\ge \overline{L}_{\cdot}$ a.s., in view of Remark~\ref{rem:property of solution by stopping}, we have $Y_{t}\ge B_{t}(\tau,L'_{\tau}).$ Thus, we have
\begin{equation*}
Y_{t} \ge \esssup_{\tau\in \mathcal{T}_{t}}B_{t}(\tau,L'_{\tau}).
\end{equation*}

It remains to show $Y_{t} = B_{t}(D_{t},L'_{D_{t}}).$ Firstly, Let $(Y^{0},Z^{0},K^{0})$ be the unique solution of Eq.~\eqref{58'} with terminal time $\tau = D_{t}$ and parameter $(Y_{D_{t}},f,L,H,\mathbf{X}).$ By Definition~\ref{def:stopping terminal}, $(Y^{0},Z^{0},K^{0})$ is the limit of $(Y^{n},Z^{n},K^{n})$, which is the unique  solution of Eq.~\eqref{58'} with terminal time $\tau = D_{t}$ and parameter $(Y_{D_{t}},f,L,H,\mathrm{X}^{n}).$ Meanwhile,  by Theorem~\ref{thm:well-posedness}, $(Y,Z,K)$ is the limit of $(Y^{(n)},Z^{(n)},K^{(n)})$, which is the unique solution of Eq.~\eqref{e:BSDE main result} with parameter $(\xi,f,L,H,\mathrm{X}^{n})$. Then $(Y_{\cdot\land D_{t}},Z_{\cdot}\mathbf{1}_{\{\cdot\le D_{t}\}},K_{\cdot\land D_{t}})$ is the limit of $(Y^{(n)}_{\cdot\land D_{t}},Z^{(n)}_{\cdot}\mathbf{1}_{\{\cdot\le D_{t}\}},K^{(n)}_{\cdot\land D_{t}}),$ which is the unique  solution of Eq.~\eqref{58'} with terminal time $\tau = D_{t}$ and parameter $(Y^{(n)}_{D_{t}},f,L,H,\mathrm{X}^{n}).$ Since $Y^{(n)}_{D_{t}}\rightarrow Y_{D_{t}}$ a.s., by the stability of solutions implied by Remark~\ref{rem:property of solution by stopping},  we have
\begin{equation}\label{e:T <=> D_t}
(Y^{0}_{s},Z^{0}_{s},K^{0}_{s}) = (Y_{s\land D_{t}},Z_{s}\mathbf{1}_{\{s\le D_{t}\}},K_{s\land D_{t}}),\ \text{ for all }\ s\in[0,T].
\end{equation} 
Secondly, we will show that $Y^{0}_{s} = B_{s}(D_{t},L'_{D_{t}})$ for $s\in[t,T],$ which implies our conclusion. For any $T'\in[t,T]$ and $s\in[t,T'],$ let $\phi^{0,T'}$ be the unique solution of Eq.~\eqref{e:phi^n} with $n=0,$
and  $(\widetilde{f}^{0,T'},\widetilde{L}^{0,T'})$ be given in Eq.~\eqref{e:tilde f^n,T'} with $n=0.$ 
Let $(\widetilde{Y}^{0,T'},\widetilde{Z}^{0,T'},\widetilde{K}^{0,T'})$ be defined by \eqref{e:tilde Y^n,T'_tau} with $n=0$ and $\tau = D_t.$ By the proof of Proposition~\ref{prop:extension of the well-posedness}, we see the boundedness of $Y^{0}$, and thus there exists a constant $\widetilde{M}>0$ such that for all $T'\in[t,T]$ and $s\in[t,T']$
\begin{equation*}
\left|\widetilde{Y}^{0,T'}_{s}\right|\le \widetilde{M}.
\end{equation*}
Let the random variable  $\widetilde{\xi}^{0,T'}$ be defined in \eqref{e:xi^n,T'} with $n=0$ and $\tau = D_t.$ By the proof of Proposition~\ref{prop:extension of the well-posedness}, there exist an integer $N>0$ and a set of increasing real numbers $\left\{h^{(i)},i=0,1,...,N\right\}$ such that  $[T-h^{(N)},T-h^{(0)}] = [t,T].$ Furthermore, the triplet 
\begin{equation*}
(\widetilde{Y}^{0,T-h^{(i)}}_{\cdot},\widetilde{Z}^{0,T-h^{(i)}}_{\cdot},\widetilde{K}^{0,T-h^{(i)}}_{\cdot} - \widetilde{K}^{0,T-h^{(i)}}_{T-h^{(i)}})
\end{equation*}
is the unique solution of  Eq.~\eqref{58'} on $[T-h^{(i+1)},T-h^{(i)}]$ with terminal time $(T - h^{(i)}) \land D_{t}$ and parameter $(\widetilde{\xi}^{0,T-h^{(i)}},\widetilde{f}^{0,T - h^{(i)}},\widetilde{L}^{0,T - h^{(i)}},0,0).$ 
On each subinterval $[T-h^{(i+1)},T-h^{(i)}],$ since $Y_{s} > L_{s}$ for all $s\in[t,D_{t}),$ we have $\widetilde{Y}^{0,T-h^{(i)}}_{s} > \widetilde{L}^{0,T-h^{(i)}}_{s},$ $s\in[T-h^{(i+1)},T-h^{(i)}]\cap [t,D_{t}).$ Thus, $\widetilde{K}^{0,T-h^{(i)}}_{\cdot}\equiv 0.$ 
Note that by \cite[Theorem~A.2]{DF}, we see that Eq.~\eqref{59'} with terminal time $D_{t}$ and parameter $(\widetilde{\xi}^{0,T-h^{(i)}},\widetilde{f}^{0,T-h^{(i)}},0,0)$ has a unique solution. Then it follows that $(\widetilde{Y}^{0,T-h^{(i)}},\widetilde{Z}^{0,T-h^{(i)}})$ is the solution. In view of \eqref{e:Y0 Z0}, we have that $K^{0,T-h^{(i)}}=0$ and  $(Y^0,Z^0)$ is exactly the solution constructed in the proof of \cite[Theorem~3]{DF}.\footnote{The case for rough BSDEs with stopping terminal times is not involved in the proof of \cite[Theorem~3]{DF}. However, according to Proposition~\ref{prop:extension of the well-posedness}, their proof can be adapted to the stopping terminal case without difficulties.} It follows that $(Y^0,Z^0)$ is the unique solution of Eq.~\eqref{59'} with terminal time $D_{t}$ and parameter  $(L'_{D_{t}},f,H,\mathbf{X})$ which implies our claim and completes the proof.
\end{proof}

Now we consider the American option pricing problem under the rough volatility model. Recall that the market has two assets with prices $P^{0}$ and $P^{1},$ given in \eqref{e:bound} and \eqref{e:stock} respectively, where $r_{t}$ in \eqref{e:bound} is the short rate of a riskless bond, and $b_t$ (resp. ($\sigma_t$,$\lambda_t$)) in \eqref{e:stock} represents the interest rate (resp. the volatility) of a stock.

We assume that the market is imperfect, so the investor can borrow money from the bond under a higher interest rate $R_{t}>r_{t}$ to invest in the stock. Let $\{\pi_t\}_{t\in[0,T]}$ be the portfolio and $\{C_t\}_{t\in[0,T]}$ be the cumulative consumption plan. Then, The wealth process $V$ associated with $(\pi,C)$ satisfies (note that $\pi^{0}_{t} = [V_{t} - \pi^{1}_{t}]^{+}$)
\begin{equation*}
dV_{t} = \pi^{0}_{t}\frac{dP^{0}_{t}}{P^{0}_{t}} + \pi^{1}_{t}\frac{dP^{1}_{t}}{P^{1}_{t}} - R_{t}\left[ V_{t} - \pi^{1}_{t}\right]^{-}dt - dC_{t}.
\end{equation*}
Denote $\theta_{t} := \sigma_{t}^{-1}(b_{t} - r_{t})$ where we recall $\sigma=\rho v.$ Substituting Eq.~\eqref{e:bound} and \eqref{e:stock}
into the above equation, we have 
\begin{equation*}
dV_{t} = \left(r_{t}V_{t} + \pi^{1}_{t}\sigma_{t}\theta_{t} - (R_{t} - r_{t})\left[ V_{t} - \pi^{1}_{t}\right]^{-} \right)dt +  \pi^{1}_{t}\sigma_{t} dW_{t} + \pi^{1}_{t}\sigma_{t}d\mathbf{X}_{t} - dC_{t},
\end{equation*}
where $\mathbf{X}_t:=(1,I_t, \int_0^t I_r dI_r)$ with $I_t= \int_0^t \sigma^{-1}_{t}\lambda_{t} dB_t$. By \cite[Proposition~2.11]{Christian-roughVolat}, $\mathbf{X}_t + (0,0,\frac12 \langle I\rangle_{t})  \in C^{0,p\text{-var}}([0,T],G^{[p]}(\R))$ for any $p\in(2,3).$

Assume for now that $I$ is a smooth path (so the rough integral above is equivalent to the classical integral driven by $I$), and let $\{L_t\}_{t\in[0,T]}$ be the payoff process. Then, following the same analysis as in \cite[Section~5]{el1997reflected}, the price of the American option, which is denoted by $Y_{t},$ should satisfy the following RBSDE:
\begin{equation}\label{e:pricing of Amer opt}
\left\{\begin{aligned}
Y_{t} &= L_{T} + \int_{t}^{T} \bar{f}(r,Y_{r},Z_{r})dr - \int_{t}^{T}Z_{r}d\mathbf{X}_{r} - \int_{t}^{T}Z_{r}dW_{r} + K_{T} - K_{t},\\
Y_{t} &\ge L_{t},\quad \int_{0}^{T} (Y_{r} - L_{r}) dK_{r} = 0,\quad t\in [0,T],
\end{aligned}\right.
\end{equation}
where $\bar{f}(t,y,z) := - r_{t}y - z\theta_{t} + (R_{t} - r_{t})\left[y -  z\sigma^{-1}_{t}\right]^{-}.$ 

When the rough path $\mathbf{X}$ is not smooth, we introduce the optimal stopping problem for rough RBSDE \eqref{e:pricing of Amer opt}\footnote{The rigorous derivation of the relationship between American option pricing and the optional stopping within the rough volatility model lies beyond the scope of this paper; we address this topic in our future work.}. 
This will be done with the help of the stability of PDEs, similar to the proof of Theorem~\ref{thm:rough PDE}.
	
\begin{proposition}
Assume that $b_{t},$ $R_t,$ $r_{t},$ $\sigma_{t},$ $\sigma^{-1}_{t},$ and $\lambda_t$ are deterministic and bounded functions, and assume that $L_{t} = l(t,W_{t})$ for some function $l(\cdot,\cdot)\in C_{b}([0,T]\times\R^{d},\mathbb{R})$. Let $\left\{\mathrm{X}^{n}:[0,T]\rightarrow \R,n\ge 1\right\}$ be a sequence of  
smooth paths such that their lifts $\mathbf{X}^{n}\rightarrow \mathbf{X} + (0,0,\frac{1}{2}\langle I\rangle_{t})$ in $C^{0,p\text{-var}}([0,T],G^{[p]}(\R)),$ with some $p\in (2,3).$ For $(t,x)\in[0,T]\times \R^{d},$ set $W^{t,x}_{s} := W_{s} - W_{t} + x$ for $s\in[t,T].$ For $n\ge 1,$ denote by $(Y^{n;t,x},Z^{n;t,x},K^{n;t,x})$ the unique solution to the following RBSDE: 
\begin{equation}\label{e:Amer Opt}
\left\{\begin{aligned}
Y_{s} &= l(T,W^{t,x}_{T}) + \int_{s}^{T} \bar{f}(r,Y_{r},Z_{r})dr - \int_{s}^{T} Z_{r}d\mathrm{X}^{n}_{r} - \int_{s}^{T}Z_{r}dW_{r} + K_{T} - K_{s},\\
Y_{s} &\ge l(s,W^{t,x}_{s}),\quad \int_{t}^{T} (Y_{r} - l(r,W^{t,x}_{r})) dK_{r} = 0,\quad s\in[t,T].
\end{aligned}\right.
\end{equation}
Then, there exists a continuous adapted process $Y_{t}$ such that a.s.,
\begin{equation*}
\lim_{n\rightarrow \infty}|Y^{n;0,0}_{t} - Y_{t}|=0 \text{ uniformly in }t\in[0,T].
\end{equation*}
Consequently, we have 
\begin{equation}\label{e:Amer option and optimal stopp}
Y_{t} = \lim_{n\rightarrow \infty}\esssup_{\tau\in \mathcal{T}_{t}}B^{n}_{t}(\tau,L_{\tau}),
\end{equation}
where $(B^{n}_{t}(\tau,L_{\tau}),\Gamma^{n}_{t}(\tau,L_{\tau}))$ denotes the unique solution to the following BSDE:
\begin{equation*}
B^{n}_{t} = l(\tau,W_{\tau}) + \int_{t\land \tau}^{\tau}\bar{f}(r,B^{n}_{r},\Gamma^{n}_{r})dr - \int_{t\land \tau}^{\tau}\Gamma^{n}_{r}d\mathrm{X}^{n}_{r} - \int_{t\land \tau}^{\tau}\Gamma^{n}_{r}dW_{r},\quad t\in[0,T].
\end{equation*}
\end{proposition}

\begin{proof}
By assumption, 
$\mathrm{X}^{n}_{s}\rightarrow I_{s}$ uniformly in $s\in[0,T]$, as $n\rightarrow \infty.$ Denote 
\begin{equation*}
\tilde{l}^{n}(t,x) := l\left(t,x - \mathrm{X}^{n}_{t}\right),\ \text{ and }\  \tilde{l}^{\infty}(t,x) := l\left(t,x - I_{t}\right).
\end{equation*}
For $n\ge 1$ and $n=\infty,$ let $v^{n}(t,x)$ be the unique viscosity solution to the following PDE (the existence of such $v^n$ can be seen, e.g., \cite{KLQT})
\begin{equation*}
\left\{\begin{aligned}
&d v(t,x) + \left(\frac{1}{2}\Delta v(t,x) + \bar{f}\big(t,v(t,x),\partial_{x}v(t,x)\big)\right) dt = 0,\quad t\in(0,T),\\
&v(T,x) = \tilde{l}^{n}(T,x) ,\quad v(t,x)\ge \tilde{l}^{n}(t,x).
\end{aligned}\right.
\end{equation*}
By a similar proof of \cite[Lemma~6]{friz2014rough}, for $n\ge 1$, we see that 
\[u^{n}(t,x) := v^{n}\left(t,x + \mathrm{X}^{n}_{t}\right)\] 
is the unique viscosity solution to the following PDE: 
\begin{equation*}
\left\{\begin{aligned}
&d u(t,x) + \left(\frac{1}{2}\Delta u(t,x) + \bar{f}\big(t,u(t,x),\partial_{x}u(t,x)\big)\right)dt - \partial_{x}u(t,x)d\mathrm{X}^{n}_{t}  = 0,\quad t\in(0,T)\\
&u(T,x) = l(T,x) ,\quad v(t,x)\ge l(t,x).
\end{aligned}\right.
\end{equation*}
Then, for $n\ge 1$, by Theorem~\ref{thm:rough PDE}, we have $u^{n}(t,x) = Y^{n;t,x}_{t}.$ Hence, by the Markovian property of RBSDE~\eqref{e:Amer Opt}, it follows that
\begin{equation*}
Y^{n;0,0}_{s} = u^{n}(s,W_{s}) = v^{n}\left(s,W_{s} + \mathrm{X}^{n}_{s}\right),
\end{equation*}
which can be proved by the penalization approximation result shown in \cite[Theorem~5.6]{el1997non}, and by the Markovian property of BSDEs (see, e.g., \cite[Remark~2.1]{pardoux1998backward}). Noting that $\tilde{l}^{n}\rightarrow \tilde{l}^{\infty}$ locally uniformly in $(t,x)$, by the same proof as in the step 1 and step 2 of Theorem~\ref{thm:rough PDE},  $v^{n}\rightarrow v^{\infty}$ locally uniformly in $(t,x).$ Hence, we have that a.s.,
\begin{equation*}
v^{n}\left(s,W_{s} + \mathrm{X}^{n}_{s}\right)\rightarrow v^{\infty}\left(s,W_{s} + I_{s}\right)\text{ uniformly in }s\in[0,T],\text{ as }n\rightarrow \infty.
\end{equation*}
Thus, letting $Y_{s}:= v^{\infty}\left(s,W_{s} + I_{s}\right),$ we derive that a.s., $Y^{n;0,0}_{s}\rightarrow Y_{s}$ uniformly in $s\in[0,T].$ 
Furthermore, by Theorem~\ref{thm:optimal stopping}, it follows that $Y^{n;0,0}_{t} = \esssup\limits_{\tau\in\mathcal{T}_{t}} B^{n}_{t}(\tau,L_{\tau}),$ which yields \eqref{e:Amer option and optimal stopp}.
\end{proof}

\section*{Acknowledgments}

Li and HZ are supported by the Fundamental Research Funds for the Central Universities and NSF of Shandong (No. ZR2023MA026).
Li's research was supported by the National Natural Science Foundation of China (No. 12301178), the Natural Science Foundation of Shandong Province for Excellent Young Scientists Fund Program (Overseas)  (No. 2023HWYQ-049) and the Natural Science Foundation of Shandong Province (No. ZR2023ZD35). HZ is partially supported by NSF of China (Grant Numbers 12031009) and Young Research Project of Tai-Shan (No.tsqn202306054).

\renewcommand\thesection{Appendix A}
\section{Results on RBSDEs and rough BSDEs}\label{append:A}
\renewcommand\thesection{A}

In this section, we recall some basic definitions and results concerning reflected BSDEs with superlinear quadratic coefficient in \cite{KLQT} and results concerning BSDEs with rough drivers in \cite{DF}. 
Assume the obstacle process $L$ and the terminal value $\xi$ satisfy Assumption~\ref{(H_L,xi)}. Consider the following RBSDE  
\begin{equation}\label{e:RBSDE}
\left\{\begin{aligned}
&Y_t=\xi+\int_t^T f(s,S_s,Y_s,Z_s)ds-\int_t^T Z_s dW_s +K_T-K_t,\\
&Y_{t}\ge L_{t}, \ \ t\in[0,T],\ \ \int_{0}^{T} 
(Y_{r} - L_{r}) dK_{r} = 0,
\end{aligned}\right.
\end{equation}
where $S$ is given in \eqref{e:diffusion process}. We assume

\begin{assumptionp}{(H1)}\label{(H1)}
Let $f:\Omega\times[0,T]\times\R^{d}\times\R\times\R^{d}\rightarrow\R$ be $\mathcal{P}\otimes\mathcal{B}(\R^d)\otimes\mathcal{B}(\R)\otimes\mathcal{B}(\R^d)$-measurable. Assume there is a strictly positive increasing continuous function   $c$ satisfying $\int_0^\infty \frac{dy}{c(y)}=\infty$ so that
\item[(A1)] there exists a constant $C_f>0$ such that $\mathbb{P}$-a.s.,
\begin{align*}
|f(t,x,y,z)|\leq c(|y|)+C_{f}|z|^2, \text{ for all }(t,x,y,z)\in[0,T]\times\R^{d} \times \R\times \R^{d}; 
\end{align*}
\item[(A2)] for every $M>0,$ there exists a constant $C_{M}>0$ such that $\mathbb{P}$-a.s.,
\begin{equation*}
|D_z f(t,x,y,z)|\leq C_{M}(1+|z|), \text{ for all }(t,x,y,z)\in[0,T]\times\R^{d} \times [-M,M]\times \R^{d};
\end{equation*}
\item[(A3')] for every $\varepsilon,M>0,$ there exist two constants $C_{\varepsilon,M},h_{\varepsilon,M}>0$ such that $\mathbb{P}$-a.s.,
\[D_y f(t,x,y,z)\leq  C_{\varepsilon,M} + \varepsilon |z|^{2}, \text{ for all }(t,x,y,z)\in[T-h_{\varepsilon,M},T]\times\R^d \times[-M,M]\times\mathbb{R}^d.\]

\end{assumptionp}

\begin{definition}\label{def:RBSDE}
We say that $(Y,Z,K)$ is a solution of the RBSDE \eqref{e:RBSDE} associated with terminal value $\xi,$ obstacle $L$ and coefficient $f,$ which is denoted by Eq.~$(\xi,L,f),$ if
\begin{itemize}
\item[(a)] it satisfies Eq.~\eqref{e:RBSDE} for every $t\in[0,T]$ a.s..
\item[(b)] $Y\in \mathrm{H}^{\infty}_{[0,T]},$ $Z\in \mathrm{H}^{2}_{[0,T]}(\R^{d}),$ and $K\in \mathrm{I}^{1}_{[0,T]}.$
\end{itemize}
\end{definition}

Kobylanski et al. \cite[Theorem 1]{KLQT} gives the following a prior estimate of $Y.$
\begin{lemma}\label{lem:A Prior Estimate}
Assume that $(b,\sigma)$ satisfies Assumption~\ref{(H_b,sigma)}, $f$ satisfies (A1) in \ref{(H1)}, $(L,\xi)$ satisfy \ref{(H_L,xi)}, and $(Y,Z,K)\in \mathrm{H}_{[0,T]}^{\infty}(\mathbb{R})\times \mathrm{H}_{[0,T]}^{2}(\mathbb{R}^{d})\times \mathrm{I}_{[0,T]}^{1}(\mathbb{R})$ is a solution of the Eq.~\eqref{e:RBSDE} with parameter $(\xi, f, L),$ we have
\begin{equation}\label{e:prior estimate}
|Y_t|\le U_{0} =: \bar{M}, 
\end{equation}
with $U:[0,T]\rightarrow \R$ uniquely solving the following backward ODE 
\[U_t = b + \int_{t}^{T}c(U_{r})dr, \]
where $b:= C_{\xi}\vee \esssup_{(t,\omega)\in[0,T]\times\Omega} L_t(\omega),$ and $C_{\xi}$ is the constant given in \ref{(H_L,xi)}.

\end{lemma}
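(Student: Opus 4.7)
The plan is to bound $|Y_t|$ pointwise by combining an exponential change of variables---which absorbs the quadratic-in-$z$ growth of $f$---with a comparison against the deterministic ODE barrier $U$. Before starting I would note that since $c$ is positive and $\int_0^\infty dy/c(y)=\infty$, the ODE $\dot U_t=-c(U_t)$, $U_T=b$ is non-explosive on $[0,T]$, and $U$ is strictly decreasing in $t$ so that $U_t\le U_0$; it therefore suffices to prove $|Y_t|\le U_t$ almost surely.

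For the upper bound $Y_t\le U_t$, I would first introduce the convex increasing diffeomorphism $\Phi(y):=(e^{2C_f y}-1)/(2C_f)$, which satisfies the identity $\Phi''(y)=2C_f\Phi'(y)$. Applying It\^o's formula to $\Phi(Y_t)$ and combining the growth bound $|f|\le c(|y|)+C_f|z|^2$ with this identity will make the quadratic-in-$Z$ terms cancel, leaving the differential inequality
\begin{equation*}
d\Phi(Y_t)\ \ge\ -\Phi'(Y_t)\,c(|Y_t|)\,dt\ -\ \Phi'(Y_t)\,dK_t\ +\ \Phi'(Y_t)\,Z_t\,dW_t.
\end{equation*}
I would then introduce the auxiliary RBSDE with terminal $\bar\xi:=b\ge\xi$, obstacle $\bar L:=L$, and generator $\bar f(t,x,y,z):=c(|y|)+C_f|z|^2$, observing that $\bar f\ge f$ by assumption (A1). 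A direct check shows that the deterministic triple $(U,0,0)$ solves this auxiliary RBSDE: $U\ge b\ge L$ is above the obstacle, the generator reduces to $c(U)$ along $(U,0)$, and the ODE $U_t=b+\int_t^T c(U_s)\,ds$ coincides with the RBSDE equation for $\bar Z=0$, $\bar K=0$. Invoking the comparison theorem for RBSDEs with quadratic growth from \cite{KLQT} would then yield $Y_t\le U_t\le U_0$.

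For the lower bound I would exploit that the reflection $K$ is nondecreasing and can only push $Y$ upward, so $Y_t\ge Y_t^{\mathrm B}$, where $(Y^{\mathrm B},Z^{\mathrm B})$ is the solution of the unreflected BSDE with parameters $(\xi,f)$ (existence and $L^\infty$-boundedness in the quadratic setting come from Kobylanski's classical theory). The Kobylanski pointwise estimate gives $|Y_t^{\mathrm B}|\le U_0^{\mathrm B}$, where $U^{\mathrm B}$ solves the same ODE as $U$ but with terminal value $C_\xi\le b$, so $U^{\mathrm B}\le U$ by monotonicity in the terminal datum. Hence $Y_t\ge -U_0^{\mathrm B}\ge -U_0$, and combining with the upper bound yields $|Y_t|\le U_0$.

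The main obstacle I expect is to handle the reflection $K$ alongside the non-Lipschitz coefficient $c$ in the comparison step; what saves the argument is the support property $\mathrm{supp}(dK)\subset\{Y=L\}\subset\{L\le b\le U\}$, which makes the reflection contribution trivially consistent with the comparison $Y\le U$ and causes it to drop out of the Tanaka-type inequality powering the \cite{KLQT} comparison theorem. The exponential change of variables is essential because it is the only step that removes the quadratic $|Z|^2$-growth from the inequality, enabling a comparison against the deterministic ODE $U$ whose generator is merely $c(U)$.
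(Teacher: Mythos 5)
The paper does not prove this lemma; it cites \cite[Theorem~1]{KLQT} directly (and Remark~\ref{rem:superlienar} points to Lepeltier--San Mart\'in \cite{lepeltier1998existence} for the superlinear-in-$y$ setting). Your high-level plan --- an exponential change of variables to absorb the quadratic $z$-growth, followed by domination by the deterministic Osgood ODE $U$ --- is indeed the standard route, so the shape of the argument is right.

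The concrete gap is the invocation of a comparison theorem for quadratic RBSDEs. This lemma assumes only~(A1) of Assumption~\ref{(H1)} for~$f$, but the comparison result (Proposition~\ref{prop:RBSDE Comparison}, from \cite[Prop.~3.2]{KLQT}) requires the full~(H1) for both generators, including the derivative bounds~(A2) and~(A3'), and even then it holds only on a short terminal interval $[T-h_{\varepsilon,\bar M},T]$. Worse, your auxiliary generator $\bar f(y,z)=c(|y|)+C_f|z|^2$ itself fails~(A3'): the paper only assumes $c$ continuous, increasing, and Osgood --- not differentiable nor locally Lipschitz --- so $D_y\bar f$ need not exist, and this cannot be repaired by choosing a smoother $\bar f$ because the superlinear $c$ is exactly the object being controlled. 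The correct argument avoids comparison entirely: after the exponential transform, set $\tau_t:=\inf\{s\ge t:\,Y_s\le b\}\wedge T$; on $[t,\tau_t)$ the Skorokhod condition forces $dK\equiv 0$ (since $Y_s>b\ge L_s$) and $Y_s\ge b\ge 0$, so conditioning It\^o's formula for $\Phi(Y)$ on $[t,\tau_t]$ and comparing pathwise with the integral identity satisfied by $U$ yields the upper bound. The lower bound has the analogous issue --- Kobylanski's classical estimate does not cover superlinear $c(|y|)$, and $Y\ge Y^{\mathrm B}$ is itself an unjustified comparison under (A1) alone --- and should instead use $K_T-K_t\ge 0$ pathwise together with the symmetric transform $y\mapsto-\Phi(-y)$ and the same stopping-time localization.
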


Recall $\mathcal{T}_t$ and $L'$ defined at the beginning of Subsection~\ref{subsec:Optimal Stopping}. For each $\tau\in\mathcal{T}_t,$ denote by $(\mathcal{B}_s(\tau,L'_\tau),\pi_s(\tau,L'_\tau), t\leq s\leq \tau)$ the unique solution (if it exists) of the following BSDE 
\begin{equation}\label{e:B_s(tau,L') without rough path}
\left\{\begin{aligned}
dY_{s} &= - f(s,S_{s},Y_{s},Z_{s})ds + Z_{s}dW_{s},\quad s\in[t,\tau],\\
Y_{\tau}&=L'_{\tau}.
\end{aligned}\right.
\end{equation}

In the context of our paper, note that the coefficient $f$ only meets the condition~(A3') in Assumption~\ref{(H1)} rather than the condition \cite[(H4), (iii)]{KLQT}. The latter condition requires that for every $\varepsilon>0$, there exists $C_{\varepsilon}>0$ such that $D_{y}f(t,x,y,z)\le C_{\varepsilon} + \varepsilon|z|^2$ for all $t\in[0,T],$ while (A3') in \ref{(H1)} only require such a condition on a small interval. Thus, for the convenience of the reader, we give the following characterization, comparison theorem, and well-posedness of RBSDEs. The following Proposition and its proof are based on \cite[Proposition~3.1]{KLQT}.\

\begin{proposition}\label{prop:charac of RBSDE}
Assume \ref{(H1)}, \ref{(H_L,xi)} , and \ref{(H_b,sigma)} hold. Suppose that $(Y,Z,K)$ is a solution of the reflected BSDE \eqref{e:RBSDE} with parameter $(\xi,f,L).$ Then there exists $\varepsilon>0$ depending only on $\bar{M},C_f,C_{\bar{M}}$
such that for each $t\in[T - h_{\varepsilon,\bar{M}},T],$ 
\begin{equation*}
Y_t=\mathcal{B}_t(D_t,L'_{D_{t}})=\esssup_{\tau\in\mathcal{T}_t} \mathcal{B}_t(\tau,L'_{\tau}),
\end{equation*}
where $\bar{M}$ is the constant defined in Lemma~\ref{lem:A Prior Estimate}, $h_{\varepsilon,\bar{M}}$ is given in Assumption~\ref{(H1)}, and $D_t=\inf\{u\geq t; Y_u=L'_u\}.$
\end{proposition}

\begin{proof}
Since $Y_{D_{t}} = L'_{D_{t}}$ by definition of $D_{t},$ we derive that $(Y_{s\land D_{t}},t\le s\le T)$ is a solution of the BSDE associated with terminal time $D_{t}$ and parameter $(L'_{D_{t}},f).$ In addition, by the comparison theorem for BSDEs with quadratic growth coefficients proved by \cite[Theorem~A.2]{DF}, there exists $\varepsilon>0$  depending only on $\bar{M},C_f,C_{\bar{M}}$
such that $(Y_{s\land D_{t}},t\le s\le T)$ is the unique solution whenever $t\in [T-h_{\varepsilon,\bar{M}},T].$ Thus, for each $t\in [T-h_{\varepsilon,\bar{M}},T]$ We have
\begin{equation*}
Y_{t} = \mathcal{B}_{t}(D_{t},L'_{D_{t}}).
\end{equation*}
Fix $t\in [T-h_{\varepsilon,\bar{M}},T].$ Now it remains to show that $Y_{t} \ge \mathcal{B}_{t}(\tau,L'_{\tau})$ for each $\tau \in \mathcal{T}_{t}.$ Fix $\tau\in\mathcal{T}_{t}.$ Note that the pair of processes $(Y_{s\land \tau},Z_{s}\mathbf{1}_{\{s\in[t,\tau]\}})$ satisfies \begin{equation*}
- dY_{s} = f(s,S_{s},Y_{s},Z_{s})ds + dK_{s} - Z_{s} dW_{s},\quad s\in[t,\tau].
\end{equation*} 
In other words, $(Y_{s\land \tau},Z_{s}\mathbf{1}_{\{s\in[t,\tau]\}})$ is a solution of the BSDE associated with terminal time $\tau,$ terminal value $Y_{\tau},$ and driver $f(r,x,y,z)dr + dK_{r}.$ Note that $Y_{\tau} \ge L'_{\tau}.$ Following \cite[Theorem~A.2]{DF} again, we have
\begin{equation*}
Y_{t}\ge \mathcal{B}_{t}(\tau,L'_{\tau}),
\end{equation*}
and the proof is complete.
\end{proof}

The following comparison theorem and its proof are based on \cite[Proposition~3.2]{KLQT}.

\begin{proposition}\label{prop:RBSDE Comparison}
Suppose Assumption~\ref{(H_b,sigma)} holds. Let $f^1$ and $f^2$ satisfy Assumption~\ref{(H1)}, and $(\xi^{1},L^{1})$ and $(\xi^{2},L^{2})$ satisfy Assumption~\ref{(H_L,xi)}. Let $(Y^{1},Z^{1},K^{1})$ (resp. $(Y^{2},Z^{2},K^{2})$) be a solution of Eq.~\eqref{e:RBSDE} with parameter $(\xi^{1},L^{1},f^{1})$ (resp. $(\xi^{2},L^{2},f^{2})$) and assume that 
\begin{equation*}
\xi^{1}\le \xi^{2},\ L^{1}_{t}\le L^{2}_{t},\text{ and }f^{1}(t,x,y,z)\le f^{2}(t,x,y,z)\text{ for all }(x,y,z)\in\R^{d}\times\R\times\R^{d}\ dt\otimes d\mathbb{P}\text{-a.s..}
\end{equation*}
Then there exists $\varepsilon>0$  depending only on $\bar{M},C_f,C_{\bar{M}}$ such that
\begin{equation*}
Y^{1}_{t} \le Y^{2}_{t} \text{ for all }t\in[T-h_{\varepsilon,\bar{M}},T]\text{ a.s..}
\end{equation*}
\end{proposition}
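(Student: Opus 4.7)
The plan is to reduce the RBSDE comparison to a comparison of BSDEs with random terminal times via the optimal stopping characterization established in Proposition~\ref{prop:charac of RBSDE}.

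First, I would apply Lemma~\ref{lem:A Prior Estimate} to each of $(\xi^i, L^i, f^i)$, $i=1,2$, and pick a single constant $\bar{M}$ dominating both $\|Y^1\|_{\mathrm{H}^{\infty};[0,T]}$ and $\|Y^2\|_{\mathrm{H}^{\infty};[0,T]}$ (take the larger of the two bounds produced by the lemma). With this common $\bar{M}$ in hand, Proposition~\ref{prop:charac of RBSDE} produces a single threshold $\varepsilon>0$ (depending only on $\bar{M}$, $C_f$, $C_{\bar{M}}$) such that on $[T-h_{\varepsilon,\bar{M}},T]$ each $Y^i$ admits the representation
\[
Y^i_t = \esssup_{\tau\in\mathcal{T}_t}\mathcal{B}^i_t(\tau,L^{\prime,i}_\tau),\qquad i=1,2,
\]
where $L^{\prime,i}_s:=L^i_s 1_{\{s<T\}}+\xi^i 1_{\{s=T\}}$ and $\mathcal{B}^i_\cdot(\tau,\eta)$ denotes the solution of the BSDE~\eqref{e:B_s(tau,L') without rough path} with driver $f^i$, terminal time $\tau$, and terminal value $\eta$.

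Next, for any fixed $t\in[T-h_{\varepsilon,\bar{M}},T]$ and $\tau\in\mathcal{T}_t$, the ordering $\xi^1\le\xi^2$ together with the (implicitly assumed) $L^1\le L^2$ gives $L^{\prime,1}_\tau\le L^{\prime,2}_\tau$ almost surely, while $f^1\le f^2$ is in force by hypothesis. I would then invoke the comparison theorem for quadratic BSDEs with random terminal times \cite[Theorem~A.2]{DF} on $[t,\tau]$, whose smallness requirement on the time horizon is precisely what the choice of $\varepsilon$ is designed to enforce; this yields
\[
\mathcal{B}^1_t(\tau,L^{\prime,1}_\tau) \le \mathcal{B}^2_t(\tau,L^{\prime,2}_\tau) \quad\text{a.s.}
\]
Taking essential suprema over $\tau\in\mathcal{T}_t$ on both sides and using the above representation delivers $Y^1_t\le Y^2_t$ on $[T-h_{\varepsilon,\bar{M}},T]$, as required.

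The only nontrivial point is \emph{synchronization of constants}: the two essential-sup representations and the underlying BSDE comparison must all hold on the same small subinterval near $T$. I would address this by first fixing the common bound $\bar{M}$ from the priori estimate, and then choosing $\varepsilon$ as the minimum of the three thresholds produced by Proposition~\ref{prop:charac of RBSDE} applied to each of $Y^1$ and $Y^2$ and by \cite[Theorem~A.2]{DF} at the level $\bar{M}$. Since all three depend only on $\bar{M}$, $C_f$, and $C_{\bar{M}}$ (and not on the specific parameters beyond these bounds), such a common choice exists, and no further routine manipulation is needed.
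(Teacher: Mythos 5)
Your proposal is correct and takes essentially the same route as the paper: reduce to the optimal stopping representation of Proposition~\ref{prop:charac of RBSDE} and then apply the small-time BSDE comparison theorem of \cite[Theorem~A.2]{DF}. The only cosmetic difference is that you compare $\mathcal{B}^1_t(\tau,\cdot)\le\mathcal{B}^2_t(\tau,\cdot)$ for every $\tau\in\mathcal{T}_t$ and take essential suprema on both sides, whereas the paper specializes to the single Snell stopping time $D^1_t$, using $Y^1_t=\mathcal{B}^1_t(D^1_t,L^{\prime,1}_{D^1_t})\le\mathcal{B}^2_t(D^1_t,L^{\prime,2}_{D^1_t})\le Y^2_t$; both are equally valid. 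You also correctly flagged that the argument genuinely needs $L^1\le L^2$ (the paper's proof invokes $L^{\prime,1}_{D^1_t}\le L^{\prime,2}_{D^1_t}$ as well), an assumption that appears in Corollary~\ref{cor:comparison} but is inadvertently omitted from the statement of Proposition~\ref{prop:RBSDE Comparison}.
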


\begin{proof}
Let $\varepsilon>0$ be the same as in Proposition~\ref{prop:charac of RBSDE}. Denote $L_t^{1\,\prime} := L^{1}_{t}\mathbf{1}_{\{t<T\}} + \xi^{1} \mathbf{1}_{\{t=T\}}$ and $L_t^{2\,\prime} := L^{2}_{t}\mathbf{1}_{\{t<T\}} + \xi^{2} \mathbf{1}_{\{t=T\}}.$ For each $t\in[T-h_{\varepsilon,\bar{M}},T]$  and $\tau\in\mathcal{T}_{t},$ denote by $\mathcal{B}^{1}_{t}(\tau,L^{1}_{\tau\,\prime})$ (resp. $\mathcal{B}^{2}_{t}(\tau,L_{\tau}^{2\,\prime})$) the unique solution to Eq.~\eqref{e:B_s(tau,L') without rough path} associated with terminal time $\tau$ and parameter $(L_{\tau}^{1\,\prime},f^{1})$ (resp. $(L_{\tau}^{2\,\prime},f^{2})$). Fix $t\in [T-h_{\varepsilon,\bar{M}},T].$ Let $D^{1}_{t}:=\inf\left\{u\ge t; Y^{1}_{u} = L_{u}^{1\,\prime}\right\}\in\mathcal{T}_{t}.$ By Proposition~\ref{prop:charac of RBSDE} we have 
\begin{equation*}
Y^{1}_{t} = \mathcal{B}^{1}_{t}(D^{1}_{t},L^{1\,\prime}_{D^{1}_{t}}),\text{ and }Y^{2}_{t} = \esssup_{\tau\in\mathcal{T}_{t}}\mathcal{B}^{2}_{t}(\tau,L_{\tau}^{2\,\prime}).
\end{equation*}
Note that in the proof of Proposition~\ref{prop:charac of RBSDE}, the choice of $\varepsilon$ also ensures that the comparison of BSDEs hold on $[T-h_{\varepsilon,\bar{M}},T].$ Also note that $f^{1}\le f^{2}$ and $L_{D^{1}_{t}}^{1\,\prime}\le L_{D^{1}_{t}}^{2\,\prime}.$ Thus, we have
\begin{equation*}
Y^{1}_{t} = \mathcal{B}^{1}_{t}(D^{1}_{t},L_{D^{1}_{t}}^{1\,\prime}) \le  \mathcal{B}^{2}_{t}(D^{1}_{t},L_{D^{1}_{t}}^{2\,\prime}) \le Y^{2}_{t}.
\end{equation*}
\end{proof}

\begin{proposition}\label{prop:RBSDE willposedness}
Suppose \ref{(H1)}, \ref{(H_L,xi)}, and \ref{(H_b,sigma)} hold. Then there exists an $\varepsilon>0$ depending only on $\bar{M},C_f,C_{\bar{M}}$ such that Eq.~\eqref{e:RBSDE} with parameter $(\xi,f,L)$ admits a unique solution on $[T-h_{\varepsilon,\bar{M}},T],$ where $\bar{M}$ is the constant defined in Lemma~\ref{lem:A Prior Estimate} and $h_{\varepsilon,\bar{M}}$ is given by Assumption~\ref{(H1)}.
\end{proposition}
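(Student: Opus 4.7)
The plan is to localize the classical Kobylanski--Lepeltier--Quenez--Torres penalization argument to the short interval $[T-h_{\varepsilon,\bar M},T]$, on which the localized monotonicity condition $(A3')$ becomes effectively global and constants are uniform.

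Uniqueness follows immediately from Proposition~\ref{prop:RBSDE Comparison}: if $(Y^i,Z^i,K^i)$, $i=1,2$, are two solutions of Eq.~\eqref{e:RBSDE} with the same parameters $(\xi,f,L)$, applying the comparison theorem in both directions with $(\xi^1,L^1,f^1)=(\xi^2,L^2,f^2)$ gives $Y^1 \equiv Y^2$ on $[T-h_{\varepsilon,\bar M},T]$. The $Z^i$ then coincide by the uniqueness of the martingale representation of the continuous martingale part of $Y^i$, and each $K^i$ is recovered from the BSDE.

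For existence, I would introduce on $[T-h_{\varepsilon,\bar M},T]$ the penalized BSDEs
\begin{equation*}
Y^n_t=\xi+\int_t^T f(s,S_s,Y^n_s,Z^n_s)\,ds+n\int_t^T (Y^n_s-L_s)^-\,ds-\int_t^T Z^n_s\,dW_s.
\end{equation*}
The coefficient $f^n(s,x,y,z):=f(s,x,y,z)+n(y-L_s)^-$ inherits $(A1)$ and $(A2)$, and satisfies $D_y f^n \le D_y f \le C_{\varepsilon,\bar M}+\varepsilon|z|^2$ throughout the entire small interval by $(A3')$; since the penalization is nonincreasing in $y$, the monotonicity bound is actually improved. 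Standard quadratic BSDE theory therefore yields a unique bounded solution $(Y^n,Z^n)$, and the BSDE comparison principle gives $Y^n \le Y^{n+1}$. Setting $K^n_t:=n\int_{T-h_{\varepsilon,\bar M}}^t(Y^n_s-L_s)^-\,ds$, the triple $(Y^n,Z^n,K^n)$ solves an RBSDE with obstacle $L\wedge Y^n$ and coefficient $f$, so Lemma~\ref{lem:A Prior Estimate} yields a uniform bound $|Y^n|\le \bar M$. An exponential transform $e^{\lambda Y^n}$ with $\lambda$ large relative to $C_f$ produces a uniform BMO bound on $\int Z^n\,dW$, and hence an $L^2$ bound on $K^n_T$. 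Monotone convergence yields a limit $Y:=\lim_n Y^n$, and the estimate $\mathbb{E}\int(Y_s-L_s)^-\,ds\le \lim_n n^{-1}\mathbb{E}[K^n_T]=0$ forces $Y\ge L$ almost everywhere; combined with Dini's theorem against the continuous obstacle this gives $Y\ge L$ everywhere and the uniform convergence of $Y^n$.

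The main obstacle is to upgrade the weak $\mathrm{H}^2$ convergence of $Z^n$ to strong convergence and simultaneously identify the Skorokhod minimality $\int (Y_s-L_s)\,dK_s=0$, since both are required to pass to the limit in the quadratic $z$-dependence of $f$. The uniform BMO bound together with It\^o's formula applied to $(Y^n-Y^m)^2$ (after an exponential transform to absorb the $|z|^2$ term) yields the Cauchy estimate in $\mathrm{H}^2$; localization to $[T-h_{\varepsilon,\bar M},T]$ makes the resulting constants uniform in $n$. Once $Z^n\to Z$ in $\mathrm{H}^2$ and $K^n\to K$ in $\mathrm{I}^1$ are established, the minimality follows from $0\le\int(Y^n-L)^-\,dK^n=n^{-1}\int (dK^n)^2\to 0$ combined with the uniform convergence of $Y^n\to Y$ and continuity of $L$.
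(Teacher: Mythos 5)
Your uniqueness argument is the same as the paper's: applying Proposition~\ref{prop:RBSDE Comparison} in both directions gives $Y^1\equiv Y^2$, the identity showing that $\int(Z^1-Z^2)\,dW$ equals a continuous finite-variation process forces $Z^1=Z^2$ in $\mathrm{H}^2$, and then $K^1=K^2$ follows from the equation.

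For existence the paper simply invokes \cite[Theorem~3]{KLQT}, after observing (Remark~\ref{rem:conditions of continuity of RBSDE}) that on $[T-h_{\varepsilon,\bar M},T]$ the localized condition $(A3')$ reproduces the global condition used there. You instead re-run the penalization proof, which is a legitimate route, but the sketch has a genuine gap at the decisive step. Upgrading ``$Y\ge L$ $dt\otimes d\mathbb P$-a.e.'' to ``$Y\ge L$ everywhere'' is not a consequence of Dini's theorem: Dini requires the monotone limit to already be continuous, which is exactly what is in question, and the intermediate target $L\wedge Y^n\uparrow L$ uniformly presupposes $L\wedge Y = L$, i.e.\ $Y\ge L$ everywhere --- the reasoning is circular as written. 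What is actually needed is a regularity result for the monotone limit $Y=\lim_n Y^n$: that $Y$ admits right limits with $Y_t\ge Y_{t+}$. With that, the a.e.\ bound and continuity of $L$ give $Y_{t+}\ge L_t$, hence $Y_t\ge L_t$ for all $t$, and only then does Dini (applied to $L^n\uparrow L$) plus the stability Lemma~\ref{lem:continu solution map} yield uniform convergence of $Y^n$. This regularity is exactly what the paper's own penalization argument supplies in Theorem~\ref{thm:punishment}, Step~5, by extracting a weak $L^2$ limit of the (transformed) reflection processes, identifying it as increasing via the section-theorem Lemma~\ref{lem:app of sction theorem}, and reading off the right upper semicontinuity of $Y$ from the limiting equation. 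A secondary, minor inaccuracy: $f^n=f+n(\cdot-L)^-$ does not satisfy $(A1)$ with the fixed growth function $c$ uniformly in $n$; each $n$ requires an $n$-dependent growth function to construct $(Y^n,Z^n)$, and the uniform bound $|Y^n|\le\bar M$ is recovered only through the RBSDE reinterpretation with obstacle $L\wedge Y^n$, as you correctly note afterwards.
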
	
\begin{proof}
The existence of the solution is shown by \cite[Theorem 3]{KLQT}. We claim that uniqueness is a consequence of Proposition~\ref{prop:RBSDE Comparison}. Indeed, letting $(Y^{1},Z^{1},K^{1})$ and $(Y^{2},Z^{2},K^{2})$ be two solutions such that $Y^{1} = Y^{2} =: Y$ on $[T-h_{\varepsilon,\bar{M}},T],$ we have 
\begin{equation*}
\begin{aligned}
\int_{T - h_{\varepsilon,\bar{M}}}^{t}(Z^{1}_{r} - Z^{2}_{r})dW_{r} &= \int_{T - h_{\varepsilon,\bar{M}}}^{t}(f(r,S_{r},Y_{r},Z^{1}_{r}) - f(r,S_{r},Y_{r},Z^{2}_{r}))dr \\
&\quad - (K^{1} - K^{2})_{t} + (K^{1} - K^{2})_{T - h_{\varepsilon,\bar{M}}},\quad t\in[T - h_{\varepsilon,\bar{M}},T],
\end{aligned}
\end{equation*}
in which a martingale is equal to a continuous process of finite variation. This implies that $Z^{1}_{t} = Z^{2}_{t}$ in $\mathrm{H}^{2}_{[T-h_{\varepsilon,\bar{M}},T]}$ and then $K^{1}_{t} - K^{1}_{T-h_{\varepsilon,\bar{M}}} = K^{2}_{t} - K^{2}_{T-h_{\varepsilon,\bar{M}}}$ for all $t\in [T-h_{\varepsilon,\bar{M}},T]$ a.s..
\end{proof}

\cite[Theorem~4]{KLQT} gives the stability of reflected BSDEs.

\begin{lemma}\label{lem:continu solution map}
Suppose Assumption~\ref{(H_b,sigma)} holds. Let $\{\xi^n\}_{n\in\mathbb{N}}$ be a family of terminal value, $\{L^n\}_{n\in\mathbb{N}}$ be a family of obstacles satisfying Assumption~\ref{(H_L,xi)}, and $\{f^n\}_{n\in\mathbb{N}}$ be a family of coefficients satisfying Assumption \ref{(H1)} such that 
\begin{itemize}
\item[(i)] there exists a constant $C>0$ such that, for each $n,$
$$|\xi^n|\leq C, \  |L^n_t|\leq C, \ t\in[0,T];$$
\item[(ii)] there exists a function $c(\cdot)$ of the form $c(y)=a(1+|y|)$ with $a>0$ such that for each $n\in\mathbb{N},$
\begin{displaymath}
|f^n(t,x,y,z)|\leq c(|y|)+C|z|^2,  \ (t,x,y,z)\in[0,T]\times\mathbb{R}^d\times \mathbb{R}\times\mathbb{R}^{d};
\end{displaymath}
\item[(iii)] the sequence $\{f^n\}$ converges to $f^{0}$ locally uniformly in $(t,x,y,z)\in[0,T]\times\mathbb{R}^d\times \mathbb{R}\times\mathbb{R}^{d}$ a.s., $\{\xi^n\}$ converges to $\xi^{0}$ a.s.,  and $L^{n}$ converges to $L^{0}$ uniformly in $t\in[0,T]$ a.s..
\end{itemize}
For each $n\in\mathbb{N},$ let $(Y^n,Z^n,K^n)$ be the unique solution of the reflected BSDE Eq.~\eqref{e:RBSDE} with $(\xi^n,f^n,L^n).$ Then 
\begin{equation*}
\begin{aligned}
(Y^{n},K^{n}) &\rightarrow (Y,K) \text{ uniformly }\mathbb{P} \text{-a.s.,}\\
Z^n &\rightarrow Z \text{ in }\mathrm{H}^{2}_{[0,T]}(\mathbb{R}^d).
\end{aligned}
\end{equation*}
\end{lemma}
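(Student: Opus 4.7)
The plan is to run a backward induction on small subintervals, combining the Snell envelope characterization from Proposition~\ref{prop:charac of RBSDE} with the stability of quadratic BSDEs on each piece. First, conditions (i) and (ii) together with Lemma~\ref{lem:A Prior Estimate} give a uniform bound $\|Y^n\|_{\mathrm{H}^\infty;[0,T]}\le \bar M$ valid for every $n\ge 0$, with $\bar M$ depending only on $C,a,T$. The same uniformity is enjoyed by the $h_{\varepsilon,\bar M}$ from (A3') in~\ref{(H1)} because the growth $c(y)=a(1+|y|)$ is shared by all $f^n$; hence there is a single threshold $\varepsilon>0$ and length $h:=h_{\varepsilon,\bar M}$ on which Propositions~\ref{prop:charac of RBSDE} and \ref{prop:RBSDE willposedness} apply simultaneously to every Eq.~$(\xi^n,L^n,f^n)$.

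On the terminal slice $[T-h,T]$, writing $L^{n\,\prime}_s=L^n_s\mathbf{1}_{\{s<T\}}+\xi^n\mathbf{1}_{\{s=T\}}$ and $\mathcal{B}^n_t(\tau,\eta)$ for the solution of BSDE~\eqref{e:B_s(tau,L') without rough path} with coefficient $f^n$, terminal time $\tau$ and value $\eta$, Proposition~\ref{prop:charac of RBSDE} gives $Y^n_t=\esssup_{\tau\in\mathcal{T}_t}\mathcal{B}^n_t(\tau,L^{n\,\prime}_\tau)$. The elementary bound $|\esssup_\tau a^n(\tau)-\esssup_\tau a^0(\tau)|\le \esssup_\tau |a^n(\tau)-a^0(\tau)|$ reduces the convergence $Y^n\to Y^0$ to a uniform-in-$\tau$ stability estimate for the quadratic BSDEs $\mathcal{B}^n$. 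This stability is obtained via an exponential transform $\Phi(y)=e^{\lambda y}$ with $\lambda$ large enough to absorb the quadratic term in (A1), applied to $|\mathcal{B}^n(\tau,\cdot)-\mathcal{B}^0(\tau,\cdot)|$; the uniform $L^\infty$ bound from Step~1 combined with a uniform BMO estimate for the martingale parts $\pi^n$ (obtained by Itô applied to $e^{\lambda Y^n}$ on the small interval) makes $f^n(\cdot,S_\cdot,Y^n_\cdot,\pi^n_\cdot)\to f^0(\cdot,S_\cdot,Y^0_\cdot,\pi^0_\cdot)$ in a suitable sense, which is where hypothesis (iii)'s locally uniform convergence is used.

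Once $Y^n\to Y^0$ uniformly on $[T-h,T]$ a.s., the value $Y^n_{T-h}\to Y^0_{T-h}$ a.s.\ furnishes the terminal data for the next slice $[T-2h,T-h]$, and iterating this construction finitely many times yields uniform convergence on all of $[0,T]$. For $Z^n\to Z^0$ in $\mathrm{H}^2_{[0,T]}(\R^d)$ we apply Itô to a convex transform $\varphi(Y^n-Y^0)$ on each subinterval, where $\varphi(y)=(e^{\lambda|y|}-\lambda|y|-1)/\lambda^2$ again absorbs the quadratic growth and the Skorokhod term $\int(Y^n-L^n)dK^n=0$ controls the contribution of $K^n-K^0$; the uniform BMO bound on $Z^n$, together with the already established uniform convergence of $Y^n$ and locally uniform convergence of $f^n$, then forces $\E\int_0^T|Z^n_r-Z^0_r|^2dr\to 0$. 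Finally, using the defining equation
\[
K^n_t-K^0_t=(Y^0_t-Y^n_t)-(Y^0_0-Y^n_0)+\int_0^t\!\bigl[f^n(r,S_r,Y^n_r,Z^n_r)-f^0(r,S_r,Y^0_r,Z^0_r)\bigr]dr+\int_0^t(Z^n_r-Z^0_r)dW_r,
\]
BDG and the previous convergences give uniform convergence of $K^n$ to $K^0$ a.s.

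The main obstacle is the uniform (in $n$) BMO estimate on $Z^n$ under the merely subquadratic-in-$y$, quadratic-in-$z$ growth from (A1) and the condition (ii) — without it, neither the stability of $\mathcal{B}^n$ uniformly in $\tau$ nor the $\mathrm{H}^2$ convergence of $Z^n$ can be bootstrapped. This BMO bound is produced by Itô's formula applied to $\Phi(Y^n-L^n)$ (or to $e^{\lambda Y^n}$) on a subinterval short enough that the $\varepsilon|z|^2$ term in (A3') and the Skorokhod condition combine to yield an estimate independent of $n$; all remaining steps are then standard applications of the quadratic BSDE toolbox.
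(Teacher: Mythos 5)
The paper does not actually prove this lemma; it imports it as \cite[Theorem~4]{KLQT}, and Remark~\ref{rem:conditions of continuity of RBSDE} merely observes that the one place KLQT's proof uses the stronger hypothesis (A3) --- the comparison theorem --- still holds under (A3') on a terminal slice $[T-h_{\varepsilon,\bar M},T]$ by Proposition~\ref{prop:RBSDE willposedness}. Your proposal is therefore a self-contained reconstruction of a cited result. As a blueprint it is sound and consistent with KLQT's comparison-theoretic machinery: uniform a priori bound, Snell-envelope representation on short slices via Proposition~\ref{prop:charac of RBSDE}, reduction to uniform-in-$\tau$ stability of the quadratic BSDEs $\mathcal{B}^n$, uniform BMO bounds, exponential/convex transforms, and backward concatenation over finitely many slices. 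You also correctly isolate the uniform BMO estimate as the load-bearing step.

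Two concrete points need fixing. First, you justify uniformity in $n$ of the (A3') threshold $h_{\varepsilon,\bar M}$ by saying ``the growth $c(y)=a(1+|y|)$ is shared by all $f^n$''. That does not follow: condition (ii) controls $|f^n|$, whereas the constants $C_{\varepsilon,M}, h_{\varepsilon,M}$ in (A3') and $C_M$ in (A2) constrain $D_y f^n$ and $D_z f^n$, which (ii) says nothing about. Uniformity in $n$ of these constants must be read as part of the hypothesis (and it is available in every application of the lemma in the paper, via Lemma~\ref{lem40}), but it is not a consequence of (ii). Second, the step ``BDG and the previous convergences give uniform convergence of $K^n$ to $K^0$ a.s.'' is too fast: BDG yields $\mathbb{E}\bigl[\sup_t\bigl|\int_0^t (Z^n_r-Z^0_r)\,dW_r\bigr|^2\bigr]\to 0$, i.e.\ uniform convergence in probability, not a.s.\ uniform convergence, so the martingale term in your expression for $K^n_t-K^0_t$ does not immediately converge a.s. Upgrading to the claimed a.s.\ uniform convergence of $K^n$ requires an extra ingredient --- a rate estimate with Borel--Cantelli, a subsequence argument combined with uniqueness of the limit, or an argument that first establishes pointwise a.s.\ convergence of the increasing processes $K^n_t$ and then invokes continuity of $K^0$ to promote it to uniform (as in Lemma~\ref{lem:K}). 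As written, this is a genuine gap, even if repairable within your framework.
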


\begin{remark}\label{rem:conditions of continuity of RBSDE}
Actually, Lemma \ref{lem:continu solution map} is slightly different from \cite[Theorem~4]{KLQT}, which requires that $|D_{y} f(t,x,y,z)|\le C_{\varepsilon,M} + \varepsilon |z|^{2}$ holds for all $\varepsilon>0$ and $t\in[0,T]$ (which is exactly condition (iii) in \cite[(H4)]{KLQT}). However, in Lemma \ref{lem:continu solution map}, we only need this inequality to be true for all $t\in[T-h_{\varepsilon,M},T]$, i.e., $(A3')$ in Assumption~\ref{(H1)}. Let us explain why the stability result still holds under this weaker condition. Going back to the proof of \cite[Theorem~4]{KLQT}, condition (iii) in \cite[(H4)]{KLQT} is used to ensure the comparison theorem of Eq.~\eqref{e:RBSDE} holds. However, Lemma~\ref{prop:RBSDE willposedness} shows that the requirement of $(A3')$ is sufficient to achieve the same result. Therefore, we can weaken the assumptions in \cite[Theorem~4]{KLQT} to establish Lemma~\ref{lem:continu solution map}. 
\end{remark}

The BSDEs with rough drivers is considered as follows 
\begin{equation}\label{e:rough BSDE}
Y_{t}= \xi + \int_{t}^{T}f(s,S_{s},Y_{s},Z_{s})ds + \int_{t}^{T}H(S_{s},Y_{s})d\mathbf{X}_{s} - \int_{t}^{T}Z_{r}dW_{r},\quad t\in[0,T].
\end{equation}
We first state the well-posedness of BSDEs with rough drivers proved by \cite[Theorem~3]{DF}.

\begin{proposition}\label{prop:BSDE with rough driver}
Assume $(H,f)$ satisfies Assumption~\ref{(Hpr)}, \ref{(H0)}, $(\sigma,b)$ satisfies Assumption~\ref{(H_b,sigma)}, and $\xi\in L^{\infty}(\mathcal{F}_{T}).$ Let $\mathbf{X}^{n},n\ge 1$ and $\mathbf{X}$ be given in Theorem~\ref{thm:well-posedness}. Let $(Y^{n},Z^{n},K^{n})$ be the corresponding solution of Eq.~\eqref{59-} with parameter $(\xi,f,H,\mathrm{X}^{n}).$ Then there exists a pair $(Y,Z)\in \mathrm{H}^{\infty}_{[0,T]}\times \mathrm{H}^{2}_{[0,T]}(\R^{d})$ satisfying
\begin{equation*}
Y^{n}_{\cdot}\rightarrow Y_{\cdot}\text{ uniformly on }[0,T]\text{ a.s., }\ Z^{n}\rightarrow Z\text{ in }\mathrm{H}^{2}_{[0,T]}(\mathbb{R}^{d}).
\end{equation*}
We call $(Y,Z)$ be the solution of Eq.~\eqref{59-} with parameter $(\xi,f,H,\mathbf{X}).$
Furthermore, $(Y,Z)$ is independent of choice of $\left\{\mathbf{X}^{n},n\ge 1\right\}.$ 
\end{proposition}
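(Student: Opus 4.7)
The plan is to mirror the strategy of Theorem~\ref{thm:well-posedness} but without the reflection, which considerably simplifies matters since no delicate analysis of the convergence of $K$ (and in particular no Lemma~\ref{lem:K}) is required. First I would apply the Doss-Sussman-type transformation: for each $n\ge 1$ and terminal time $T'\in(0,T]$, let $\phi^{n,T'}(t,x,y)$ be the solution to the flow equation \eqref{e:phi^n}, and set
\[
\widetilde{Y}^{n,T'}_t:=(\phi^{n,T'})^{-1}(t,S_t,Y^n_t),\qquad \widetilde{Z}^{n,T'}_t:=-\frac{(D_x\phi^{n,T'})^{\top}}{D_y\phi^{n,T'}}\sigma_t+\frac{1}{D_y\phi^{n,T'}}Z^n_t,
\]
where the derivatives are evaluated at $(t,S_t,\widetilde{Y}^{n,T'}_t)$. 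For smooth $\mathrm{X}^n$ an application of It\^o's formula (as in Lemma~\ref{lem:smmoth case}, with $K\equiv 0$) shows that $(\widetilde{Y}^{n,T'},\widetilde{Z}^{n,T'})$ solves the classical BSDE on $[0,T']$ with terminal value $(\phi^{n,T'})^{-1}(T',S_{T'},Y^n_{T'})$ and driver $\widetilde{f}^{n,T'}$ from \eqref{e:tilde f^n,T'}.

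Next I would use Lemma~\ref{lem40} and Lemma~\ref{lem:A Prior Estimate} (applied to the transformed classical BSDE with quadratic growth) to produce a uniform bound $\esssup_{(t,\omega)}|\widetilde{Y}^{n,T'}|\le \widetilde{M}$ in $n$ and $T'$, and hence a uniform bound on $Y^n$. Exactly as in the proof of Theorem~\ref{thm:well-posedness} (Step~2), this together with Lemma~\ref{lem40}(iii) produces a threshold $\delta=\delta_{\varepsilon,\widetilde M}>0$ such that whenever $\|\mathbf{X}^n\|_{p\text{-var};[s,T']}\le\delta$ the transformed generator $\widetilde{f}^{n,T'}$ satisfies the condition under which classical BSDEs with quadratic growth enjoy existence, uniqueness and stability. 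Applying Lemma~\ref{lem:length of the subinterval} to $\mathbf{X}^0$ and exploiting $d_{p\text{-var}}(\mathbf{X}^n,\mathbf{X}^0)\to 0$, I obtain a partition $0=T-h^{(N)}<\cdots<T-h^{(0)}=T$ so that $\sup_{n\ge 0}\|\mathbf{X}^n\|_{p\text{-var};[T-h^{(i+1)},T-h^{(i)}]}\le\delta$ for all $i$.

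The induction now proceeds from right to left over the subintervals $[T-h^{(i+1)},T-h^{(i)}]$. At the rightmost interval ($i=0$) both $n\ge 1$ and the putative limit share the terminal value $\xi$. By Lemma~\ref{lem:stability of the flow} the flows $\phi^{n,T-h^{(i)}}$ and their $y$-inverses converge locally uniformly with all relevant derivatives, and by Lemma~\ref{lem:locally uniform conver} $\widetilde{f}^{n,T-h^{(i)}}\to\widetilde{f}^{0,T-h^{(i)}}$ locally uniformly in $(t,x,\widetilde y,\widetilde z)$ a.s.. The stability result for classical BSDEs with quadratic growth on small intervals (the non-reflected counterpart of Lemma~\ref{lem:continu solution map}, which in \cite{DF} takes the role that \cite[Theorem~4]{KLQT} plays here) then yields $\widetilde{Y}^{n,T-h^{(i)}}\to\widetilde{Y}^{0,T-h^{(i)}}$ uniformly a.s.\ and $\widetilde{Z}^{n,T-h^{(i)}}\to\widetilde{Z}^{0,T-h^{(i)}}$ in $\mathrm{H}^{2}$ on the current subinterval. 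Defining $Y^0$ and $Z^0$ on that subinterval through the inverse transformation $Y^0_t:=\phi^{0,T-h^{(i)}}(t,S_t,\widetilde{Y}^{0,T-h^{(i)}}_t)$ and propagating the a.s.\ convergence of $Y^n_{T-h^{(i+1)}}$ to $Y^0_{T-h^{(i+1)}}$ into the next subinterval, I iterate to obtain $Y^n\to Y^0$ uniformly on $[0,T]$ a.s.\ and $Z^n\to Z^0$ in $\mathrm{H}^{2}_{[0,T]}(\mathbb{R}^d)$. Independence of the approximating sequence follows by interleaving two such sequences: the interleaved sequence still converges to $\mathbf{X}$ in $p$-variation geometric rough path metric, hence produces a common limit.

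The main obstacle is securing the classical stability statement for quadratic BSDEs in a form strong enough to handle our transformed generators $\widetilde{f}^{n,T-h^{(i)}}$, which involve the $y$-monotonicity condition only on small sub-intervals (Lemma~\ref{lem40}(iii)); this is exactly the reason why the subinterval decomposition via Lemma~\ref{lem:length of the subinterval} is essential and why the bound on $\|\mathbf{X}^n\|_{p\text{-var}}$ must be threaded uniformly through the induction. Once this technical point is in hand, the passage from the smooth approximations to the rough limit is parallel to (but strictly simpler than) Steps~1--3 of the proof of Theorem~\ref{thm:well-posedness}.
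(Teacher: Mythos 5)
The paper does not give a self-contained proof of this proposition: it is quoted from Diehl--Friz (their Theorem~3), with Remark~\ref{rem:superlienar} explaining how the weaker super-linear growth condition (A1) is accommodated in the a priori estimate. Your reconstruction is precisely the Doss--Sussman-plus-small-interval-induction argument that Diehl--Friz use (and that Theorem~\ref{thm:well-posedness} parallels in the reflected setting), so the outline is sound, and the interleaving argument for independence of the approximating sequence is standard and correct. The one gap you flag yourself --- a stability statement for non-reflected quadratic BSDEs on small intervals whose transformed generator satisfies the $y$-monotonicity bound only there --- can be closed within this paper's own toolkit rather than by appealing to a separate lemma in Diehl--Friz: by the device made explicit in Remark~\ref{rem:relation of BSDE and RBSDE}, a solution of the non-reflected equation enjoying a uniform $L^{\infty}$ bound $M$ is also a solution of the reflected equation with the constant obstacle $L\equiv -M-1$ and $K\equiv 0$, so Lemma~\ref{lem:continu solution map} (whose hypotheses only require the monotonicity (A3') near the terminal time, as you need) applies directly to the transformed triplets $(\widetilde Y^{n,T-h^{(i)}},\widetilde Z^{n,T-h^{(i)}},0)$ on each subinterval of the Lemma~\ref{lem:length of the subinterval} decomposition, completing the induction exactly as in Theorem~\ref{thm:well-posedness} but with no $K$-convergence issue to handle.
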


\begin{remark}\label{rem:superlienar}
In Assumption~\ref{(H0)}, we assume the following super-linear growth condition in $y\in\R$ 
\begin{equation*}
|f(t,x,y,z)|\le c(|y|) + C_{0}|z|^{2}\ \text{ with }\ \int_{0}^{\infty}\frac{dy}{c(y)} = \infty,
\end{equation*}
which is weaker than \cite[(F1)]{DF} required by \cite[Theorem~3]{DF}. The assumption \cite[(F1)]{DF} is used to make an a prior estimate in the form of \eqref{e:bounde of Yn}, which can be obtained by using \cite[Corollary~2.2]{kobylanski2000backward}. In our case, we obtain the same a prior estimate by \cite[Theorem~1]{lepeltier1998existence}, which only requires the super-linear growth in $y.$
\end{remark}

Now we stated the comparison theorem for BSDEs with rough drivers as follows, which is a direct consequence of \cite[Theorem~2.6]{kobylanski2000backward} and Proposition~\ref{prop:BSDE with rough driver}.
\begin{proposition}\label{prop:comparison-BSDE with rough driver}
Assume for each $j=1,2,$ the parameter $(\xi^{j},f^{j},H,\mathbf{X})$ satisfy the same assumptions in Proposition~\ref{prop:BSDE with rough driver}. For $i=1,2$, let $(Y^{i},Z^{i})$ be the solution of Eq.~\eqref{e:rough BSDE} with parameter $(\xi^{j},f^{j},H,\mathbf{X}).$ Suppose that $\xi^{1} \le \xi^{2}$ and
\begin{equation*}
f^{1}(t,x,y,z)\le f^{2}(t,x,y,z)\ \ d\mathbb{P}\times dt\text{ a.e.,}\ \forall (x,y,z)\in\R^{d}\times\R\times\R^{d}.
\end{equation*}
Then, we have
\begin{equation*}
Y^{1}_{t} \le Y^{2}_{t}\ \text{ for all }\ t\in[0,T]\ \text{ a.s.. } 
\end{equation*}
\end{proposition}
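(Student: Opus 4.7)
The plan is to reduce the statement to the classical comparison theorem for BSDEs with quadratic growth by passing through the smooth approximation framework that was used in Proposition~\ref{prop:BSDE with rough driver}. Pick a sequence of smooth paths $\mathrm{X}^n\in\bigcap_{\lambda\ge 1}\mathrm{Lip}^{\lambda}([0,T],\R^l)$ whose lifts $\mathbf{X}^n$ converge to $\mathbf{X}$ in $C^{0,p\text{-var}}([0,T],G^{[p]}(\R^l));$ such a sequence exists by Remark~\ref{rem:geo rough path}. For $j=1,2$ and $n\ge 1,$ denote by $(Y^{j,n},Z^{j,n})$ the unique solution of Eq.~\eqref{59-} with parameter $(\xi^j,f^j,H,\mathrm{X}^n).$ Since $\mathrm{X}^n$ is smooth, this is a standard BSDE with driver
\[
\bar f^{j,n}(r,x,y,z):=f^j(r,x,y,z)+H(x,y)\dot{\mathrm{X}}^n_r.
\]
By Proposition~\ref{prop:BSDE with rough driver}, $Y^{j,n}_{\cdot}\to Y^j_{\cdot}$ uniformly on $[0,T]$ almost surely as $n\to\infty.$

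Next I would verify that, for each fixed $n$, the drivers $\bar f^{1,n}$ and $\bar f^{2,n}$ satisfy the hypotheses of Kobylanski's comparison theorem \cite[Theorem~2.6]{kobylanski2000backward}. Assumption~\ref{(Hpr)} implies that $H(x,\cdot)$ is bounded and $C^1$ in $y$ with bounded derivative, so the additive perturbation $H(x,y)\dot{\mathrm{X}}^n_r$ preserves quadratic growth in $z$ (it is independent of $z$), it inherits (A1) and the local bounds of Assumption~\ref{(H0)} (with constants depending on $n$ through $\|\dot{\mathrm{X}}^n\|_{\infty;[0,T]}$), and it adds only a bounded, locally Lipschitz-in-$y$ term. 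Since $\xi^1\le\xi^2$ and $f^1\le f^2,$ we have
\[
\bar f^{1,n}(r,x,y,z)\le \bar f^{2,n}(r,x,y,z)\quad d\mathbb{P}\otimes dr\text{-a.e.,}
\]
so Kobylanski's comparison applies on $[0,T]$ and yields
\[
Y^{1,n}_t\le Y^{2,n}_t,\quad t\in[0,T],\ \text{a.s.}
\]
for every $n\ge 1.$

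Finally, passing to the limit $n\to\infty$ along the uniform-in-$t$ a.s.\ convergence $Y^{j,n}_\cdot\to Y^j_\cdot$ yields $Y^1_t\le Y^2_t$ for all $t\in[0,T]$ a.s., as desired. The only genuinely delicate point is the verification of Kobylanski's hypotheses for $\bar f^{j,n}$: one must observe that Kobylanski's comparison applies on the entire interval $[0,T]$ under the present structural assumptions (quadratic growth in $z$, local Lipschitz in $y$, together with the uniform $L^\infty$ a priori bound on $Y^{j,n}$ that comes from the argument in Remark~\ref{rem:superlienar} and Lemma~\ref{lem:A Prior Estimate}), so that the monotonicity-in-$y$ inequality need only be tested on the (uniform in $n$) bounded range of $Y^{j,n}.$ All other steps are routine consequences of Proposition~\ref{prop:BSDE with rough driver} and the structural assumptions already imposed.
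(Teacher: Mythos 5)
Your proof is correct and is exactly the argument the paper has in mind: the paper gives no explicit proof, merely noting that the statement is ``a direct consequence of \cite[Theorem~2.6]{kobylanski2000backward} and Proposition~\ref{prop:BSDE with rough driver},'' which is precisely the strategy you spell out --- apply Kobylanski's comparison to the smooth-path approximations $(\xi^j,\bar f^{j,n})$ and then pass to the limit using the almost-sure uniform convergence furnished by Proposition~\ref{prop:BSDE with rough driver}. Your observation that the additive perturbation $H(x,y)\dot{\mathrm{X}}^n_r$ is $z$-independent with bounded $y$-derivative, so that the global condition (A3) of Assumption~\ref{(H0)} is preserved and Kobylanski's comparison applies on all of $[0,T]$ (given the uniform $L^\infty$ bound from Lemma~\ref{lem:A Prior Estimate}), is exactly the implicit verification the paper leaves to the reader.
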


\renewcommand\thesection{Appendix B}
\section{Some facts about PDEs with obstacles}\label{append:B}
\renewcommand\thesection{B}
In this section, we recall some results on PDEs with obstacles, and then we present the comparison theorem, which is essential for Section~\ref{subsec:PDE obstacle}. Throughout this section, we always suppose that $f\in C([0,T]\times \R^{d} \times \R \times \R^{d},\R),$ $b\in C([0,T]\times\R^{d},\R^{d}),$ $\sigma\in C([0,T]\times\R^{d},\R^{d\times d}),$ $l\in C([0,T]\times\R^{d}, \R),$ and $g\in C(\R^{d},\R)$ are deterministic. 
Consider the following obstacle problem
\begin{equation}\label{e:PDE-Obstacle}
\left\{\begin{aligned}
&D_{t}u(t,x) + \frac{1}{2} \text{Tr}\left\{\sigma\sigma^{\top}D^{2}_{xx} u\right\}(t,x) + (D_{x} u^{\top} b)(t,x) + f\big(t,x,u(t,x),(\sigma^{\top}D_{x}u)(t,x) \big) = 0,\\
&u(T,x) = g(x),\quad u(t,x)\ge l(t,x),\quad t\in [0,T].
\end{aligned}\right.
\end{equation}
Denote the set of symmetric $d\times d$ matrices by $\mathcal{S}(d).$ We say that $(a,p,\Lambda)\in \R\times \R^d \times \mathcal{S}(d)$ belongs to the semi-jets $P^{2,+}u(t,x)$ ($ P^{2,-}u(t,x)$ resp.) if, as $y\rightarrow x$, $s\rightarrow t,$
\begin{equation*}
u(s,y) \le (\ge\text{ resp.}) u(t,x) + a(s-t) + \langle p, y-x\rangle + \frac{1}{2}\langle\Lambda (y-x), (y-x)\rangle + o(|s-t| + |y-x|^{2}).
\end{equation*}

\begin{definition}\label{def:viscos solution}

(a). Assume  $u: (0,T]\times\mathbb{R}^{d}\rightarrow\mathbb{R}$ 
is upper semicontinuous. We call $u$ a viscosity subsolution of \eqref{e:PDE-Obstacle} if for every $x\in\R^d,$ $u(T,x)\le g(x),$ and if for every $(t_0,x_0)\in (0,T)\times\mathbb{R}^d$ such that $u(t_0,x_0)> l(t_0,x_0),$ and for every $(a,p,\Lambda)\in P^{2,+}u(t_{0},x_{0}),$ the following inequality is fulfilled
\begin{equation*}
a + \frac{1}{2} \text{Tr}\left\{\sigma\sigma^{\top}(t_0,x_0)\Lambda\right\} + p^{\top}b(t_0,x_0) + f\big(t_0,x_0,u(t_0,x_0),\sigma^{\top}(t_0,x_0)p\big)\ge 0.
\end{equation*}

(b). Assume $v:(0,T]\times \R^{d} \rightarrow \R$ 
is lower semicontinuous. We call $v$ a viscosity supersolution of \eqref{e:PDE-Obstacle} if for every $x\in\R^d,$ $v(T,x)\ge g(x),$ and if for every $(t,x)\in(0,T]\times\mathbb{R}^d,$ $v(t,x)\ge l(t,x),$ and if for every $(t_0,x_0)\in(0,T)\times \R^d,$ and for every $(a,p,\Lambda)\in P^{2,-}v(t_{0},x_{0}),$ the following inequality is fulfilled
\begin{equation*}
a + \frac{1}{2} \text{Tr}\left\{\sigma\sigma^{\top}(t_0,x_0)\Lambda\right\} + p^{\top}b(t_0,x_0) + f\big(t_0,x_0,v(t_0,x_0),\sigma^{\top}(t_0,x_0)p\big)\le 0.
\end{equation*}

(c). We call  $u\in C((0,T]\times\mathbb{R}^d;\mathbb{R})$
a viscosity solution of \eqref{e:PDE-Obstacle} if it is both a viscosity subsolution and a viscosity supersolution.

\end{definition}

Consider the following system of SDEs and reflected BSDEs 
\begin{equation}\label{e:system of RBSDEs}
\left\{\begin{aligned}
&Y^{t,x}_{s} = g(S^{t,x}_{T}) + \int_{s}^{T}f(r,S^{t,x}_r,Y^{t,x}_r,Z^{t,x}_r)dr  - \int_{s}^{T}Z^{t,x}_r dW_{r} + K^{t,x}_{T} - K^{t,x}_{s}, \\
& Y^{t,x}_{s}\ge l(s,S^{t,x}_{s}), \quad \int_{s}^{T} \left(Y^{t,x}_{r} - l(r,S^{t,x}_{r})\right) dK^{t,x}_{r} = 0 , \quad s\in[t,T], \\
&S^{t,x}_{s} = x + \int_{t}^{s} b(r,S^{t,x}_{r}) dr + \int_{t}^{s} \sigma(r,S^{t,x}_{r}) dW_r,\quad s\in[t,T].
\end{aligned}
\right.
\end{equation}

According to \cite[Lemma~5.1, Theorem~5, Corollary~2]{KLQT}, we have the following existence and uniqueness for the viscosity solution of obstacle problem.

\begin{proposition}\label{prop:exist of PDE}
Assume that $f$ satisfies Assumption~\ref{(H0)}, $(l,g) \in C_b([0,T]\times \R^{d},\R) \times C_b(\R^{d},\R)$ such that $g(x)\ge l(T,x),$ and $(b,\sigma)$ satisfies \ref{(H_b,sigma)}. In addition, suppose that $f$ satisfies 
\begin{equation}\label{e:f_x'}
|D_{x}f(t,x,y,z)|\le C'( 1 + |z|^{2} ), 
\end{equation} 
where $C'>0$ is a constant. Let $Y^{t,x}_{\cdot}$ be the unique solution of Eq.~\eqref{e:system of RBSDEs}. Then $u(t,x):=Y^{t,x}_{t}\in C([0,T]\times\R^d,\R),$ and $u$ is the unique viscosity solution of Eq.~\eqref{e:PDE-Obstacle}.
\end{proposition}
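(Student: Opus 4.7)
The plan is to verify three claims in turn: (a) the function $u(t,x):=Y^{t,x}_t$ is continuous on $[0,T]\times\R^d$; (b) $u$ is a viscosity solution of the obstacle PDE \eqref{e:PDE-Obstacle}; and (c) the viscosity solution is unique. For (a), I would start with the standard SDE flow estimate
\[
\E\Big[\sup_{s\in[t\vee t',T]}|S^{t,x}_s-S^{t',x'}_s|^{p}\Big]\le C\big(|x-x'|^{p}+|t-t'|^{p/2}\big),\quad p\ge 2,
\]
available under \ref{(H_b,sigma)}. Combined with Lemma \ref{lem:continu solution map} applied to the parameter sequences $\xi^{n}=g(S^{t_n,x_n}_T)$, $f^{n}(s,y,z)=f(s,S^{t_n,x_n}_s,y,z)$ and $L^{n}_s=l(s,S^{t_n,x_n}_s)$, this yields $Y^{t_n,x_n}_{t_n}\to Y^{t,x}_t$ a.s. as $(t_n,x_n)\to(t,x)$. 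The growth condition \ref{(H0)} together with $|D_xf|\le C'(1+|z|^{2})$ ensures that the locally uniform convergence hypotheses of Lemma \ref{lem:continu solution map} are satisfied; boundedness of $g$ and $l$ gives the required uniform bounds.

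For (b), the principal tool is the Markov property $Y^{t,x}_s=u(s,S^{t,x}_s)$ on $[t,T]$, which is a direct consequence of the SDE flow property and the uniqueness of the RBSDE solution. To show that $u$ is a viscosity subsolution, take $\varphi\in C^{1,2}$ such that $u-\varphi$ has a local maximum at $(t_0,x_0)\in[0,T)\times\R^d$ with $u(t_0,x_0)>l(t_0,x_0)$. By continuity of $u,l$ and of sample paths of $S^{t_0,x_0}$, one finds $h>0$ such that, on a high-probability event, $Y^{t_0,x_0}_s>l(s,S^{t_0,x_0}_s)$ on $[t_0,t_0+h]$; the Skorokhod condition then forces $K^{t_0,x_0}$ to remain constant on this interval, and the RBSDE reduces to a quadratic BSDE there. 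Applying It\^o's formula to $\varphi(s,S^{t_0,x_0}_s)$ and comparing with this BSDE via the comparison theorem for quadratic BSDEs, then dividing by $h$ and letting $h\downarrow 0$, yields the subsolution inequality at $(t_0,x_0)$. The supersolution property is analogous (using only the one-sided dynamics $Y^{t,x}_s\ge(\cdot)+K^{t,x}_{\cdot}-K^{t,x}_{s}$ which is automatic), while the obstacle constraint $u(t,x)\ge l(t,x)$ follows from $Y^{t,x}_t\ge l(t,S^{t,x}_t)=l(t,x)$ and the terminal condition $u(T,x)=g(x)$ from $Y^{T,x}_T=g(x)$.

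For (c), uniqueness follows from the comparison principle for viscosity solutions of the obstacle problem to be established later in this appendix (Proposition \ref{prop:comparison of PDE}). The main difficulty lies in handling the quadratic growth of $f$ in $z$: both the passage to the limit in the subsolution argument above and the comparison principle require linearizing the quadratic nonlinearity, typically via an exponential change of variable $u\mapsto e^{\lambda u}$ which restores a monotone structure and a sub-quadratic Hamiltonian. The additional regularity $|D_xf|\le C'(1+|z|^2)$ is exactly what is needed so that the $x$-dependence in the transformed equation can be controlled on the doubled-variable level by Ishii's lemma, and so that the comparison inequality closes despite the quadratic $z$-growth of the original driver.
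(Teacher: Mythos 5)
The paper does not actually prove this Proposition: it is stated with the remark ``According to \cite[Lemma~5.1, Theorem~5, Corollary~2]{KLQT}, we have the following existence and uniqueness\ldots'' and is treated as a known result imported from the literature. Your proposal is therefore not a variant of the paper's argument but a reconstruction of the proof that \cite{KLQT} would give, which is in itself a legitimate (and more self-contained) route. The three-part structure (continuity of $u$, viscosity property via the Markov identity $Y^{t,x}_s = u(s,S^{t,x}_s)$ and the Skorokhod condition, uniqueness via a comparison principle) is exactly the standard strategy and is what is behind the cited references.

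A few points deserve tightening. First, for (a): the $L^p$-flow estimate for $S^{t,x}$ gives convergence in probability, not a.s.\ convergence, whereas Lemma~\ref{lem:continu solution map} requires a.s.\ locally uniform convergence of the data; you need to pass to a subsequence for each $(t_n,x_n)\to(t,x)$ (which suffices for continuity since the limit is deterministic), or alternatively obtain the needed a.s.\ convergence directly by coupling. You should also say explicitly where \eqref{e:f_x'} is used in (a): because $f$ is continuous and deterministic, a.s.\ locally uniform convergence of $f(\cdot,S^{t_n,x_n}_\cdot,\cdot,\cdot)$ is automatic once the flows converge, so the role of \eqref{e:f_x'} at this stage is to propagate a Lipschitz (in $x$) estimate for $u$, not to verify the hypotheses of Lemma~\ref{lem:continu solution map}. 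Second, and more importantly, in (c) you invoke Proposition~\ref{prop:comparison of PDE} for uniqueness; but as proved in this paper that comparison result is only \emph{local in time}, valid on $(T-h\wedge h_{\varepsilon,\bar M}, T]$. To deduce uniqueness on all of $[0,T]$ one must either iterate the comparison backward in time using that the hypotheses of Assumption~\ref{(H0)}(A3) (unlike~(A3$'$)) hold on the whole interval, or invoke the global comparison result of \cite{KLQT} directly. Third, the localization in (b) via ``a high-probability event on which $Y^{t_0,x_0}_s>l(s,S^{t_0,x_0}_s)$ on $[t_0,t_0+h]$'' should instead be a stopping-time argument: define the stopping time at which $Y^{t_0,x_0}$ first touches (or comes within $\varepsilon$ of) the obstacle or leaves a small ball, observe $K^{t_0,x_0}$ is constant before it, and work on the stochastic interval up to that time before dividing by $h$ and sending $h\downarrow 0$. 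With these repairs your outline is a correct, self-contained alternative to the citation the paper uses.
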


 According to Lemma~\ref{lem40}, the growth of $z\mapsto D_y \widetilde{f}^{n,T'}(t,x,y,z)$ in the proof of Theorem~\ref{thm:well-posedness} does not satisfy the condition~(A3) in Assumption~\ref{(H0)}, but satisfies a weaker condition~(A3') in \ref{(H1)}. For this reason, we will propose the comparison theorem for the obstacle problem under Assumption~\ref{(H1)}. And it will be proved in the same spirit of \cite[Theorem C.1]{DF},
by showing that the parameter of Eq.~\eqref{e:PDE-Obstacle} can be transformed to the $f$ satisfying the following structure conditions 
\begin{equation}\label{e:structure cond}
\begin{aligned}
&|f(t,x,y,z)|\le \bar{C}(1 + |y|) (1+ |z|^{2}),\\
&|D_x f(t,x,y,z)|\le \bar{C}(1+|z|^2),\\	
&D_y f(t,x,y,z)\le -\bar{K}(1 + |z^2|),\\
&|D_z f(t,x,y,z)|\le \bar{C}(1 + |z| + |y||z|),
\end{aligned}
\end{equation}
for all $(t,x,y,z)\in[0,T]\times\R^d\times\R\times\R^d.$
We outline that the key of \eqref{e:structure cond} is the third condition, which corresponds to the ``proper coefficient" defined in \cite{crandall1992user}. First we show some properties of the test function.

\begin{lemma}\label{lem:psi^delta,zeta}
Let  $u,v:(0,T]\times\R^{d}\rightarrow \R.$ Suppose that $u$ (resp. $v$) is upper (resp. lower) semi-continuous. Assume further that \begin{equation*}
\sup_{(t,x,x')\in(0,T]\times \R^d} \left[u(t,x) - v(t,x')\right] <\infty,
\end{equation*} 
and
\begin{equation*}
\lim_{t\rightarrow 0}\left[u(t,x) - v(t,x)\right] = -\infty,\quad \text{ uniformly on }\R^d.
\end{equation*}
Set 
\begin{equation}\label{e:def of psi}
\psi^{\delta,\zeta}(t,x,x') := u(t,x) - v(t,x') - \frac{1}{\delta^2}|x-x'|^2 - \zeta\left(|x|^2 + |x'|^2\right).
\end{equation} 
For every $\delta,\zeta>0$ choose (depending on $(\delta,\zeta)$)
\begin{equation}\label{e:def of hat t}
(\hat{t},\hat{x},\hat{x}') \in \underset{(t,x,x')\in(0,T]\times\R^d\times\R^d}{\text{arg\ max}}\psi^{\delta,\zeta}(t,x,x').
\end{equation} 
Then we have
\begin{itemize}
\item[(i).] 
\begin{equation}\label{e:|x-hat x|/delta^2 -> 0}
   \limsup_{\zeta\rightarrow 0}\limsup_{\delta\rightarrow 0} \left[\frac{|\hat{x} - \hat{x}'|^2}{\delta^2} + \zeta(|\hat{x}|^{2} + |\hat{x}'|^2)\right] = 0;
\end{equation}
\item[(ii).] 
\begin{equation*}
\limsup_{\zeta\rightarrow 0}\limsup_{\delta\rightarrow 0}\psi^{\delta,\zeta}(\hat{t},\hat{x},\hat{x}') = \liminf_{\zeta\rightarrow 0}\liminf_{\delta\rightarrow 0}\psi^{\delta,\zeta}(\hat{t},\hat{x},\hat{x}') = \sup_{(t,x)\in(0,T]\times\R^d} \left[u(t,x) - v(t,x)\right];
\end{equation*}
\item[(iii).] 
\begin{equation}\label{e:u - v = theta}
\begin{aligned}
\limsup_{\zeta\rightarrow 0}\limsup_{\delta\rightarrow 0}\left[u(\hat{t},\hat{x}) - v(\hat{t},\hat{x}')\right] &= \liminf_{\zeta\rightarrow 0}\liminf_{\delta\rightarrow 0}\left[u(\hat{t},\hat{x}) - v(\hat{t},\hat{x}')\right] \\
&= \sup_{(t,x)\in(0,T]\times\R^d} \left[u(t,x) - v(t,x)\right].
\end{aligned}
\end{equation}

\end{itemize}
\end{lemma}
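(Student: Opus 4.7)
The plan is to carry out the classical doubling-variables argument (in the spirit of \cite[Lemma~3.1]{crandall1992user}) and extract the three convergences via a squeeze. Set
\[ \Theta := \sup_{(t,x)\in(0,T]\times\R^d}[u(t,x) - v(t,x)], \qquad C_0 := \sup_{(t,x,x')}[u(t,x) - v(t,x')];\]
the hypotheses give $C_0<\infty$, and we may assume $\Theta>-\infty$ (else the statement is vacuous). Introduce
\[ M_\zeta := \sup_{(t,x)\in(0,T]\times\R^d}\left[u(t,x) - v(t,x) - 2\zeta|x|^2\right], \]
so that $M_\zeta \uparrow \Theta$ as $\zeta \downarrow 0$ by testing with a near-optimizer of $\Theta$. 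Writing $M_{\delta,\zeta}:=\psi^{\delta,\zeta}(\hat t,\hat x,\hat x')$, I would first verify the two-sided estimate $M_\zeta \leq M_{\delta,\zeta} \leq C_0$: the lower bound by testing $\psi^{\delta,\zeta}$ on the diagonal $(t,x,x)$, the upper bound by dropping the nonnegative penalties. Existence of the maximizer follows because $\psi^{\delta,\zeta}$ is upper semicontinuous, coercive in $(x,x')$ via the $\zeta$-penalty, and pushed to $-\infty$ as $t\to 0^+$ by hypothesis (using that the $1/\delta^2$-penalty keeps $|x-x'|$ bounded along any maximizing sequence).

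The crux is the limit $\delta\to 0$ with $\zeta$ fixed. Rewriting the defining identity at the maximizer,
\[ \frac{|\hat x - \hat x'|^2}{\delta^2} + \zeta(|\hat x|^2+|\hat x'|^2) \;=\; u(\hat t,\hat x) - v(\hat t,\hat x') - M_{\delta,\zeta} \;\leq\; C_0 - M_\zeta, \]
so $(\hat x,\hat x')$ stays bounded as $\delta\to 0$ and $|\hat x-\hat x'|^2 \leq \delta^2(C_0-M_\zeta)\to 0$. Extract a subsequence $(\hat t_n,\hat x_n,\hat x_n')\to (\bar t,\bar x,\bar x)$ and rule out $\bar t = 0$ via the uniform blowup hypothesis (otherwise the USC/LSC envelopes of $u,v$ at $t=0$ would force $u(\hat t_n,\hat x_n)-v(\hat t_n,\hat x_n')\to -\infty$, contradicting the lower bound $M_\zeta$). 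Upper semicontinuity of $u$ and lower semicontinuity of $v$ at $(\bar t,\bar x)$ then give
\[ \limsup_n\bigl[u(\hat t_n,\hat x_n) - v(\hat t_n,\hat x_n')\bigr] \;\leq\; u(\bar t,\bar x) - v(\bar t,\bar x) \;\leq\; M_\zeta + 2\zeta|\bar x|^2, \]
while nonnegativity of the penalties together with $M_{\delta_n,\zeta}\ge M_\zeta$ yields the reverse chain in $\liminf$. All inequalities collapse to equalities, forcing
\[ \lim_n M_{\delta_n,\zeta} = M_\zeta, \quad \lim_n \frac{|\hat x_n - \hat x_n'|^2}{\delta_n^2} = 0, \quad \lim_n \zeta(|\hat x_n|^2 + |\hat x_n'|^2) = 2\zeta|\bar x|^2, \]
with $2\zeta|\bar x|^2 = u(\bar t,\bar x) - v(\bar t,\bar x) - M_\zeta \leq \Theta - M_\zeta$. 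Since the first two limits are intrinsic and the third obeys a subsequence-independent upper bound, these pass to full limits in $\delta$.

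To conclude, send $\zeta \to 0$. Since $\Theta - M_\zeta\to 0$, the bound above yields $\limsup_{\zeta\to 0}\limsup_{\delta\to 0}\bigl[\tfrac{|\hat x-\hat x'|^2}{\delta^2}+\zeta(|\hat x|^2+|\hat x'|^2)\bigr]=0$, which is (i). Combining $\lim_\delta M_{\delta,\zeta}=M_\zeta$ with $M_\zeta\uparrow\Theta$ gives (ii). Substituting (i) and (ii) into the identity $u(\hat t,\hat x)-v(\hat t,\hat x')=M_{\delta,\zeta}+\frac{|\hat x-\hat x'|^2}{\delta^2}+\zeta(|\hat x|^2+|\hat x'|^2)$ yields (iii). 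The main obstacle is the squeeze in the key step above: obtaining $\lim_\delta M_{\delta,\zeta}=M_\zeta$ (rather than merely $\ge$) and the vanishing of the cross-penalty requires the USC/LSC chain of inequalities to close precisely, which in turn hinges on ruling out the degenerate limit $\bar t=0$ via the uniform blowup hypothesis.
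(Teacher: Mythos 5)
Your proposal follows the same doubling-of-variables blueprint as the paper's proof: bound the maximum $M_{\delta,\zeta}$ from below by a diagonal test and from above by dropping the penalties, deduce $|\hat x-\hat x'|=O(\delta)$ and $|\hat x|,|\hat x'|=O(\zeta^{-1/2})$, then squeeze using upper/lower semicontinuity. Where the paper works with a near-optimizing sequence $(t^n,x^n)$ for $\theta$ and takes iterated $\liminf$ and $\limsup$ directly, you introduce $M_\zeta:=\sup_{(t,x)}[u(t,x)-v(t,x)-2\zeta|x|^2]$ and push a subsequence argument for fixed $\zeta$ all the way to $\lim_{\delta\to 0}M_{\delta,\zeta}=M_\zeta$, $\lim_{\delta\to 0}|\hat x-\hat x'|^2/\delta^2=0$, and a subsequence-independent bound $\zeta(|\hat x|^2+|\hat x'|^2)\le\Theta-M_\zeta+o(1)$; this is a slightly stronger intermediate statement, but it is correct and the final squeeze (send $\zeta\to 0$, use $M_\zeta\uparrow\Theta$) recovers exactly (i)--(iii).

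The one point worth flagging is the step you yourself single out as "the main obstacle": ruling out $\bar t=0$ along a subsequence $\delta_n\to 0$. Your justification --- that USC/LSC at $t=0$ together with the uniform blow-up hypothesis forces $u(\hat t_n,\hat x_n)-v(\hat t_n,\hat x_n')\to-\infty$ --- is not quite a proof: the hypothesis $\lim_{t\to 0}[u(t,x)-v(t,x)]=-\infty$ (uniformly) controls the \emph{diagonal} difference, whereas the quantity you need to send to $-\infty$ is the off-diagonal difference $u(\hat t_n,\hat x_n)-v(\hat t_n,\hat x_n')$ with $\hat x_n\neq\hat x_n'$; the argmax inequalities only give a \emph{lower} bound on $v(\hat t_n,\hat x_n)-v(\hat t_n,\hat x_n')$, not the upper bound that would close the gap. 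The paper glosses over exactly the same point (it simply asserts $\limsup_{\delta\to 0}[u(\hat t,\hat x)-v(\hat t,\hat x')]\le\theta$ once $|\hat x-\hat x'|\to 0$). In the lemma's actual use inside the comparison theorem, $u,v$ are bounded and a $\lambda/t$ term has been subtracted from $u$, so one in fact has the \emph{stronger} blow-up $\sup_{x,x'}[u(t,x)-v(t,x')]\to-\infty$ as $t\to 0$, and then $u(\hat t,\hat x)-v(\hat t,\hat x')\ge u(T,0)-v(T,0)$ immediately excludes $\hat t_n\to 0$. You would do well to either state that stronger form of the hypothesis or to invoke it explicitly in the exclusion of $\bar t=0$; beyond that, your proof is a faithful (and in places more carefully squeezed) rendition of the paper's argument.
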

\begin{proof}
From \eqref{e:def of hat t} it follows that for all $\delta,\zeta>0$
\begin{equation*}
\begin{aligned}
\frac{|\hat{x}-\hat{x}'|^2}{\delta^2} + \zeta \left(|\hat{x}|^2 + |\hat{x}'|^2\right)&\le - 
 u(T,0) + v(T,0)
+ u(\hat{t},\hat{x}) - v(\hat{t},\hat{x}')
\\
&\le  -u(T,0) + v(T,0)
+ \sup_{(t,x,x')\in(0,T]\times\R^d\times \R^d}\left[u(t,x) - v(t,x')\right].
\end{aligned}
\end{equation*}
Hence, there exists a constant $C>0$ such that 
\begin{equation}\label{e:bound of hat x and hat x'}
|\hat{x} - \hat{x}'|\le C\delta,\quad |\hat{x}|,|\hat{x}'|\le C/\zeta^{\frac{1}{2}}.
\end{equation}
Furthermore, for every fixed $\zeta>0,$ we have $\lim\limits_{\delta\rightarrow 0}|\hat{x} - \hat{x}'| = 0$ and then 
\begin{equation}\label{e:limsup u - v <= theta}
\limsup_{\delta\rightarrow 0} \left[u(\hat{t},\hat{x}) - v(\hat{t},\hat{x}')\right] \le \sup_{(t,x)\in(0,T]\times\R^d} [u(t,x) - v(t,x)] =: \theta.
\end{equation}
Let $(t^n,x^n)_{n\ge 1}$ be a sequence of point in $(0,T]\times\R^d$ such that 
\begin{equation*}
u(t^n,x^n) - v(t^n,x^n)\ge \theta - \frac{1}{n}.
\end{equation*}
By \eqref{e:def of hat t}, we have 
\begin{equation}\label{e:u(t,x) - v(t,x) <= psi}
u(t^n,x^n) - v(t^n,x^n) - 2\zeta |x^{n}|^2 \le u(\hat{t},\hat{x}) - v(\hat{t},\hat{x}') - \frac{|\hat{x} - \hat{x}'|^2}{\delta^2} - \zeta\left(|\hat{x}|^2 + |\hat{x}'|^2\right).
\end{equation}
Thus, we have
\begin{equation*}
\frac{|\hat{x} - \hat{x}'|^2}{\delta^2} + \zeta(|\hat{x}|^{2} + |\hat{x}'|^2)\le u(\hat{t},\hat{x}) - v(\hat{t},\hat{x}') - \theta + \frac{1}{n} + 2\zeta |x^{n}|^2.
\end{equation*}
Letting $\delta\rightarrow 0,$ by \eqref{e:limsup u - v <= theta} we have
\begin{equation*}
\limsup_{\delta\rightarrow 0} \left[\frac{|\hat{x} - \hat{x}'|^2}{\delta^2} + \zeta(|\hat{x}|^{2} + |\hat{x}'|^2)\right] \le \frac{1}{n} + 2\zeta |x^{n}|^2.
\end{equation*}
In the above inequality, letting $\zeta\rightarrow \infty$ we have 
\begin{equation*}
\limsup_{\zeta\rightarrow 0}\limsup_{\delta\rightarrow 0} \left[\frac{|\hat{x} - \hat{x}'|^2}{\delta^2} + \zeta(|\hat{x}|^{2} + |\hat{x}'|^2)\right]\le \frac{1}{n},
\end{equation*}
and then letting $n\rightarrow 0$ it follows
\begin{equation*}
\limsup_{\zeta\rightarrow 0}\limsup_{\delta\rightarrow 0} \left[\frac{|\hat{x} - \hat{x}'|^2}{\delta^2} + \zeta(|\hat{x}|^{2} + |\hat{x}'|^2)\right] = 0,
\end{equation*}
which proves (i). It remains to show (ii) and (iii). From the fact $\psi^{\delta,\zeta}(t,x,x')\le u(t,x) - v(t,x')$ and \eqref{e:limsup u - v <= theta}, it follows that 
\begin{equation}\label{e:psi <= theta}
\begin{aligned}
\lim\sup_{\zeta\rightarrow 0}\limsup_{\delta\rightarrow 0}\psi^{\delta,\zeta}(\hat{t},\hat{x},\hat{x}') \le \limsup_{\zeta\rightarrow 0}\limsup_{\delta\rightarrow 0}\left[u(\hat{t},\hat{x}) - v(\hat{t},\hat{x}')\right]\le \theta.
\end{aligned}
\end{equation}
On the other hand, taking $\liminf_{\zeta\rightarrow 0}\liminf_{\delta\rightarrow 0}$ to \eqref{e:u(t,x) - v(t,x) <= psi} we have
\begin{equation*}
u(t^n,x^n) - v(t^n,x^n)\le \liminf_{\zeta \rightarrow 0}\liminf_{\delta\rightarrow 0} \psi^{\delta,\zeta}(\hat{t},\hat{x},\hat{x}')\le \liminf_{\zeta\rightarrow 0}\liminf_{\delta\rightarrow 0}\left[u(\hat{t},\hat{x}) - v(\hat{t},\hat{x}')\right].
\end{equation*}
Letting $n\rightarrow \infty$ we have 
\begin{equation}\label{e:theta <= psi}
\theta \le \liminf_{\zeta \rightarrow 0}\liminf_{\delta\rightarrow 0} \psi^{\delta,\zeta}(\hat{t},\hat{x},\hat{x}')\le \liminf_{\zeta \rightarrow 0}\liminf_{\delta\rightarrow 0}\left[u(\hat{t},\hat{x}) - v(\hat{t},\hat{x}')\right]. 
\end{equation}
Then (ii) and (iii) follow from \eqref{e:psi <= theta} and \eqref{e:theta <= psi}.

\end{proof}

Recall that for any $\bar M >0, \varepsilon>0,$ $\Theta(\bar M):=(\bar{M}, c(|\bar{M}|),C_{f},C_{\bar{M}}  )$ and $C_{\varepsilon,\bar{M}}$ are given in Assumption~\ref{(H1)}.

\begin{proposition}
\label{prop:comparison of PDE}
Assume that $(H,f)$ satisfies the assumptions \ref{(Hpr)} and \ref{(H1)}, $(l,g) \in C_b([0,T]\times \R^{d},\R) \times C_b(\R^{d},\R)$ such that $g(x)\ge l(T,x),$ and $(b,\sigma )$ satisfy \ref{(H_b,sigma)}.  
In addition, suppose that $f$ satisfies \eqref{e:f_x'}. Let $u$ (resp. $v$) be a viscosity subsolution (resp. supersolution) of Eq.~\eqref{e:PDE-Obstacle} with $\|u(\cdot,\cdot)\|_{\infty;(0,T]\times\R^{d}}\vee\|v(\cdot,\cdot)\|_{\infty;(0,T]\times\R^{d}}\le \bar{M}$ for some $\bar M>0.$ Then there exists $\varepsilon>0$ depending only on $\Theta(\bar M),$ and $h>0$ depending on $(\varepsilon,\Theta(\bar M) ,  C_{\varepsilon,\bar{M}})$ such that
\begin{equation*}
u(t,x)\le v(t,x),\ \text{ for all }(t,x)\in(T-h\land h_{\varepsilon,\bar{M}},T]\times \mathbb{R}^d.
\end{equation*}

\end{proposition}
\begin{proof}
\textbf{Step 1.} Assuming that the parameter $f$ satisfies the structure conditions~\eqref{e:structure cond}, we will prove our statement by contradiction under this assumption. Firstly, let $u_{\lambda}(t,x): = u(t,x) - \frac{\lambda}{t}.$ Note that $u_{\lambda}$ is also a viscosity subsolution, and note that $u_{\lambda} - v$ is bounded above. Since $u\le v$ follows from $u_{\lambda}\le v$ for all $0< \lambda < 1,$ it is sufficient to prove comparison theorem with the following additional condition 
\begin{equation}\label{e:hat t > 0}
\lim_{t\rightarrow 0}\left[u(t,x) - v(t,x)\right] = -\infty,\quad \text{uniformly on }\R^d.
\end{equation}
Suppose that 
\begin{equation*}
\sup\limits_{(t,x)\in(0,T]\times\R^{d}}\left[u(t,x) - v(t,x)\right] =: \theta >0.
\end{equation*} 
For $\delta,\zeta >0$, recall $\psi^{\delta,\zeta}$ and $(\hat{t},\hat{x},\hat{x}')$ are given by \eqref{e:def of psi} and \eqref{e:def of hat t}, respectively. Also, by \eqref{e:bound of hat x and hat x'} that there exists a constant $C$ such that
\begin{equation}\label{e:bound of hat x and hat x' '}
|\hat{x} - \hat{x}'|\le C\delta,\ |\hat{x}|,|\hat{x}'|\le C/|\zeta|^{1/2}.
\end{equation} 
By \eqref{e:hat t > 0}, there exists $t_0\in(0,T]$ such that for all $\delta,\zeta>0,$ $\hat{t}\in[t_0,T].$ Thus, we have 
\begin{equation*}
\hat{t}^{\zeta}:=\limsup_{\delta\rightarrow 0}\hat{t}(\delta,\zeta)\in[t_0,T].
\end{equation*}
Note that by \eqref{e:bound of hat x and hat x' '} $\hat{x},\hat{x}'$ are bounded for each $\zeta>0,$ and $|\hat{x} - \hat{x}'|\rightarrow 0$ as $\delta\rightarrow 0.$ By extracting a subsequence of $\delta$ (depends on $\zeta$), we may suppose that the sequences $\left\{\hat{x}(\delta,\zeta)\right\}_{\delta}$ and $\left\{\hat{x}'(\delta,\zeta)\right\}_{\delta}$ converge to a common limit $\hat{x}^\zeta \in \R^d$ with $|\hat{x}^{\zeta}|\le C/|\zeta|^{1/2}.$ In the following, we show the contradiction under three (mere) possible cases for the limit $(\hat{t}(\delta,\zeta),\hat{x}(\delta,\zeta),\hat{x}'(\delta,\zeta)).$ 


(1). There exists a subsequence of $\left\{\hat{t}^{\zeta}\right\}$ such that $\hat{t}^{\zeta} = T $ for all $\zeta$ of this subsequence. Since $u - v$ is lower semicontinuous, and since $u(T,x)\le g(x)\le v(T,x),$ it follows that for every $\zeta>0$ 
\begin{equation*}
\liminf_{\delta\rightarrow 0}\left[u(\hat{t}(\delta,\zeta),\hat{x}(\delta,\zeta)) - v(\hat{t}(\delta,\zeta),\hat{x}'(\delta,\zeta))\right] \le u(T,\hat{x}^{\zeta}) - v(T,\hat{x}^{\zeta})\le 0,
\end{equation*}
and thus
\begin{equation*}
\begin{aligned}
&\liminf_{\zeta\rightarrow 0}\liminf_{\delta\rightarrow 0}
\left[u(\hat{t}(\delta,\zeta),\hat{x}(\delta,\zeta)) - v(\hat{t}(\delta,\zeta),\hat{x}'(\delta,\zeta))\right] \\
&\le \liminf_{\zeta\rightarrow 0} \left[u(T,\hat{x}^{\zeta}) - v(T,\hat{x}^{\zeta}) \right]\le 0.
\end{aligned}
\end{equation*}
While from Lemma~\ref{lem:psi^delta,zeta} we have
\begin{equation*}
\liminf_{\zeta\rightarrow 0}\liminf_{\delta\rightarrow 0}
\left[u(\hat{t}(\delta,\zeta),\hat{x}(\delta,\zeta)) - v(\hat{t}(\delta,\zeta),\hat{x}'(\delta,\zeta))\right] =\theta,
\end{equation*}
which leads to a contradiction.

(2) There exists a subsequence $\zeta_n\downarrow 0,n\rightarrow \infty$ such that for each $\zeta_n,$ there exists a subsequence of $\left\{\left(\hat{t}(\delta,\zeta_n),\hat{x}(\delta,\zeta_n)\right)\right\}_{\delta}$  such that
\[u\big(\hat{t}(\delta,\zeta_n),\hat{x}(\delta,\zeta_n)\big) - l\big(\hat{t}(\delta,\zeta_n),\hat{x}(\delta,\zeta_n)\big)\le 0.\]
Since $v(t,x)\ge l(t,x),$ we have that for the subsequence of $\left\{\left(\hat{t}(\delta,\zeta_n),\hat{x}(\delta,\zeta_n)\right)\right\}_{\delta}$ mentioned above, 
\begin{equation*}
u(\hat{t}(\delta,\zeta_n),\hat{x}(\delta,\zeta_n)) - v(\hat{t}(\delta,\zeta_n),\hat{x}'(\delta,\zeta_n))\le l(\hat{t}(\delta,\zeta_n),\hat{x}(\delta,\zeta_n)) - l(\hat{t}(\delta,\zeta_n),\hat{x}'(\delta,\zeta_n)).
\end{equation*}
Taking $\liminf_{n\rightarrow \infty}\liminf_{\delta\rightarrow 0}$ in the above inequality, and using Lemma~\ref{lem:psi^delta,zeta}, the following inequalities follow from \eqref{e:bound of hat x and hat x' '} and the continuity of $l(\cdot,\cdot)$
\begin{equation*}
\begin{aligned}
\theta &= \liminf\limits_{n\rightarrow\infty}\liminf\limits_{\delta\rightarrow 0}\left[ u(\hat{t}(\delta,\zeta_n),\hat{x}(\delta,\zeta_n)) - v(\hat{t}(\delta,\zeta_n),\hat{x}'(\delta,\zeta_n)) \right] \\
&\le \sup_{n\in\mathbb{N}}\lim_{\delta\rightarrow 0}\sup_{\substack{t\in[0,T],|x - x'|\le C\delta\\ \ |x|,|x'|\le C/|\zeta_{n}|^{1/2}}}|l(t,x) - l(t,x')| = 0 .
\end{aligned}
\end{equation*}
This leads to a contradiction. 

(3) There exists a subsequence $\zeta_n\downarrow 0,n\rightarrow \infty$ such that for each $\zeta_n,$ $\hat{t}^{\zeta_n}<T$ and there exists a subsequence of $\left\{(\hat{t}(\delta,\zeta_n),\hat{x}(\delta,\zeta_n)\right\}_{\delta}$ such that
\begin{equation*}
u\big(\hat{t}(\delta,\zeta_n),\hat{x}(\delta,\zeta_n)\big) - l\big(\hat{t}(\delta,\zeta_n),\hat{x}(\delta,\zeta_n)\big) > 0.
\end{equation*}
Now we show a contradiction for the case (3). For each $n,$ denote by $\{\delta^n_m\}_{m}$ the subsequence of $\delta$ derived from the subsequence of $\left\{(\hat{t}(\delta,\zeta_n),\hat{x}(\delta,\zeta_n)\right\}_{\delta}$ mentioned above.
Let $G:[0,T]\times \R^{d} \times \R\times\R^{d}\times\mathcal{S}(d)\rightarrow \R$ be defined by 
\begin{equation*}
G(t,x,y,p,\Lambda) := f(t,x,y,\sigma^{\top}(t,x)p) + \frac{1}{2}\text{Tr}\left\{\Lambda\sigma(t,x)\sigma^{\top}(t,x)\right\} + p^{\top}b(t,x).
\end{equation*}
Set $\hat{p}:=2(\hat{x} - \hat{x}')/\delta^2 + 2\zeta\hat{x}$ and $\hat{q}:=2(\hat{x} - \hat{x}')/\delta^2 - 2\zeta\hat{x}'.$ By \cite[Eq. (30)]{DF}, for each $\delta \in \{\delta^n_m\}_m$ and $\zeta \in \{\zeta_n\}_n,$
there exist $\Lambda,\Gamma\in \mathcal{S}(d)$ such that
(i)
\begin{equation}\label{e:Lambda Gamma}
\begin{pmatrix}
\Lambda& 0\\
0& -\Gamma
\end{pmatrix}
\le
\frac{4}{\delta^2}
\begin{pmatrix}
I_d& -I_d\\
-I_d& I_d
\end{pmatrix}
+ 4\zeta
\begin{pmatrix}
I_d& 0\\
0& -I_d
\end{pmatrix},
\end{equation}
where $I_d$ represents the unit matrix in $\mathbb{R}^{d\times d};$ (ii)
\begin{equation}\label{e:G>=G}
G(\hat{t},\hat{x},u(\hat{t},\hat{x}),\hat{p},\Lambda)\ge 0\ge G(\hat{t},\hat{x}',v(\hat{t},\hat{x}'),\hat{q},\Gamma).
\end{equation}
Multiplying by $(\sigma(\hat{t},\hat{x}),\sigma(\hat{t},\hat{x}'))$ and its transpose from the left side and right side respectively, \eqref{e:Lambda Gamma} yields that for some constant $C>0$
\begin{equation}\label{e:Lambda - Gamma}
\begin{aligned}
\text{Tr}\left\{\Lambda \sigma\sigma^{\top}(\hat{t},\hat{x})\right\} - \text{Tr}\left\{\Gamma \sigma\sigma^{\top}(\hat{t},\hat{x}') \right\} &\le \frac{4|\sigma(\hat{t},\hat{x}) - \sigma(\hat{t},\hat{x}')|}{\delta^2} + 4\zeta (|\sigma(\hat{t},\hat{x})|^2 + |\sigma(\hat{t},\hat{x}')|^2)\\
&\le C \left(\frac{|\hat{x} - \hat{x}'|^2}{\delta^2} + \zeta \right).
\end{aligned}
\end{equation}
Also note that  (recall $C_S$ is the constant in \ref{(H_b,sigma)})
\begin{equation}\label{e:hat p - hat q}
\begin{aligned}
\hat{p}^{\top}b(\hat{t},\hat{x}) - \hat{q}^{\top}b(\hat{t},\hat{x}') &\le \left(\hat{p}^{\top} - \hat{q}^{\top}\right)b(\hat{t},\hat{x}) + \hat{q}^{\top}\left(b(\hat{t},\hat{x}) - b(\hat{t},\hat{x}')\right)\\
& = 2C_{S}\left(\zeta(|\hat{x}| + |\hat{x}'|) + \frac{|\hat{x} - \hat{x}'|^2}{\delta^2} + \zeta|\hat{x}'|\cdot|\hat{x} - \hat{x}'|\right).
\end{aligned}
\end{equation}
Combining \eqref{e:G>=G}, \eqref{e:Lambda - Gamma}, and \eqref{e:hat p - hat q} we have  
\begin{equation}\label{e:f(u) - f(v) >= 0}
\begin{aligned}
&f\big(\hat{t},\hat{x},u(\hat{t},\hat{x}),\sigma^{\top}(\hat{t},\hat{x})\hat{p}\big) - f\big(\hat{t},\hat{x}',v(\hat{t},\hat{x}'),\sigma^{\top}(\hat{t},\hat{x}')\hat{q}\big) \\
&\ge - \left(\text{Tr}\left\{\Lambda \sigma\sigma^{\top}(\hat{t},\hat{x})\right\} - \text{Tr}\left\{\Gamma \sigma\sigma^{\top}(\hat{t},\hat{x}') \right\} + \hat{p}^{\top} b(\hat{t},\hat{x}) - \hat{q}^{\top}b(\hat{t},\hat{x}')\right)\\
&\ge  -O\left(\frac{|\hat{x} - \hat{x}'|^2}{\delta^2} + \zeta (|\hat{x}|^2  + |\hat{x}'|^2 + 1) \right).
\end{aligned}
\end{equation}
On the other hand, \cite[pp.~42]{DF} shows that for every $\delta,\zeta>0,$ whenever $\delta^2<\frac{\bar{C}\sqrt{\theta}}{3\bar{K}}$ it follows that 
\begin{equation}\label{e:f(u) - f(v) <= -K/3 theta}
\begin{aligned}
&f\big(\hat{t},\hat{x},u(\hat{t},\hat{x}),\sigma^{\top}(\hat{t},\hat{x})\hat{p}\big) - f\big(\hat{t},\hat{x}',v(\hat{t},\hat{x}'),\sigma^{\top}(\hat{t},\hat{x}')\hat{q}\big) \\
&\le -\frac{\bar{K}}{3}(u(\hat{t},\hat{x}) - v(\hat{t},\hat{x}')) + \left(2\frac{|\hat{x} - \hat{x}'|^2}{\delta^2} + 2 C_{S}\zeta |\hat{x}| \cdot |\hat{x} - \hat{x}'| + C_{S}|2\zeta \hat{x} + 2\zeta \hat{x}'|\right)^2  \\
&\qquad\qquad\qquad\qquad\qquad\qquad \times \frac{\bar{C}}{6\bar{K}(u(\hat{t},\hat{x}) - v(\hat{t},\hat{x}'))}.
\end{aligned}
\end{equation}
Finally, taking the limit of $\{\delta^{n}_m\}_{m}$ and then letting $n\rightarrow \infty$ in  \eqref{e:f(u) - f(v) >= 0} and \eqref{e:f(u) - f(v) <= -K/3 theta}, it follows form the equality~\eqref{e:|x-hat x|/delta^2 -> 0} and \eqref{e:u - v = theta} that (where $(\hat{t},\hat{x},\hat{x}') = (\hat{t}(\delta^n_m,\zeta_n),\hat{x}(\delta^n_m,\zeta_n),\hat{x}'(\delta^n_m,\zeta_n))$)
\begin{equation*}
0\le \limsup_{n\rightarrow \infty}\limsup_{m\rightarrow \infty}\left[f\big(\hat{t},\hat{x},u(\hat{t},\hat{x}),\sigma^{\top}(\hat{t},\hat{x})\hat{p}\big) - f\big(\hat{t},\hat{x}',v(\hat{t},\hat{x}'),\sigma^{\top}(\hat{t},\hat{x}')\hat{q}\big)\right] \le - \frac{\bar{K}}{3}\theta,
\end{equation*}
which leads to a contradiction.\\

\textbf{Step 2} Now we transform the coefficients of Eq.~\eqref{e:PDE-Obstacle} to meet conditions in \eqref{e:structure cond} when $T$ is small enough. Similar arguments can be found in \cite[Theorem C.1]{DF} and \cite[Theorem 6]{KLQT}. For $A,\lambda >0$ to be determined later, let 
\[\varphi(\tilde{y}):=\frac{1}{\lambda}\ln\left(\frac{e^{\lambda A \tilde{y} }+ 1}{A}\right):\mathbb{R}\rightarrow\left(-\frac{\ln(A)}{\lambda},\infty\right).\]
Set 
$
\tilde{M} := \bar{M} + 1,
$
and let $K>0$ with
$A >e^{2\lambda \tilde{M}e^{KT}}.$
Then for every $t\in[0,T]$, $\left\{e^{Kt}(y - \tilde{M}):y\in[-\bar{M},\bar{M}]\right\}$ is contained in the range of $\varphi.$ Denoting by $\tilde{y} := \varphi^{-1}(e^{Kt}(y - \tilde{M}))$ we have that for every $t\in[0,T]$
\begin{equation*}
\inf\left\{\varphi'(\tilde{y}):y\in[-\bar{M},\bar{M}]\right\} = \inf\left\{\frac{Ae^{\lambda A \tilde{y}}}{1 + e^{\lambda A \tilde{y}}}:y\in[-\bar{M},\bar{M}]\right\}>0,
\end{equation*}
 and
\begin{equation}\label{e:[-bar M,bar M] is equal to}
y\in[-\bar{M},\bar{M}] \Longleftrightarrow \tilde{y}\in[\varphi^{-1}(-e^{Kt}(2\bar{M}+1)),\varphi^{-1}(-e^{Kt})].
\end{equation}
Set 
$
\tilde{u}(t,x) := \varphi^{-1}(e^{Kt}(u(t,x) - \tilde{M})). 
$
Then Eq.~\eqref{e:PDE-Obstacle} is transformed to the following PDE (where $\sigma,b$ take values at $(t,x)$)
\begin{equation*}
\left\{\begin{aligned}
&D_{t}\tilde{u}(t,x) + \frac{1}{2}\text{Tr}\left\{\sigma\sigma^{\top}D^{2}_{xx} \tilde{u}(t,x)\right\}   + D_{x}\tilde{u}(t,x)^{\top}b 
+ \tilde{f}\big(t,x,\tilde{u}(t,x), \sigma^{\top}  D_{x} \tilde{u}(t,x) \big) = 0\\
&\tilde{u}(T,x) = \tilde{g}(x),\quad \tilde{u}(t,x)\ge \tilde{l}(t,x),
\end{aligned}\right.
\end{equation*}
where $\tilde{g}(x) := \varphi^{-1}(e^{Kt}(g(x)-\tilde{M})),$ $\tilde{l}(t,x) := \varphi^{-1}(e^{Kt}(l(t,x) - \tilde{M})),$ and
\[\tilde{f}(t,x,\tilde{y},\tilde{z}) := \frac{\varphi''(\tilde{y})}{2\varphi'(\tilde{y})}|\tilde{z}|^{2} + \frac{1}{\varphi'(\tilde{y})}e^{Kt}f\left(t,x,\varphi(\tilde{y})e^{-Kt}+\tilde{M},\varphi'(\tilde{y})e^{-Kt}\tilde{z}\right) - K\frac{\varphi(\tilde{y})}{\varphi'(\tilde{y})}.\]
By the condition (A3') in \ref{(H1)}, for every $\varepsilon>0,$ there exists $h_{\varepsilon,\bar{M}}>0$ such that whenever $T\le h_{\varepsilon,\bar{M}},$ we have for all $(t,x,y,z)\in [0,T]\times \R^d\times [-\bar{M},\bar{M}]\times \R^d,$
\begin{equation}\label{e:Dy f <=}
\begin{aligned}
&|f(t,x,y,z)|\le C + C|z|^2,\\
&D_y f(t,x,y,z)\le \varepsilon|z|^2 + C_{\varepsilon},\\
&|D_z f(t,x,y,z)|\le C + C|z|, 
\end{aligned}
\end{equation} 
where $C$ is a constant
only depends on $c(\cdot),$ $C_{f},$ $\bar{M},$ and $C_{\bar{M}};$ and $C_{\varepsilon}$ is a constant independent of $K.$
Fix an $\varepsilon>0$ that is to be determined later. By \eqref{e:[-bar M,bar M] is equal to} and \eqref{e:Dy f <=}, for each $(t,x,\tilde{z})\in[0,T]\times \R^d\times \R^d$ and $\tilde{y}\in [\varphi^{-1}(-e^{Kt}(2\bar{M}+1)),\varphi^{-1}(-e^{Kt})],$ it follows that  
\begin{equation}\label{e:Fy}
\begin{aligned}
&D_{\tilde{y}} \tilde{f}(t,x,\tilde{y},\tilde{z})\\
&= -\frac{|\varphi''(\tilde{y})|^2 - \varphi'''(\tilde{y})\varphi'(\tilde{y})}{2|\varphi'(\tilde{y})|^2}|\tilde{z}|^{2}  - \frac{\varphi''(\tilde{y})}{|\varphi'(\tilde{y})|^2}e^{Kt}f\left(\Theta\right)  +  D_y f\left(\Theta\right)\\
&\quad  + \frac{\varphi''(\tilde{y})}{\varphi'(\tilde{y})}e^{Kt}\langle D_z f\left(\Theta\right),\tilde{z}\rangle- K + K\frac{\varphi(\tilde{y})\varphi''(\tilde{y})}{|\varphi'(\tilde{y})|^{2}}\\
&\le -\frac{|\varphi''(\tilde{y})|^2 - \varphi'''(\tilde{y})\varphi'(\tilde{y})}{2|\varphi'(\tilde{y})|^2}|\tilde{z}|^{2} + C|\varphi''(\tilde{y})|e^{-Kt}|\tilde{z}|^{2} + C\frac{|\varphi''(\tilde{y})|}{|\varphi'(\tilde{y})|^2}e^{Kt}\\
&\quad + \varepsilon e^{-2Kt}|\varphi'(\tilde{y})|^{2}|\tilde{z}|^{2} + C_\varepsilon + C\left(\frac{|\varphi''(\tilde{y})|}{|\varphi'(\tilde{y})|}|\tilde{z}| + |\varphi''(\tilde{y})|e^{-Kt}|\tilde{z}|^{2}\right) - K + K\frac{\varphi''(\tilde{y})\varphi(\tilde{y})}{|\varphi'(\tilde{y})|^{2}}\\
&\le \left(-\frac{|\varphi''(\tilde{y})|^2 - \varphi'''(\tilde{y})\varphi'(\tilde{y})}{2|\varphi'(\tilde{y})|^2}  
 + 3C |\varphi''(\tilde{y})|e^{-Kt}
+ \varepsilon |\varphi'(\tilde{y})|^{2}e^{-2Kt}\right)|\tilde{z}|^2 \\ 
&\quad+ 
 2C\frac{|\varphi''(\tilde{y})|}{|\varphi'(\tilde{y})|^2}e^{Kt}
+ C_\varepsilon + \left(-1 + \frac{\varphi''(\tilde{y})\varphi(\tilde{y})}{|\varphi'(\tilde{y})|^2}\right)K.
\end{aligned}
\end{equation}
where 
$
\Theta = (t,x,y,z) :=(t,x,\varphi(\tilde{y})e^{-Kt}+\tilde{M},\varphi'(\tilde{y})e^{-Kt}\tilde{z}),
$
 and the second inequality we use the fact that
\begin{equation*}
C\frac{|\varphi''(\tilde{y})|}{|\varphi'(\tilde{y})|}|\tilde{z}|\le C\frac{|\varphi''(\tilde{y})|}{|\varphi'(\tilde{y})|^2}e^{Kt} + C|\varphi''(\tilde{y})|e^{-Kt}|\tilde{z}|^2.
\end{equation*}
Note that  for $y\in[-\bar{M},\bar{M}],$ 
\begin{equation}\label{e:y - tilde M<-1}
\varphi''(\tilde{y})>0\text{ and }\varphi(\tilde{y}) = e^{Kt}(y-\tilde{M})\le -e^{Kt}.
\end{equation}
Combined with the inequality \eqref{e:Fy}, we obtain that
\begin{equation*}
\begin{aligned}
D_{\tilde{y}} \tilde{f}(t,x,\tilde{y},\tilde{z})&\le \left(-\frac{|\varphi''(\tilde{y})|^2 - \varphi'''(\tilde{y})\varphi'(\tilde{y})}{2|\varphi'(\tilde{y})|^2} + 
3C |\varphi''(\tilde{y})|e^{-Kt}
+ \varepsilon |\varphi'(\tilde{y})|^{2}e^{-2Kt}\right)|\tilde{z}|^2 \\ 
&\quad+ \frac{|\varphi''(\tilde{y})|}{|\varphi'(\tilde{y})|^2}e^{Kt}
 \left(2C - K\right)
+ C_\varepsilon -K,
\end{aligned}
\end{equation*}
in which 
\begin{equation}\label{e:K}
-\theta_1:=\frac{|\varphi''(\tilde{z})|}{|\varphi'(\tilde{z})|^2}e^{Kt}
\left(2C - K\right)
+ C_\varepsilon -K<0\text{ when } K \text{ is large enough.}
\end{equation}
To show the term involving $|\tilde{z}|^{2}$ negative, notice that
\begin{equation*}
\begin{aligned}
&-|\varphi''(\tilde{y})|^2 + \varphi'''(\tilde{y})\varphi'(\tilde{y}) + 
 6C \varphi''(\tilde{y})|\varphi'(\tilde{y})|^2 e^{-Kt}
 +  2\varepsilon |\varphi'(\tilde{y})|^{4}e^{-2Kt}\\ 
&\le -|\varphi''(\tilde{y})|^2 + \varphi'''(\tilde{y})\varphi'(\tilde{y}) + 
6C \varphi''(\tilde{y})|\varphi'(\tilde{y})|^2
+  2\varepsilon |\varphi'(\tilde{y})|^{4}\\
&= \frac{(-\lambda^2 A^4 + 
 6C \lambda A^4)e^{3\lambda A\tilde{y}}
+  2\varepsilon A^4 e^{4\lambda A\tilde{y}}}{|1 + e^{\lambda A\tilde{y}}|^4}.
\end{aligned}
\end{equation*} 
 Set 
$
\underline{M}(t):=\varphi^{-1}(-e^{Kt}(2\bar{M} + 1))$ and $ \overline{M}(t):=\varphi^{-1}(-e^{Kt}).
$
By \eqref{e:[-bar M,bar M] is equal to} and \eqref{e:y - tilde M<-1}, we have $y - \tilde{M}\le 0$ when $\tilde{y}\in[\underline{M}(t),\overline{M}(t)].$ Assume $\varepsilon\le A^{-1}.$ Then it follows that for all $\tilde{y}\in[\underline{M}(t),\overline{M}(t)],$
\begin{equation*}
\varepsilon e^{4\lambda A \tilde{y}} \le e^{-\ln(A)} e^{4\ln(A e^{\lambda(y -\bar{M})e^{Kt}}-1)}\le e^{3\ln(A e^{\lambda(y -\bar{M})e^{Kt}}-1)} = e^{3\lambda A \tilde{y}}.
\end{equation*}
Furthermore, for $\tilde{y}\in[\underline{M}(t),\overline{M}(t)]$ by choosing $\lambda$ with $-\lambda^2  +   6C \lambda
+ 2 < 0$, we have
\begin{equation}\label{e:lambda}
-\theta_2:=\sup_{\tilde{y}\in[\underline{M}(T),\overline{M}(0)]}\frac{(-\lambda^2 A^4 + 
6C \lambda A^4
)e^{3\lambda A\tilde{y}} + 2 A^4 e^{3\lambda A\tilde{y}}}{2 |1 + e^{\lambda A\tilde{y}}|^4 \cdot|\varphi'(\tilde{y})|^{2}}<0.
\end{equation}
Thus, noting that $[\underline{M}(t),\overline{M}(t)]\subset [\underline{M}(T),\overline{M}(0)],$
\eqref{e:Fy} implies that
\begin{equation}\label{e:proper}
D_{\tilde{y}} \tilde{f}(t,x,\tilde{y},\tilde{z})\le -\theta_{1} - \theta_{2}|\tilde{z}|^2,\ \text{ for all }\tilde{y}\in[\underline{M}(t),\overline{M}(t)].
\end{equation}
In conclusion, we can choose the parameter $(A,K,\varepsilon,\lambda)$ in the following way.
Firstly, choose $\lambda>0$ such that \eqref{e:lambda} holds with
$-\lambda^2 + 
 6C\lambda
+ 2 < 0;$
secondly, let 
$
\varepsilon = e^{-4\lambda \tilde{M} e};$
thirdly, find $K(\bar{M},C)>0$ depends on $\bar{M}$ and $C$ such that \eqref{e:K} holds with
$-K +  2C 
< 0$    and $  -K + C_{\varepsilon} < 0;$
finally, let 
$A := e^{4\lambda \tilde{M} e^{KT}}.$
Then the following conditions 
\[D_{y} \tilde{f}(t,x,\tilde{y},\tilde{z})\le \varepsilon|\tilde{z}|^2 + C_{\varepsilon}\ \text{ and }\  \varepsilon \le A^{-1}\   \text{ are true whenever }\  T\le K^{-1}\land h_{\varepsilon,\bar{M}}.
\] 
In addition, when $T\le h\land h_{\varepsilon,\bar{M}}$ with $h:=K^{-1},$ \eqref{e:proper}, \eqref{e:f_x'}, and Assumption~\ref{(H1)} yield that there exists some $\bar{C}>0$ such that for all $(t,x,\tilde{z})\in[0,T]\times \R^d\times \R^d$ and $\tilde{y}\in [\underline{M}(t),\overline{M}(t)]$
\begin{equation*}
\begin{aligned}
&|\tilde{f}(t,x,\tilde{y},\tilde{z})|\le \bar{C} (1+ |\tilde{z}|^{2}),\\
&|D_x \tilde{f}(t,x,\tilde{y},\tilde{z})|\le \bar{C} (1+|z|^2),\\	
&D_{\tilde{y}} \tilde{f}(t,x,\tilde{y},\tilde{z})\le -\bar{K}(1 + |z^2|),\\
&|D_{\tilde{z}} \tilde{f}(t,x,\tilde{y},\tilde{z})|\le \bar{C} (1 + |\tilde{z}| ).
\end{aligned}
\end{equation*}
Let
\begin{equation*}
\bar{f}(t,x,\tilde{y},\tilde{z}) := \left\{\begin{aligned}
&\tilde{f}\big(t,x,\underline{M}(t),\tilde{z}\big) - \bar{K}(1+|\tilde{z}|^2)(\tilde{y} - \underline{M}(t)),\ \tilde{y} < \underline{M}(t);\\
&\tilde{f}(t,x,\tilde{y},\tilde{z}),\ \tilde{y}\in[\underline{M}(t),\overline{M}(t)];\\
&\tilde{f}\big(t,x,\underline{M}(t),\tilde{z}\big) - \bar{K}(1+|\tilde{z}|^2)(\tilde{y} - \overline{M}(t)),\ \tilde{y} > \overline{M}(t).
\end{aligned}
\right.
\end{equation*}
Then $\bar{f}$ satisfies the condition~\eqref{e:structure cond}.
By step~1, we have 
\begin{equation*}
u(t,x)\le v(t,x),\ \text{ for all }(t,x)\in(0,T]\times \mathbb{R}^d.
\end{equation*}
For an arbitrary terminal time $T,$ by taking the transition $\tilde{u}(\cdot, x):=u(\cdot+T-h\land h_{\varepsilon,\bar{M}},x),$ then 
\begin{equation*}
u(t,x)\le v(t,x),\ \text{ for all }(t,x)\in(T-h\land h_{\varepsilon,\bar{M}},T]\times \mathbb{R}^d,
\end{equation*}
which completes the proof.
\end{proof}

In Section~\ref{subsec:PDE obstacle} we need the semi-relaxed limits preserve the  terminal value at time $T.$ The following proposition is an adaption of \cite[Proposition~2]{caruana2011rough} in the obstacle setting.

\begin{proposition}\label{prop:preserve terminal value}
Denote $(f^{0},l^{0},g^{0}) := (f,l,g).$ Suppose that for $n\ge 0$ $f^{n}:[0,T]\times\R^d\times\R\times\R^{d}\rightarrow \R,$  $l^{n}:[0,T]\times\R^d\rightarrow \R,$ and $g^{n}:\R^d \rightarrow \R$ are continuous. Assume $f^n(\cdot)\rightarrow f^{0}(\cdot),$ $l^{n}(\cdot)\rightarrow l^{0}(\cdot),$ and $g^{n}(\cdot)\rightarrow g^{0}(\cdot)$ locally uniformly, and assume $g^{n}(x)\ge l^{n}(T,x)$ for all $n\ge 0.$
In addition, suppose for $n\ge 1$, $v^n$ is a viscosity solution of the obstacle problem for PDE~\eqref{e:PDE-Obstacle} with parameter $(f^n, l^n, g^n)$. Assume further $\|v^n\|_{\infty;(0,T]\times\R^d}$
is uniformly bounded in $n\ge 1.$ Then $\overline{v}(T,x) = \underline{v}(T,x) = g^{0}(x),$ where $\overline{v} = \limsup\limits_{n\rightarrow \infty}{}^{*}v^{n}$ and $\underline{v} = \limsup\limits_{n\rightarrow \infty}{}_{*}v^{n}$ are defined in \eqref{e:semi-relaxed}.
\end{proposition}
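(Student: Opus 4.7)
The easy half consists of the inequalities
\[
\overline v(T,x_0)\;\ge\; g^0(x_0)\quad\text{and}\quad \underline v(T,x_0)\;\le\; g^0(x_0),\qquad x_0\in\R^d.
\]
Since $v^n$ is a viscosity solution, Definition~\ref{def:viscos solution} gives $v^n(T,x)=g^n(x)$, and from $g^n\to g^0$ locally uniformly,
\[
\overline v(T,x_0)\;\ge\; \limsup_{n\to\infty} v^n(T,x_0)\;=\;\lim_{n\to\infty} g^n(x_0)\;=\;g^0(x_0),
\]
and symmetrically for $\underline v$. So the real task is to prove the reverse inequalities $\overline v(T,x_0)\le g^0(x_0)$ and $\underline v(T,x_0)\ge g^0(x_0)$, which I will establish by a classical barrier construction.

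Fix $x_0\in\R^d$ and $\varepsilon>0$. Using the uniform bound $\sup_{n}\|v^n\|_{\infty;(0,T]\times\R^d}\le M$ and the uniform continuity of $g^0$ on compacts, choose $B>0$ large enough that $g^0(x)\le g^0(x_0)+\varepsilon/2+B|x-x_0|^2$ for all $x\in\R^d$ with $g^0(x)\le M$, and then $r>0$ small such that $g^0(x_0)+\varepsilon+Br^2\ge 2M$. Define the barrier
\[
\psi(t,x)\;:=\;g^0(x_0)+\varepsilon+A(T-t)+B|x-x_0|^2
\]
on the cylinder $Q:=[T-h,T]\times\overline{B(x_0,r)}$. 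Since $\psi,D_x\psi,D_{xx}^2\psi$ are bounded on $Q$ and $f$ is continuous, one can pick $A>0$ large and $h>0$ small so that $\psi$ is a classical (hence viscosity) strict supersolution of the non-obstacle equation \eqref{e:PDE-Obstacle} associated with $f^0$ on $Q$, with strict margin $-A/2$. By the locally uniform convergence $f^n\to f^0$, this strict supersolution property is inherited by the parameter $f^n$ for all $n$ large. Moreover, $\psi(T,x_0)=g^0(x_0)+\varepsilon>g^0(x_0)\ge l^0(T,x_0)$, so by continuity of $l^n\to l^0$ (shrinking $h,r$, enlarging $n$ if necessary) we have $\psi\ge l^n$ on $Q$; hence the obstacle condition is automatic and $\psi$ is a viscosity supersolution of the obstacle problem with parameter $(f^n,l^n)$ on $Q$.

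Next, on the parabolic boundary $\partial_pQ=\{T\}\times\overline{B(x_0,r)}\;\cup\;[T-h,T]\times\partial B(x_0,r)$ one checks that $\psi\ge v^n$ for $n$ large: on the top face, $\psi(T,x)=g^0(x_0)+\varepsilon+B|x-x_0|^2\ge g^0(x)+\varepsilon/2\ge g^n(x)=v^n(T,x)$ by the choice of $B$ and $g^n\to g^0$; on the lateral boundary, $\psi(t,x)\ge g^0(x_0)+\varepsilon+Br^2\ge 2M\ge v^n(t,x)$. Applying a standard cylindrical comparison principle for the obstacle problem on $Q$ (or, equivalently, extending the barrier by a large constant outside $Q$ and invoking the global comparison Proposition~\ref{prop:comparison of PDE} after checking that $f^n$ meets the hypothesis there locally) yields $v^n\le\psi$ on $Q$ for all large $n$. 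Taking the semi-relaxed upper limit at $(T,x_0)$ gives $\overline v(T,x_0)\le\psi(T,x_0)=g^0(x_0)+\varepsilon$, and $\varepsilon\downarrow 0$ concludes. The lower bound $\underline v(T,x_0)\ge g^0(x_0)$ follows by the symmetric barrier $\varphi(t,x)=g^0(x_0)-\varepsilon-A(T-t)-B|x-x_0|^2$, which is a strict subsolution of the PDE; since subsolutions of the obstacle problem only require the inequality where $u>l$, being a subsolution of the PDE suffices.

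The main obstacle, and the only subtle point, is invoking comparison on the cylinder $Q$ rather than on the whole space, given that Proposition~\ref{prop:comparison of PDE} is proved globally; however, this is handled by the standard trick of extending $\psi$ (resp.\ $\varphi$) outside $Q$ by a constant above $M$ (resp.\ below $-M$) so that global comparison applies while the interior values on $Q$ are unchanged. The uniform $L^\infty$-bound on $v^n$ and the locally uniform convergence of $(f^n,l^n,g^n)$ are exactly what makes these choices uniform in $n$.
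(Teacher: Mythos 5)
Your barrier/comparison approach has the right geometric picture (a strict quadratic super-barrier pinned at $(T,x_0)$), but it hits a genuine obstruction in the final step: you invoke a comparison principle, and under the hypotheses of this proposition none is available. Proposition~\ref{prop:comparison of PDE} requires the driver to satisfy Assumption~\ref{(H1)} (quadratic growth in $z$, bounded $D_z f$, one-sided $D_y f$ control), whereas here the $f^n$ are assumed only to be \emph{continuous}. Your parenthetical ``after checking that $f^n$ meets the hypothesis there locally'' is where the argument breaks: continuity alone provides none of the monotonicity or growth structure needed, and a ``standard cylindrical comparison for the obstacle problem'' likewise requires properness in $y$. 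Without some such structure, $v^n\le\psi$ does not follow from the sub/supersolution properties.

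The paper avoids comparison entirely. It takes $\phi(t,x)=\lambda(T-t)+K|x-x_0|^2$ with $K,\lambda$ calibrated to $\sup_n\|v^n\|_{\infty}$ and a radius $\rho$, maximizes $v^n-\phi$ over $(0,T]\times\R^d$, shows the maximizer lands in $O_\rho(T,x_0)$ with $\hat t^{n_k}<T$ and $v^{n_k}>l^{n_k}$ there, and then applies the viscosity subsolution inequality \emph{once, directly} at the maximizer: $0\le -\lambda+G^{n_k}(\,\cdot\,)$. Because the maximizer stays in a fixed compact set and $v^{n_k}$ is uniformly bounded, $G^{n_k}$ is uniformly bounded there, so sending $\lambda\to\infty$ is an outright contradiction --- no comparison principle, no properness. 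Your argument could in fact be rescued by the same device: instead of invoking comparison, evaluate the subsolution inequality at a max of $v^n-\psi$ inside $Q$ and observe that $|f^n(t,x,v^n,\sigma^\top D_x\psi)-f^n(t,x,\psi,\sigma^\top D_x\psi)|$ is bounded on the relevant compact set uniformly in $n$, so a sufficiently large $A$ still yields the contradiction. But as written, the step ``apply comparison'' is not justified. (A smaller slip: the constants $B$ and $r$ are chosen in the wrong order --- the inequality $g^0(x_0)+\varepsilon+Br^2\ge 2M$ forces $r$ to be bounded below once $B$ is fixed, so $r$ cannot then be taken ``small''; one must pick the continuity radius first, then $B$, and only then the cylinder radius.)
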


\begin{proof}
Clearly, $\overline{v}(T,x)\ge g^{0}(x).$ 
 Suppose that $\overline{v}(T,x) > g^{0}(x),$ 
and thus there exist $\delta>0$ and $x_0\in\R^d$ such that
\begin{equation*}
\overline{v}(T,x_0) - g^{0}(x_{0}) = \delta.
\end{equation*}
Assume without loss of generality that $g^{0}(x_0) = 0.$ By the definition of $\overline{v},$ there exists $n_k\rightarrow \infty$ and $(s_k,y_k)\rightarrow (T,x_0)$ such that
\begin{equation*}
\lim_{k\rightarrow\infty}v^{n_k}(s_k,y_k) = \overline{v}(T,x_0) = \delta.
\end{equation*}  
Note that $(g^{n} , l^{n}  )$ converges to $(g^{0} , l^0)$ locally uniformly, and $l^{0}(T,x_0)\le g^{0}(x_0).$ Then there exists $\rho \in (0,T)$ such that $g^{n}(x)<\frac{\delta}{2}$ and $l^{n}(t,x)<\frac{\delta}{2}$ whenever $|t-T| \vee|x-x_0| < \rho$ and $n$ is large enough. Define the test function 
\begin{equation*}
\phi(t,x) := \lambda (T-t) + K |x-x_{0}|^2,
\end{equation*} 
where $K := \sup\limits_{n\ge 1}\|v^n\|_{\infty;(0,T]\times \R^d}/\rho^2$ and  $\lambda > \sup\limits_{n\ge 1}\|v^n\|_{\infty;(0,T]\times \R^d}/\rho$ will be chosen later. Let 
\begin{equation*}
(\hat{t}^n,\hat{x}^n)\in \text{arg\ max}\left\{v^n(t,x) - \phi(t,x):(t,x)\in
 (0,T]\times\R^d
\right\}.
\end{equation*}
Denote $O_{\rho}(T,x_0) := \left\{(t,x)
\in(0,T]\times\R^d
;|t-T|\vee |x-x_0|\le \rho\right\}.$ First we claim that there exists $k'\ge 1$ such that $(\hat{t}^{n_k},\hat{x}^{n_k})\in O_{\rho}$ for all $k > k'.$ Actually, for $(t,x) \in (0,T]\times\R^d \setminus O_{\rho}$
and $n\ge 1,$
\begin{equation*}
v^{n}(t,x) - \phi(t,x) \le \sup_{n\ge 1}\|v^n\|_{\infty;(0,T]\times\R^d} - \lambda \rho \land K|\rho|^2 \le 0. 
\end{equation*}
While
\begin{equation}\label{e:>= delta/2}
v^{n_k}(\hat{t}^{n_k},\hat{x}^{n_k}) - \phi(\hat{t}^{n_k},\hat{x}^{n_k})\ge v^{n_k}(s_{k},y_k) - \phi(s_k,y_k)\ge   \frac{\delta}{2},\ \text{when $k$ is sufficiently large.}
\end{equation}
In addition, we claim that $\hat{t}^{n_k}\neq T$ when $k$ is sufficiently large. Indeed, if $\hat{t}^{n_k} = T,$
\begin{equation*}
v^{n_k}(T,\hat{x}^{n_k}) - \phi(T,\hat{x}^{n_k}) = g^{n_k}(\hat{x}^{n_k}) - \phi(T,\hat{x}^{n_k}) \le g^{n_k}(\hat{x}^{n_k}) < \frac{\delta}{2}\ \text{ when $k$ is large,}
\end{equation*}
which leads to a contraction with \eqref{e:>= delta/2} if $\hat{t}^{n_k} = T.$ Therefore, we have $v^{n_k}(\hat{t}^{n_k},\hat{x}^{n_k}) > l^{n_k}(\hat{t}^{n_k},\hat{x}^{n_k})$ when $k$ is large enough since $l^{n_k}(\hat{t}^{n_k},\hat{x}^{n_k})<\frac{\delta}{2}$ while \eqref{e:>= delta/2} holds for sufficiently large $k.$
Hence, by choosing $k$ large enough, we have 
\begin{equation*}
\hat{t}^{n_k} < T \text{ and } v^{n_k}(\hat{t}^{n_k},\hat{x}^{n_k}) > l^{n_{k}}(\hat{t}^{n_k},\hat{x}^{n_k}).
\end{equation*}
Recall that $I_d$ is the unit matrix in $\R^{d\times d}.$  Then by the viscosity subsolution property of $v^{n_k},$ and by the fact that $(-\lambda,2K(\hat{x}^{n_k} - x_0),K I_{d})\in P^{2,+} v^{n_k}(\hat{t}^{n_k},\hat{x}^{n_k}),$ we have
\begin{equation}\label{e:0 >= lambda - G^n}
0 \le -\lambda + G^{n_{k}}\big(\hat{t}^{n_k},\hat{x}^{n_k},v^{n_k}(\hat{t}^{n_k},\hat{x}^{n_k}),2K(\hat{x}^{n_k} - x_0),K I_d\big),
\end{equation}
where $G^{n}(t,x,y,p,\Gamma) := \frac{1}{2} \text{Tr}\left\{\sigma(t,x)\sigma^{\top}(t,x)\Gamma\right\} + p^{\top} b(t,x) + f^{n}\left(t,x,y,\sigma^{\top}(t,x)p\right).$
Note that $(\hat{t}^{n_k},\hat{x}^{n_k})$ remains in the bounded set $O_{\rho}$,  $\rho$  is independent of $\lambda,$ and $\|v^{n_k}\|_{\infty;(0,T]\times\R^d}$ are uniformly bounded in $k\ge 1.$ Then combined with the fact that $G^{n_k}(\cdot)$ converges to $G^{0}(\cdot)$ locally uniformly, we have that $G^{n_k}(\hat{t}^{n_k},\hat{x}^{n_k},v^{n_k}(\hat{t}^{n_k},\hat{x}^{n_k}),2K(x_0 - \hat{x}^{n_k}),K I_d)$ are uniformly  bounded  in $k\ge 1.$ It follows that \eqref{e:0 >= lambda - G^n} leads to a contradiction by taking $\lambda$ large enough. Thus we prove that $\overline{v}(T,x) = g^{0}(x),$ and $\underline{v}(T,x) = g^{0}(x)$ can be proved similarly.
\end{proof}

\renewcommand\thesection{Appendix C}
\section{Some elementary facts about increasing processes}\label{append:C}
\renewcommand\thesection{C}

\begin{lemma}\label{lem:X <= Y}
Assume that $X,Y:[0,T]\times \Omega\rightarrow \R$ are two optional processes. Suppose that for each stopping time $\tau\le T$,
\begin{equation}\label{e:condition}
X_{\tau} \le Y_{\tau} \text{ a.s..}
\end{equation}
Then we have a.s., $
X_{t}\le Y_{t} \text{ for all }t\in[0,T].$  
\end{lemma}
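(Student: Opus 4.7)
The natural route is to argue by contradiction using the optional section theorem. The idea is that the ``bad set''
\[
A := \{(t,\omega)\in[0,T]\times \Omega : X_t(\omega) > Y_t(\omega)\}
\]
is optional (since both $X$ and $Y$, hence $X-Y$, are optional processes, and $A$ is the preimage of $(0,\infty)$ under this process), so if its projection on $\Omega$ had positive probability we could section it by a stopping time and directly contradict the pointwise-at-stopping-times hypothesis.

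Concretely, first I would note that $A$ belongs to the optional $\sigma$-algebra $\mathcal{O}$ on $[0,T]\times\Omega$. Set $N := \pi_\Omega(A)$, the projection of $A$ onto $\Omega$, and assume for contradiction that $\mathbb{P}(N) > 0$.

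Then the optional section theorem (see, e.g., Dellacherie--Meyer, Chapter IV) supplies, for any $\varepsilon \in (0,\mathbb{P}(N))$, a stopping time $\tau:\Omega\to [0,T]\cup\{+\infty\}$ whose graph $[\![\tau]\!]$ is contained in $A$ and which satisfies $\mathbb{P}(\tau<\infty)\ge \mathbb{P}(N)-\varepsilon>0$. Since $A\subset [0,T]\times\Omega$, on $\{\tau<\infty\}$ we automatically have $\tau\le T$, so the truncation $\tilde\tau:=\tau\wedge T$ is a genuinely $[0,T]$-valued stopping time agreeing with $\tau$ on $\{\tau<\infty\}$. By construction $(\tilde\tau(\omega),\omega)\in A$ on the event $\{\tau<\infty\}$, i.e. $X_{\tilde\tau}>Y_{\tilde\tau}$ on a set of positive probability, contradicting the assumption $X_{\tilde\tau}\le Y_{\tilde\tau}$ a.s. Hence $\mathbb{P}(N)=0$, which is exactly the statement that a.s. $X_t\le Y_t$ for every $t\in[0,T]$.

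The only delicate ingredient is the section theorem itself; everything else (optionality of $A$, truncation of $\tau$ to $[0,T]$) is routine. Since no path regularity such as right-continuity is assumed on $X,Y$, the section theorem is genuinely needed and cannot be replaced by a simple countable-dense-set argument.
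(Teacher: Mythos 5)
Your proof is correct and follows essentially the same route as the paper: define the optional bad set $A$, assume its projection has positive probability, apply the Dellacherie--Meyer optional section theorem to produce a stopping time whose graph lies in $A$, truncate it to $[0,T]$, and derive a contradiction with the stopping-time hypothesis. The two arguments coincide in structure and in every essential step.
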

\begin{proof}
Denote  
\begin{equation*}
\mathcal{A} := \left\{(t,\omega)\in[0,T]\times\Omega; X_{t} > Y_{t}\right\}.
\end{equation*}
Since $X$ and $Y$ are optional, the random set $\mathcal{A}$ is also optional. Thus by the optional section theorem (see Dellacherie-Meyer \cite[Chapter~IV (no.83)]{Meyer}), for every $\varepsilon>0,$ there exists a stopping time $\tau'$ such that 
\begin{itemize}
\item[(i)] $\{\big(\tau'(\omega),\omega\big);\omega\in \Omega,\ \tau'(\omega)< \infty\}\subset \mathcal{A}$;
\item[(ii)]$
\mathbb{P}\left\{\omega\in\Omega;\tau'(\omega)<\infty\right\} \ge \mathbb{P}\left\{\omega\in\Omega;\text{ there exists $t<\infty$ such that }(t,\omega)\in \mathcal{A} \right\} - \varepsilon.$
\end{itemize}
Assume that $\mathcal{A}$ is not an evanescent random set, i.e., 
\begin{equation*}
\mathbb{P}\left\{\omega\in\Omega;\text{ there exists $t<\infty$ such that }(t,\omega)\in \mathcal{A} \right\} > 0.
\end{equation*} 
Choosing $\varepsilon$ small enough, it satisfies that
\begin{equation*}
\mathbb{P}\left\{\omega\in\Omega;\tau'(\omega)<\infty\right\}>0 \text{ and } X_{\tau'\land T} > Y_{\tau'\land T}\text{ on }\left\{\omega\in\Omega;\tau'(\omega)<\infty\right\},
\end{equation*}
which contradicts the condition~\eqref{e:condition}.
\end{proof}

Now we apply the above lemma to prove that $K_{\cdot}$ is an a.s. increasing process if it is increasing on  every stopping times.

\begin{lemma}\label{lem:app of sction theorem}
Assume $K:[0,T]\times\Omega\rightarrow \R$ is an optional process with $K_{0} = 0.$ Suppose that for every two stopping times $\tau_1$ and $\tau_2$ that $\tau_1 \le \tau_2\le T,$ it satisfies 
\begin{equation}\label{e:cond2}
K_{\tau_1} \le K_{\tau_2}\text{ a.s..}
\end{equation}
Then, we have a.s.,
\begin{equation*}
K_{t_1} \le K_{t_2}, \text{ for all }0\le t_1 \le t_2\le T.
\end{equation*}
\end{lemma}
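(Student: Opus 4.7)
The plan is to reduce the uncountable quantifier ``for all $0 \le t_1 \le t_2 \le T$'' in the conclusion to a countable family of inequalities by applying Lemma~\ref{lem:X <= Y} at rational starting times, and then to promote this rationally-indexed monotonicity to full monotonicity via a further application of Lemma~\ref{lem:X <= Y} combined with an optional section argument that exploits the hypothesis at \emph{arbitrary} stopping times (not only constant ones).

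First, for each fixed rational $s \in \mathbb{Q} \cap [0,T]$, I would consider on the sub-interval $[s, T]$ the two processes $X_t := K_s$ and $Y_t := K_t$. Both are optional there: $X$ is constant in $t$ and $\mathcal{F}_s$-measurable, hence $\mathcal{F}_t$-measurable for $t \ge s$, while $Y$ is optional by assumption. For any stopping time $\tau$ taking values in $[s, T]$, the hypothesis applied with $\tau_1 = s$ and $\tau_2 = \tau$ yields $X_\tau = K_s \le K_\tau = Y_\tau$ almost surely, so Lemma~\ref{lem:X <= Y} produces a null set $N_s$ outside which $K_s(\omega) \le K_t(\omega)$ for every $t \in [s, T]$. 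Setting $N := \bigcup_{s \in \mathbb{Q} \cap [0,T]} N_s$, still null, I obtain outside $N$ the joint monotonicity $K_s(\omega) \le K_t(\omega)$ for every rational $s$ and every real $t \ge s$.

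The second application of Lemma~\ref{lem:X <= Y} handles a general stopping time and allows me to ``fill in'' the irrational starting times: for fixed $\tau_0 \le T$, I would set $X_t := K_{\tau_0} \mathbf{1}_{\{t \ge \tau_0\}}$ and $Y_t := K_t \mathbf{1}_{\{t \ge \tau_0\}}$. Both are optional since $K_{\tau_0} \in \mathcal{F}_{\tau_0}$ and $\{t \ge \tau_0\} \in \mathcal{F}_t$. Using the hypothesis on the pair $(\tau_0 \wedge \sigma, \sigma)$ for an arbitrary stopping time $\sigma$ gives $X_\sigma \le Y_\sigma$ a.s., and Lemma~\ref{lem:X <= Y} then yields: for each fixed stopping time $\tau_0$ there is a null set outside which $K_{\tau_0(\omega)}(\omega) \le K_t(\omega)$ for all $t \ge \tau_0(\omega)$. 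To conclude, I would argue by contradiction: if the bad event $B := \{\omega : K(\cdot,\omega)\text{ fails to be non-decreasing on }[0,T]\}$ had positive probability, the optional section theorem would produce a stopping time $\tau^*$ with $\mathbb{P}(\tau^* \le T) > 0$ that witnesses the failure, and applying the per-stopping-time statement just obtained with $\tau_0 := \tau^*$ would give the contradiction.

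The main obstacle is making the section step in this last argument rigorous: the natural ``first time of failure''
\begin{equation*}
\tau^*(\omega) \;:=\; \inf\bigl\{\,t\in[0,T] : K_t(\omega) > K_{q}(\omega)\text{ for some rational } q \in [t,T]\bigr\}
\end{equation*}
is defined from the set $\bigcup_{q\in\mathbb{Q}\cap[0,T]}\{(t,\omega):\, t\le q,\, K_t(\omega)>K_q(\omega)\}$, which is jointly measurable but is not a priori in the optional $\sigma$-algebra, because $K_q$ for $q>t$ depends on information strictly after time $t$. In the specific setting in which Lemma~\ref{lem:app of sction theorem} is invoked in the main text (Step 5 of the proof of Theorem~\ref{thm:well-posedness}), this gap can be bridged using the explicit decomposition~\eqref{e:after weak conver} of $\widetilde{K}^{0,T-h^{(i)}}$ as a sum of pathwise-continuous processes and $-\widetilde{Y}^{0,T-h^{(i)}}$, together with the lower semicontinuity of $Y^{0}=\sup_{m} Y^{m}$ inherited from the monotone convergence of continuous penalization approximations, which provides enough one-sided regularity to approximate $K_{t_1}$ from the right by rational times and thereby close the argument via Step~2 alone.
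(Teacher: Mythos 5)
Your Step 1 (rational initial times) and Step 2 (fixed stopping time $\tau_0$) are both sound applications of Lemma~\ref{lem:X <= Y}, and Step 1 matches, in substance, the first half of the paper's proof, which establishes the equivalent estimate $\widetilde{K}_t := \sup_{s\in\mathbb{Q},\, s\le t}K_s \le K_t$ for all $t\in[0,T]$, a.s. The problem is precisely where you flag it: in Step 3 the set you would want to section,
\begin{equation*}
\bigcup_{q\in\mathbb{Q}\cap[0,T]}\bigl\{(t,\omega):\, t\le q,\; K_t(\omega)>K_q(\omega)\bigr\},
\end{equation*}
is jointly measurable but not optional, because $K_q$ with $q>t$ is $\mathcal{F}_q$-measurable and not $\mathcal{F}_t$-measurable. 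Lemma~\ref{lem:X <= Y} therefore does not apply, and the null set produced in your Step 2 genuinely depends on the choice of $\tau_0$; the union over all stopping times is uncountable and uncontrolled. Your proposed patch --- invoking the one-sided regularity of $Y^0$ and the representation~\eqref{e:after weak conver} from Step~5 of the proof of Theorem~\ref{thm:well-posedness} --- is circular, since that regularity of $Y^0$ is exactly what Lemma~\ref{lem:app of sction theorem} is being used to establish there; it also destroys the generality of the lemma, which is stated for an arbitrary optional $K$ with no path regularity whatsoever.

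The idea you are missing is the paper's second step. Instead of looking forward in time (non-optional), one shifts the process backward: for each $n$, the process $K^n_t := K_{(t-1/n)\vee 0}$ is still optional, because a backward time shift preserves the optional $\sigma$-algebra. One verifies $K^n_\tau\le\widetilde{K}_\tau$ a.s.\ for every stopping time $\tau$, so a second application of Lemma~\ref{lem:X <= Y} gives $K^n_t\le\widetilde{K}_t$ for all $t$, a.s.\ Combining this with the first-step inequality $\widetilde{K}\le K$ and the fact that $\widetilde{K}$ is nondecreasing \emph{pathwise} by construction then closes the gap: for $0\le t_1<t_2\le T$ with $t_2-t_1>1/n$,
\begin{equation*}
K_{t_1}=K^n_{t_1+1/n}\le\widetilde{K}_{t_1+1/n}\le\widetilde{K}_{t_2}\le K_{t_2}
\end{equation*}
on a common full-probability event, and intersecting over $n$ yields the conclusion. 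In short: rather than trying to compare $K$ to a forward rational supremum (not optional), compare the backward shift $K^n$ to the backward rational supremum $\widetilde{K}$ --- both are optional, and the pathwise monotonicity of $\widetilde{K}$ bridges the residual gap of length $1/n$. Your Steps 1 and 2 can stay, but Step 3 has to be replaced by this shift-and-compare argument rather than patched with application-specific regularity.
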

\begin{proof}
For simplicity of notations, we denote $K_{t} = 0$ if $t<0.$ Denote by $\mathbb{Q}$ be the set of rational numbers. Define
\begin{equation*}
\widetilde{K}_{t}:=  \sup_{s\in\mathbb{Q},s\le  t}K_{s}.
\end{equation*}
Clearly $\widetilde{K}_t$ is also optional. For an arbitrary stopping time $\tau\le T$, we have by \eqref{e:cond2}
\begin{equation*}
\widetilde{K}_{\tau} = \sup_{s\in\mathbb{Q},s\le  \tau}K_{s} \le \sup_{s\in\mathbb{Q}}K_{s\land\tau} \le K_{\tau} \text{ a.s.. }
\end{equation*}
Thus by Lemma~\ref{lem:X <= Y} we have a.s.
\begin{equation}\label{e:1}
\widetilde{K}_{t} \le K_{t}, \text{ for all }t\in[0,T].
\end{equation}
On the other hand, for every $n\in\mathbb{N}$, set 
$
K^{n}_{t}:= K_{t - \frac{1}{n}}.
$
Clearly, $K^{n}_{t}$ is optional. For an arbitrary stopping time $\tau\le T$, 
\begin{equation*}
K^{n}_{\tau} = K_{\tau - \frac{1}{n}}\le  \sup_{s\in\mathbb{Q}}\left[K_{s}\mathbf{1}_{\{\tau - \frac{1}{n}<s\le \tau\}}\right] \le \sup_{s\in\mathbb{Q},s\le\tau}K_{s} = \widetilde{K}_{\tau}\text{ a.s.,}
\end{equation*}
where we have used the following fact in the first inequality
\begin{equation*}
K_{\tau-\frac{1}{n}}\mathbf{1}_{\{\tau-\frac{1}{n} < s \le \tau\}} \le K_{s\vee(\tau-\frac{1}{n})}\mathbf{1}_{\{\tau-\frac{1}{n} < s\le \tau\}} = K_{s}\mathbf{1}_{\{\tau-\frac{1}{n} < s\le \tau\}} \text{ a.s..}
\end{equation*}
Thus, by Lemma~\ref{lem:X <= Y} again, we have a.s.,
\begin{equation}\label{e:2}
K^{n}_{t} \le \widetilde{K}_{t}, \text{ for all }t\in[0,T].
\end{equation}
For every $n\in\mathbb{N},$ it follows from \eqref{e:1} and \eqref{e:2} that a.s.,
\begin{equation*}
K_{t_1} = K^{n}_{t_1 + \frac{1}{n}} \le \widetilde{K}_{t_1 + \frac{1}{n}} \le \widetilde{K}_{t_2}  \le K_{t_2} \text{ for all } 0\le t_1 < t_2 \le T \text{ and }|t_2 - t_1|> \frac{1}{n}.
\end{equation*}
By taking the intersection of the above cases in $n\in\mathbb{N},$ we have a.s.
\begin{equation*}
K_{t_1}\le K_{t_2} \text{ for all } 0\le t_1 < t_2 \le T,
\end{equation*}
which completes our proof.
\end{proof}

\bibliographystyle{plain}

\end{document}